\definecolor{verylight}{gray}{0.97}
\definecolor{light}{gray}{0.9}
\definecolor{medium}{gray}{0.85}
\definecolor{dark}{gray}{0.6}
\def\NZQ{\mathbb}               
\def\NN{{\NZQ N}}
\def\QQ{{\NZQ Q}}
\def\ZZ{{\NZQ Z}}
\def\FF{{\NZQ F}}
\def\GG{{\NZQ G}}
\def\KK{{\NZQ K}}
\def\frk{\mathfrak}               
\def\mm{{\frk m}}
\def\Phi{{\frk N}}
\def\eb{{\bold e}}
\def\ab{{\bold a}}
\def\xb{{\bold x}}
\def\al{\alpha}
\def\bl{\beta}
\def\opn#1#2{\def#1{\operatorname{#2}}} 
\opn\chara{char} \opn\length{\ell} \opn\pd{pd} \opn\rk{rk}
\opn\projdim{proj\,dim} \opn\injdim{inj\,dim} \opn\rank{rank}
\opn\depth{depth} \opn\grade{grade} \opn\height{height}
\opn\size{size}
\opn\embdim{emb\,dim} \opn\codim{codim}
\opn\Tr{Tr} \opn\bigrank{big\,rank}
\opn\superheight{superheight}\opn\lcm{lcm}
\opn\trdeg{tr\,deg}
\opn\reg{reg} \opn\lreg{lreg} \opn\ini{in} \opn\lpd{lpd}
\opn\size{size}\opn{\mult}{mult}
\opn{\Cl}{Cl}
\opn{\PF}{PF}
\opn{\RF}{RF}
\opn{\MC}{MC}
\opn{\Kos}{Kos}
\opn\div{div} \opn\Div{Div} \opn\cl{cl} \opn\Cl{Cl}
\opn\Spec{Spec} \opn\Supp{Supp} \opn\supp{supp} \opn\Sing{Sing}
\opn\Ass{Ass} \opn\Min{Min} \opn\cl{cl}
\opn\Ann{Ann} \opn\Rad{Rad} \opn\Soc{Soc}
\opn\Syz{Syz} \opn\Im{Im} \opn\Ker{Ker} \opn\Coker{Coker}
\opn\Am{Am} \opn\Hom{Hom} \opn\Tor{Tor} \opn\Ext{Ext}
\opn\End{End} \opn\Aut{Aut} \opn\id{id} \opn\ini{in}\opn\GCD{GCD}
\opn\nat{nat}
\opn\pff{pf}
\opn\Pf{Pf} \opn\GL{GL} \opn\SL{SL} \opn\mod{mod} \opn\ord{ord}
\opn\Gin{Gin}
\opn\Hilb{Hilb}\opn\adeg{adeg}\opn\std{std}\opn\ip{infpt}
\opn\Pol{Pol}
\opn\sat{sat}
\opn\Var{Var}
\opn\Gen{Gen}
\opn\lex{lex}
\opn\div{div}
\opn\NUF{NUF}
\opn\mNUF{mNUF}
\opn\type{type}
\opn\PF{PF}
\opn\Fr{F}
\opn\Ap{Ap}
\opn\aff{aff} \opn\con{conv} \opn\relint{relint} \opn\st{st}
\opn\lk{lk} \opn\cn{cn} \opn\core{core} \opn\vol{vol}
\opn\link{link} \opn\star{star}
\opn\gr{gr}
\def\pot#1#2{#1[\kern-0.28ex[#2]\kern-0.28ex]}
\opn\dirlim{\underrightarrow{\lim}}
\opn\inivlim{\underleftarrow{\lim}}
\let\union=\cup
\let\dirsum=\oplus
\let\iso=\cong
\let\Dirsum=\bigoplus
\let\to=\rightarrow
\def\Implies{\ifmmode\Longrightarrow \else
        \unskip${}\Longrightarrow{}$\ignorespaces\fi}
\def\implies{\ifmmode\Rightarrow \else
        \unskip${}\Rightarrow{}$\ignorespaces\fi}
\def\iff{\ifmmode\Longleftrightarrow \else
        \unskip${}\Longleftrightarrow{}$\ignorespaces\fi}
\newtheorem{Theorem}{Theorem}[section]
\newtheorem{Lemma}[Theorem]{Lemma}
\newtheorem{lem}[Theorem]{Lemma}
\newtheorem{Corollary}[Theorem]{Corollary}
\newtheorem{Proposition}[Theorem]{Proposition}
\newtheorem{prop}[Theorem]{Proposition}
\newtheorem{Remark}[Theorem]{Remark}
\newtheorem{rem}[Theorem]{Remark}
\newtheorem{Example}[Theorem]{Example}
\newtheorem{ex}[Theorem]{Example}
\newtheorem{Examples}[Theorem]{Examples}
\newtheorem{Definition}[Theorem]{Definition}
\newtheorem{defn}[Theorem]{Definition}
\newtheorem{Question}[Theorem]{Question}
\let\epsilon\varepsilon
\let\kappa=\varkappa
\def\qed{\ifhmode\textqed\fi
      \ifmmode\ifinner\quad\qedsymbol\else\dispqed\fi\fi}
\def\textqed{\unskip\nobreak\penalty50
       \hskip2em\hbox{}\nobreak\hfil\qedsymbol
       \parfillskip=0pt \finalhyphendemerits=0}
\def\dispqed{\rlap{\qquad\qedsymbol}}
\opn\dis{dis}
\def\pnt{{\raise0.5mm\hbox{\large\bf.}}}
\opn\Lex{Lex}
\opn\int{int}
\newcommand{\inD}[1][\relax]{\def\argone{#1}\def\temprelax{\relax}
  \ifx\argone\temprelax\right.\else\,\middle|#1\right.{}\fi}
\newif\ifbinary
\newcommand{\Z}{\mathbb Z}
\def\bld#1{\mbox{\boldmath $#1$}}
\begin{document}

\title{Almost symmetric numerical  semigroups}
\author{J\" urgen Herzog and Kei-ichi Watanabe}

\title{Almost symmetric numerical  semigroups}

\address{J\"urgen Herzog, Fachbereich Mathematik, Universit\"at Duisburg-Essen, Campus Essen, 45117
Essen, Germany} \email{juergen.herzog@uni-essen.de}

\address{Kei-ichi Watanabe, Department  of  Mathematics,  College of  Humanities  and Sciences, Nihon University,  Setagaya-ku, Tokyo,  156-8550}
\email{watanabe@math.chs.nihon-u.ac.jp}

\dedicatory{Dedicated to Professor Ernst Kunz}

\begin{abstract}
We study almost symmetric numerical semigroups and semigroup rings.
We  describe a characteristic property of  the minimal free resolution of the semigroup ring
of  an almost symmetric numerical semigroup.
 For almost symmetric semigroups generated by  $4$ elements  we will give a structure theorem by using the \lq\lq row-factorization matrices", introduced by Moscariello.
As a result, we give a simpler proof of Komeda's structure theorem of pseudo-symmetric numerical
semigroups generated by $4$ elements. Row-factorization matrices are also used to study shifted families of numerical semigroups.
\end{abstract}

\subjclass[2010]{Primary  13H10, 20M25; Secondary 14H20, 13D02}
\keywords{numerical semigroup, almost Gorenstein, pseudo-symmetric, row factorization matrix, row factorization relation, shifted semigroup}

\maketitle

\section*{Introduction}

Numerical semigroups and semigroup rings are very important objects in the study of singularities of
dimension $1$ (See \S 1 for the definitions).  Let $H =\langle n_1,\ldots ,n_e\rangle$ be a numerical
semigroup minimally generated by $\{n_1,\ldots,n_e\}$ and let $K[H] = K[ t^{n_1}, \ldots , t^{n_e}]$
be the semigroup ring of $H$, where $t$ is a variable and $K$ is any field.
We can represent $K[H]$ as a quotient ring of a polynomial ring $S=K[x_1,\ldots , x_e]$
 as $K[H] = S/I_{H}$, where $I_H$ is the kernel of the $K$-algebra homomorphism which maps
  $x_i\to t^n_i$.  We call $I_H$ the defining ideal of $K[H]$. The ideal $I_H$ is a binomial ideal, whose binomials correspond to   pairs of factorizations of elements of $H$.

\medskip 	
Among the numerical semigroups, almost symmetric semigroups admit very
interesting properties. They are a natural generalization of symmetric numerical semigroups. It was Kunz \cite{Ku} who observed  that $H$ is symmetric, if and only if is $K[H]$ Gorenstein. Almost symmetric semigroups are distinguished  by the symmetry of their pseudo-Frobenius numbers - a fact which has been  discovered by Nari \cite{N}.  The history of this class of numerical semigroups begins with the work of Barucci, Dobbs and Fontana \cite{BDF},  and the influential paper of Barucci and Fr\"oberg \cite{BF}. In \cite{BDF} pseudo-symmetric numerical semigroups appeared the first time, while in \cite{BF} almost symmetric numerical semigroups were introduced. The pseudo-symmetric are just a special class of almost symmetric numerical semigroups, namely those of type 2, see Section~\ref{sec:1} for details. Actually, Barucci and Fr\"oberg introduced more generally  the so-called  almost Gorenstein rings, which in the case of numerical semigroup rings lead to the concept of almost symmetric numerical semigroups.  Later on, Goto, Takahashi and Taniguchi \cite{GTT}  developed the theory of almost Gorenstein rings much further,  and extended this concept to rings of any Krull dimension.

The aim of this paper
is to analyze the structure of an almost symmetric numerical semigroup and its semigroup ring.

\medskip
In the first part of this paper, we analyze the structure of an almost symmetric semigroup by
using Apery sets and pseudo-Frobenius numbers. When our semigroup $H$ is generated by $3$
elements, then a characterization of $H$ to be almost symmetric is  known  in terms of the relation matrix of $I_H$, see  \cite{NNW}.
So in this paper, our main objective is to understand the structure of almost symmetric semigroups
generated by $4$ elements.   For our approach, the
RF-matrix $\RF(f)$ (row-factorization matrix) attached to a pseudo-Frobenius number $f$, as  introduced by
A. Moscariello in \cite{Mo}, is of  particular importance. It  will be shown that if $H$ is almost symmetric,
then $\RF(f)$ has \lq\lq special rows", as  described in  Corollary~\ref{0_in_row}. This concept of  special rows of  RF-matrices plays an essential role in our studies of almost symmetric semigroups generated by $4$ elements.

\medskip

One of the applications  of RF-matrices will be  the classification of pseudo-symmetric numerical semigroups generated by $4$ elements.
This result was already found by Komeda \cite{Ko},  but using RF-matrices, the argument becomes
much simpler. Also Moscariello (\cite{Mo}) proved that if $H$ is an almost symmetric numerical
 semigroup generated
by $4$ elements, then the type of $H$ is at most $3$. We give a new proof of this by using
the special rows of RF-matrices.  We also  show that in this case the defining relation of the semigroup
ring of $H$ is given by RF-matrices. This is not the case for arbitrary numerical semigroups.

\medskip

In the second part of this paper, we observe the very peculiar structure of the minimal free resolutions
of  $k[H]=S/I_H$ over $S$ when $H$ is almost symmetric. By this observation, we can see that if $e=4$,
and $\type(H)=t$, then the defining ideal needs at least $3(t-1)$ generators
 and also if $I_H$ is generated by exactly  $3(t-1)$  elements we can assert that the
 degree of each minimal generator of $I_H$ is of
the form $f + n_i+n_j$ where $f$ is a pseudo-Frobenius number different from the Frobenius number
of $H$.  Furthermore,  we show   that if $e=4$ and $H$ is almost symmetric, then
 $I_H$ is minimally generated by
either $6$ or $7$ elements and in the latter case one has $n_1+n_4=n_2+n_3$,  if we assume
$n_1<n_2<n_3<n_4$.

\medskip

In the last section we consider shifted families of numerical semigroups and study periodic properties  of $H$ under this shifting operation when $e=4$. Namely,
if $H=\langle n_1,\ldots ,n_4\rangle$, we put  $H+m=\langle n_1+m,\ldots ,n_4+m\rangle$ and ask
 when $H+m$ is almost symmetric for infinitely many $m$.  We prove that for any $H$,
 $H+m$ is almost symmetric of type $2$ for only finitely many $m$.  We also  classify
 those numerical semigroups  $H$ for which $H+m$ is almost symmetric of type $3$ for infinitely many $m$.

\medskip

Some readers will notice that there is a considerable overlap with  K. Eto's paper \cite{E}.  To explain
this, let us briefly comment on the history of the paper.  This work began in September  2015, when the 2nd
named author visited Essen. So it took several years for this work to be fully ripe. In the meantime we
gave  reports   in \cite{HW16}, \cite{HW17} in which we gave partial results  of contents of this paper
without proof.  Also, there were some occasions
that Eto and Watanabe discussed about this kind of problems and the use of RF-matrics.
Then Eto, by his approaches, which are  different from ours, more quickly completed the new proof of Theorem \ref{type_le3} and the proofs of some strucure theorems on almost symmetric numerical semigroups generated by 4 elements. Nevertheless we believe  that our point of view, which also includes the concept of RF-relations,  and our way of proof of the theorems may be useful for the future study of numerical semigroups.

\section{Basic concepts}
\label{sec:1}

In this section we fix notation and recall the basic
 definitions and concepts which will be used in this paper.

\medskip
\noindent
{\em  Pseudo-Frobenius numbers and Apery sets.} A  submonoid
$H\subset \NN$ with $0\in H$  and $\NN \setminus H$ is finite
 is called a numerical semigroup. Any numerical semigroup $H$ induces a partial order on $\ZZ$, namely $a\leq_H b$ if and only if $b-a\in H$.

There exist finitely many positive  integers $n_1,\ldots,n_e$ belonging to $H$ such that each $h\in H$ can be written as $h=\sum_{i=1}^e\al_in_i$ with non-negative integers $\al_i$. Such a presentation of $h$ is called a {\em a factorization} of $h$,  and  the set $\{n_1,\ldots,n_e\}\subset H$   is called a {\em set of generators} of $H$. If $\{n_1,\ldots,n_e\}$ is a set of generators of $H$, then we write $H=\langle n_1,\ldots ,n_e\rangle$. The set of generators $\{n_1,\ldots,n_e\}$ is called a {\em minimal set of generators} of $H$, if  none of the $n_i$ can be omitted to generate $H$. A minimal set of generators of $H$ is uniquely determined.

\medskip
Now let $H=\langle n_1,\ldots ,n_e\rangle$ be a  numerical semigroup. We assume  that $n_1,\ldots ,n_e$ are minimal generators of $H$, that  $\gcd(n_1,\ldots,n_e)=1$ and that $H\neq \NN$, unless otherwise stated.

The assumptions imply that the set $G(H)=\NN\setminus H$ of {\em gaps} is a finite non-empty  set. Its cardinality will be denoted by $g(H)$. The largest gap  is called the {\em Frobenius number} of $H$,  and denoted $\Fr(H)$.

An element $f\in \ZZ\setminus H$ is called a  {\em pseudo-Frobenius} number,
if $f+n_i\in H$ for all $i$. Of course, the Frobenius number is a  pseudo-Frobenius number as well and each pseudo-Frobenius number belongs to $G(H)$.
The set of  pseudo-Frobenius numbers will be  denoted by $\PF(H)$.

We also set $\PF'(H)=\PF(H)\setminus\{\Fr(H)\}$. The cardinality of $\PF(H)$ is called the {\em type} of $H$, denoted $t(H)$. Note that for any $a\in \ZZ\setminus H$, there exists $f\in \PF(H)$ such that $f-a\in H$.

\medskip\noindent
Let $a\in H$. Then we let
$$\Ap(a, H) = \{ h\in H\;|\; h-a \not\in H\}.$$
This  set is called the {\em Apery set} of $a$ in $H$. It is clear that $|\Ap(a,H)|=a$ and that $0$ and all $n_i$ belong to $\Ap(a,H)$.
For  every $a$  the largest element in $\Ap(a,H)$ is $a + \Fr(H)$.

\medskip\noindent
{\em Symmetric, pseudo-symmetric and almost symmetric numerical semigroups}. For each $h\in H$, the element $\Fr(H)-h$ does not belong to $H$. Thus the assignment $h\mapsto \Fr(H)-h$ maps each element $h\in H$ with $h<\Fr(H)$ to a gap of $H$. If each gap of $H$ is of the form $\Fr(H)-h$, then $H$ is called {\em symmetric}. This is the case if and only if for each $a\in \ZZ$ one has:  $a\in H$ \iff $\Fr(H)-a\not\in H$. It follows that a numerical  semigroup is symmetric if and only if  $g(G)=|\{h\in H\: \;   h<\Fr(H)\}|$, equivalently if $2g(H)=F(H)+1$.  A symmetric semigroup is also characterized by the property that its type is $1$. Thus we see that  a symmetric  semigroup satisfies  $2g(H)=\Fr(H)+t(H)$, while in general  $2g(H)\geq \Fr(H)+t(H)$. If equality holds, then  $H$  is called {\em almost symmetric}. The almost symmetric semigroups of type $2$ are called {\em pseudo-symmetric}. It is quite obvious that a numerical semigroup is pseudo-symmetric if and only if $\PF(H)= \{\Fr(H)/2, \Fr(H)\}$. From this  one easily deduces that if $H$ is pseudo-symmetric, then  $a\in H$ \iff $\Fr(H)-a\not\in H$ and  $a\neq \Fr(H)/2$.

Less obvious is the following nice result of Nari \cite{N} which provides a certain symmetry property of the pseudo-Frobenius numbers of $H$.

\begin{Lemma}
\label{Nari}
 Let $\PF(H)=\{f_1,f_2,\ldots,f_{t-1}, \Fr(H)\}$ with $f_1<f_2\ldots <f_{t-1}$. Then $H$ is almost symmetric if and only if
\[
f_i+f_{t-i}=\Fr(H) \quad \text{for} \quad i=1,\ldots,t.
\]
\end{Lemma}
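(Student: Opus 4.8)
The plan is to route the whole statement through the involution $\tau\colon\ZZ\to\ZZ$, $\tau(x)=\Fr(H)-x$, and then to compare two explicit finite sets. Throughout set $F=\Fr(H)$, $t=t(H)$, and $L=\{h\in H : h<F\}$. First I would record the elementary count: since every gap of $H$ lies in $[1,F]$ and $F$ itself is a gap, exactly $g(H)$ of the $F+1$ integers $0,1,\dots,F$ are gaps, so $|L|=F+1-g(H)$; and $\tau$ maps $L$ injectively into $G(H)$, because $h\in L$ forces $F-h\ge 1$ and $F-h\notin H$ (else $F=h+(F-h)\in H$). Put $G'=G(H)\setminus\tau(L)$; unwinding the definition, $G'=\{x\in\ZZ : x\notin H \text{ and } F-x\notin H\}$, so $\tau$ restricts to an involution of $G'$, we have $F\notin G'$ since $0\in H$, and $|G'|=g(H)-|L|=2g(H)-F-1$.

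The second step places $\PF'(H)$ inside $G'$. If $f\in\PF(H)$ and $f\ne F$, then $f<F$ (pseudo-Frobenius numbers lie in $G(H)$), hence $F-f\ge 1$ and $F-f\notin H$, since $f+(F-f)=F$ would otherwise lie in $H$, contradicting $f\in\PF(H)$; thus $f\in G'$. So $\PF'(H)\subseteq G'$, and as $|\PF'(H)|=t-1$ this yields $|G'|\ge t-1$, with equality precisely when $G'=\PF'(H)$. By the count above, $|G'|=t-1$ is exactly the equation $2g(H)=F+t$; hence $H$ is almost symmetric if and only if $G'=\PF'(H)$. It then remains to check that this set equality is equivalent to the symmetry $f_i+f_{t-i}=F$.

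For the ``only if'' direction, once $G'=\PF'(H)=\{f_1,\dots,f_{t-1}\}$, the restriction of $\tau$ is an order-reversing bijection of a finite chain onto itself, hence the map $f_i\mapsto f_{t-i}$; this gives $f_i+f_{t-i}=F$ for $i=1,\dots,t-1$, the case $i=t$ being the trivial identity $f_t+f_0=F$ with $f_0:=0$. For the ``if'' direction, assume $f_i+f_{t-i}=F$ for all $i$ and take $x\in G'$; I want $x\in\PF(H)$. Since $F-x\notin H$, the remark recorded in Section~\ref{sec:1} produces $f_j\in\PF(H)$ with $f_j-(F-x)\in H$; here $f_j\ne F$, because $f_j=F$ would give $x=f_j-(F-x)\in H$, so $1\le j\le t-1$, and then $x-f_{t-j}=f_j-(F-x)\in H$ using $f_j+f_{t-j}=F$. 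Thus $f_{t-j}\le_H x$ with $x,f_{t-j}\notin H$, and since a pseudo-Frobenius number is a $\le_H$-maximal element of $\ZZ\setminus H$ (immediate from the definition of $\PF(H)$), we conclude $x=f_{t-j}\in\PF'(H)$. Hence $G'\subseteq\PF'(H)$, so $G'=\PF'(H)$ and $H$ is almost symmetric.

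I do not expect a genuine obstacle: the argument is essentially bookkeeping once $\tau$, $L$, $G'$ and $\PF'(H)$ are lined up. The one thing to get right is the passage from the numerical condition $2g(H)=F+t(H)$ to the set equality $G'=\PF'(H)$, and it is worth isolating the elementary fact, used above, that every pseudo-Frobenius number is $\le_H$-maximal in $\ZZ\setminus H$. A possible alternative, closer in flavour to the rest of the paper, would be to phrase the same dichotomy in terms of an Apery set $\Ap(a,H)$ and its behaviour under $x\mapsto(a+F)-x$, but the gap-counting version seems the most direct.
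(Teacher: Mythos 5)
Your proof is correct and complete. Note that the paper does not prove this lemma at all---it is quoted from Nari \cite{N}---so there is no internal argument to compare with; your route (showing $2g(H)=\Fr(H)+t(H)$ holds exactly when the set $G'=\{x\in\ZZ\setminus H:\ \Fr(H)-x\notin H\}$ coincides with $\PF(H)\setminus\{\Fr(H)\}$, via the count $|G'|=2g(H)-\Fr(H)-1$ and the $\le_H$-maximality of pseudo-Frobenius numbers, and then reading the symmetry off the order-reversing involution $x\mapsto \Fr(H)-x$ on $G'$) is essentially the standard gap-counting proof in the spirit of Nari's original. The only point to flag is the index convention $f_t=\Fr(H)$, $f_0=0$ you adopt for the (vacuous) case $i=t$ of the stated symmetry, which is the natural reading of the lemma as formulated.
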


\medskip
\noindent
{\em  Numerical semigroup rings}. Many of the properties of a numerical semigroup ring are reflected by algebraic properties of the associated semigroup ring. Let $H$ be a numerical semigroup, minimally generated by $n_1,\ldots,n_e$. We fix a field $K$. The semigroup ring $K[H]$ attached to $H$ is the $K$-subalgebra of the polynomial ring $K[t]$ which  is generated by the monomials $t^{n_i}$. In other words, $K[H]=K[t^{n_1},\ldots,t^{n_e}]$. Note that $K[H]$ is a $1$-dimensional Cohen-Macaulay domain. The symmetry of $H$ has a nice algebraic counterpart, as shown by Kunz \cite{Ku}. He has shown that $H$ is symmetric if and only if and only if $K[H]$ is a Gorenstein ring. Recall that a positively graded  Cohen--Macaulay $K$-algebra  $R$ of dimension $d$ with graded maximal ideal $\mm$ is Gorenstein if and only if $\dim_K\Ext_R(R/\mm, R)=1$. In general the $K$-dimension of the finite dimensional $K$-vector space is called the CM-type (Cohen-Macaulay type) of $R$.  Kunz's theorem follows from the fact that the type of $H$ coincides with the CM-type of $K[H]$.

If $a\in H$, then $K[H]/(t^a)$ is a $0$-dimensional $K$-algebra with $K$-basis $t^h+(t^a)$ with $h\in \Ap(a,H)$. The elements $t^{f+a}+(t^a)$ with $f\in PF(H)$ form a $K$-basis of the socle of  $K[H]/(t^a)$. This shows that indeed the type of $H$ coincides with the CM-type of $K[H]$.

\medskip
The canonical module 
of $\omega_{K[H]}$  of $K[H]$ can be identified with the fractionary ideal of $K[H]$ generated by the elements $t^{-f}\in Q(K[H])$ with $f\in \PF(H)$. Consider the exact sequence of graded $K[H]$-modules
\[
0\to K[H]\to \omega_{K[H]}(-\Fr(H))\to C\to 0,
\]
where $K[H]\to \omega_{K[H]}(-\Fr(H))$ is the $K[H]$-module homomorphism which sends $1$ to $t^{-\Fr(H)}$ and where $C$ is the cokernel of this map. One immediately verifies that $H$ is almost symmetric if and only if $\mm C=0$, where $\mm$ denotes the graded maximal ideal of $K[H]$. Motivated by this observation
Goto et al 
\cite{GTT} call a Cohen--Macaulay local ring with canonical module $\omega_R$ {\em almost Gorenstein}, if the exists an exact sequence
\[
0\to R\to \omega_R\to C \to 0.
\]
with $C$ an Ulrich module.  If $\dim C=0$, $C$ is a Ulrich module if and only if $\mm C=0$.
Thus it can be seen that $H$ is almost symmetric if and only if $K[H]$ is almost Gorenstein (in the graded sense).

\medskip
In this paper we are interested in the defining relations of $K[H]$. Let $S=K[x_1,\ldots,x_e]$ be the polynomial ring over $K$ in the indeterminates $x_1,\ldots,x_e$. Let $\pi: S\to K[H]$ be the surjective $K$-algebra homomorphism with $\pi(x_i)=t^{n_i}$ for $i=1,\ldots,n$.  We denote by $I_H$ the kernel of $\pi$. If we assign to each $x_i$ the degree $n_i$, then with respect to this grading, $I_H$ is a homogeneous ideal, generated by binomials. A binomial $\phi=\prod_{i=1}^ex_i^{\al_i}-\prod_{i=1}^ex_i^{\bl_i}$ belongs to $I_H$ if and only if $\sum_{i=1}^e\al_in_i=\sum_{i=1}^e\bl_in_i$. With respect to this grading $\deg \phi=\sum_{i=1}^e\al_in_i$.

\section{On unique factorization of elements of $H$ and Factorizations of $f+n_k$ for $f \in \PF(H)$}
\label{sec:2}
In this section we discuss unique factorization of elements of $H$ with respect to its minimal generator.
Also, we review Komeda's argument  on Apery set.

The following Lemma will be very essential in \S 4 and \S 5.

\begin{Lemma}
\label{Apery} Let $a\in H$ and $h\in\Ap(a,H)$. Then the following holds:
\begin{enumerate}
\item[{\em (i)}] If $h,h'\in H$ and if $h+h'\in \Ap(a,H)$, then $h,h'\in \Ap(a,H)$.
\item[{\em (ii)}] Assume $H$ is almost symmetric. If $h\in \Ap(a,H)$, then either
\[
(a+\Fr(H))-h\in \Ap(a,H)\quad \text{or}  \quad h - a\in \PF'(H).
\]
In the latter case, $(a+\Fr(H)) - h \in \PF(H)$.
\end{enumerate}
\end{Lemma}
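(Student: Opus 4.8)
The plan is to prove (i) first, as it is elementary and will be reused. Suppose $h,h'\in H$ with $h+h'\in\Ap(a,H)$ but, say, $h\notin\Ap(a,H)$. Then $h-a\in H$ by definition of the Apery set, hence $(h+h')-a=(h-a)+h'\in H$, contradicting $h+h'\in\Ap(a,H)$. The same argument applies to $h'$, so both $h,h'\in\Ap(a,H)$.

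For (ii), set $F=\Fr(H)$ and let $h\in\Ap(a,H)$, so $h\in H$ and $h-a\notin H$. First I would show that $(a+F)-h\in H$ or $h-a\in\PF(H)$, and then separate the two cases. Consider $(a+F)-h$. Since $h\le_H a+F$ is equivalent to $(a+F)-h\in H$, suppose this fails, i.e. $(a+F)-h\notin H$. We want to deduce $h-a\in\PF'(H)$. The key tool should be the almost symmetry of $H$: recall that $K[H]$ is almost Gorenstein, equivalently the cokernel $C$ in the sequence $0\to K[H]\to\omega_{K[H]}(-F)\to C\to 0$ satisfies $\mm C=0$; translating this into semigroup language, the standard characterization of almost symmetric semigroups says precisely that for every $x\in\ZZ\setminus H$, \emph{either} $F-x\in H$ \emph{or} $x\in\PF(H)$. (This is the combinatorial form of $\mm C=0$, and one may take it as known from Section~\ref{sec:1} / Lemma~\ref{Nari}.) Apply this with $x=h-a$: we have $h-a\notin H$ by hypothesis, and $F-(h-a)=(a+F)-h\notin H$ by our case assumption; therefore $h-a\in\PF(H)$. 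It remains to rule out $h-a=F$: if $h-a=F$ then $h=a+F$, which is the largest element of $\Ap(a,H)$, and then $(a+F)-h=0\in H$, contrary to assumption. Hence $h-a\in\PF(H)\setminus\{F\}=\PF'(H)$.

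Finally, in this latter case I must verify the extra assertion that $(a+F)-h\in\PF(H)$. Write $f:=h-a\in\PF'(H)$; by Nari's Lemma~\ref{Nari}, the pseudo-Frobenius numbers are symmetric about $F/2$, so $f':=F-f\in\PF(H)$ as well. But $F-f=F-(h-a)=(a+F)-h$, which is exactly the element in question; thus $(a+F)-h=f'\in\PF(H)$, as claimed. The main obstacle is really just pinning down the correct combinatorial restatement of almost symmetry to invoke (the ``either $F-x\in H$ or $x\in\PF(H)$'' dichotomy); once that is in hand, everything else is a short deduction using only the definition of the Apery set and Nari's symmetry lemma.
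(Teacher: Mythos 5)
Your part (i), your exclusion of $h-a=\Fr(H)$, and your final application of Lemma~\ref{Nari} to conclude $(a+\Fr(H))-h\in\PF(H)$ are all correct and agree with the paper. The problem is the pivot of part (ii): you invoke, as a known fact, the dichotomy that for every $x\in\ZZ\setminus H$ either $\Fr(H)-x\in H$ or $x\in\PF(H)$, and you assert it ``may be taken as known from Section~\ref{sec:1} / Lemma~\ref{Nari}''. That dichotomy is true, and it is indeed a standard characterization of almost symmetry in the literature, but it is stated nowhere in this paper: Section~\ref{sec:1} defines almost symmetric only by $2g(H)=\Fr(H)+t(H)$ and records Nari's symmetry of the pseudo-Frobenius numbers, and the dichotomy is not a formal consequence of the bare statement of Lemma~\ref{Nari} --- it requires an argument. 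Supplying that argument is precisely what the paper's proof of this lemma consists of, so as written your proposal assumes the crux of the statement it is meant to prove.

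The missing derivation is short, and you should include it (or an explicit external citation): given $x=h-a\notin H$ with $\Fr(H)-x=(a+\Fr(H))-h\notin H$, use the fact recorded in Section~\ref{sec:1} that there is $f\in\PF(H)$ with $f-x\in H$, say $f=x+h'$, $h'\in H$. Then $f\neq\Fr(H)$, since otherwise $\Fr(H)-x=h'\in H$; so Lemma~\ref{Nari} gives $\Fr(H)-f\in\PF'(H)$, and from $\Fr(H)-x=(\Fr(H)-f)+h'$ one gets $h'=0$ (otherwise $\Fr(H)-x\in H$, a contradiction), hence $x=f\in\PF'(H)$. This is, up to relabeling, the paper's own proof, so your route is not really different --- it just outsources the key step. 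One further small point, glossed over both by you and by the paper: in the first alternative one needs $(a+\Fr(H))-h\in\Ap(a,H)$, not merely $\in H$; this is immediate because $\bigl((a+\Fr(H))-h\bigr)-a=\Fr(H)-h\notin H$ (otherwise $\Fr(H)\in H$), but it is worth one line.
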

\begin{proof}
(i) is obvious.  (ii) If $(a+\Fr(H))-h\not\in H$, there is some $h'\in H$ and $f\in \PF(H)$
such that $f = (a+\Fr(H))-h + h'$.  Since $h-a \not\in H$, $f\ne \Fr(H)$.
Then by Lemma ~\ref{Nari},
$f' = \Fr(H)-f = \Fr(H) - [(a+\Fr(H))-h + h'] = (h-a) -h'\in \PF'(H)$.
Since $h-a\not\in H$ by assumption,
we must have $h'=0$ and $h-a \in \PF'(H)$.
\end{proof}

Let  $H=\langle n_1,\ldots , n_e\rangle$ be a numerical semigroup minimally generated by $e$ elements.
For every $i$, $1\le i\le e$, we define $\al_i$ to be the minimal positive integer such that
\begin{eqnarray}
\label{minimal}
\al_i n_i = \sum_{j=1, j\ne i}^e \al_{ij} n_j.
\end{eqnarray}
Note that $\al_{ij}$ is in general  not  uniquely determined. However the minimality of $\al_i$ implies

\begin{Lemma}\label{al_iAp} For all integers $i$ and $k$ with  $1\le i,k\le e$ and  $i\ne k$ one has  $$(\al_i -1) n_i \in \Ap(n_k, H). $$
\end{Lemma}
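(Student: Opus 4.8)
The plan is to argue by contradiction, exploiting the defining minimality of $\al_i$ in \eqref{minimal}. Suppose that $(\al_i-1)n_i \notin \Ap(n_k,H)$ for some $i\ne k$. Since $(\al_i-1)n_i$ is obviously an element of $H$, failing to lie in $\Ap(n_k,H)$ means exactly that $(\al_i-1)n_i - n_k \in H$. Write this difference as a non-negative combination of the generators: $(\al_i-1)n_i - n_k = \sum_{j=1}^e \g_j n_j$ with all $\g_j\ge 0$.

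The key step is to rearrange this into a relation that contradicts the minimality of $\al_i$. From the displayed equation we get
\[
(\al_i-1)n_i = n_k + \sum_{j=1}^e \g_j n_j = (\g_i)n_i + \Big(n_k + \sum_{j\ne i}\g_j n_j\Big),
\]
and hence
\[
(\al_i - 1 - \g_i)\,n_i = n_k + \sum_{j\ne i}\g_j n_j.
\]
The right-hand side is a non-negative combination of the $n_j$ with $j\ne i$ (note $k\ne i$, so the coefficient of $n_i$ on the right is genuinely $0$), and it is strictly positive because it contains the term $n_k>0$; consequently the left-hand coefficient satisfies $\al_i-1-\g_i \ge 1$, i.e. it is a \emph{positive} integer. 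But then $\al_i - 1 - \g_i < \al_i$ is a positive integer $m$ with $m\,n_i$ expressible as $\sum_{j\ne i}\al_{ij}'n_j$ for suitable non-negative integers $\al_{ij}'$, contradicting the minimality of $\al_i$ in \eqref{minimal}. This contradiction forces $(\al_i-1)n_i \in \Ap(n_k,H)$.

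The only point requiring care — and the main (if mild) obstacle — is checking that the coefficient $\al_i - 1 - \g_i$ is strictly positive rather than merely non-negative: if it were $0$, one would have $n_k + \sum_{j\ne i}\g_j n_j = 0$, impossible since $n_k>0$. Thus the rewritten relation is a genuine witness of the shape \eqref{minimal} with a smaller positive coefficient, which is precisely what minimality of $\al_i$ forbids. Everything else is a routine bookkeeping of non-negative coefficients.
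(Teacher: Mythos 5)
Your proposal is correct and follows essentially the same route as the paper: assume $(\al_i-1)n_i - n_k \in H$, expand it in the generators, move the $n_i$-terms to the left, and observe that the resulting positive coefficient of $n_i$ is strictly smaller than $\al_i$ while the right-hand side involves only generators $n_j$ with $j\ne i$, contradicting the minimality in (\ref{minimal}). Your extra check that the coefficient $\al_i-1-\g_i$ is genuinely positive (forced since the right-hand side is a positive integer and $n_i>0$) is exactly the implicit point in the paper's shorter argument.
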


\begin{proof} Suppose this is not the case. Then  $(\al_i -1) n_i - n_k\in H$,  and we will have an equation of type
$\beta_i n_i = \sum_{j=1, j\ne i}^e \beta_{ij} n_j$ with integers $0<\beta_i< \al_i$ and $\beta_{ij}\geq 0$, contradicting the
minimality of $\al_i$.
\end{proof}

Let $H$ be a numerical semigroup minimally generated by $n_1,\ldots,n_e$. An element $h\in H$ is said to have UF (Unique Factorization),  if $h$ admits only one factorization. Note that $h$ does not have UF if and only if $h\ge_H \deg(\phi)$ for some $\phi \in I_H$.

The set   $I=\{h\in H\: \; h  \text{ does not have UF} \}$ is an ideal of $H$ and is equal to $\{ \deg(\phi) \:\;  \phi \in I_H\}$.
Observe that  if $\phi\in I_H$ and $\deg(\phi)$ is a minimal generator of $U$, then $\phi$ is a minimal generator of
$I_H$.  But the converse is not true in general.  Hence the number of minimal generators of $I$ is less than or equal to the number of minimal generators of $I_H$.

\begin{Lemma}\label{deg_phi}
Let $\phi = m_1 - m_2$ be a minimal binomial generator of $I_H$.  Then the following holds:
\begin{enumerate}
\item[{\em (i)}] There exists $f \in \PF(H)$ and  integers  $i\ne j$ such that
$\deg(\phi)\le_H f+ n_i+n_j$.
\item[{\em (ii)}] Let  $i$ and $j$ such that $x_i|m_1$ and $x_j|m_2$. Then $\deg \phi= f + n_i  + n_j $ for some $f\in \PF' (H)$  if and only if
$\Fr(H) + n_i+n_j -\deg(\phi) \not\in H$.
 \end{enumerate}
\end{Lemma}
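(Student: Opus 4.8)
The plan is to analyze the factorization structure of $\deg(\phi)$ by using the minimality of $\phi$ together with the defining property of pseudo-Frobenius numbers.

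First I would prove (i). Write $\phi = m_1 - m_2$ with $m_1 = \prod x_i^{\al_i}$ and $m_2 = \prod x_j^{\bl_j}$, and pick indices $i,j$ with $x_i\mid m_1$ and $x_j\mid m_2$. Set $h=\deg(\phi)$. Consider $h - n_i - n_j$. Since $x_i\mid m_1$, the element $h-n_i$ lies in $H$ (it is the degree of $m_1/x_i$), and similarly $h-n_j\in H$. I claim $h-n_i-n_j\notin H$: if it were in $H$, then $h - n_i - n_j = \deg(m)$ for some monomial $m$, and then $m_1 - x_j m$ and $x_i m - m_2$ would be binomials in $I_H$ of strictly smaller degree whose sum (in an appropriate sense) recovers $\phi$; more precisely $\phi = m_1 - m_2 = (m_1 - x_i x_j m)+(x_ix_j m - m_2)$ is a sum of two binomials in $I_H$ each involving strictly fewer variables or lower powers — one checks this contradicts $\phi$ being a minimal generator (since $m_1 \ne x_ix_j m$ as $x_j \nmid m_1$ would need care — actually $x_j$ may divide $m_1$, so instead use that $m_1/x_i$ and $m_2/x_j$ both map to $h-n_i$ resp.\ $h-n_j$, and if $h-n_i-n_j\in H$ is witnessed by $m$, then $m_1 - x_ix_j m \in I_H$ and $x_ix_j m - m_2\in I_H$, and at least one is a nonzero binomial of degree $< h$ that, together with $\phi$, shows $\phi$ is not needed as a generator). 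Granting $h-n_i-n_j\notin H$, the last sentence of the paragraph on pseudo-Frobenius numbers in Section~\ref{sec:1} ("for any $a\in\ZZ\setminus H$ there exists $f\in\PF(H)$ with $f-a\in H$") applied to $a = h-n_i-n_j$ gives $f\in\PF(H)$ with $f - (h-n_i-n_j)\in H$, i.e.\ $h \le_H f+n_i+n_j$. This proves (i).

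Next, (ii). Fix $i,j$ with $x_i\mid m_1$, $x_j\mid m_2$, and again set $h=\deg(\phi)$; as above $h-n_i\in H$, $h-n_j\in H$, and $h-n_i-n_j\notin H$. For the forward direction, suppose $h = f+n_i+n_j$ with $f\in\PF'(H)$. Then $\Fr(H)+n_i+n_j - h = \Fr(H)-f$, which by Lemma~\ref{Nari} (applied since $H$ — wait, we have not assumed $H$ almost symmetric here) — so instead: $\Fr(H)+n_i+n_j-h = \Fr(H)-f$; since $f\in\PF'(H)\subseteq G(H)$ and $f\ne \Fr(H)$, we have $\Fr(H)-f$ is a positive integer, and it is not in $H$ because $f = \Fr(H)-(\Fr(H)-f)$ would then express $f$ as $\Fr(H)$ minus an element of $H$, which is consistent — here I must be careful: $\Fr(H)-f\notin H$ need not hold for general $H$. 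The correct argument: $f\notin H$ and $f = (\Fr(H)+n_i+n_j-h$-th shifted)... Let me restate cleanly: since $h - n_i = f + n_j \in H$ forces $f + n_j\in H$ (true: $f\in\PF(H)$), and we want $\Fr(H)+n_i+n_j-h = \Fr(H)-f\notin H$. Suppose $\Fr(H)-f = s\in H$. Then $\Fr(H) = f+s$ with $s\in H$, so $\Fr(H)\ge_H f$, i.e.\ $\Fr(H)-f\in H$ — that is exactly what we supposed, so this is circular. The genuine input must be $f\in\PF'(H)$ meaning $f$ is a \emph{pseudo}-Frobenius number $\ne\Fr(H)$: if $\Fr(H)-f\in H$ then since $f+n_k\in H$ for all $k$ and $f\notin H$... hmm, this still does not immediately give a contradiction for general $H$. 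The honest resolution: the equivalence in (ii) is genuinely $\deg\phi = f+n_i+n_j$ for some $f\in\PF'(H)$ $\iff$ $\Fr(H)+n_i+n_j-\deg\phi\notin H$, and the right-to-left direction is the substantive one: if $a:=\Fr(H)+n_i+n_j-h\notin H$, apply the same "exists $f\in\PF(H)$ with $f-a\in H$" fact; one shows $f$ must equal $\Fr(H)$ forcing $a\in H$... This circularity is the crux, so:

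The main obstacle is pinning down exactly which direction of the "if and only if" in (ii) uses which property, and isolating the precise elementary fact about $\PF(H)$ that makes it work without assuming $H$ almost symmetric. My expectation is that the key lemma to invoke is: for $a\in\ZZ\setminus H$, $a+\Fr(H)\in H$ actually fails and one has the characterization $a\in\PF(H)\iff a\notin H$ and $a+n_k\in H\ \forall k$; combined with $h-n_i\in H$, $h-n_j\in H$, $h-n_i-n_j\notin H$, the condition $\Fr(H)+n_i+n_j-h\notin H$ should be manipulated via Lemma~\ref{Nari}-free reasoning to show $h-n_i-n_j$ is actually a pseudo-Frobenius number $f$ (check $f+n_k\in H$ for all $k$ using minimality of $\phi$), and then $f\ne\Fr(H)$ because otherwise $\Fr(H)+n_i+n_j-h = 0\in H$, contradiction. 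Conversely if $h = f+n_i+n_j$ with $f\in\PF'(H)$, then $\Fr(H)+n_i+n_j-h = \Fr(H)-f$ which lies outside $H$ precisely because $f$ being pseudo-Frobenius and distinct from $\Fr(H)$ places $\Fr(H)-f$ among the gaps — this last point I would verify directly from the definition, or simply cite that it follows from the structure of $\PF(H)$. I would organize the write-up so that the minimality of $\phi$ is used exactly once (to get $h-n_i-n_j\notin H$), and everything else is bookkeeping with the order $\le_H$.
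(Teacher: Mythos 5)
Your part (i) is essentially the paper's argument: the same decomposition $\phi=(m_1-x_ix_jm)+(x_ix_jm-m_2)$ (with $m$ a monomial of degree $\deg\phi-n_i-n_j$) contradicting minimality, followed by the fact that every element of $\ZZ\setminus H$ is $\leq_H$ some pseudo-Frobenius number. That part is fine.

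Part (ii) is where the proposal has a genuine gap, and your own hesitations point at it. First, the direction you agonize over (forward) is actually the easy one and needs nothing beyond the definition: if $f\in\PF(H)$, $f\neq\Fr(H)$, and $\Fr(H)-f=h\in H$, then $h>0$, so $h=n_{i_1}+h'$ with $h'\in H$, and $\Fr(H)=(f+n_{i_1})+h'\in H$, a contradiction; hence $\Fr(H)+n_i+n_j-\deg\phi=\Fr(H)-f\notin H$. Second, and more seriously, the backward direction cannot be proved along the lines you sketch (``check $f+n_k\in H$ for all $k$ using minimality of $\phi$''), because in the generality in which you are arguing the statement is false. Take $H=\langle 5,6,7\rangle$ (which is not almost symmetric, $\PF(H)=\{8,9\}$) and the minimal generator $\phi=x_2^2-x_1x_3$ of degree $12$, with $(i,j)=(2,1)$: then $\deg\phi-n_i-n_j=1\notin H$ and $\Fr(H)+n_i+n_j-\deg\phi=8\notin H$, yet $1\notin\PF'(H)$. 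The point you are missing is that Lemma~\ref{deg_phi}(ii) is used (and proved) in the almost symmetric setting: the paper observes that $\deg\phi-n_j\in\Ap(n_i,H)$ (this is exactly what the minimality argument of (i) gives) and then invokes Lemma~\ref{Apery}(ii), whose dichotomy --- either $(n_i+\Fr(H))-h\in\Ap(n_i,H)$ or $h-n_i\in\PF'(H)$, and in the latter case $(n_i+\Fr(H))-h\in\PF(H)$ --- is precisely where almost symmetry (via Nari's Lemma~\ref{Nari}) enters. Without routing the argument through that lemma (or reproving its content under the almost symmetric hypothesis), your plan for (ii) cannot be completed.
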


\begin{proof}
(i) We choose $i$ and $j$ such that $x_i|m_1$ and $x_j|m_2$.

Suppose that $\deg \phi - n_i-n_j \in H$. Then there exists a monomial $m$ with $\deg m=h$. Thus,  $mx_j-m_1/x_i\in I_H$ and $mx_i-m_2/x_j\in I_H$, and so $\phi=x_i(mx_j-m_1/x_i)+x_j(mx_i-m_2/x_j)\in I_H$, contradicting the minimality of $\phi$.
It follows that $\deg \phi - n_i-n_j \not\in H$. Hence there exist $f\in \PF(H)$ such that $f-(\deg \phi - n_i-n_j)\in H$, and this  implies that $\deg \phi \le_H f+ n_i+n_j$.

(ii) It follows from the proof of (i) that  $\deg \phi - n_j \in \Ap(n_i, H)$.
Then by Lemma~\ref{Apery}(ii) we obtain that $\deg \phi - n_j = f + n_i$ for some $f\in \PF'(H)$ if and only if
$(\Fr(H) + n_i) -  (\deg \phi - n_j) \not\in H$.
\end{proof}

The factorizations of the elements $f+n_k$ for $f\in \PF(H)$ play an important role in the understanding of the structure of $H$.  We first prove

\begin{Lemma}\label{f + n_k = b_in_i}  Let $f\in \PF(H)$. With the notation of {\em (\ref{minimal})}  the following holds:
\begin{enumerate}
\item[{\em (i)}] If $f + n_k = \sum_{j\ne k}\beta_jn_j$ and if $\beta_i\ge \al_i$ for some
$i$, then $\al_{ik}=0$.
\item[{\em (ii)}] If $f + n_k = b_in_i$ for some $k \ne i$. Then
$b_i\ge \al_i -1$.
\item[{\em (iii)}] If $f+n_k\leq_H (\alpha_i - 1)n_i$ for some $k\ne i$,
 then  $ f+n_k = (\al_i-1)n_i$.
\end{enumerate}
\end{Lemma}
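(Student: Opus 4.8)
The plan is to handle the three parts in order, each by a minimality-of-$\al_i$ argument combined with the Apery-set facts from Lemma~\ref{Apery} and Lemma~\ref{al_iAp}. For part (i), suppose $f+n_k=\sum_{j\ne k}\beta_jn_j$ with $\beta_i\ge\al_i$ and suppose for contradiction that $\al_{ik}>0$ in the defining relation \eqref{minimal} $\al_in_i=\sum_{j\ne i}\al_{ij}n_j$. Since $\beta_i\ge\al_i$, I can subtract the relation \eqref{minimal} from the expression for $f+n_k$: replacing $\al_i$ copies of $n_i$ by $\sum_{j\ne i}\al_{ij}n_j$ yields a new factorization of $f+n_k$ in which the coefficient of $n_k$ becomes $\al_{ik}>0$. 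But then $f+n_k-n_k=f\in H$, contradicting $f\in\PF(H)\subset\ZZ\setminus H$. Hence $\al_{ik}=0$.

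For part (ii), suppose $f+n_k=b_in_i$ with $k\ne i$ but $b_i\le\al_i-2$. Then $f+n_k\le_H(\al_i-1)n_i$, so $(\al_i-1)n_i-(f+n_k)\in H$; writing this difference as a sum of generators and adding $n_k$ shows $(\al_i-1)n_i-f\in H$, i.e.\ $(\al_i-1)n_i\ge_H f$. But $f\in\PF(H)$ means $f+n_i\in H$, hence adding $n_i$ we would get an element $\ge_H f+n_i$; more directly, from $(\al_i-1)n_i-f\in H$ and $f+n_i\in H$ I obtain that $(\al_i-1)n_i-f$ plus the witness for $f+n_i\in H$ produces a relation expressing $\al_in_i$ (or a smaller multiple) in terms of the other generators with strictly smaller coefficient on $n_i$, contradicting minimality of $\al_i$. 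So $b_i\ge\al_i-1$. (The clean way: $(\al_i-1)n_i-n_k\in\Ap(n_k,H)$ by Lemma~\ref{al_iAp} fails if $b_i\le\al_i-2$ since then $(\al_i-1)n_i-n_k\ge_H f$ would force it out of the Apery set.)

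For part (iii), assume $f+n_k\le_H(\al_i-1)n_i$ with $k\ne i$. By Lemma~\ref{al_iAp}, $(\al_i-1)n_i\in\Ap(n_k,H)$, and by Lemma~\ref{Apery}(i) every element $h\in H$ with $h\le_H(\al_i-1)n_i$ also lies in $\Ap(n_k,H)$; in particular $f+n_k\in\Ap(n_k,H)$. But $f+n_k-n_k=f\notin H$ automatically, so that alone gives no contradiction; instead I use $(\al_i-1)n_i-(f+n_k)\in H$. If this difference $d$ is nonzero, then $d\le_H(\al_i-1)n_i$, so $d\in\Ap(n_k,H)$ too; moreover $f+n_k=(\al_i-1)n_i-d$, and since $f\notin H$ while $f+n_i\in H$, combining $d+n_k=(\al_i-1)n_i-f\le_H$ something forces $d$ to produce a representation of $\al_in_i$ with smaller $n_i$-coefficient — contradiction with minimality. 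Hence $d=0$ and $f+n_k=(\al_i-1)n_i$.

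The main obstacle I anticipate is (iii): one must argue carefully that $(\al_i-1)n_i-(f+n_k)$ cannot be a nonzero element of $H$, and the cleanest route is probably to invoke Lemma~\ref{Apery}(ii) applied to $h=(\al_i-1)n_i\in\Ap(n_k,H)$ — either $(n_k+\Fr(H))-h\in\Ap(n_k,H)$ or $h-n_k=(\al_i-1)n_i-n_k\in\PF'(H)$, the latter being impossible since $(\al_i-1)n_i-n_k\in H$ by minimality of $\al_i$ when $\al_i\ge 2$ (and the case $\al_i=1$ is vacuous). This pins down the structure tightly enough to force the difference to vanish; the other two parts are straightforward bookkeeping with the relation \eqref{minimal}.
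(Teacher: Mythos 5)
Parts (i) and (ii) are fine. Your argument for (i) is the same replacement trick as the paper's. For (ii) the main body of your argument ("produces a relation expressing $\al_in_i$ \dots with strictly smaller coefficient on $n_i$") is not actually justified, but your parenthetical "clean way" does work once stated correctly: if $b_i\le\al_i-2$ then $(\al_i-1)n_i-n_k=f+(\al_i-1-b_i)n_i$ is $f$ plus a nonzero element of $H$, hence lies in $H$ (because $f\in\PF(H)$), contradicting $(\al_i-1)n_i\in\Ap(n_k,H)$ from Lemma~\ref{al_iAp}. This is a genuinely different (and arguably slicker) route than the paper's, which adds $n_i$ to both sides to get $(b_i+1)n_i=(f+n_i)+n_k$ and invokes the minimality of $\al_i$ directly.

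Part (iii) is where you have a genuine gap. Your main text never completes the argument (the sentence "combining $d+n_k=(\al_i-1)n_i-f\le_H$ something forces $d$ to produce a representation of $\al_in_i$ with smaller $n_i$-coefficient" is not a proof), and your proposed rescue is wrong on two counts. First, Lemma~\ref{Apery}(ii) assumes $H$ is almost symmetric, whereas the present lemma is stated for arbitrary $H$, so you may not invoke it here. Second, you claim "$(\al_i-1)n_i-n_k\in H$ by minimality of $\al_i$"; in fact the opposite is true: Lemma~\ref{al_iAp} says $(\al_i-1)n_i\in\Ap(n_k,H)$, i.e.\ $(\al_i-1)n_i-n_k\notin H$ --- exactly the fact you used in (ii) --- so the alternative $h-n_k\in\PF'(H)$ is not excluded and the dichotomy does not "pin down" anything. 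The paper proves (iii) by brute-force coefficient comparison: writing $(\al_i-1)n_i=(f+n_k)+h$, expanding $f+n_k=\sum_j\beta_jn_j$ and $h=\sum_j\gamma_jn_j$, one gets $(\al_i-1-\beta_i-\gamma_i)n_i=\sum_{j\ne i}(\beta_j+\gamma_j)n_j$, and minimality of $\al_i$ forces $\al_i-1-\beta_i-\gamma_i=0$ and $\beta_j=\gamma_j=0$ for $j\ne i$; then $f+n_k=\beta_in_i$ with $\beta_i\le\al_i-1$, and (ii) gives $\beta_i\ge\al_i-1$. Alternatively, your own Apery-set idea can be completed in one line, exactly as in your fix of (ii): if $d:=(\al_i-1)n_i-(f+n_k)$ were a nonzero element of $H$, then $(\al_i-1)n_i-n_k=f+d\in H$ (again because $f$ plus any nonzero element of $H$ lies in $H$), contradicting Lemma~\ref{al_iAp}; hence $d=0$. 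As written, however, part (iii) is not proved.
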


\begin{proof}  (i) By using equation  (\ref{minimal}) we can replace the summand $\beta_in_i$ on the right hand side of the equation in (i) by $\sum_{k\neq i}\al_{ik}n_k+(\beta_i-\al_i)n_i$. Thus if   $\al_{ik} >0$, then $f\in H$, which is a contradiction.

 (ii) Add $n_i$ to both sides of the equality in (ii). Then  we have
\[ (b_i+1)n_i = (f + n_i) + n_k = \sum_{j \ne i} c_j n_j+ n_k.\]
Since the right hand side does not contain $n_i$, we must have $b_i+1 \ge \al_i$.\par

(iii) By assumption,  $ (\alpha_i - 1)n_i = f + n_k +  h$ for some $h\in H$. Write $f + n_k =\sum_j\beta_jn_j$ and $h=\sum_j\gamma_jn_j$ with non-negative integers  $\beta_j$ and $\gamma_j$. Then we get $(\al_i-1-\beta_i-\gamma_i)n_i=\sum_{j\neq i}(\beta_j+\gamma_j)n_j$. 
The minimality of $\al_i$ implies
$\al_i-1-\beta_i-\gamma_i=0$ and $\beta_j= 0$ for  $j\neq i$.
The second equations imply that $f+n_k=\beta_in_i$, and the first equation implies that
$\beta_i\leq \al_i-1$. On the other hand, by (ii) we have $\beta_i\geq \alpha_i-1$.
\end{proof}

In the case that $H$ is almost symmetric we can say more about the factorization of $f+n_k$.

\begin{Lemma}
\label{UF-almost symmetric}
Suppose that $H= \langle n_1,n_2,\ldots,n_e\rangle$ is almost symmetric, and let  $f \in \PF'(H)$.
\begin{enumerate}
\item[{\em (i)}] Suppose that  $f+n_k= \sum_{j\neq k}\beta_jn_j$ with  $\beta_i>0$,  Then there exists a factorization  $\Fr(H)+n_k=\sum_{j\neq k}a_jn_j$ of $\Fr(H)+n_k$ such
 that $\beta_i=a_i+1$ and $a_j\geq \beta_j$ for $j\neq i$.
\item[{\em (ii)}] Suppose that $\Fr(H)+n_k$ has UF, say $\Fr(H)+n_k=\sum_{j\neq k}a_jn_j$.  Then $f+n_k=(a_i+1)n_i$ for some $i\neq k$.
\end{enumerate}
\end{Lemma}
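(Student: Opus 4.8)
The plan is to prove (i) directly from Lemma~\ref{Apery}(ii) together with Nari's symmetry (Lemma~\ref{Nari}), and then to obtain (ii) as a short formal consequence of (i) and the uniqueness hypothesis.

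For (i), put $f':=\Fr(H)-f$, so that $f'\in\PF'(H)$ by Lemma~\ref{Nari}; in particular $f'\notin H$ while $f'+n_j\in H$ for all $j$. Given the factorization $f+n_k=\sum_{j\ne k}\beta_j n_j$ with $\beta_i>0$, I would look at the element $m:=(f+n_k)-n_i=(\beta_i-1)n_i+\sum_{j\ne i,k}\beta_j n_j\in H$. The first observation is that $m\in\Ap(n_k,H)$: indeed $m-n_k=f-n_i$, which is not in $H$, since otherwise $f=(f-n_i)+n_i\in H$, contradicting $f\in G(H)$. Now apply Lemma~\ref{Apery}(ii) to $h=m$ and $a=n_k$. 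Its dichotomy gives either $(n_k+\Fr(H))-m\in\Ap(n_k,H)$ or $m-n_k=f-n_i\in\PF'(H)$; the latter is impossible, because $f-n_i\in\PF'(H)$ would force $f=(f-n_i)+n_i\in H$. Hence $(n_k+\Fr(H))-m=\Fr(H)-f+n_i=f'+n_i$ lies in $\Ap(n_k,H)$, so every factorization of $f'+n_i$ omits $x_k$. On the other hand $f'+n_i\in\Ap(n_i,H)$ trivially, since $(f'+n_i)-n_i=f'\notin H$, so every factorization of $f'+n_i$ also omits $x_i$. Fix one such factorization $f'+n_i=\sum_{j\ne i,k}d_j n_j$ with $d_j\ge 0$.

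To finish (i), just add: since $f+f'=\Fr(H)$ we get
\[
\Fr(H)+n_k=(f+n_k)+f'=m+(f'+n_i)=(\beta_i-1)n_i+\sum_{j\ne i,k}(\beta_j+d_j)n_j,
\]
which is a factorization of $\Fr(H)+n_k$ not involving $x_k$, with $a_i:=\beta_i-1$ (so $\beta_i=a_i+1$) and $a_j:=\beta_j+d_j\ge\beta_j$ for $j\ne i$. For (ii), choose any factorization $f+n_k=\sum_{j\ne k}\beta_j n_j$ and an index $i$ with $\beta_i>0$. By (i) there is a factorization of $\Fr(H)+n_k$ whose $i$-th coefficient is $\beta_i-1$ and whose other coefficients are $\ge\beta_j$; uniqueness forces $a_i=\beta_i-1$ and $a_j\ge\beta_j$ for all $j\ne i$. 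If some $\beta_{j_0}>0$ with $j_0\ne i$, applying (i) to $j_0$ and invoking uniqueness again would give $a_i\ge\beta_i$, contradicting $a_i=\beta_i-1$. Hence $\beta_j=0$ for $j\ne i$, i.e.\ $f+n_k=\beta_i n_i=(a_i+1)n_i$.

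The main obstacle is the first step of (i): one must recognise that $m=(f+n_k)-n_i$ sits in $\Ap(n_k,H)$, so that Lemma~\ref{Apery}(ii) applies at all, and that $f'+n_i$ sits in \emph{both} $\Ap(n_k,H)$ and $\Ap(n_i,H)$ --- the second membership is exactly what pins the $n_i$-coefficient of the resulting factorization of $\Fr(H)+n_k$ down to precisely $\beta_i-1$, rather than merely $\ge\beta_i-1$. Once these Apery-set memberships are secured, the rest is routine bookkeeping with the relation $f+f'=\Fr(H)$.
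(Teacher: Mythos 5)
Your proof is correct and follows essentially the paper's own argument: both hinge on the decomposition $\Fr(H)+n_k=\bigl((f+n_k)-n_i\bigr)+\bigl((\Fr(H)-f)+n_i\bigr)$, with $\Fr(H)-f\in \PF'(H)$ by Lemma~\ref{Nari} and the observation that no factorization of $(\Fr(H)-f)+n_i$ can involve $n_i$, and part (ii) is the same uniqueness-versus-inequality contradiction. The only deviation is your detour through Lemma~\ref{Apery}(ii) to show that factorizations of $(\Fr(H)-f)+n_i$ avoid $n_k$; this is correct but unnecessary, since any factorization of $\Fr(H)+n_k$ containing $n_k$ would already force $\Fr(H)\in H$.
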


\begin{proof}
 (i) Put $h = (f+n_k) - n_i$. Since $\beta_i>0$, $h\in H$.
Then we see
\[\Fr(H)+ n_k =  h +  ((\Fr(H) - f) + n_i). \]
Since $H$ is almost symmetric,   $(\Fr(H) - f) \in \PF'(H)$ and
$(\Fr(H) - f) + n_i$ does not contain $n_i$ in its factorization.

(ii) Let   $f+n_k= \sum_{j\neq k}\beta_jn_j$ with  $\beta_i>0$. Assume there exists  $l\neq i$ with  $\beta_l>0$. Then (i) implies that
$\beta_l=a_l+1$. On the other hand, since $\beta_i>0$ we also have that  $a_j\geq \beta_j$ for all $j\neq i$. In particular, $a_l\geq \beta_l$, a contradiction. Thus $f+n_k=(a_i+1)n_i$.
\end{proof}

\begin{Corollary}
Suppose that $H= \langle n_1,n_2,\ldots,n_e\rangle$ is almost symmetric, and let  $f \in \PF'(H)$. If for some $k$, $f+n_k$ has a factorization with more than one non-zero coefficient, then $\Fr(H)+n_k$ does not have  UF .
\end{Corollary}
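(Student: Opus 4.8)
The plan is to argue by contraposition, deducing the statement from Lemma~\ref{UF-almost symmetric}. Suppose, to the contrary, that $\Fr(H)+n_k$ has UF, and write its unique factorization as $\Fr(H)+n_k=\sum_{j\neq k}a_jn_j$. By hypothesis $f+n_k$ admits a factorization $f+n_k=\sum_{j\neq k}\beta_jn_j$ with at least two non-zero coefficients, so I can choose indices $i\neq l$ with $\beta_i>0$ and $\beta_l>0$ (both automatically different from $k$).

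Next I would apply Lemma~\ref{UF-almost symmetric}(i) twice to this factorization of $f+n_k$. Applied with the index $i$, it yields a factorization of $\Fr(H)+n_k$ in which the coefficient of $n_l$ is at least $\beta_l>0$; since $\Fr(H)+n_k$ has UF, this factorization must be $\sum_{j\neq k}a_jn_j$, whence $a_l\geq\beta_l$. Applied instead with the index $l$, it yields a factorization of $\Fr(H)+n_k$ whose coefficient of $n_l$ equals $\beta_l-1$; by the same uniqueness this is again $\sum_{j\neq k}a_jn_j$, so $a_l=\beta_l-1$. These two conclusions are incompatible, and this contradiction shows that $\Fr(H)+n_k$ cannot have UF.

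In essence this is just the contrapositive reading of Lemma~\ref{UF-almost symmetric}(ii), whose proof already shows that, under the almost symmetry hypothesis, UF of $\Fr(H)+n_k$ forces every factorization of $f+n_k$ to involve exactly one generator. I do not expect a genuine obstacle here; the only point requiring a little care is to check that both applications of Lemma~\ref{UF-almost symmetric}(i) produce factorizations of the \emph{same} element $\Fr(H)+n_k$, so that the uniqueness hypothesis may legitimately be used to identify their coefficients. Once this is observed the contradiction is immediate.
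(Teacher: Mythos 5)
Your argument is correct and coincides with the paper's: the Corollary is stated there as an immediate consequence of Lemma~\ref{UF-almost symmetric}(ii), whose proof is precisely your double application of part (i) (once at $i$ to get $a_l\geq\beta_l$, once at $l$ to get $a_l=\beta_l-1$). The only implicit point, that no factorization of $f+n_k$ can involve $n_k$ itself (else $f\in H$), is trivial and does not affect the argument.
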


\begin{Lemma} Assume that $\Fr(H) + n_k$ has UF. Then the following holds:
\begin{enumerate}
\item[{\em (i)}] If
$\Fr(H) + n_k-(\al_j -1)n_j\in H$ for every $j\ne k$, then
$\Fr(H) + n_k = \sum_{j\ne k} (\al_j-1)n_j$.
\item[{\em (ii)}] If moreover, $H$ is almost symmetric, then $f +n_k=\al_in_i$ for
some $i \ne k$ and $f\in \PF'(H)$.
\end{enumerate}
\end{Lemma}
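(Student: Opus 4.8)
The statement asserts: if $\Fr(H)+n_k$ has UF, then (i) the hypothesis that $\Fr(H)+n_k - (\al_j-1)n_j \in H$ for every $j\neq k$ forces $\Fr(H)+n_k = \sum_{j\neq k}(\al_j-1)n_j$; and (ii) under the extra assumption that $H$ is almost symmetric, one gets $f+n_k = \al_i n_i$ for some $i\neq k$ and some $f\in\PF'(H)$. The plan for (i) is to exploit the combination of Lemma~\ref{al_iAp} and Lemma~\ref{Apery}(i). Write the unique factorization $\Fr(H)+n_k = \sum_{j\neq k} a_j n_j$. For a fixed $j\neq k$, the hypothesis says $\Fr(H)+n_k - (\al_j-1)n_j \in H$, so we may write $\Fr(H)+n_k = (\al_j-1)n_j + h$ with $h\in H$; by uniqueness of the factorization this forces $a_j \geq \al_j-1$. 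So it remains to prove $a_j \leq \al_j - 1$ for each $j\neq k$. Here I would argue by contradiction: if $a_j \geq \al_j$ for some $j$, then in the factorization $\Fr(H)+n_k = \sum a_i n_i$ one can apply the defining relation (\ref{minimal}) to replace $\al_j n_j$ by $\sum_{i\neq j}\al_{ji}n_i$, producing a genuinely different factorization of $\Fr(H)+n_k$ (it is different because the coefficient of $n_j$ strictly drops), contradicting UF. This gives $a_j = \al_j - 1$ for all $j\neq k$, which is exactly (i).

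For (ii): since $H$ is almost symmetric and $f\in\PF'(H)$, by Lemma~\ref{Nari} we have $\Fr(H)-f\in\PF'(H)$ as well. The idea is to run Lemma~\ref{UF-almost symmetric}(ii): since $\Fr(H)+n_k$ has UF with factorization $\sum_{j\neq k}a_j n_j = \sum_{j\neq k}(\al_j-1)n_j$ (using part (i), once we know its hypothesis holds — but note that in (ii) we only assume $H$ almost symmetric, not the divisibility hypothesis of (i), so I will instead apply Lemma~\ref{UF-almost symmetric}(ii) directly to the UF of $\Fr(H)+n_k$), it yields $f+n_k = (a_i+1)n_i$ for some $i\neq k$. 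Now I need to identify $a_i+1$ with $\al_i$. By Lemma~\ref{f + n_k = b_in_i}(ii) applied to $f+n_k = (a_i+1)n_i$ we get $a_i + 1 \geq \al_i - 1$; that is not yet sharp enough, so the finer input is needed: since $f+n_k = (a_i+1)n_i$ is a factorization of an element of $H$ not involving $n_k$, and since $f\notin H$, the element $(a_i+1)n_i - n_k = f \notin H$ forces, via Lemma~\ref{al_iAp} and the minimality of $\al_i$, that $a_i + 1$ cannot exceed $\al_i$; and Lemma~\ref{f + n_k = b_in_i}(ii) together with the fact that $f\in\PF'(H)\neq\Fr(H)$ (so the "$-1$" case is excluded — one checks $f+n_k \neq (\al_i-1)n_i$ because by Lemma~\ref{f + n_k = b_in_i}(iii) that equality would be forced, but $(\al_i-1)n_i$ factors as part of the UF of $\Fr(H)+n_k$, i.e. $(\al_i-1)n_i \leq_H \Fr(H)+n_k$, and subtracting would put $\Fr(H)-f$ related quantities in $H$ in a way incompatible with $f\neq\Fr(H)$) pins $a_i+1 = \al_i$. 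Hence $f+n_k = \al_i n_i$.

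The main obstacle I anticipate is the bookkeeping in step (ii): cleanly excluding the degenerate possibility $f + n_k = (\al_i - 1)n_i$ and thereby upgrading the inequality $a_i + 1 \geq \al_i - 1$ from Lemma~\ref{f + n_k = b_in_i}(ii) to the exact equality $a_i+1 = \al_i$. The cleanest route is probably: we have $\al_i n_i \geq_H f + n_k$ would need checking, but actually the right comparison is between $\Fr(H)+n_k$ and $f+n_k$. Since $\Fr(H)-f \in \PF'(H) \subseteq G(H)$, and $(\Fr(H)+n_k) - (f+n_k) = \Fr(H)-f \notin H$, the two elements $\Fr(H)+n_k$ and $f+n_k$ are incomparable in $\leq_H$ in the relevant direction; combined with $f+n_k = (a_i+1)n_i$ and the UF-factorization $\Fr(H)+n_k = (\al_i-1)n_i + \sum_{j\neq i,k}(\al_j-1)n_j$, a short degree/coefficient comparison yields $a_i + 1 = \al_i$ and simultaneously $\al_j - 1 = 0$ for $j\neq i,k$, so everything is consistent. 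I would write this last comparison out carefully since it is where all the earlier lemmas get combined, and it is easy to be off by one.
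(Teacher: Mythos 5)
Part (i) of your proposal is correct and is essentially the paper's own argument: the hypothesis gives $a_j\ge \al_j-1$ for every $j\ne k$ in the unique factorization $\Fr(H)+n_k=\sum_{j\ne k}a_jn_j$, and $a_j\le \al_j-1$ because a coefficient $\ge \al_j$ would allow a rewriting via (\ref{minimal}) into a genuinely different factorization, contradicting UF.

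Part (ii) has a genuine gap, and it comes from your decision to drop the hypothesis of (i). In the statement, ``if moreover'' keeps the hypotheses of (i) in force; with them, (i) gives $\Fr(H)+n_k=\sum_{j\ne k}(\al_j-1)n_j$, and Lemma~\ref{UF-almost symmetric}(ii) immediately yields $f+n_k=((\al_i-1)+1)n_i=\al_in_i$ for every $f\in\PF'(H)$ --- this two-line deduction is the paper's proof. The version you set out to prove (only UF of $\Fr(H)+n_k$ plus almost symmetry) is actually false, so no bookkeeping can close your argument: take the paper's example $H=\langle 22,28,47,53\rangle$, which is almost symmetric with $\PF'(H)=\{25,258\}$ and $\al_2=11$, $\al_3=2$. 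Here $\Fr(H)+n_1=305=9n_2+n_4$ has UF, but $25+n_1=n_3=(\al_3-1)n_3$ and $258+n_1=10n_2=(\al_2-1)n_2$, so there is no $f\in\PF'(H)$ and $i\ne 1$ with $f+n_1=\al_in_i$ (of course, the hypothesis of (i) fails here: $305-(\al_2-1)n_2=25\notin H$). Concretely, the step in your sketch that breaks is the claim that $(\al_i-1)n_i\le_H \Fr(H)+n_k$: by UF that would force $a_i\ge \al_i-1$, which is precisely the inequality that only the hypothesis of (i) provides (in the example $a_3=0=\al_3-2$ and $a_2=9=\al_2-2$), and without it the degenerate case $f+n_k=(\al_i-1)n_i$ cannot be excluded. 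A smaller point: your justification of the upper bound $a_i+1\le\al_i$ from ``$f\notin H$ together with Lemma~\ref{al_iAp}'' is not a proof; the correct reason is again UF of $\Fr(H)+n_k$, exactly as in (i).
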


\begin{proof}
(i) Let $\Fr(H) + n_k =\sum_{j\neq k}a_jn_j$ be the unique factorization of $\Fr(H) + n_k$. Then the hypothesis in (i) implies that $a_j\geq \alpha_j-1$ for all $j\neq k$. If $a_i\geq \alpha_i$ for some $i\neq k$, then $\sum_{j\neq k}a_jn_j$ can be rewritten by using (\ref{minimal}), contradicting the assumption that $\Fr(H) + n_k$ has unique factorization.

(ii) follows from Lemma~\ref{UF-almost symmetric}(ii).
\end{proof}

The next two lemmata deal with the case that $e=4$, a case we are mainly interested in.

\begin{Lemma}\label{NU_e=4}  Let $e(H)=4$ and let $f\in \PF(H)$. Then the following holds:
\begin{enumerate}
\item[{\em (i)}] If $f+n_k$ does not have UF, then for some $i$,
$f + n_k\ge_H \al_in_i$.
\item[{\em  (ii)}] If $ f+n_k=\sum_{i\ne k} a_in_i$ with
$a_i< \al_i$ for every $i$ and for any factorization of $f+n_k$, then $f + n_k$ has UF.
\end{enumerate}
\end{Lemma}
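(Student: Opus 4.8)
The plan is to treat the two statements in tandem, since (ii) is essentially the contrapositive of (i) combined with the definition of the $\al_i$. First I would prove (i). Suppose $f+n_k$ does not have UF. Then there are two distinct factorizations of $f+n_k$, so $f+n_k \ge_H \deg(\phi)$ for some $\phi \in I_H$, say $\phi = m_1 - m_2$ with $x_i \mid m_1$ and $x_j \mid m_2$ for some $i \ne j$ (note $i,j \ne k$ is not automatic, but the key point is that $m_1$ and $m_2$ involve disjoint-in-leading-variable data). Write $f+n_k = \sum_{l \ne k} a_l n_l$ for a chosen factorization. The idea is: since $e = 4$, the index set $\{1,2,3,4\}\setminus\{k\}$ has only three elements, and the existence of a binomial relation below $f+n_k$ forces one of the coefficients to be large. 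Concretely, if $f+n_k$ had a factorization with $a_l < \al_l$ for every $l$, I would argue this factorization must then be unique, contradicting the hypothesis — which is exactly what (ii) asserts. So the real content is (ii), and (i) follows by contraposition: if no factorization has all $a_l < \al_l$, then some factorization has $a_i \ge \al_i$, whence $f+n_k \ge_H \al_i n_i$.

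For (ii), assume every factorization of $f+n_k$ has all coefficients strictly below the corresponding $\al_l$; I want to show the factorization is unique. Suppose not, so $f+n_k = \sum_{l\ne k} a_l n_l = \sum_{l\ne k} b_l n_l$ are two distinct factorizations. Subtracting, $\sum_{l\ne k}(a_l - b_l) n_l = 0$, giving a nontrivial relation among $\{n_l : l \ne k\}$, which is a set of three generators $n_{l_1}, n_{l_2}, n_{l_3}$. I would then invoke the structure of relations among three of the four generators. The classical fact (going back to Herzog's analysis of numerical semigroups generated by three elements, or directly from minimality of the $\al_i$) is that any relation among three minimal generators is a consequence of the "Apéry-type" relations $\al_{l} n_{l} = \al_{l l'} n_{l'} + \al_{l l''} n_{l''}$. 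Using equation~(\ref{minimal}) and Lemma~\ref{al_iAp}, one can then reduce any factorization to a normal form in which no coefficient reaches the threshold $\al_l$ by repeatedly applying the rewriting $\al_l n_l \rightsquigarrow \al_{l l'} n_{l'} + \al_{l l''} n_{l''}$; since each such step reduces the coefficient of $n_l$, the process terminates, and the normal form is unique. Two factorizations of $f+n_k$ both in normal form (all coefficients $< \al_l$) must then coincide, contradicting distinctness. Hence $f+n_k$ has UF.

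The main obstacle I anticipate is making precise the claim that the rewriting process produces a unique normal form when all coefficients are below the $\al_l$ thresholds — i.e. a confluence/diamond-lemma type argument for the relation lattice of three generators. One must rule out that two genuinely different reduced factorizations coexist below $f+n_k$; this is where the hypothesis $e=4$ (so that removing $n_k$ leaves exactly three generators, whose relation structure is well understood) is essential, since for four or more generators there can be "mixed" relations with no large coefficient. I would handle this by citing or reproving the three-generator relation structure: either $n_{l_1}, n_{l_2}, n_{l_3}$ are still minimally generated (the generic case, where the relation ideal is generated by at most three binomials of the stated Apéry form, by the Herzog classification), or one of them drops out of the minimal generating set of $\langle n_{l_1}, n_{l_2}, n_{l_3}\rangle$, in which case one of the $\al_{l}$ among the remaining indices is affected and a short direct argument finishes it. Once the uniqueness of the reduced form is established, both parts of the lemma fall out immediately.
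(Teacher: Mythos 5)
The logical skeleton you set up is fine---(ii) is the heart, and (i) follows from it by contraposition---but the way you propose to prove (ii) has a genuine gap, and it also misses the one observation that makes the whole lemma a two-line argument. Since $f\in\PF(H)$ we have $f\notin H$, so no factorization of $f+n_k$ can involve $n_k$ (otherwise $f=(f+n_k)-n_k\in H$); in particular your parenthetical remark that ``$i,j\ne k$ is not automatic'' is off the mark. Consequently two distinct factorizations of $f+n_k$ yield, after cancelling their common part, a binomial $\phi=m_1-m_2\in I_H$ with $\gcd(m_1,m_2)=1$ involving only the three variables $x_j$, $j\ne k$; disjointness of the supports of $m_1$ and $m_2$ then forces one of them to be a power $x_i^b$ of a single variable, and since $bn_i$ is then a sum of the other generators, $b\ge \al_i$ by the minimality in (\ref{minimal}). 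Because cancellation only removes common factors, the original factorization on that side has $i$-th coefficient at least $b\ge\al_i$; this gives (i) (as $f+n_k\ge_H bn_i\ge_H \al_in_i$) and simultaneously contradicts the hypothesis of (ii). That is the paper's proof, and it is exactly where $e=4$ enters: with only three variables, coprimality forces a pure power, which fails for four or more variables.

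Your substitute for this step---rewrite factorizations to a normal form using ``Ap\'ery-type relations'' among the three generators $n_l$, $l\ne k$, and invoke uniqueness of the normal form---is precisely the point you concede you cannot yet justify, and as stated it does not go through. The integers $\al_l$ are defined by minimal relations in the full four-generated semigroup, so the relation $\al_l n_l=\sum_{j\ne l}\al_{lj}n_j$ may involve $n_k$ with positive coefficient (this is why the paper needs Lemma~\ref{f + n_k = b_in_i}(i)); a relation $\al_l n_l=\al_{ll'}n_{l'}+\al_{ll''}n_{l''}$ with $l',l''\ne k$ need not exist, so the rewriting rules you want to cite are not available with the thresholds $\al_l$. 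Passing instead to the three-generated subsemigroup $\langle n_l:\ l\ne k\rangle$ changes the thresholds (they are in general larger than the $\al_l$), and knowing Herzog's generators of its relation ideal does not by itself give confluence or uniqueness of reduced factorizations---proving that uniqueness is again the disjoint-support argument above, so the normal-form detour adds work without closing the gap. (Also, your fallback case in which one of the three generators ``drops out of the minimal generating set'' cannot occur, since the $n_i$ minimally generate $H$.)
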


\begin{proof}  (i) Let  $f +n_k =\sum_{j\neq k}\beta_jn_j$  and $f +n_k =\sum_{j\neq k}\beta_j'n_j$ be two distinct factorizations of $f+n_k$.  Then  $\prod_j x_j^{\beta_j}- \prod_j x_j^{\beta_j'}$ is a non-zero binomial  of $I_H$. Taking out the common factor of the difference, the remaining binomial $\phi=m_1-m_2$ belongs to $I_H$, since $I_H$ is a prime ideal,  and we have  $f +n_k\geq_H \deg \phi$. Moreover, $\phi$ does not contain the variable $x_k$. Thus
$\phi$ contains at most $3$ variables and since $gcd(m_1,m_2)=1$, $\phi$ must contain a monomial $x_i^b$ with $b\ge \alpha_i$. It proves that $f +n_k\geq_H bn_i$.

(ii) follows  from (i).
\end{proof}

\begin{Examples}{\em
(1) Let $H=\langle 22,28,47,53\rangle$. Then  $\PF(H)=\{25,258,283\}$. Since $25+258=283$,
$H$ is almost symmetric  of type $3$. Moreover,  $I_H=(xw-yz, xy^3-w^2, x^2y^2-zw, x^3-z^2, x^{13}z- y^{10}w,
x^{14} - y^{11})$ , so that $\al_1= 14, \al_2=11, \al_3=2=\al_4$.

In this example, $f + n_i$ has UF
for every $i$ and $f\in \PF'(H)$. In these cases, condition (ii) of Lemma~\ref{NU_e=4} is satisfied. On the other hand, each of $283+47$ and $283+53$ has 3 factorizations.

\medskip
(2) Let $H = \langle 33, 56, 61,84\rangle$. Then $\PF(H) = \{28, 835, 863\}$. Since $28+835=863$,
$H$ is almost symmetric  of type $3$. Moreover,  $I_H= (xw-yz, x^{27}z-y^2w^{10},  y^3-w^2, xy^2-zw, x^2y-z^2,
x^{28}-w^{11})$, so that $\al_1= 28, \al_2=3, \al_3=2=\al_4$. In this example, $835 + n_1$ and
$835+n_3$ do not have UF.  For example,  $835 + n_3$ has 6 different factorizations, including
$835 + n_3= 16 n_2$. }
\end{Examples}

\begin{Lemma}\label{formula_Ap}  Assume that  $H$ is almost symmetric. Then the following holds:
\begin{enumerate}
\item[{\em (i)}]  We assume $e=4$. If for some $k$, $\al_{ik} \ge 1$ for all $i\ne k$, then $\Fr(H) + n_k$ has UF.

\item[{\em (ii)}] Assume that $\Fr(H) +n_k =
\sum_{j \ne k} \beta_jn_j$.  If $\Fr(H) +n_k$ has UF,
then $n_k = \prod_{j\ne k} (\beta_j+1) + \type(H) -1$.
\end{enumerate}
\end{Lemma}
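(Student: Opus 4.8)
The plan is to prove the two parts separately, using the structural results on factorizations of $f+n_k$ established above (Lemmata~\ref{f + n_k = b_in_i}, \ref{UF-almost symmetric}, \ref{NU_e=4}) together with Komeda's counting of Apery sets.

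For part (i), I would argue by contradiction: suppose $e=4$, fix $k$ with $\al_{ik}\ge 1$ for all $i\ne k$, and suppose $\Fr(H)+n_k$ does not have UF. By Lemma~\ref{NU_e=4}(i), there is some index $i\ne k$ with $\Fr(H)+n_k\ge_H \al_in_i$. The key point is to exploit the hypothesis $\al_{ik}\ge 1$: the defining relation $\al_in_i=\sum_{j\ne i}\al_{ij}n_j$ has $\al_{ik}>0$, so $\al_in_i-n_k\in H$, i.e. $\al_in_i\ge_H n_k$; equivalently $(\al_i-1)n_i\ge_H n_k - (\text{something})$ — more usefully, $\al_i n_i = n_k + h$ for some $h\in H$. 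Combining $\Fr(H)+n_k\ge_H \al_i n_i = n_k + h$ gives $\Fr(H)\ge_H h$, hence $\Fr(H)-h\in H$; but then writing $\Fr(H)+n_k = \al_in_i + h'$ for some $h'\in H$ and substituting $\al_i n_i = n_k + h$ yields $\Fr(H) = h + h'\in H$, a contradiction. (I need to be a little careful matching $h$ to the actual witness; the idea is that the hypothesis $\al_{ik}\ge1$ forces any occurrence of $\al_in_i$ inside a factorization of $\Fr(H)+n_k$ to be rewritable so as to produce a factorization of $\Fr(H)$ itself.) Since $\Fr(H)\notin H$, this contradiction proves $\Fr(H)+n_k$ has UF.

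For part (ii), assume $\Fr(H)+n_k = \sum_{j\ne k}\beta_jn_j$ has UF. I want to count $|\Ap(n_k,H)|$, which equals $n_k$. The natural bijection is $h\mapsto (\Fr(H)+n_k)-h$, or rather: by Lemma~\ref{Apery}(ii), for $h\in\Ap(n_k,H)$ either $(\Fr(H)+n_k)-h\in\Ap(n_k,H)$ as well, or $h-n_k\in\PF'(H)$. The elements $h\in\Ap(n_k,H)$ of the first kind should be exactly those of the form $\sum_{j\ne k}\gamma_jn_j$ with $0\le\gamma_j\le\beta_j$: since $\Fr(H)+n_k$ has UF and $(\Fr(H)+n_k)-h\in H$ iff $h\le_H \Fr(H)+n_k$, Lemma~\ref{Apery}(i) tells us the divisors of $\Fr(H)+n_k$ in the $\le_H$ order that lie in $\Ap(n_k,H)$ are precisely the submonomials $\prod x_j^{\gamma_j}$ of $\prod x_j^{\beta_j}$, and uniqueness of factorization guarantees these $\prod_{j\ne k}(\beta_j+1)$ choices of exponent vectors give distinct elements of $H$, all in $\Ap(n_k,H)$. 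The elements of the second kind are the $h$ with $h-n_k\in\PF'(H)$; there are $|\PF'(H)| = \type(H)-1$ of them, and one should check they are not already counted among the submonomial type (indeed $h-n_k\notin H$ so $h$ is not $\le_H$-below anything forcing it into the first class, and in particular $h\ne\sum\gamma_jn_j$ with the $\gamma$'s as above since such an $h$ satisfies $h\le_H\Fr(H)+n_k$, whereas for $h$ of the second kind $(\Fr(H)+n_k)-h = \Fr(H)-(h-n_k)\in\PF'(H)$ is not in $H$). Summing, $n_k = |\Ap(n_k,H)| = \prod_{j\ne k}(\beta_j+1) + (\type(H)-1)$.

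The main obstacle I anticipate is verifying the dichotomy cleanly in part (ii): specifically, proving that \emph{every} element of $\Ap(n_k,H)$ of the "first kind" (those $h$ with $(\Fr(H)+n_k)-h\in H$) really is a submonomial of the unique factorization of $\Fr(H)+n_k$, with no repetitions and no omissions. The $\ge$ inclusion (submonomials do land in $\Ap(n_k,H)$) requires knowing $\sum\gamma_jn_j-n_k\notin H$, which follows because otherwise $\Fr(H)+n_k$ would have a second factorization obtained by replacing the factor $n_k$; the $\le$ inclusion and injectivity are exactly where UF of $\Fr(H)+n_k$ is used. Once this bookkeeping is pinned down, the count is immediate. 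Part (i) I expect to be routine once the rewriting trick with $\al_{ik}\ge1$ is set up correctly.
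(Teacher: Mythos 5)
Your proposal is correct and takes essentially the same route as the paper: in (i) you use Lemma~\ref{NU_e=4}(i) and then rewrite $\al_in_i$ via the hypothesis $\al_{ik}\ge 1$ to force $\Fr(H)\in H$, exactly as the paper does, and in (ii) you split $\Ap(n_k,H)$ into $\{h\in H\:\; h\le_H \Fr(H)+n_k\}$ and $\{f+n_k\:\; f\in\PF'(H)\}$ (via Lemma~\ref{Apery}(ii) and Nari's symmetry) and count the first set as $\prod_{j\ne k}(\beta_j+1)$ using UF, which is precisely the paper's counting argument. The bookkeeping points you flag (submonomials lie in $\Ap(n_k,H)$, no repetitions, disjointness of the two classes) all go through as you indicate.
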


\begin{proof} (i) If $\Fr(H) + n_k$ does not have UF, Lemma~\ref{NU_e=4} implies that there exists a factorization  $\Fr(H) + n_k= \sum_{i\ne k} \beta_in_i$ with  $\beta_i \ge \al_i$ for some $i$. But since we assume that $\al_{ik}\ge 1$, this yields that  $\Fr(H)\in H$, a  contradiction.

(ii)  We first observe that
\[
\Ap(n_k, H)=\{h\in H\: \; h\leq_H F(H)+n_k\}\union\{f+n_k\:\; f\in PF'(H)\}.
\]
Thus,  $n_k=\type(H)-1+|\{h\in H\: \; h\leq_H F(H)+n_k\}|$.

Since by assumption $F(H)+n_k$ has UF, each $h\leq_H F(H)+n_k$ has UF, as well. Therefore, each  $h\leq_H \Fr(H)+n_k$ has a unique  factorization $h=\sum_{i\ne k} \gamma_in_i$ with integers $0\leq \gamma_i\leq \beta_i$, and conversely each such sum $\sum_{i\ne k} \gamma_in_i$ is $\leq_H\Fr(H)+n_k$. It follows that that
$|\{h\in H\: \; h\leq_H F(H)+n_k\}| =\prod_{i\neq k}(\beta_i+1)$, as desired.
\end{proof}

\section{$\RF$-matrices}
\label{sec:3}
Let us recall the notion of the row-factorization matrix ($\RF$-matrix for short) introduced by Moscariello in \cite{Mo} for the numerical semigroup $H=\langle n_1,\ldots,n_e\rangle$. It describes for each $f\in\PF(H)$ and each $n_i$ a factorization of $f+n_i$.

\begin{Definition}\label{RF-def}
Let $f\in \PF(H)$.  An $e\times e$ matrix $A= (a_{ij})$ is an {\em $RF$-matrix} of $f$,
if $a_{ii}= -1$ for every $i$, $a_{ij}\in \NN$ if $i\ne j$
 and for every $i= 1,\ldots , e$,
$$\sum_{j=1}^e a_{ij} n_j = f.$$
\end{Definition}

Note that an $\RF$-matrix of $f$ need not to be uniquely determined. Nevertheless,  $\RF(f)$ will be the notation for one of the possible $\RF$-matrices of $f$.

\subsection{Fundamental Properties}

The most important property of $\RF$-matrices is the following.

\begin{Lemma}\label{symm}
 Let $f,f' \in \PF(H)$ with $f + f'\not\in H$. Set $\RF(f)= A=(a_{ij})$
and $\RF(f') = B = (b_{ij})$. Then either $a_{ij}= 0$ or $b_{ji} =0$ for every pair
$i\ne j$. In particular, if $\RF(\Fr(H)/2) = (a_{ij})$, then either $a_{ij}$
or $a_{ji}=0$ for every $i\ne j$.
\end{Lemma}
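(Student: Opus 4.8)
The plan is to argue by contradiction using the row-factorization equations together with the hypothesis $f+f'\notin H$. Suppose, for some pair $i\neq j$, we have both $a_{ij}>0$ and $b_{ji}>0$. The idea is to add the $i$-th row equation of $A$ and the $j$-th row equation of $B$:
\[
\sum_{k=1}^e a_{ik}n_k = f, \qquad \sum_{k=1}^e b_{jk}n_k = f'.
\]
Adding these gives $\sum_k (a_{ik}+b_{jk})n_k = f+f'$. Now I separate out the diagonal contributions: the coefficient of $n_i$ on the left is $a_{ii}+b_{ji} = -1 + b_{ji}$, and the coefficient of $n_j$ is $a_{ij}+b_{jj} = a_{ij}-1$. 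Every other coefficient $a_{ik}+b_{jk}$ (for $k\neq i,j$) is a non-negative integer. Since by assumption $b_{ji}\geq 1$ and $a_{ij}\geq 1$, both $-1+b_{ji}$ and $a_{ij}-1$ are $\geq 0$ as well. Hence $f+f' = \sum_k c_k n_k$ with all $c_k\in\NN$, which shows $f+f'\in H$, contradicting the hypothesis. Therefore at least one of $a_{ij}$, $b_{ji}$ must vanish.

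For the ``in particular'' statement, recall from Lemma~\ref{Nari} (or directly from the definition of pseudo-symmetric) that $\Fr(H)/2\in\PF(H)$ when $H$ has even Frobenius number and $\Fr(H)/2$ is a pseudo-Frobenius number; more to the point, here we simply need that $f = f' = \Fr(H)/2$ is allowed, so $2f = \Fr(H)\notin H$ (the Frobenius number is a gap). Applying the first part with $f'=f$ and $B=A$ yields that for every $i\neq j$, either $a_{ij}=0$ or $a_{ji}=0$.

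The only subtlety to watch is that in the ``in particular'' case we must make sure $f+f' = \Fr(H)\notin H$, which is immediate since $\Fr(H)$ is by definition the largest gap of $H$; and that taking $f'=f$, $B=A$ is legitimate in the general statement, which it is, since the hypothesis $f+f'\notin H$ becomes $2f = \Fr(H)\notin H$. I do not anticipate a genuine obstacle here: the entire argument is the single observation that adding one row of $\RF(f)$ to one row of $\RF(f')$ produces a factorization of $f+f'$ in $H$ precisely when the two relevant off-diagonal entries are both positive (so that the two $-1$'s on the diagonal get absorbed). The proof is a short direct computation once this is set up.
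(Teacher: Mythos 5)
Your proof is correct and is essentially the paper's own argument: the paper likewise adds the $i$-th row equation of $\RF(f)$ (written as $f+n_i=\sum_{k\ne i}a_{ik}n_k$) to the $j$-th row equation of $\RF(f')$ and observes that when $a_{ij}\ge 1$ and $b_{ji}\ge 1$ the two diagonal $-1$'s are absorbed, yielding $f+f'=(b_{ji}-1)n_i+(a_{ij}-1)n_j+\sum_{s\ne i,j}(a_{is}+b_{js})n_s\in H$, a contradiction. Your handling of the ``in particular'' case (taking $f=f'=\Fr(H)/2$, $B=A$, and noting $\Fr(H)\notin H$) is exactly the intended specialization.
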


\begin{proof} By our assumption,  $f + n_i =\sum_{k\ne i} a_{ik} n_k$ and
$f' + n_j = \sum_{l \ne j} b_{jl} n_l$.  If $a_{ij}\ge 1$ and $b_{ji}\ge 1$, then
summing up these equations, we get
\[  f + f'= (b_{ji} - 1)n_i + (a_{ij}-1)n_j + \sum_{s\ne i,j} (a_{is} + b_{js})n_s
\in H,\]
a contradiction.
\end{proof}


\begin{Remark}\label{symm-Rem}
{\em As mentioned before, for given $f\in\PF(H)$, $\RF(f)$ is not necessarily unique.
Note that in the notation of Lemma \ref{symm}, if $a_{ij}>0$ for {\bf some}
$\RF(f)$, then $b_{ji} = 0$ for {\bf any} $\RF(f')$.}
\end{Remark}

The rows of $\RF(f)$ produce binomials in $I_H$. We shall need the following notation. For a vector $\ab=(a_1,\ldots,a_n) \in \ZZ^n$, we let $\ab^+$ the vector  whose $i$th entry is $a_i$ if $a_i\geq 0$, and is zero otherwise, and we let $\ab^-=\ab^+-\ab$. Then $\ab=\ab^+-\ab^-$ with $\ab^+,\ab^-\in \NN^n$.

\begin{Lemma}
\label{produce}
Let $\ab_1,\ldots,\ab_e$ be the row vectors of $\RF(f)$,
and set $\ab_{ij}=\ab_i-\ab_j$ for all $i,j$ with $1\leq i<j\leq e$. Then
$
\phi_{ij}=\xb^{\ab_{ij}^+}-\xb^{\ab_{ij}^-}\in I_H
$
for all $i<j$. Moreover, $\deg \phi_{ij} \leq f+n_i+n_j$. Equality holds, if the vectors $\ab_i+\eb_i+\eb_j$ and $\ab_j+\eb_i+\eb_j$ have disjoint support, in which case there is no cancelation when taking the difference of these two vectors.
\end{Lemma}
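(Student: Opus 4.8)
\textbf{Proof proposal for Lemma~\ref{produce}.}

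The plan is to verify the three assertions in turn, all of them essentially by unwinding the definitions of $\RF(f)$ and of the operations $\ab\mapsto\ab^+,\ab^-$. First I would establish that $\phi_{ij}\in I_H$. By definition of an $\RF$-matrix, each row vector $\ab_i=(a_{i1},\dots,a_{ie})$ satisfies $\sum_{k}a_{ik}n_k=f$, hence $\sum_k a_{ij,k}\,n_k = f-f = 0$ where $\ab_{ij}=\ab_i-\ab_j=(a_{ij,1},\dots,a_{ij,e})$. Splitting $\ab_{ij}=\ab_{ij}^+-\ab_{ij}^-$ with both parts in $\NN^e$, this says $\sum_k(\ab_{ij}^+)_k n_k=\sum_k(\ab_{ij}^-)_k n_k$, which by the binomial criterion recalled in Section~\ref{sec:1} is exactly the statement that $\xb^{\ab_{ij}^+}-\xb^{\ab_{ij}^-}\in I_H$. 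So this step is immediate.

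Next I would bound $\deg\phi_{ij}$. Since $\phi_{ij}$ is homogeneous in the $n_i$-grading, $\deg\phi_{ij}=\sum_k(\ab_{ij}^+)_k n_k$. The key observation is that $\ab_{ij}^+$ is obtained from $\ab_i-\ab_j$ by keeping only the coordinates where $a_{ik}>a_{jk}$; in particular $(\ab_{ij}^+)_k\le a_{ik}$ for $k\ne i$ (because $a_{jk}\ge 0$ there), while in coordinate $i$ we have $a_{ii}-a_{ji}=-1-a_{ji}<0$, so $(\ab_{ij}^+)_i=0$. Hence $\deg\phi_{ij}=\sum_{k\ne i}(\ab_{ij}^+)_k n_k\le\sum_{k\ne i}a_{ik}n_k$. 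Now $\sum_{k\ne i}a_{ik}n_k=\sum_k a_{ik}n_k - a_{ii}n_i = f+n_i$, so $\deg\phi_{ij}\le f+n_i$, and by symmetry (using $\phi_{ij}=-\phi_{ji}$, i.e. $\ab_{ij}^+=\ab_{ji}^-$, so the other monomial has the analogous bound with $j$) we also get $\deg\phi_{ij}\le f+n_j$. To land on $f+n_i+n_j$ the cleaner route is to compare with the vectors $\ab_i+\eb_i+\eb_j$ and $\ab_j+\eb_i+\eb_j$: these have nonnegative entries in every coordinate (the only negative entry of $\ab_i$, namely the $i$th, is cancelled by $\eb_i$), their $n_i$-degrees are both $f+n_i+n_j$, and their difference is again $\ab_{ij}$; so $\xb^{\ab_{ij}^+}$ and $\xb^{\ab_{ij}^-}$ are both obtained from $\xb^{\ab_i+\eb_i+\eb_j}$ by cancelling the common factor $\xb^{(\ab_{ij})^-}$ respectively $\xb^{(\ab_{ij})^+}$, whence $\deg\phi_{ij}=\deg\xb^{\ab_i+\eb_i+\eb_j}-\deg(\text{common factor})\le f+n_i+n_j$.

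Finally, for the equality clause: if $\ab_i+\eb_i+\eb_j$ and $\ab_j+\eb_i+\eb_j$ have disjoint support, then in forming $\ab_{ij}=(\ab_i+\eb_i+\eb_j)-(\ab_j+\eb_i+\eb_j)$ there is no coordinate where both are positive, so no cancellation occurs: the common factor $\xb^{\gcd}$ above is $1$, and therefore $\{\ab_{ij}^+,\ab_{ij}^-\}=\{\ab_i+\eb_i+\eb_j,\ab_j+\eb_i+\eb_j\}$, giving $\deg\phi_{ij}=f+n_i+n_j$ exactly. I do not anticipate a serious obstacle here; the only point requiring a little care is the bookkeeping with the positive/negative parts and the exact location of the unique negative entry in each row of $\RF(f)$ (the diagonal $-1$), so that one correctly identifies which shifted vectors have genuinely nonnegative entries and disjoint support. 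Everything else is a direct translation of the $\RF$-matrix equations into the binomial membership and degree statements.
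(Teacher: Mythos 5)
Your argument is correct and follows essentially the same route as the paper: the paper's proof consists precisely of observing that $\ab_i+\eb_i+\eb_j$ and $\ab_j+\eb_i+\eb_j$ are coefficient vectors of factorizations of $f+n_i+n_j$, that their difference is $\ab_{ij}$ (which gives $\phi_{ij}\in I_H$), and that the degree bound and the equality clause then follow at once. Your first step (taking the difference of the rows directly) and your ``cleaner route'' plus the disjoint-support discussion reproduce exactly this.

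One side remark in your write-up is false, although it is not load-bearing since you abandon it: the claimed bound $\deg\phi_{ij}\le f+n_i$ (and, by symmetry, $\le f+n_j$). Your estimate $(\ab_{ij}^+)_k\le a_{ik}$ for $k\ne i$ fails at $k=j$, because $a_{jj}=-1$ gives $(\ab_{ij})_j=a_{ij}-a_{jj}=a_{ij}+1$; carried out correctly, that computation only yields $\deg\phi_{ij}\le f+n_i+n_j$. Note also that if $\deg\phi_{ij}\le f+n_i$ were true, the equality case of the lemma could never occur; concretely, in Example~\ref{RF_to_I_H} one has $\deg\phi_{13}=35>22=f+n_1$. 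Since your actual proof of the bound and of the equality statement goes through the shifted vectors $\ab_i+\eb_i+\eb_j$ and $\ab_j+\eb_i+\eb_j$, the lemma is proved correctly; just delete or repair that intermediate claim.
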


\begin{proof}
Let $\eb_1,\ldots,\eb_e$ be  the canonical unit vectors of $\ZZ^e$. The   vector $\ab_i+\eb_i+\eb_j$ as well as the vector $\ab_j+\eb_i+\eb_j$ with $i<j$  is the coefficient vector of a factorization of  $f+n_i+n_j$. Thus, taking the difference of these two vectors,  we obtain the vector $\ab_{ij}$ with $\sum_{k=1}^ec_kn_k=0$ where the $c_k$ are the components of $\ab_{ij}$.  It follows that $\phi_{ij}=\xb^{\ab_{ij}^+}-\xb^{\ab_{ij}^-}$ belongs to $I_H$.  The remaining statements are obvious.
\end{proof}

{
We call a  binomial relation of $\phi\in I_H$ an {\em $\RF(f)$-relation},  if it is of the form as in Lemma~\ref{produce}, and we call it an {\em $\RF$-relation} if it is an  $\RF(f)$-relation for some  $f\in \PF'(H)$.

\begin{Example}\label{RF_to_I_H}{\em  Let $H=\langle 7,12,13, 22 \rangle$.
Then  $\PF(H)= \{15,30\}$. In this case,
$$\RF(15) =  \left( \begin{array}{cccc} -1 & 0 & 0 & 1\\2 & -1 & 1 & 0\\
4 & 0 & -1 & 0 \\ 0 & 2 & 1 & -1
 \end{array}\right).$$
Taking the difference  of the first and second row  we get the vector $(-3,1,-1,1)$. This gives us the  minimal generator $yw-x^3z$ of $I_H$, where we put
$(x_1,x_2,x_3,x_4)=(x,y,z,w)$. \par

\medskip
Let us now consider all the choices of 2 rows and the resulting generators of $I_H$ together with their degree.
\begin{eqnarray*}
\ab_{12} &=& (-3,1,-1,1),  \quad\hspace{1cm} yw-x^3z,
\quad \hspace{0.57cm} 34\\
\ab_{13} &=& (-5, 0,1,1), \quad  \hspace{1.3cm} zw-x^5,  \quad
 \hspace{0.83cm} 35      \\
\ab_{14} &=& (-1,-2,-1,2), \quad \hspace{0.65cm} w^2-xy^2z,  \quad \hspace{0.45cm}44        \\
\ab_{23} &=& (-2,-1,2,0),  \quad \hspace{0.96cm} z^2-x^2y, \hspace{0.75cm}
  \quad 26       \\
\ab_{24} &=& (2,-3,0,1),  \quad \hspace{1.3cm}  x^2w- y^3,  \quad
 \hspace{0.63cm}36  \\
\ab_{34} &=& (4,-2,-2,1), \hspace{1.4cm} x^4w-y^2z^2, \quad  \hspace{0.25cm}50
\end{eqnarray*}

Among the above elements of $I_H$,
 $ x^4w-y^2z^2 = x^2(x^2w- y^3) - y^2( z^2-x^2y)$ is not a
minimal generator of $I_H$,  and we obtain $5$ minimal generators of $I_H$. Cancelation occurs in computing $\ab_{23}$ and $\ab_{24}$. }
\end{Example}}

\begin{Question}
\label{question} {\em  Are all minimal minimal generators of a numerical  semigroups  $\RF$-relations?}
\end{Question}

The following lemma provides a condition which guarantees  that $I_H$ can be generated by $\RF$-relations.

\begin{Lemma}
\label{conditionforquestion}
Suppose $I_H$ admits a system of binomial generators $\phi_1,\ldots,\phi_m$ satisfying the following property:
 for each $k$ there exist $i<j$ and $f\in \PF'(H)$  such that $\deg \phi_k=f+n_i+n_j$ and $\phi_k=u-v$, where $u$ and $v$ are monomials such that $x_i|u$ and $x_j|v$. Then $I_H$ is generated by $\RF$-relations.
 \end{Lemma}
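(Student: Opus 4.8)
The strategy is to show that each given generator $\phi_k$ coincides, up to sign, with an $\RF(f)$-relation for the pseudo-Frobenius number $f$ attached to it. Fix $k$ and write $\phi_k = u - v$ with $\deg\phi_k = f + n_i + n_j$, $x_i \mid u$, $x_j \mid v$, and $f \in \PF'(H)$. Set $u = x_i u'$ and $v = x_j v'$, so that $\deg u' = \deg v' = f + n_j = f + n_i$; thus the exponent vector of $u'$ is the coefficient vector of a factorization of $f + n_j$, and the exponent vector of $v'$ is the coefficient vector of a factorization of $f + n_i$. Concretely, let $\ab_i$ be the row vector whose $n$th coordinate is the exponent of $x_n$ in $v'$ for $n \neq i$, and $-1$ in position $i$; similarly let $\ab_j$ have the exponents of $u'$ off the $j$th coordinate and $-1$ in position $j$. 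Then each of $\ab_i$ and $\ab_j$ satisfies $\sum_n a_{in} n_n = f$ and has the required sign pattern ($-1$ on the diagonal, nonnegative elsewhere).

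\textbf{Completing the rows.} The two rows $\ab_i$ and $\ab_j$ are only two rows of a prospective $\RF(f)$-matrix; I need to supply the remaining $e - 2$ rows. For each index $m \neq i, j$ this amounts to choosing some factorization $f + n_m = \sum_{n \neq m} c_n n_n$, which exists precisely because $f \in \PF(H)$, so $f + n_m \in H$. Filling these in produces a full $e \times e$ matrix $A$ that meets Definition~\ref{RF-def}, hence is a legitimate $\RF(f)$-matrix. (The non-uniqueness of $\RF$-matrices, remarked after Definition~\ref{RF-def}, is exactly what lets me make these arbitrary choices without obstruction.) Now apply Lemma~\ref{produce} to this $A$ with the pair of indices $i < j$ (or $j < i$, adjusting the sign of the relation, which does not affect the ideal it generates): the resulting $\RF(f)$-relation is $\phi_{ij} = \xb^{\ab_{ij}^+} - \xb^{\ab_{ij}^-}$ where $\ab_{ij} = \ab_i - \ab_j$.

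\textbf{Matching $\phi_{ij}$ with $\phi_k$.} It remains to check $\phi_{ij} = \pm\phi_k$. By construction, $\ab_i + \eb_i + \eb_j$ is the exponent vector of $v = x_j v'$ together with the extra $x_i$, i.e. of $x_i v$; likewise $\ab_j + \eb_i + \eb_j$ is the exponent vector of $x_j u$. Since $x_i u' = u$ already contains $x_i$ and $x_j v' = v$ already contains $x_j$, one has $x_i v = x_i x_j v'$ and $x_j u = x_i x_j u'$, and these two monomials have greatest common divisor $x_i x_j \cdot \gcd(u', v')$. Passing to the difference and cancelling this common factor recovers exactly $u' / \gcd - v' / \gcd$ scaled — more carefully, $\ab_{ij} = \ab_i - \ab_j$ has positive part supported on the exponents of $v'$ beyond $\gcd(u',v')$ plus $\eb_i$, and negative part the exponents of $u'$ beyond $\gcd(u',v')$ plus $\eb_j$; reassembling, $\xb^{\ab_{ij}^-} = u / \gcd(u,v)$ and $\xb^{\ab_{ij}^+} = v / \gcd(u,v)$. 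Because $\phi_k = u - v$ is a \emph{minimal} binomial generator, $\gcd(u,v) = 1$ (a common factor could be cancelled, contradicting minimality, as in the proof of Lemma~\ref{NU_e=4}(i)), so $\phi_{ij} = \xb^{\ab_{ij}^+} - \xb^{\ab_{ij}^-} = v - u = -\phi_k$. Hence $\phi_k$ is an $\RF(f)$-relation up to sign, and since $k$ was arbitrary, $\{\phi_1, \ldots, \phi_m\}$ is a system of $\RF$-relations generating $I_H$.

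\textbf{Main obstacle.} The only genuinely delicate point is the exponent bookkeeping in the last paragraph: verifying that forming $\ab_i - \ab_j$ and splitting into positive and negative parts really reproduces $u$ and $v$ on the nose (and not some proper divisors), which is where minimality of $\phi_k$ — forcing $\gcd(u,v)=1$ — is essential. Everything else (existence of factorizations of $f + n_m$, the sign pattern of the rows, the invocation of Lemma~\ref{produce}) is routine.
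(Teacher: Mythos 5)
Your construction is correct in substance, but it takes a genuinely different route from the paper's proof. The paper does not try to realize $\phi_k$ itself as an $\RF$-relation: it keeps only the row coming from $u'=u/x_i$, chooses an \emph{arbitrary} monomial $v'$ of degree $f+n_i$ for the other row, forms the $\RF$-relation $\psi_k=u-x_jv'$, observes that $\phi_k-\psi_k=x_j(v/x_j-v')$ lies in $\mm I_H$, and then concludes by Nakayama's lemma that the $\psi_k$ generate $I_H$. You instead take \emph{both} relevant rows from $\phi_k$ itself, namely $\ab_j$ the exponent vector of $u/x_i$ with $-1$ in position $j$, and $\ab_i$ the exponent vector of $v/x_j$ with $-1$ in position $i$ (these are legitimate rows of an $\RF(f)$-matrix because $x_j\nmid u/x_i$ and $x_i\nmid v/x_j$, since otherwise $f\in H$ --- a point you use implicitly and should state), complete the matrix with arbitrary factorizations of the $f+n_m$, and find that the resulting $\RF$-relation is $\pm\phi_k$ on the nose. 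This is more direct: no replacement of the generators and no Nakayama argument, whereas the paper's version does not need to match the second monomial of $\phi_k$ at all.

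Two points in your write-up need repair, though neither is fatal. First, the hypothesis does not say the $\phi_k$ are \emph{minimal} generators, so you may not invoke minimality to force $\gcd(u,v)=1$. This is easily fixed by your own computation: $\ab_{ij}=\ab_i-\ab_j$ is the exponent vector of $v$ minus that of $u$, so $\xb^{\ab_{ij}^+}=v/w$ and $\xb^{\ab_{ij}^-}=u/w$ with $w=\gcd(u,v)$, hence $\phi_k=-w\,\phi_{ij}$ is a monomial multiple of an $\RF$-relation; since every $\RF$-relation lies in $I_H$, this already gives $I_H=(\phi_1,\ldots,\phi_m)\subseteq (\text{the ideal generated by the } \RF\text{-relations})\subseteq I_H$, which is all the lemma asserts. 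Second, the bookkeeping in your last paragraph contains index slips: $\deg u'=f+n_j$ and $\deg v'=f+n_i$ are not equal in general; $\ab_i+\eb_i+\eb_j$ is the exponent vector of $v$ (not of $x_iv$) and $\ab_j+\eb_i+\eb_j$ that of $u$ (not of $x_ju$); and in your description of $\ab_{ij}^{+}$ and $\ab_{ij}^{-}$ the roles of $\eb_i$ and $\eb_j$ are interchanged. The final identities you state, $\xb^{\ab_{ij}^+}=v/\gcd(u,v)$ and $\xb^{\ab_{ij}^-}=u/\gcd(u,v)$, are nevertheless correct, so after these corrections your argument stands as a valid alternative to the paper's.
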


\begin{proof}
Let $\phi_k=u-v$ be as described in the lemma, and let $u'=u/x_i$.  Then $\deg u'=f+n_j$ and $\deg u=(f+n_j)+n_i$ . Let $v'$ be a monomial with $\deg v'=f+n_i$. Then $\deg x_j v'=(f+n_i)+n_j$. By construction,   $\psi_k=u-x_j v'$ is an $\RF$-relation, and we have
\[
\phi_k=\psi_k+x_j(v/x_j-v') \quad\text{with}\quad v/x_j-v'\in I_H.
\]
It follows that $(\phi_1,\ldots,\phi_m) +\mm I_H=(\psi_1,\ldots,\psi_m) +\mm I_H$. Nakayama's lemma implies that $I_H=(\psi_1,\ldots,\psi_m)$.
\end{proof}

It will be shown in Theorem~\ref{type2} that  Question~\ref{question}  has an affirmative answer when $e=4$ and $\Fr(H)/2\in \PF(H)$. Here we show

\begin{Proposition}
\label{3generated}
Question~\ref{question} has an affirmative  answer if $e=3$
\end{Proposition}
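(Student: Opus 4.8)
The plan is to use the well-known classification of the defining ideal of a $3$-generated numerical semigroup, due to Herzog, together with Lemma~\ref{conditionforquestion}. Recall that for $e=3$ there are exactly two cases: either $K[H]$ is a complete intersection, so $I_H$ is minimally generated by two binomials, or $I_H$ is minimally generated by exactly three binomials, and in the latter case the three generators are, up to relabeling, of the shape
\[
\phi_1=x_1^{\al_1}-x_2^{\beta_2}x_3^{\beta_3},\quad
\phi_2=x_2^{\al_2}-x_1^{\gamma_1}x_3^{\gamma_3},\quad
\phi_3=x_3^{\al_3}-x_1^{\delta_1}x_2^{\delta_2},
\]
with all exponents on the right-hand sides positive. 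The strategy is to show that in each of these two cases the hypothesis of Lemma~\ref{conditionforquestion} is satisfied, so that $I_H$ is generated by $\RF$-relations.

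First I would handle the non-complete-intersection case, since it is the generic one. Here one checks that $H$ has type $2$, and in fact $\Fr(H)+n_k$ has a unique factorization for each $k$ when the ideal is $3$-generated; more usefully, one computes directly from the three generators above that the two pseudo-Frobenius numbers are $\Fr(H)$ and a second one $f\in\PF'(H)$, and that each $\phi_\ell$ has degree exactly of the form $f+n_i+n_j$ with $i\ne j$ and with $x_i$ dividing the first monomial and $x_j$ the second. Concretely, from $\phi_1=x_1^{\al_1}-x_2^{\beta_2}x_3^{\beta_3}$ one gets $\deg\phi_1=\al_1 n_1$, and the point is that $\al_1 n_1 - n_1 - n_2 = (\al_1-1)n_1-n_2$ lies outside $H$ (by the minimality of $\al_1$, cf.\ Lemma~\ref{al_iAp}), and similarly $\al_1 n_1-n_1-n_3\notin H$, while $\al_1 n_1-n_2-n_3\notin H$ can be read off from the presentation; hence $\deg\phi_1=f+n_1+n_j$ for a suitable $j\in\{2,3\}$ and $f\in\PF'(H)$ by Lemma~\ref{deg_phi}(i)--(ii). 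The same argument applied cyclically to $\phi_2$ and $\phi_3$ gives the required form for all three generators, with the correct divisibility of the monomials built in by construction. Then Lemma~\ref{conditionforquestion} applies verbatim.

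For the complete intersection case, $I_H=(\phi_1,\phi_2)$ with (after relabeling) $\phi_1=x_1^{\al_1}-x_2^{\beta_2}x_3^{\beta_3}$ and $\phi_2=x_2^{\al_2}-x_3^{\al_3}$ (or $x_2^{\al_2}-x_1^{\gamma}x_3^{\delta}$, depending on which variable is ``redundant''); here $H$ is symmetric, so $\PF'(H)=\emptyset$ and one cannot literally invoke Lemma~\ref{conditionforquestion}. Instead I would argue directly that each generator is an $\RF(\Fr(H))$-relation: with $f=\Fr(H)$, one builds an $\RF$-matrix whose rows realize the relevant factorizations of $f+n_i$, and checks that the differences of suitable pairs of rows reproduce $\phi_1$ and $\phi_2$ up to the cancelation bookkeeping in Lemma~\ref{produce}. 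In fact the symmetric case is classical: $\Fr(H)=\al_1 n_1-n_1-n_2-n_3+\dots$ can be written so that each $\phi_i$ has degree $\le_H \Fr(H)+n_i+n_j$ with equality achieved by the $\RF$-construction. (One should note that "$\RF$-relation" as defined just after Lemma~\ref{produce} requires $f\in\PF'(H)$; so for the symmetric case the cleanest route is to observe that the two generators of a complete intersection can always be taken among the $\phi_{ij}$ coming from $\RF(\Fr(H))$, and to remark that Question~\ref{question} is about \emph{minimal generators}, for which this suffices.)

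The main obstacle will be the complete intersection (symmetric) case, precisely because $\PF'(H)$ is empty and the slick Nakayama argument of Lemma~\ref{conditionforquestion} is unavailable; one has to exhibit an explicit $\RF$-matrix for $\Fr(H)$ and verify by hand that its row differences give the two complete-intersection generators after removing common factors. By contrast, the $3$-generated non-complete-intersection case is essentially immediate once the shape of the three generators is recalled and Lemma~\ref{deg_phi} is invoked to pin down the pseudo-Frobenius number appearing in each degree.
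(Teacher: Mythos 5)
Your treatment of the non--complete-intersection case has a genuine gap. Lemma~\ref{conditionforquestion} requires a generating set whose members have degree \emph{exactly} $f+n_i+n_j$ with $f\in\PF'(H)$, but for a $3$-generated non-symmetric $H$ this is false in general, and the tools you cite do not yield it: Lemma~\ref{deg_phi}(i) only gives $\deg\phi\le_H f+n_i+n_j$, while Lemma~\ref{deg_phi}(ii) is proved via Lemma~\ref{Apery}(ii), which assumes $H$ almost symmetric --- an assumption not available here, since a $3$-generated non-symmetric semigroup need not be pseudo-symmetric. Concretely, take $H=\langle 14,16,19\rangle$ (in Herzog's labeling $n_1=19$, $n_2=16$, $n_3=14$). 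Then $\PF(H)=\{59,69\}$, so $\PF'(H)=\{59\}$, and $I_H$ is minimally generated by $x_1^4-x_2^3x_3^2$, $x_2^5-x_1^2x_3^3$, $x_3^5-x_1^2x_2^2$, of degrees $76$, $80$, $70$; none of these equals $59+n_i+n_j\in\{89,92,94\}$. So the degree equality you assert for each $\phi_\ell$ fails, and with it the appeal to Lemma~\ref{conditionforquestion}. Note that these three binomials \emph{are} nevertheless $\RF(59)$-relations: in the definition following Lemma~\ref{produce} cancelation is allowed, so an $\RF$-relation may have degree strictly smaller than $f+n_i+n_j$ --- which is precisely why any argument based on forcing that equality cannot succeed.

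The paper's proof takes the route you reserved for the symmetric case and uses it in both cases: in the non-symmetric case it quotes Herzog's explicit presentation together with the known formulas for $\PF(H)$, writes down an explicit $\RF(f)$, and checks directly that the three row differences (after cancelation) are exactly the three minimal generators; in the symmetric (complete intersection) case it exhibits an explicit $\RF(\Fr(H))$ and reads off the two generators from suitable row differences (your remark that $\PF'(H)=\emptyset$ there, so the literal definition of ``$\RF$-relation'' must be read as allowing $f=\Fr(H)$, is a fair observation, and the paper indeed works with $\RF(\Fr(H))$). To repair your argument you should drop Lemma~\ref{conditionforquestion} in the non-CI case and instead carry out the same explicit $\RF(f)$ computation there as well.
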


\begin{proof}

Let $H=\langle n_1,n_2,n_3\rangle$ be a 3-generated  numerical semigroup. We first consider the case that $H$ is not symmetric and collect a few known facts.

By Herzog \cite{H} and Numata \cite[Section 2.2]{Nu}
the following facts are known: there exist positive integers $\alpha$, $\beta$ and $\gamma$, as well  $\alpha'$, $\beta'$ and $\gamma'$  such that
\begin{enumerate}
\item[(1)] $I_H$ is minimally generated by $g_1=x^{\alpha+\alpha'}-y^{\beta'}z^\gamma$, $g_2=y^{\beta+\beta'}-x^\alpha z^{\gamma'}$ and $g_3=z^{\gamma+\gamma'}-y^\beta z^{\alpha'}$, that is, $(\alpha+\alpha')n_1=\beta' n_2+\gamma n_3$, $(\beta+\beta')n_2=\alpha n_1+\gamma' n_3$ and  $(\gamma+\gamma')n_3=\alpha' n_1+\beta n_2$.
\item[(2)]   $n_1=(\beta+\beta')\gamma +\beta'\gamma'$, $n_2=(\gamma+\gamma')\alpha+\gamma'\alpha'$ and $n_3=(\alpha+\alpha')\beta+\alpha'\beta'$.
\item[(3)] $\PF(H)=\{f,f'\}$ with $f=\alpha n_1+(\gamma-\gamma')n_3-(n_1+n_2+n_3)$ and $f'= \beta' n_2+(\gamma-\gamma')n_3-(n_1+n_2+n_3)$.
\end{enumerate}

\medskip
Then it is easy to see that.
\[
\RF(f) =  \left( \begin{array}{ccc} -1 & \beta'+\beta-1 & \gamma-1\\
\alpha-1 & -1 & \gamma+\gamma'-1\\
\alpha +\alpha'-1 & \beta-1 & -1
\end{array}\right).
\]
Let $\ab_1,\ab_2, \ab_3$, the first, second and third row of $\RF(f)$. Then we obtain
$\ab_3-\ab_1=(\alpha+\alpha', -\beta',-\gamma)$, $\ab_1-\ab_2=(-\alpha, \beta+\beta', -\gamma')$ and $\ab_2-\ab_3=(-\alpha',-\beta, \gamma+\gamma')$.

Comparing these vectors with (1), we see that they correspond to the minimal  generators of $I_H$.

\medskip
We now consider the case  that $H$ is a symmetric semigroup. Then it is known that there exist positive integers $a,b$  and $d$ with $\gcd(a,b)=1$ and $d>1$ such that (after a relabeling) $n_1=da$,$n_2=db$ and $n_3=\alpha a-\beta b$. In this case $I_H$ is generated by the regular sequence $g_1=x^b-y^a$ and $g_2= z^d-x^{\alpha}y^{\beta}$, that is, for the semigroup we have the generating relations $bn_1=an_2$ and $dn_3=\alpha n_1+\beta_3 n_2$.

Since $H$ is symmetric $\PF(H)=\{\Fr(H)\}$, and since $I_H$ is  generated by the regular sequence $F(H)=(\deg g_1)+(\deg g_2)-n_1-n_2-n_3= bn_1+gn_3-n_1-n_2-n_3$,

\medskip
We  claim that

\[
\RF(F(H)) =  \left( \begin{array}{ccc} -1 & a-1 & d-1\\
b-1 & -1 & d-1\\
b-1+\alpha_1 & \alpha_2-1 & -1
\end{array}\right),
\]
\medskip
\noindent
if $\alpha_2>0$. On the other hand, if $\alpha_2=0$ and $\alpha_1>0$, then the last column of $\RF(F(H))$ has to be replaced by the column  $(\alpha_1-1,a-1+\alpha_2,-1)$.

 Let $\ab_1,\ab_2, \ab_3$, the first, second and third row of $\RF(f)$. Then, if $\alpha_2>0$ we obtain
 $\ab_1-\ab_2=(b,-a,0)$ and $\ab_3-\ab_2=(\alpha_1,\alpha_2, -d)$. These are the generating relations of $H$. In the case that $\alpha_1>0$ one obtains $\ab_1-\ab_3=(\alpha_1,\alpha_2, -d)$.
\end{proof}

We will show that if  $e=4$ and $H$ is pseudo-symmetric or almost symmetric,
 then Question~\ref{question} has an affirmative answer
  ( cf.  Theorem \ref{Type_even} and Theorem
 \ref{rfrelations} and Remark \ref{RF-rel-7gen}.)  When $e=4$ and $H$ is symmetric, we can not determine
$\RF(\Fr(H))$ in a unique manner and also we cannot get all the generators of $I_H$ from a
single $\RF(\Fr(H))$ although we get the generators by selecting a suitable expression and
consider linear relations of rows of $\RF(\Fr(H))$.

Moscariello proves \cite[Lemma 6]{Mo} that if  $H$ is almost symmetric and  $e=4$,  and if for some $j$ and $a_{ij}=0$ for every $i\ne j$, then
$f = \Fr(H)/2$.  His result can be slightly improved.

\begin{Lemma}\label{a_ij>0} Assume  that $H$ is almost symmetric and  $e=4$. Let $f\in \PF(H), f\ne \Fr(H)$ and put $A=(a_{ij})=\RF(f)$.
Then for every $j$, there exists $i\neq j$ such that $a_{ij}>0$. Namely, any column of $A$ should
contain some positive component. 
\end{Lemma}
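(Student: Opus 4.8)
The statement is a slight strengthening of Moscariello's Lemma 6: the plan is to argue by contradiction, assuming that some column of $A=\RF(f)$ is identically zero except for the diagonal entry $-1$. So suppose $a_{ij}=0$ for all $i\ne j$, for a fixed index $j$. Since $a_{jj}=-1$ and the $j$-th row must satisfy $\sum_k a_{jk}n_k = f$, the $j$-th row still has genuine content; the vanishing is only in the $j$-th \emph{column}. The first observation is that for every $i\ne j$, the relation $f+n_i=\sum_{k\ne i}a_{ik}n_k$ has no contribution from $n_j$, i.e. $f+n_i\in\langle n_k : k\ne i,j\rangle$ for all $i\ne j$. Equivalently, $(f+n_i)-n_j\notin H$ for all $i\ne j$ would be too strong; rather what we get is that $f+n_i$ admits a factorization avoiding $n_j$.

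\textbf{Key steps.} First I would use Lemma~\ref{symm}, applied with $f'=\Fr(H)-f\in\PF'(H)$ (this uses that $f\ne\Fr(H)$, so $f'\in\PF'(H)$, and that $f+f'=\Fr(H)\notin H$). Write $B=\RF(f')=(b_{kl})$. Lemma~\ref{symm} and Remark~\ref{symm-Rem} tell us that for every pair $i\ne j$: since $a_{ij}=0$ is forced for our special column, this imposes \emph{no} constraint on $b_{ji}$, so that does not immediately help. Instead the useful direction is: for the \emph{other} column, note that row $j$ of $A$ has $a_{jk}$ possibly positive for various $k\ne j$, and for each such $k$ with $a_{jk}>0$ we get $b_{kj}=0$. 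The heart of the matter is to exploit the structure when $e=4$: the index set $\{1,2,3,4\}\setminus\{j\}$ has exactly three elements, say $\{p,q,r\}$. For each $i\in\{p,q,r\}$, $f+n_i$ is a non-negative combination of $\{n_p,n_q,n_r\}\setminus\{n_i\}$, i.e. of only the \emph{two} other generators among $p,q,r$. Now I would invoke Lemma~\ref{UF-almost symmetric} (or the tools around it) together with Lemma~\ref{f + n_k = b_in_i} to analyze these three two-variable factorizations simultaneously; the point is that such a tight system — each $f+n_i$ living in a rank-two subsemigroup — combined with almost symmetry, should force $f+f'\in H$ or force $f=\Fr(H)/2$, and in the latter case one checks directly that $\RF(\Fr(H)/2)$ cannot have a zero column either, because $n_j$ itself lies in $\Ap(n_k,H)$ for $k\ne j$ and the row/column symmetry of Lemma~\ref{symm} for $\Fr(H)/2$ would then be violated by considering the factorization of $f+n_j=\Fr(H)/2+n_j$.

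\textbf{Main obstacle.} The delicate part is the simultaneous analysis of the three factorizations $f+n_p, f+n_q, f+n_r$, each using only two of the three generators $n_p,n_q,n_r$: I expect one needs to pair this with the corresponding rows of $B=\RF(\Fr(H)-f)$ and play the Lemma~\ref{symm} alternation $a_{ij}=0$ or $b_{ji}=0$ across all six off-diagonal positions within the $3\times 3$ block indexed by $\{p,q,r\}$, deriving that some $n_i$ ($i\in\{p,q,r\}$) can be written purely in terms of the other two — which contradicts minimality of the generating set — unless $f=\Fr(H)/2$. Handling the residual case $f=\Fr(H)/2$ cleanly (showing the column of $\RF(\Fr(H)/2)$ cannot vanish) is the second obstacle; here I would argue that $n_j=0\cdot n_j + \sum_{k\ne j}(\text{something})$ is impossible while $n_j\in\Ap(n_k,H)$ for each $k\ne j$ forces, via Lemma~\ref{Apery}(ii) and Nari's Lemma~\ref{Nari}, a positive entry in column $j$. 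Once both cases yield contradictions, the lemma follows.
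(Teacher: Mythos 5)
There is a genuine gap: your proposal is a plan rather than a proof, and the two steps you leave as expectations are exactly the ones that carry all the weight. For the main reduction you assert that the ``tight system'' of two-variable factorizations of $f+n_p,f+n_q,f+n_r$, played against $\RF(\Fr(H)-f)$ via Lemma~\ref{symm}, ``should force'' $f+f'\in H$ or $f=\Fr(H)/2$, possibly by contradicting minimality of the generators; no such derivation is given, and the suggested contradiction with minimality does not follow from the zero-pattern bookkeeping. The paper's (i.e.\ Moscariello's) actual route is different: from the vanishing column one first checks $\gcd(n_k:k\ne j)=1$ (using row $j$ together with one of the other rows), so that $H_1=\langle n_k: k\ne j\rangle$ is a numerical semigroup with $f\in\PF(H_1)$; then $\type(H_1)\le 2$ for a $3$-generated semigroup forces $\PF(H_1)=\{f,\Fr(H_1)\}$, and an inequality chase with $\le_{H_1}$ gives $\Fr(H)-f=f$, i.e.\ $f=\Fr(H)/2$. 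None of these ingredients (the subsemigroup $H_1$, the gcd check, the type bound for $3$-generated semigroups) appear in your sketch, and it is not clear your block-of-zeros analysis can replace them.

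The second, and decisive, gap is the residual case $f=\Fr(H)/2$, which is precisely where the lemma strengthens Moscariello's result. Your proposed ``direct check'' does not work: a zero column in $\RF(\Fr(H)/2)$ is perfectly compatible with Lemma~\ref{symm}, since the alternative ``$a_{ij}=0$ or $a_{ji}=0$'' is automatically satisfied at every position involving that column, and the fact that $n_j\in\Ap(n_k,H)$ for $k\ne j$ produces no contradiction by itself. The paper needs a genuinely different idea here: with $f=\Fr(H)/2$ one shows $\Fr(H_1)=\Fr(H)$ and that $H_1$ is pseudo-symmetric, hence $g(H_1)=\Fr(H)/2+1$; since $H\supseteq H_1\cup\{n_j\}$ one gets $g(H)\le g(H_1)-1=\Fr(H)/2$, which contradicts almost symmetry, because $2g(H)=\Fr(H)+t(H)\ge \Fr(H)+2$. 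This gap-counting argument is absent from your proposal, and without it (or a substitute) the statement as given -- that \emph{no} column of $\RF(f)$ can vanish, even for $f=\Fr(H)/2$ -- is not established.
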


\begin{proof}
First, let us recall the proof of Moscariello.
Assume, for simplicity, $a_{i1}=0$ for $i=2,3,4$.  Put $d = \gcd(n_2,n_3,n_4)$.
From the equation $f = -n_2 + a_{23}n_3 + a_{24}n_4$, we have $d|f$ and from
$f = -n_1+a_{12}n_2+a_{13}n_3+a_{14}n_4$, we get $d| n_1$. This implies $d=1$.
Hence $H_1: =\langle n_2,n_3,n_4\rangle$ is a numerical semigroup,  and the last 3 columns of $A$  show us that
$f\in \PF(H_1)$.

Since $H_1$ is generated by $3$ elements, $t(H_1)\le 2$ (see \cite{H}), and
since $\Fr(H_1)\ge \Fr(H)>f$, we conclude  that $\PF(H_1)=\{f, \Fr(H_1)\}$.
Hence we have $\Fr(H_1) -f \not\in H_1$ and $\Fr(H_1) -f \le_{H_1} f$ or $\Fr(H_1) -f \le_{H_1} \Fr(H_1)$. The second case cannot happen, since otherwise $f\in H_1$. Thus we have $\Fr(H_1) -f \le_{H_1} f$.

On the other hand,  since $\Fr(H)\not\in H_1$ and $f< \Fr(H)\le \Fr(H_1)$, we get  $\Fr(H) - f \le_{H_1} \Fr(H_1) - f \le_{H_1} f$. Moreover,  since
$ \Fr(H) - f \in \PF(H)$, it follows that  $ \Fr(H) - f =f$.
(Until here, this  Moscariello's argument.)

The arguments before  show that $F(H)=F(H_1)$,   and that  $H_1$ is  pseudo-symmetric. The latter implies that  $g(H_1) = \Fr(H_1)/2 +1$.Therefore, $g(H) \le g(H_1)-1 =\Fr(H_1)/2=\Fr(H)/2$, since
$H \supset H_1\cup\{n_1\}$.  This  is a contradiction.
\end{proof}

Combining Lemma \ref{symm}, Remark \ref{symm-Rem} and Lemma \ref{a_ij>0}, we get

\begin{Corollary}\label{0_in_row}
Assume $H$ is almost symmetric, and let $f\in \PF'(H)$. Then every row of $\RF(f)$ has at least one $0$.
Moreover, for every $i$, there exists $j\ne i$ such that the $(i,j)$ component of $\RF(f)$ is $0$ for  any choice of $\RF(f)$.
\end{Corollary}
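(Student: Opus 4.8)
The statement to prove is Corollary~\ref{0_in_row}, which has two assertions about $\RF(f)$ for an almost symmetric $H$ and $f\in\PF'(H)$: first, that every row of any $\RF(f)$ contains a zero entry; second, that for every row index $i$ there is a column index $j\ne i$ such that the $(i,j)$-entry vanishes \emph{for every} choice of $\RF(f)$. The plan is simply to assemble the three results quoted just before the corollary, so no new computation is needed.

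First I would dispose of the ``every row has a zero'' claim. Apply Lemma~\ref{symm} with $f'=\Fr(H)-f$: since $H$ is almost symmetric, Nari's Lemma~\ref{Nari} gives $f'\in\PF(H)$, and $f+f'=\Fr(H)\notin H$, so the hypothesis of Lemma~\ref{symm} holds. Writing $A=(a_{ij})=\RF(f)$ and $B=(b_{ij})=\RF(f')$, the lemma says that for each pair $i\ne j$ at least one of $a_{ij}$, $b_{ji}$ is zero. Now fix a row $i$ of $A$. If every off-diagonal entry $a_{ij}$ ($j\ne i$) were positive, then $b_{ji}=0$ for all $j\ne i$, i.e.\ the $i$-th column of $B=\RF(f')$ would be identically zero off the diagonal; but $f'\in\PF'(H)$ (it is $\ne\Fr(H)$ because $f\ne\Fr(H)$ and $f+f'=\Fr(H)$ forces $f'=\Fr(H)$ only when $f=0\notin\PF'(H)$), so Lemma~\ref{a_ij>0} applied to $B$ forbids this. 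Hence some $a_{ij}=0$, which is the first assertion.

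For the second, stronger assertion I would argue the same way but combine it with Remark~\ref{symm-Rem}. Fix $i$. Run Lemma~\ref{a_ij>0} on $B=\RF(f')$ (for any one choice of $\RF(f')$): the $i$-th column of $B$ has a positive entry, say $b_{ki}>0$ for some $k\ne i$. By Remark~\ref{symm-Rem}, if $b_{ki}>0$ for \emph{some} $\RF(f')$, then $a_{ik}=0$ for \emph{any} $\RF(f)$. Thus $j:=k$ is a column index, $j\ne i$, with the $(i,j)$-entry of $\RF(f)$ equal to $0$ regardless of which $\RF(f)$ we picked. This is exactly the claimed uniform vanishing.

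The only subtle point — hardly an obstacle — is checking that $f'=\Fr(H)-f$ genuinely lies in $\PF'(H)$ so that Lemma~\ref{a_ij>0} is applicable to it: this is immediate from $f\in\PF'(H)$, Nari's symmetry $f'\in\PF(H)$, and the fact that $f'=\Fr(H)$ would force $f=0$, absurd. Everything else is a direct citation of Lemma~\ref{symm}, Remark~\ref{symm-Rem}, and Lemma~\ref{a_ij>0}, so the proof is short.
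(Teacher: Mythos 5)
Your proof is correct and is essentially the paper's own argument: the paper obtains Corollary~\ref{0_in_row} precisely by combining Lemma~\ref{symm} (applied to the pair $f$ and $f'=\Fr(H)-f$, which lies in $\PF'(H)$ by Nari's symmetry, as you check), Remark~\ref{symm-Rem}, and Lemma~\ref{a_ij>0}, exactly as you do. The only cosmetic point is that the statement implicitly assumes $e=4$ (as Lemma~\ref{a_ij>0} requires), which you use without remark.
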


If $e=5$, Lemma \ref{a_ij>0} is not true.

\begin{ex}  If $H= \langle 10, 11 , 15, 16, 28 \rangle$, then $\PF(H) =\{ 5,17,29, 34\}$
and hence $H$ is almost symmetric of type $4$. Then
\[\RF(5) =  \left( \begin{array}{ccccc} -1 & 0 & 1 & 0 & 0\\
                                                            0 & -1 & 0 & 1 & 0\\
2 & 0 & -1 & 0 & 0\\ 1  & 1 & 0 &  -1& 0\\ 0 & 3 & 0& 0 & -1
 \end{array}\right), \quad {\mbox{\rm and}}  \;
 \RF(29) =  \left( \begin{array}{ccccc} -1 & 1 & 0 & 0& 1\\1 & -1 & 2 & 0 & 0\\
2 & 2& -1 & 0 & 0\\ 0 & 0 & 3  & -1 & 0\\ 3 & 1 & 0 & 1 & -1
 \end{array}\right).\]
We see that the 5th column of $\RF(5)$ has no positive entry and we can choose another
expression of $\RF(29))$ whose  $(5,3)$ entry is positive.
\end{ex}

We can say that if $e=4$ and $H$ almost symmetric, then $f + n_i$ has UF in $H$ in most cases
for $f\in \PF'(H)$.

\begin{Corollary}\label{2nj}  Assume that  $H$ is almost symmetric, $e=4$. Then  for any  $f\in PF'(H)$ and
any $n_i$, the decomposition of $f+n_i$ has at most $2$ $n_j$'s.  Moreover, if $f+n_i$ does not have
UF and $n_j, n_k$ appears in the decomposition of $f+n_i$, then we have $\al_jn_j= \al_kn_k$.
\end{Corollary}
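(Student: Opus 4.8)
\textbf{Proof strategy for Corollary~\ref{2nj}.} The plan is to read off both assertions from the RF-matrix machinery developed above, together with the almost-symmetric structural lemmata of Section~\ref{sec:2}. Fix $f\in\PF'(H)$ and an index $i$. The key observation is that any factorization of $f+n_i$ is the $i$-th row of some $\RF(f)$ (after removing the $-1$ in position $i$ and reading the nonnegative entries as exponents); conversely every $\RF(f)$ has $i$-th row giving such a factorization. So to bound the number of distinct $n_j$ appearing, I would argue: suppose $f+n_i=\sum_{j\ne i}\beta_jn_j$ with at least three of the $\beta_j$ positive, i.e.\ $\beta_j,\beta_k,\beta_l>0$ for three distinct indices $j,k,l$ (all $\ne i$), which forces $e=4$ and $\{j,k,l\}=\{1,2,3,4\}\setminus\{i\}$. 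Build an $\RF(f)=A=(a_{ij})$ whose $i$-th row records this factorization, so $a_{ij},a_{ik},a_{il}$ are all $>0$. Now apply Lemma~\ref{symm} with $f'=\Fr(H)-f\in\PF'(H)$ (using Lemma~\ref{Nari}, which guarantees $f'\in\PF'(H)$, and $f+f'=\Fr(H)\notin H$): for the pair $(i,j)$ we get $a_{ij}=0$ or $b_{ji}=0$ where $B=\RF(f')$; since $a_{ij}>0$ we conclude $b_{ji}=0$, and likewise $b_{ki}=0$ and $b_{li}=0$. But then the $i$-th column of $B$ has no positive entry, contradicting Lemma~\ref{a_ij>0} applied to $f'\ne\Fr(H)$. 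Hence at most two $n_j$ can occur. (I expect this step — arranging the contradiction via the symmetry lemma on the partner $\Fr(H)-f$ — to be the crux; everything else is bookkeeping about which lemma provides which ingredient.)

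For the second assertion, assume $f+n_i$ does not have UF and that $n_j,n_k$ (with $j\ne k$, both $\ne i$) both appear in \emph{some} factorization of $f+n_i$, so by the first part every factorization of $f+n_i$ uses only $n_j,n_k$ and at least one factorization uses both. Since $f+n_i$ has no UF, Lemma~\ref{NU_e=4}(i) gives a factorization $f+n_i\ge_H \al_mn_m$ for some index $m\ne i$. Because only $n_j$ and $n_k$ can occur among all factorizations of $f+n_i$, we must have $m\in\{j,k\}$; say $m=j$, so $f+n_i=\al_jn_j+h$ for some $h\in H$, and then $h\le_H f+n_i$ means $h$ is again factored using only $n_j,n_k$. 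Thus there is a factorization of $f+n_i$ of the form $bn_j+cn_k$ with $b\ge\al_j$; rewriting the $\al_jn_j$ part via the defining relation \eqref{minimal}, $\al_jn_j=\sum_{s\ne j}\al_{js}n_s$, produces yet another factorization of $f+n_i$, and since this one involves $n_s$ for $s$ with $\al_{js}>0$, minimality of "only $\{j,k\}$ occur" forces $\al_{js}=0$ for all $s\notin\{j,k\}$, i.e.\ $\al_jn_j=\al_{jk}n_k$ with $\al_{jk}\ge 1$. Symmetrically (running the same argument with the roles of $j$ and $k$ swapped, using that $n_k$ also appears and that $f+n_i$ has no UF, so Lemma~\ref{NU_e=4}(i) can be applied to extract $\al_kn_k\le_H f+n_i$ — here one should check that if the extracted $\al_mn_m$ had $m=j$ again we simply reuse the previous conclusion) one gets $\al_kn_k=\al_{kj}n_j$ with $\al_{kj}\ge 1$. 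Combining $\al_jn_j=\al_{jk}n_k$ and $\al_kn_k=\al_{kj}n_j$ yields $\al_j\al_{kj}=\al_{jk}\al_k\cdot(\text{something})$; more directly, $\al_jn_j=\al_{jk}n_k$ already shows $n_k\mid\al_jn_j$ and the minimality of $\al_k$ forces $\al_{jk}\ge\al_k$, while minimality of $\al_j$ applied to $\al_{kj}n_j=\al_kn_k$ forces $\al_{kj}\ge\al_j$; feeding these back gives $\al_jn_j\le_H\al_{jk}n_k$ and $\al_kn_k\le_H\al_{kj}n_j$, whence $\al_jn_j\le_H\al_{kj}n_j$ so $\al_{kj}=\al_j$ and similarly $\al_{jk}=\al_k$, i.e.\ $\al_jn_j=\al_kn_k$ as claimed.

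The main obstacle I anticipate is the second paragraph: one must be careful that Lemma~\ref{NU_e=4}(i) is extracting a monomial power $\al_mn_m$ from \emph{a} factorization, and to conclude $m\in\{j,k\}$ I am implicitly using that the binomial $\phi=m_1-m_2$ produced there has support inside the support of the two factorizations compared, hence inside $\{x_j,x_k,x_i\}\setminus\{x_i\}=\{x_j,x_k\}$ — so the $x_m^b$ appearing must be one of $x_j^b,x_k^b$. I would double-check this support-containment and also the edge case where all factorizations of $f+n_i$ that are "long" happen to use the \emph{same} single variable among $\{j,k\}$ (so that one of $n_j,n_k$ appears only with coefficient $0$ in every long factorization while appearing with a positive coefficient in some short one); a short additional argument, or an appeal to the almost-symmetric refinements Lemma~\ref{UF-almost symmetric}(i)–(ii), handles this and identifies which variable plays the role of the $\al_m n_m$-factor. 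Modulo this care, the corollary follows from Lemma~\ref{symm}, Lemma~\ref{a_ij>0} (equivalently Corollary~\ref{0_in_row}), and Lemma~\ref{NU_e=4}.
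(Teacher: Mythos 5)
Your first claim is proved correctly, and by essentially the paper's own mechanism: the paper simply quotes Corollary~\ref{0_in_row} (whose proof is exactly your combination of Lemma~\ref{symm}, Remark~\ref{symm-Rem}, Lemma~\ref{Nari} and Lemma~\ref{a_ij>0}) to conclude that the $(i,l)$ entry of $\RF(f)$ vanishes for \emph{any} choice of $\RF(f)$, so that every factorization of $f+n_i$ is supported on the same pair $\{n_j,n_k\}$. Note one small point of care: what you actually prove in your first paragraph is that a \emph{single} factorization cannot involve three generators; the statement you later use, that \emph{all} factorizations of $f+n_i$ avoid the same $n_l$, needs the cross-factorization form of the argument (positive $(i,j)$, $(i,k)$, $(i,l)$ entries occurring in possibly \emph{different} choices of $\RF(f)$ still kill the whole $i$-th column of any $\RF(\Fr(H)-f)$ via Remark~\ref{symm-Rem}), i.e.\ exactly the ``for any choice'' clause of Corollary~\ref{0_in_row}. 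This is immediate with your tools, but it is not literally ``the first part''.

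The genuine gap is in the second claim, at the place you yourself flag. Your route through Lemma~\ref{NU_e=4}(i) produces only \emph{one} pure-power bound, say a factorization of $f+n_i$ whose $n_j$-coefficient is at least $\al_j$, and hence only the one-sided relation $\al_jn_j=\al_{jk}n_k$ with $\al_{jk}\ge\al_k$; this alone does not imply $\al_jn_j=\al_kn_k$, and your sandwich needs the symmetric relation $\al_kn_k=\al_{kj}n_j$ with $\al_{kj}\ge\al_j$, which in turn needs a factorization of $f+n_i$ whose $n_k$-coefficient is at least $\al_k$. Nothing in Lemma~\ref{NU_e=4}(i) (nor in Lemma~\ref{UF-almost symmetric}, which concerns $\Fr(H)+n_k$) guarantees this, and ``reuse the previous conclusion if $m=j$ again'' does not produce it. The missing idea is the paper's one-line argument: since $f+n_i$ has no UF and all its factorizations are supported on $\{n_j,n_k\}$, take two distinct ones, $f+n_i=an_j+bn_k=a'n_j+b'n_k$ with $a>a'$, $b<b'$; subtracting gives $(a-a')n_j=(b'-b)n_k$, and minimality of $\al_j$ and $\al_k$ yields \emph{both} $a\ge a-a'\ge\al_j$ and $b'\ge b'-b\ge\al_k$ simultaneously. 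With both bounds in hand, your substitution-into-(\ref{minimal}) argument (using that no factorization of $f+n_i$ can involve $n_i$ or $n_l$) gives $\al_jn_j=\al_{jk}n_k$ and $\al_kn_k=\al_{kj}n_j$, and your sandwich then closes the proof. So the proposal is repairable, but as written the decisive symmetric step is asserted rather than proved, and the replacement of the paper's two-factorization subtraction by Lemma~\ref{NU_e=4}(i) is precisely what creates the hole.
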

\begin{proof}
Let $\{i,j,k,l\}$ be a permutation of $\{1,2,3,4\}$.
By Corollary \ref{0_in_row}, we may assume that the $(i.l)$ component of $\RF(f)$ is $0$
 for any choice of $\RF(f)$.  Thus $f+n_i$ contains only $n_j, n_k$. Assume that there
 are $2$ different expressions $f+n_i = an_j + bn_k =  a'n_j + b'n_k$. Assuming $a>a', b< b'$,
  we have $(a-a')n_j = (b'-b)n_k$ and then $a\ge a-a'\ge \al_j,  b'-b \ge \al_k$.
\end{proof}
\begin{Proposition}\label{f + n_k = bn_i}
Suppose that  $H$ is almost symmetric, $e=4$ and for some  $f\in PF'(H)$ we have  $f + n_k = bn_i$ for some $k \ne i$.
Then one of the following cases occur:
\begin{enumerate}
\item[{\em (i)}] $b= \al_i -1$  or
\item[{\em (ii)}] $b \ge \al_i$ and for some $j\ne i,k$, $\al_i n_i = \al_j n_j$.
\end{enumerate}
\end{Proposition}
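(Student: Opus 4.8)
The starting point is Lemma~\ref{f + n_k = b_in_i}(ii), which already tells us $b \geq \al_i - 1$, so only two alternatives remain: $b = \al_i - 1$ (case (i)) or $b \geq \al_i$. I would assume $b \geq \al_i$ throughout and aim to derive that $\al_i n_i = \al_j n_j$ for some $j \neq i, k$. Since $b \geq \al_i$, the equation $f + n_k = b n_i$ shows that $f + n_k$ does not have UF: indeed $f + n_k = b n_i$ and, substituting one copy of $\al_i n_i$ via the minimality relation (\ref{minimal}), we get a second, genuinely different factorization $f + n_k = (b - \al_i) n_i + \sum_{l \neq i} \al_{il} n_l$. (These differ because the coefficient of $n_i$ drops and at least one $\al_{il} > 0$ with $l \neq i$.)

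\textbf{Key steps.} So $f + n_k$ does not have UF, and by construction $n_i$ appears in its decomposition $b n_i$. I now invoke Corollary~\ref{2nj}: since $H$ is almost symmetric, $e = 4$, $f \in \PF'(H)$, and $f + n_k$ fails UF, any two generators appearing in a decomposition of $f + n_k$ must satisfy $\al$-times each other. The decomposition $f + n_k = (b - \al_i) n_i + \sum_{l \neq i, k} \al_{il} n_l$ obtained above shows which generators actually occur. By Corollary~\ref{2nj} the decomposition of $f + n_k$ involves at most two generators, hence exactly the generators $n_i$ and some $n_j$ with $j \neq i, k$ (it must involve a second generator besides $n_i$, since otherwise $f + n_k = b' n_i$ would force, via Lemma~\ref{f + n_k = b_in_i}(iii) applied to the smaller coefficient, a collision that still produces a second generator — more carefully: if the only generator were $n_i$ then $f + n_k$ would have UF by uniqueness of the coefficient, contradicting our deduction). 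Corollary~\ref{2nj} then gives $\al_i n_i = \al_j n_j$, which is precisely conclusion (ii).

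\textbf{Main obstacle.} The delicate point is bookkeeping around which generators appear in the non-UF decomposition of $f + n_k$ and ensuring that the second generator $n_j$ satisfies $j \neq k$ (not just $j \neq i$). The fact that $n_k$ does not appear in any decomposition of $f + n_k$ is exactly what Corollary~\ref{0_in_row} / Lemma~\ref{a_ij>0} guarantee (the $(k, \cdot)$-row structure of the $\RF$-matrix), so $j \neq k$ comes for free; the key is to phrase Corollary~\ref{2nj} so it applies directly. A secondary subtlety is verifying that the two factorizations of $f + n_k$ I exhibit are genuinely distinct when $b = \al_i$ (so $b - \al_i = 0$): here the new decomposition has $n_i$-coefficient $0$ while the original has $n_i$-coefficient $b = \al_i > 0$, so they are distinct, and at least one of the $\al_{il}$ with $l \neq i$ is positive since (\ref{minimal}) is a nontrivial relation. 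Once these points are pinned down, the proof is short and reduces entirely to combining Lemma~\ref{f + n_k = b_in_i}(ii) with Corollary~\ref{2nj}.
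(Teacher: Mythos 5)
Your argument is correct and is essentially the paper's own proof: the paper disposes of this proposition in one line as a direct consequence of Lemma~\ref{f + n_k = b_in_i}(ii) and Corollary~\ref{2nj}, and you have simply made that explicit (rewriting $bn_i$ via (\ref{minimal}) when $b\ge \al_i$ to destroy UF, then applying Corollary~\ref{2nj} to the two generators $n_i$ and $n_j$ that appear). One small misattribution, harmless to the argument: the reason $n_k$ cannot occur in any factorization of $f+n_k$ is simply that $f\notin H$ (equivalently, Lemma~\ref{f + n_k = b_in_i}(i) gives $\al_{ik}=0$), not Corollary~\ref{0_in_row}, whose content is the existence of a further zero entry in the row.
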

\begin{proof}   This is a direct consequence of
 Lemma~\ref{f + n_k = b_in_i}  Corollary \ref{2nj}.
\end{proof}

\begin{Example}{\em  Let $H =\langle 33,56,61,84\rangle$ with $\PF(H)=\{f=28,f'=835, F(H)=863\}$
and $\al_1=28, \al_2=3, \al_3=2, \al_4=2$. In this case,  $\RF(28)$ is uniquely determined,
but there are several choices of $\RF(835)$. Among them we can choose the following,
where  $ f' + n_3= 16 n_2$ with $16> \al_2$. Note that
we have $\al_2n_2= \al_4n_4$ in this case.

$$\RF(28) =  \left( \begin{array}{cccc} -1 & 0 & 1 & 0\\0 & -1 & 0 & 1\\
1 & 1 & -1 & 0 \\ 0 & 2 & 0 & -1
 \end{array}\right)\quad \text{and}\quad
  \RF(835) =  \left( \begin{array}{cccc} -1 & 2 & 0 & 9\\27 & -1 & 0 & 0\\
0 & 16 & -1 & 0 \\ 26 & 0 & 1 & -1
 \end{array}\right).$$}
\end{Example}

The following Proposition plays an important role in Section~\ref{sec:komeda}.

\begin{Proposition}\label{f+n_k= f' + n_l}  Assume $e=4$, $H$ is almost symmetric  and $\{i,j,k,l\}=\{1,2,3,4\}$.
Then the following statements hold:.
\begin{enumerate}
\item[{\em (i)}] For any $f,f',f'' \in \PF'(H)$,  $f+n_k = f' + n_l=f''+n_j$ does not occur.

\item[{\em (ii)}] Assume that for some  $f\ne f'\in  \PF '(H)$, we have
$f+n_k = f' + n_l$ for some $1\le k,l\le 4$. Then $f+n_k = f' + n_l = (\al_i -1)n_i$
 for some $i\ne j,k$.
\item[{\em (iii)}] Assume  $f+n_k = f' + n_l = (\al_i -1)n_i$ for some $f,f'\in \PF'(H)$ with
$f+f' \not\in H$.  Then there are expressions
\[\al_in_i = pn_j + qn_k = p'n_j + r n_l \quad {\mbox{\text with }} q,r>0.\]
Moreover, if $p\le p'$, then $q = \al_k$ and if $q\ge \al_k+1$, then
$\al_kn_k=\al_jn_j$.

\end{enumerate}
\end{Proposition}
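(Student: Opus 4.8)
\textbf{Proof proposal for Proposition~\ref{f+n_k= f' + n_l}.}

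The plan is to treat the three parts in sequence, each time exploiting Corollary~\ref{0_in_row} (every row of an $\RF$-matrix of a non-Frobenius pseudo-Frobenius number has a zero) together with the symmetry Lemma~\ref{Nari} and the column-positivity Lemma~\ref{a_ij>0}. For part~(i), I would argue by contradiction: if $f+n_k=f'+n_l=f''+n_j$ with $\{i,j,k,l\}=\{1,2,3,4\}$ and $f,f',f''\in\PF'(H)$, then by Corollary~\ref{2nj} each of these common values is a sum of at most two of the $n$'s; analyzing which two generators can occur, and using Lemma~\ref{a_ij>0} applied to $\RF(\Fr(H)-f)$ etc.\ to force the missing column entries to vanish, should rule out that all three of $f+n_k, f'+n_l, f''+n_j$ coincide. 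The key point is a counting/parity obstruction: the value $w:=f+n_k$ equals $f'+n_l$ forces $f-f'=n_l-n_k$, and then $f''+n_j=w$ forces $f''=w-n_j$ to also be in $\PF'(H)$, so we would have three distinct elements of $\PF'(H)$ differing by differences of generators, and one checks (via Lemma~\ref{Apery}(ii) applied to $w\in\Ap(n_i,H)$ for the remaining index $i$) that $w-n_i\in\PF'(H)$ would also hold, producing too many pseudo-Frobenius numbers of a special type for a $4$-generated almost symmetric semigroup.

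For part~(ii), starting from $f+n_k=f'+n_l$ with $f\ne f'$, I would first observe that this common element $w$ lies in $\Ap(n_k,H)\cap\Ap(n_l,H)$ (indeed $w-n_k=f\notin H$ and $w-n_l=f'\notin H$). By Corollary~\ref{2nj}, $w=f+n_k$ is a sum involving at most two generators, and neither of these can be $n_k$ (since $a_{kk}=-1$ forces the $k$-th row of $\RF(f)$ not to use $n_k$) nor $n_l$; but $w$ also cannot use $n_k$ in its factorization coming from $f'+n_l$... wait — I should be careful: $w=f+n_k$ does use $n_k$ from that side but the factorization as $f'+n_l$ does not, so by uniqueness-type reasoning (Corollary~\ref{2nj} again: if $w$ does not have UF then $\al_jn_j=\al_{j'}n_{j'}$ for the two generators appearing) the generator $n_k$ must in fact appear, contradiction unless $w$ has a factorization avoiding both $n_k$ and $n_l$. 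Since only $n_i$ and $n_j$ remain, and a factorization using both $n_i$ and $n_j$ for $f+n_k$ would (by Lemma~\ref{UF-almost symmetric}(i)) force $\Fr(H)+n_k$ to fail UF in a way incompatible with simultaneously failing for $\Fr(H)+n_l$, I expect to be pushed to $w=(\al_i-1)n_i$ or $w=(\al_j-1)n_j$ for a single index. Lemma~\ref{f + n_k = b_in_i}(iii) then identifies $b=\al_i-1$: indeed $w=bn_i$ with $b\ge\al_i-1$ by part~(ii) of that lemma, and $b\le\al_i-1$ because otherwise $w\ge_H\al_in_i$ would let us rewrite using (\ref{minimal}) to get a factorization using $n_k$ or $n_l$, contradicting what we just established. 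Finally $i\ne j$ is automatic and $i\ne k$ follows since $w-n_k=f\notin H$ while $(\al_i-1)n_i-n_i=(\al_i-2)n_i\in H$ when $i=k$... I would double-check the edge case $\al_i\le 1$, but $\al_i\ge 2$ always since $n_i$ is a minimal generator.

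For part~(iii), assume $f+n_k=f'+n_l=(\al_i-1)n_i=:w$ with $f+f'\notin H$. Since $w-n_k=f\notin H$, the vector $\ab_k+\eb_k$ (a factorization of $w$) cannot be supported only on $\{i\}$ unless it uses $n_k$—so $w$ must have a factorization $pn_j+qn_k$ with $q>0$; similarly from the $f'$ side a factorization $p'n_j+r n_l$ with $r>0$. (These are the factorizations of $w=f+n_k$ given by the $k$-th row of $\RF(f)$ plus $\eb_k$, and of $w=f'+n_l$ given by the $l$-th row of $\RF(f')$ plus $\eb_l$; Corollary~\ref{2nj} tells us at most two generators appear, and since $q,r>0$ are forced, the other generator in each must be the same one, namely $n_j$—here one uses that $n_i$ cannot appear in the $k$-th row of $\RF(f)$ together with $n_k$, because then $w=(\al_i-1)n_i$ would have a factorization with an $n_i$-coefficient at least... hmm, actually $n_i$ could appear; I would instead invoke Lemma~\ref{symm} applied to the pair $(f,f')$: since $f+f'\notin H$, for the entries $a_{ki}$ of $\RF(f)$ and $b_{lj}$... let me restructure — the cleanest route is: by Lemma~\ref{symm} with $f,f'$, for each pair of indices either the $\RF(f)$-entry or the $\RF(f')$-entry vanishes, which pins down the supports of rows $k$ of $\RF(f)$ and $l$ of $\RF(f')$ to force both to involve $n_j$.) Once the two expressions $\al_i n_i=pn_j+qn_k=p'n_j+rn_l$ are in hand (obtained by adding $n_i$ to both sides of $w=(\al_i-1)n_i$ and using the two factorizations of $w$), the final assertions are exactly Lemma~\ref{f + n_k = b_in_i} combined with Corollary~\ref{2nj}: comparing the two expressions $pn_j+qn_k=p'n_j+rn_l$ — wait, these have different generators on each side, so subtracting gives $(p-p')n_j+qn_k=rn_l$ or $qn_k=(p'-p)n_j+rn_l$; if $p\le p'$ then $qn_k+(p'-p)n_j=rn_l$, but $r<\al_l$ always fails... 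I think the intended comparison is within the single element $\al_in_i$: two factorizations $pn_j+qn_k$ and $p'n_j+rn_l$ of the \emph{same} number $\al_in_i$, so their difference $\prod$ gives a binomial in $I_H$ of degree $\le_H\al_in_i$ in variables $x_j,x_k,x_l$ only; if $p\le p'$ the binomial is $x_j^{p}x_k^q - x_j^{p'}x_l^r$ divided by $x_j^{\min}=x_j^p$, i.e.\ $x_k^q-x_j^{p'-p}x_l^r\in I_H$, and since $\gcd$ of the two monomials is $1$ this forces $q\ge\al_k$; minimality of $\al_k$ gives $q=\al_k$ unless there is a smaller relation, and if $q\ge\al_k+1$ we can subtract $\al_kn_k$ and rewrite via (\ref{minimal}), producing a factorization of $\al_kn_k$... which by Corollary~\ref{2nj}'s last clause yields $\al_kn_k=\al_jn_j$.

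\textbf{Main obstacle.} The delicate point throughout is the bookkeeping of \emph{which} generators can appear in the two-generator factorizations guaranteed by Corollary~\ref{2nj}, and making sure the zero-entry from Corollary~\ref{0_in_row} is placed at the right position for \emph{every} choice of $\RF$-matrix (not just one), so that the argument is not accidentally circular. Coordinating Lemma~\ref{symm} (which links $\RF(f)$ and $\RF(f')$ when $f+f'\notin H$) with the per-row zero of Corollary~\ref{0_in_row} is what makes the supports collapse onto the single index $j$ in part~(iii); getting this coordination exactly right, and handling the degenerate possibilities $k=l$ or small $\al$'s, is where the real work lies. Parts~(i) and~(ii) are then comparatively short consequences.
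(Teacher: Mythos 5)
Your outline identifies some of the right ingredients (Lemma~\ref{symm}, Lemma~\ref{a_ij>0}, Corollary~\ref{2nj}), but at each of the three parts the decisive step is left unproved or replaced by an argument that does not work. The engine of the paper's proof of (i) and (ii) is the following move, which you gesture at and then abandon: add $\Fr(H)-f$ (which lies in $\PF'(H)$ by Lemma~\ref{Nari}) to the common value, and observe that then $n_k$ cannot occur in any factorization of $(\Fr(H)-f)+n_s$ for every $s\neq k$ --- for $s=l$ (and $j$) because otherwise $\Fr(H)-f'\in H$ against Nari symmetry, and for the remaining indices because otherwise $\Fr(H)\in H$ --- so the whole $k$-th column of $\RF(\Fr(H)-f)$ vanishes, contradicting Lemma~\ref{a_ij>0}. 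In (i) you instead fall back on a counting claim (``too many pseudo-Frobenius numbers''): this is not available, since no bound on $\type(H)$ has been established at this point (Theorem~\ref{type_le3} is proved \emph{later}, using this very proposition), the elements $f,f',f''$ need not be distinct, and Lemma~\ref{Apery}(ii) only gives a dichotomy, not $w-n_i\in\PF'(H)$. In (ii) the key exclusion of a mixed factorization $w=bn_i+b'n_j$ with $b,b'>0$ is exactly where this column argument is needed, and you leave it at ``I expect to be pushed to\dots''; moreover your argument for $b\le\al_i-1$ fails as stated, because rewriting $bn_i$ via (\ref{minimal}) may produce a factorization involving only $n_j$, which does not contradict the exclusion of $n_k,n_l$ --- one must feed it back into the already-excluded mixed case, as the paper does.

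In (iii) the mechanism producing the two expressions is misidentified: they do not come from ``factorizations of $w$'' or from the $k$-th row of $\RF(f)$ and the $l$-th row of $\RF(f')$ (indeed $w-n_k=f\notin H$ and $w-n_l=f'\notin H$, so no factorization of $w$ can involve $n_k$ or $n_l$ at all). The correct source is the $i$-th rows: $\al_in_i=(f+n_i)+n_k=(f'+n_i)+n_l$, and the supports of $f+n_i$ and $f'+n_i$ are pinned down by Lemma~\ref{symm} using that the $(k,i)$ entry of $\RF(f)$ and the $(l,i)$ entry of $\RF(f')$ equal $\al_i-1>0$, which kills the $(i,l)$ entry of $\RF(f)$ and the $(i,k)$ entry of $\RF(f')$ in \emph{every} expression and rules out a single expression of $\al_in_i$ containing both $n_k$ and $n_l$. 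Finally, from $qn_k=(p'-p)n_j+rn_l$ minimality gives only $q\ge\al_k$; your ``minimality gives $q=\al_k$ unless there is a smaller relation'' is not an argument, and the case $q\ge\al_k+1$ must be handled by exhibiting two distinct factorizations of $f+n_i$ supported on $\{n_j,n_k\}$ (possible precisely because the $(i,l)$ entry is forced to vanish) and then invoking Corollary~\ref{2nj}; your version, which speaks of a factorization of $\al_kn_k$, does not put you in the situation to which Corollary~\ref{2nj} applies. So the proposal, as it stands, has genuine gaps in all three parts, even though its toolkit overlaps with the paper's.
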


\begin{proof} (i) Assume for some $f,f',f'' \in \PF'(H), f+n_k = f' + n_l=f''+n_j$. Then these terms are equal to  $bn_i$ for some integer $b>0$.
Then, adding $\Fr(H)-f$, we have $\Fr(H)+ n_k = (\Fr(H)-f) +  n_l+ f'= (\Fr(H)-f)+n_j+f'' =
(\Fr(H)-f) +bn_i$.  This implies that $n_k$ does not appear in $(\Fr(H)-f)+n_l, (\Fr(H)-f)+n_j,
(\Fr(H)-f)+n_i$, contradicting Lemma \ref{a_ij>0}.  \par

(ii)   Assume that $f+n_k = f' + n_l = bn_i + b'n_j$. If $b,b'>0$,
adding $\Fr(H) -f$ to both sides, we get
\[ \Fr(H) +n_k = (\Fr(H) -f) + (f' + n_l) = (\Fr(H) -f) + bn_i + b'n_j,\]
where $\{i,j,k,l\}$ is a permutation of $\{1,2,3,4\}$.
 Then $n_k$ does not appear in $(\Fr(H) -f)+n_i, (\Fr(H) -f)+n_j$. Moreover,
 since $\Fr(H) +n_k = [(\Fr(H) -f) +n_l] + f'$, $n_k$ does not appear in
 $(\Fr(H) -f)+n_l$, too. This contradicts  Lemma \ref{a_ij>0}.

 Hence $b=0$ or $b'=0$. We may assume that $b'=0$, and hence  $f+n_k = f' + n_l = bn_i$.
 But if $b\ge \al_i$, then $\al_i n_i$ cannot contain $n_k, n_l, n_j$, which is absurd.
 Hence by Proposition \ref{f + n_k = bn_i}, we must have $b =\al_i-1$.\par

(iii) We write the $(i,j)$ component of $\RF(f)$ (resp. $\RF(f')$) by $m(i,j)$ (resp. $m'(i,j)$).
We know by Lemma \ref{symm} that $m(i,j) >0$ for some expression of $\RF(f)$, then
$m'(j,i)=0$ for any expression of $\RF(f')$ and vice versa.

Now, adding $n_i$ to both sides of the equation, we get
\[
(f+n_i) +n_k = (f' +n_i) + n_l = \al_i n_i.
\]

Hence  $\al_{ik}>0$ and $\al_{il}>0$ in some  expressions of
$\al_in_i$.  If we have an expression of the form
$\al_i n_i = p n_j + q n_k + rn_l$ with $q,r >0$, then we have
\[ f + n_i = p n_j + (q-1) n_k + rn_l \quad {\mbox{\text and }} f' + n_i = p' n_j +q n_k + (r-1)n_l,\]
\noindent
so $m(i,l), m'(i,k) >0$. This  contradicts Lemma \ref{symm} since $m(k,i), m'(l,i)>0$.
\par
Hence we have different expressions
\[\al_in_i = p n_j + q n_k = p' n_j + r n_l\]
with $q,r>0$.  If $p\le p'$, then we
have $qn_k = (p'-p) n_j + r n_l$. Hence we have $q\ge \al_k$.
\par
If $q\ge \al_k+1$, then $f+n_i$ has 2 different expressions and since
$p n_j + (q-1) n_k$ cannot contain $n_l$, we must have $\al_kn_k = \al_jn_j$
by Corollary \ref{2nj}.
\end{proof}

\begin{Example} {\em The following example demonstrates the result of Proposition~ \ref{f+n_k= f' + n_l} (ii).  Let $H= \langle 9,22,46,57 \rangle$. Then $\PF(H) = \{ f=35,f'=70,\Fr(H)=105\}$. We have  $f +n_4= 92 = f' + n_2 = 2 n_3$, and   $\al_1=10, \al_2=3,
\al_3=3, \al_4=2$. The $\RF$-matrices  are 

\[  \RF(35) =  \left( \begin{array}{cccc} -1 & 2 & 0 & 0\\0 & -1 & 0 & 1\\
9 & 0 & -1 & 0 \\ 0 & 0 & 2 & -1
 \end{array}\right), \quad {\mbox{\rm and}}  \;
 \RF(70) =  \left( \begin{array}{cccc} -1 & 1 & 0 & 1\\0 & -1 & 2 & 0\\
8 & 2& -1 & 0 \\ 9 & 0 & 1 & -1
 \end{array}\right).\]}
\end{Example}

\section{Komeda's structure theorem for $4$ generated pseudo-symmetric semigroups via
RF-matrices. }\label{sec:komeda}

We will apply our results in the previous sections to give a new proof
 of Komeda's structure theorem for  $4$-generated pseudo-symmetric
 semigroups using  $\RF(\Fr(H)/2)$.

 We  believe that  our proof of the structure theorem of type $2$ almost symmetric numerical semigroups is simpler than the one in the original paper \cite{Ko}.

\medskip
In this subsection we always assume that $H=\langle n_1,n_2,n_3,n_4\rangle$ and that
$\Fr(H)/2\in \PF(H)$.

First we sum up the properties of $\RF(\Fr(H)/2) = A=(a_{ij})$ given in Lemma \ref{symm},
\ref{a_ij>0} and Corollary \ref{0_in_row}.

\begin{Proposition}\label{RF_F/2}
Let  $\RF(\Fr(H)/2)=(a_{ij})$  be an $\RF$-matrix of $\Fr(H)/2)$. Then:

\begin{enumerate}
\item[{\em (i)}]  $a_{ii} = -1$ for every $i$,  and $a_{ij}$ is a non-negative integer for every $i\ne j$.
\item[{\em (ii)}] For every pair $(i,j)$ with $i\ne j$, either $a_{ij}$ or $a_{ji}$ is $0$.
\item[{\em (iii)}] Every row and column of $A$ has at least one positive entry.
\end{enumerate}
\end{Proposition}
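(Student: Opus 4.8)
The plan is to assemble Proposition~\ref{RF_F/2} directly from the three results that precede it, since each clause is a specialization of a general fact to the case $f = \Fr(H)/2$.

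\medskip

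For part (i), I would simply observe that this is the definition of an $\RF$-matrix (Definition~\ref{RF-def}): the diagonal entries are $-1$ and the off-diagonal entries lie in $\NN$. Nothing further is needed. For part (ii), I would invoke Lemma~\ref{symm} with $f = f' = \Fr(H)/2$. The hypothesis there is $f + f' \notin H$; here $f + f' = \Fr(H) \notin H$ since the Frobenius number is a gap, so the lemma applies and gives the ``in particular'' clause: for every $i \ne j$, either $a_{ij} = 0$ or $a_{ji} = 0$. For part (iii), I would split into rows and columns. Since $\Fr(H)/2 \in \PF(H)$ and $H$ is pseudo-symmetric hence almost symmetric, we have $\Fr(H)/2 \in \PF'(H)$ (as it is not the Frobenius number), so Corollary~\ref{0_in_row} shows every row has at least one zero entry — but I actually want the opposite, that every row has at least one \emph{positive} entry. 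For the positivity of rows: each row $i$ of $\RF(f)$ encodes a factorization $f + n_i = \sum_{j \ne i} a_{ij} n_j$ of the element $f + n_i \in H$, and since $f + n_i > 0$ (indeed $f \ge 0$ is false in general, but $f + n_i \in H \setminus \{0\}$ because $f \notin H$ forces $f + n_i \neq 0$), the sum on the right is nonempty, so some $a_{ij} > 0$. For the positivity of columns, I would invoke Lemma~\ref{a_ij>0}, which states precisely that when $H$ is almost symmetric, $e = 4$, and $f \in \PF(H)$ with $f \ne \Fr(H)$, every column of $\RF(f)$ contains a positive entry; this applies verbatim with $f = \Fr(H)/2$.

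\medskip

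I do not anticipate a genuine obstacle here — this proposition is a bookkeeping statement that repackages Lemma~\ref{symm}, Lemma~\ref{a_ij>0}, and Corollary~\ref{0_in_row} for the specific pseudo-Frobenius number $\Fr(H)/2$. The only point requiring the slightest care is making sure the hypotheses of each cited result are met: namely that $\Fr(H)/2 \in \PF'(H)$ (true because $\Fr(H)/2 \neq \Fr(H)$ when $\Fr(H) > 0$, which holds as $H \ne \NN$), that $e = 4$, and that $H$ is almost symmetric (true since pseudo-symmetric semigroups are almost symmetric of type $2$, as recalled in Section~\ref{sec:1}). So the proof is essentially: ``(i) is the definition; (ii) follows from Lemma~\ref{symm} since $\Fr(H) \notin H$; (iii) follows for rows because each row is a factorization of the positive element $f + n_i$, and for columns from Lemma~\ref{a_ij>0}.''
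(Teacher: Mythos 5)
Your proof follows the paper's own route: the paper presents Proposition~\ref{RF_F/2} precisely as a summary of Definition~\ref{RF-def}, Lemma~\ref{symm} (applied with $f=f'=\Fr(H)/2$, which is legitimate since $f+f'=\Fr(H)\notin H$) and Lemma~\ref{a_ij>0}, so your treatment of (i), (ii) and the column half of (iii) is the same as the paper's, and your observation that each row of $\RF(\Fr(H)/2)$ records a factorization of the positive element $\Fr(H)/2+n_i$ settles the row half. (Your parenthetical that ``$f\ge 0$ is false in general'' is itself off --- pseudo-Frobenius numbers are gaps, hence positive --- but nothing depends on it, since $\Fr(H)/2>0$ makes $f+n_i>0$ trivially.)

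The one substantive point is your verification of the hypotheses of Lemma~\ref{a_ij>0}. You assert that $H$ is pseudo-symmetric, hence almost symmetric; but the standing assumption of this section is only that $e=4$ and $\Fr(H)/2\in\PF(H)$, and that $\type(H)=2$ is exactly what this development is heading towards (Theorem~\ref{Type_even}(iii), Proposition~\ref{even_type_le2}), so invoking pseudo-symmetry here is circular as written. Two repairs are available. Either carry almost symmetry as an explicit hypothesis --- harmless for Theorem~\ref{type2}, where $\PF(H)=\{\Fr(H)/2,\Fr(H)\}$ is assumed, and for Proposition~\ref{even_type_le2}, where $H$ is almost symmetric --- or, better, note that for $f=\Fr(H)/2$ the proof of Lemma~\ref{a_ij>0} needs no almost symmetry at all: the only place it enters is via Nari's lemma, to conclude $\Fr(H)-f\in\PF(H)$ and hence $\Fr(H)-f=f$, and this is automatic when $f=\Fr(H)/2$, since $\Fr(H_1)-f\le_{H_1} f$ gives $\Fr(H)-\Fr(H_1)=2f-\Fr(H_1)\in H_1$ while $\Fr(H)\le\Fr(H_1)$, forcing $\Fr(H)=\Fr(H_1)$; the concluding gap count $g(H)\le g(H_1)-1=\Fr(H)/2<(\Fr(H)+1)/2\le g(H)$ then yields the contradiction with no further hypotheses. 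With that remark (or the added hypothesis) your argument is complete and coincides with the paper's.
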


 Since there are at most $6$ positive entries in $A$, at least $2$ rows have
 only one positive entry. More precisely we have

\begin{Proposition}\label{RF(F/2)}  After a suitable relabeling of the generators of $H$  we may assume that
 \[\RF( \Fr(H)/2) = \left( \begin{array}{cccc} -1 & \alpha_2-1 & 0 & 0\\
0 & -1 & \alpha_3-1 & 0 \\
a & 0 & -1 & d\\
a' & b & 0 & -1 \end{array}\right)\]
with $a', d>0$  and $a, b\ge 0$.
 \end{Proposition}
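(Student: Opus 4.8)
The plan is to exploit the combinatorial constraints collected in Proposition \ref{RF_F/2}. Let $A=(a_{ij})=\RF(\Fr(H)/2)$. By (ii) of that proposition, for each of the $\binom{4}{2}=6$ unordered pairs $\{i,j\}$ at most one of the two entries $a_{ij},a_{ji}$ is positive, so $A$ has at most $6$ positive entries; by (iii) every row has at least one positive entry, so the total is between $4$ and $6$. Since there are $4$ rows and at most $6$ positive entries, at least two rows contain exactly one positive entry. First I would single out these two "light" rows and, after relabeling, put them first: say rows $1$ and $2$ each have a unique positive off-diagonal entry. The task is then to show that, again after relabeling, the positive entry of row $1$ sits in column $2$ and the positive entry of row $2$ sits in column $3$, while the remaining two rows $3,4$ account for the columns not yet "covered".

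The key step is a covering argument on columns. By (iii) each of the four columns contains a positive entry. Row $1$ contributes a positive entry to exactly one column, row $2$ to exactly one column; these two columns are distinct, because if rows $1$ and $2$ had their unique positive entries in the same column $c$, then — noting $c\neq 1,2$ since $a_{11}=a_{22}=-1$ — the columns $1$ and $2$ would have to receive their positive entries from rows $3$ and $4$ only, and also columns $\{1,2,3,4\}\setminus\{c\}$ (three columns) would all need positive entries from rows $3,4$ alone, forcing one of rows $3,4$ to have at least two positive entries in those columns plus possibly more: counting carefully, rows $3$ and $4$ together would then need at least $3$ positive entries in columns $\neq c$, which combined with the one shared entry in column $c$ coming from rows $1,2$ is fine numerically, so this naive count does not immediately exclude it. The cleaner route is: relabel so that row $1$'s positive entry is $a_{12}$ (i.e.\ $a_{12}=\alpha_2-1>0$, using Lemma \ref{al_iAp} / the identification of this entry with $\alpha_j-1$ from the $\RF$-relation interpretation in Lemma \ref{produce}), and note that then by (ii) $a_{21}=0$; since row $2$'s unique positive entry is not in column $1$ or column $2$, it lies in column $3$ or $4$ — relabel so it is $a_{23}=\alpha_3-1>0$, and then $a_{32}=0$ by (ii). Now columns $1$ and $4$ still need positive entries, and these can only come from rows $3$ and $4$ (rows $1,2$ are exhausted).

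Next I would pin down rows $3$ and $4$. Column $1$ needs a positive entry from row $3$ or $4$; column $4$ likewise. Also $a_{31}$ and $a_{41}$: at least one is positive. I claim we can arrange $a_{41}=a'>0$ and $a_{34}=d>0$. Indeed, consider where the positive entries of row $3$ lie: $a_{32}=0$ already, and $a_{33}=-1$, so row $3$'s positive entries are among columns $1$ and $4$. Similarly row $4$'s positive entries are among columns $1,3$ (since $a_{42}$: by (ii) with $a_{21}=0$ this is unconstrained, but I would use Lemma \ref{a_ij>0}/Corollary \ref{0_in_row}, which says every row of $A$ has a zero, to get more room) — here I must be a little careful, and the honest statement is: rows $3$ and $4$ between them must cover columns $1$ and $4$ with positive entries, and each of rows $3,4$ has at least one zero off the diagonal by Corollary \ref{0_in_row}. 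A short case analysis on which of $a_{31},a_{34},a_{41},a_{43}$ vanish, using (ii) to forbid simultaneous positivity of $a_{ij}$ and $a_{ji}$, shows that after possibly swapping the labels $3\leftrightarrow 4$ we land in the configuration with $a_{34}=d>0$, $a_{41}=a'>0$, $a_{31}=a\ge 0$, $a_{42}=b\ge 0$, $a_{43}=0$, $a_{13}=a_{14}=a_{24}=0$, which is exactly the displayed matrix. The entries $\alpha_2-1,\alpha_3-1$ appear because the difference of appropriate rows of $\RF(\Fr(H)/2)$ is, by Lemma \ref{produce}, an $\RF$-relation whose degree is $\Fr(H)/2+n_i+n_j$; comparing with the defining equation $\alpha_i n_i=\sum_{j\ne i}\alpha_{ij}n_j$ of (\ref{minimal}) forces the corresponding row entry to be $\alpha_i-1$ in the relevant spot (this uses that the other entries of that row are $0$, so there is no cancellation).

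The main obstacle I anticipate is the bookkeeping in the last paragraph: ruling out the "bad" positions for the positive entries of rows $3$ and $4$ so that they really do cover columns $1$ and $4$ in the claimed pattern, rather than, say, both piling into column $1$ and leaving column $4$ for row $1$ or $2$ (which we have already excluded) — this is where the interplay of Proposition \ref{RF_F/2}(ii), (iii) and Corollary \ref{0_in_row} has to be used most delicately, together with the fact that $A$ has only $4$ columns and the two light rows have been fixed. Establishing that $d>0$ and $a'>0$ (strict positivity, not just nonnegativity) is the crux: $d>0$ because column $4$'s positive entry cannot come from rows $1,2$, and once $a_{24}=0$ is forced it cannot come from row $2$ either, while $a_{44}=-1$; so it comes from row $3$, giving $a_{34}>0$. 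Symmetrically $a'=a_{41}>0$ because column $1$'s positive entry, not available from rows $1,2$ and not from row $3$ if we have arranged $a_{31}=a$ to be the "free" nonnegative entry, must be $a_{41}$. The rest is routine.
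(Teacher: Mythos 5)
Your combinatorial bookkeeping (at most six positive entries by Proposition \ref{RF_F/2}(ii), at least one positive entry in each row and column by (iii), hence two ``light'' rows, then a relabeling and column-covering argument giving $a',d>0$) is essentially only the first, easy step: it can at best produce the \emph{zero pattern} of the matrix. But the proposition asserts the precise values of two entries, namely $a_{12}=\alpha_2-1$ and $a_{23}=\alpha_3-1$, and here your argument has a genuine gap. Your justification --- that the rows are $\RF$-relations and ``comparing with the defining equation $\alpha_in_i=\sum_{j\neq i}\alpha_{ij}n_j$ forces the entry to be $\alpha_i-1$'' --- only yields the inequality $a_{12}\geq \alpha_2-1$ (this is Lemma \ref{f + n_k = b_in_i}(ii)); nothing in Lemma \ref{produce} or in ``no cancellation'' excludes $a_{12}\geq\alpha_2$. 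The paper gets equality by exploiting the \emph{non-uniqueness} of $\RF(\Fr(H)/2)$, which your proposal never engages with: using Lemma \ref{symm} together with Remark \ref{symm-Rem}, one can choose the first row so that its $(1,3)$ and $(1,4)$ entries vanish in \emph{every} choice of $\RF(\Fr(H)/2)$; if then $a_{12}\geq\alpha_2$, rewriting $a_{12}n_2$ via (\ref{minimal}) (note $\alpha_{21}=0$ by Lemma \ref{f + n_k = b_in_i}(i)) produces another admissible first row with a positive $(1,3)$ or $(1,4)$ entry, a contradiction.

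The equality $a_{23}=\alpha_3-1$ is even more delicate and is the heart of the paper's proof: assuming $a_{23}\geq\alpha_3$, one deduces $\alpha_3n_3=\alpha_4n_4$ from Corollary \ref{2nj}, hence $b=0$ and $\Fr(H)/2+n_4=(\alpha_1-1)n_1$ by Lemma \ref{symm}, and then comparing the third and fourth rows gives both $d\geq\alpha_4-1$ and $d\leq\alpha_4-1$, ending in $(\alpha_3-1)n_3=(\alpha_1-1-a)n_1$, which contradicts the minimality of $\alpha_3$. No semigroup-theoretic argument of this kind appears in your proposal, and it cannot be replaced by counting zeros. (Two smaller points: the paper also treats the case where the second row has \emph{two} positive entries, in which case the relabeling comes from rows $3$ and $4$ --- your assumption that the two light rows can be taken as rows $1,2$ with row $1$ pointing at column $2$ needs this case as well, and you leave the case analysis unfinished; on the other hand, your derivation of $a',d>0$ from column covering, once the zero pattern is fixed, is fine and agrees with the paper.)
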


\begin{proof} We simply write $F=F(H)$ and let $A=(a_{ij})= \RF(F/2)$.
We will see that this matrix is uniquely determined, but at this moment,
we can take any choice.
 Proposition~\ref{RF_F/2}(ii) implies that $A$ has at least $6$ entries
  equal to zero. Thus there must exist a row of $A$ with only one positive entry
 and we can assume that the first row of $A$ is $(-1, b',0,0)$, or
 $F/2 + n_1 = b' n_2.$  We can assume that in {\it any} choice of $\RF( \Fr(H)/2)$,
 $(1,3), (1,4)$ entries are $0$.  Then, by Lemma \ref{f + n_k = b_in_i},
  $b' \ge \al_2 -1$. If $b' \ge \al_2$, we have another choice of $\RF(F/2)$
  with non $0$ $(1,3)$ or  $(1,4)$ entry. Hence we conclude $b' =\al_2-1$.

 Now,  by Lemma \ref{symm}, $a_{21}=0$ and
 $F/2+n_2= a_{23}n_3+a_{24}n_4$.  If $a_{23}$ or
 $a_{24}=0$, then we may assume the $2$nd row of $\RF(F/2)$ is
 $(0,-1,c,0)$.  If $a_{23}$ and   $a_{24}$ are both positive, then since
 $a_{32}=a_{42}=0$ and $a_{34}$ or $a_{43}=0$ by Lemma \ref{symm},
 the 3rd row (resp.\ 4th row) of $\RF(F/2)$ is $(a', 0,-1, 0)$ (resp.
 $(a', 0, 0,-1)$. In both cases, after changing the order of generators,
 we can assume  the first $2$ rows of $\RF(F/2)$ are

\[ \left( \begin{array}{cccc} -1 & \al_2-1 & 0 & 0\\
0 & -1 & c & 0  \end{array}\right).\]

or

\[ \left( \begin{array}{cccc} -1 & b & 0 & 0\\
0 & -1 & \al_3-1 & 0  \end{array}\right).\]

We assume the 1st expression. We can treat the 2nd case in the same manner.

We have only to show that $c=\al_3-1$ since
 $a_{32}=0$ and $a_{34}>0$ by \ref{symm}.
 Now we use $a,a',b,d$ as in the right hand side of $\RF(F/2)$ in Proposition
 \ref{RF(F/2)}.  If $c\ge \al_3$, $\al_3n_3$ can contain only $n_4$,
 since $a_{21}=0$.   Then we must have
 $\al_3n_3 = \al_4 n_4$ by Corollary \ref{2nj}.
 Then by Lemma \ref{symm}, $b=0$ and
 $F/2 + n_4 = a' n_1$. It is easy to see that $a'=\al_1-1$,
 since $a_{42}=a_{43}=0$ by our assumption.

 Taking the difference of 3rd and 4th rows of $\RF(F/2)$, we get

 \[(d+1) n_4 = (\al_1-1-a) n_1+ n_3,\]

 and thus $d\ge \al_4-1$. On the other hand, from
  $F/2 + n_3 = an_1+ d n_4$, we see $d\le  \al_4-1$ since we have
 seen  $\al_3n_3= \al_4n_4$.  Then we have

 \[(\al_3 -1) n_3 = (\al_1-1-a) n_1,\]

 contradicting the definition of $\al_3$.
\end{proof}

Now we come to the main results of this section.

\begin{Theorem}\label{type2} Let $H=\langle n_1,n_2,n_3,n_4\rangle$,  and assume
$\PF(H) =\{\Fr(H)/2, \Fr(H)\}$. Then for a suitable relabeling of the generators of $H$,
\begin{eqnarray}\label{normal}
 \RF(\Fr(H)/2) = \left( \begin{array}{cccc} -1 & \alpha_2-1 & 0 & 0\\
0 & -1 & \alpha_3-1 & 0 \\
\al_1-1 & 0 & -1 & \al_4-1\\
\al_1-1 &  \al_{42} & 0 & -1 \end{array}\right)
\end{eqnarray}
and  $\Fr(H)/2 +n_k$ has UF for every $k$, that is, $\RF(\Fr(H)/2)$ is uniquely determined.
\end{Theorem}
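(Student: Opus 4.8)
The plan is to start from the normal form for $\RF(F/2)$ obtained in Proposition~\ref{RF(F/2)}, namely
\[
\RF(F/2) = \left( \begin{array}{cccc} -1 & \alpha_2-1 & 0 & 0\\
0 & -1 & \alpha_3-1 & 0 \\
a & 0 & -1 & d\\
a' & b & 0 & -1 \end{array}\right)
\]
with $a',d>0$ and $a,b\ge 0$, and to pin down the remaining entries. First I would identify $a'$. By Lemma~\ref{symm} applied to the first and fourth rows, $a_{41}=a'$ and $a_{14}=0$ are compatible, and the relation $F/2+n_4=a'n_1+bn_2$ together with Corollary~\ref{0_in_row} (which forces the $(4,3)$ entry to be $0$ for every choice) shows $F/2+n_4$ only involves $n_1$ and $n_2$. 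If $b>0$, then adding $\Fr(H)-F/2=F/2$ to both sides gives $\Fr(H)+n_4=(F/2)+a'n_1+bn_2$; combined with the first row $F/2+n_1=(\alpha_2-1)n_2$ this would express $F/2+n_1+n_4$ in a way that, via Lemma~\ref{symm} against $\RF(F/2)$ itself (recall $f=f'=F/2$ here so $f+f'=\Fr(H)\notin H$), contradicts $a_{14}=0$ or $a_{12}>0$. So $b=0$ and $F/2+n_4=a'n_1$; then Proposition~\ref{f + n_k = bn_i} gives either $a'=\alpha_1-1$ or $a'\ge\alpha_1$ with $\alpha_1n_1=\alpha_jn_j$ for some $j\ne 1,4$, and the latter is excluded exactly as in the final paragraph of the proof of Proposition~\ref{RF(F/2)}. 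Hence $a'=\alpha_1-1$ and the fourth row is $(\alpha_1-1,\alpha_{42},0,-1)$ — but wait, I have just argued $b=0$, so I should instead keep $b$ general and only conclude $b=\alpha_{42}$ is forced to equal the value making the $(4,2)$ slot consistent; the cleanest route is to accept the matrix shape from Proposition~\ref{RF(F/2)} and argue each entry.

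Next I would handle the third row $F/2+n_3=an_1+dn_4$. Adding $n_3$ gives $(F/2+n_3)+n_3 = F/2+2n_3$, which is not directly useful; instead add $F/2$ to get $\Fr(H)+n_3=(F/2)+an_1+dn_4$, and use the second row $F/2+n_3=(\alpha_3-1)n_2$ — wait, that is a different factorization of the same element $F/2+n_3$, so $an_1+dn_4=(\alpha_3-1)n_2$, contradicting minimality unless... Actually the second row says $F/2+n_2=(\alpha_3-1)n_3$, not $F/2+n_3$. Re-reading: rows $1,2,3,4$ read $F/2+n_1=(\alpha_2-1)n_2$, $F/2+n_2=(\alpha_3-1)n_3$, $F/2+n_3=an_1+dn_4$, $F/2+n_4=a'n_1+bn_2$. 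To get $a=\alpha_1-1$: add $n_1$ to the third row, $(F/2+n_1)+n_3=(a+1)n_1+dn_4$, and substitute the first row to get $(\alpha_2-1)n_2+n_3=(a+1)n_1+dn_4$; since the left side omits $n_1$, we get $a+1\ge\alpha_1$, i.e. $a\ge\alpha_1-1$. For the reverse inequality, suppose $a\ge\alpha_1$; then rewrite $an_1$ using \eqref{minimal} to lower the $n_1$-coefficient, producing a second factorization of $F/2+n_3$, so $F/2+n_3$ does not have UF, and Corollary~\ref{2nj} forces $\alpha_1n_1=\alpha_jn_j$ for the unique other index $j$ appearing (which is $j=4$ since only $n_1,n_4$ occur), i.e. $\alpha_1n_1=\alpha_4n_4$. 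Then by Lemma~\ref{symm} this forces $d=0$ entries to shuffle, and a short computation identical in spirit to the last display of Proposition~\ref{RF(F/2)}'s proof — taking differences of rows $3$ and $4$ — yields a relation contradicting the minimality of $\alpha_1$ or $\alpha_4$. Hence $a=\alpha_1-1$, and symmetrically $d=\alpha_4-1$: from row $3$ again, adding $n_4$ gives $(F/2+n_4)+n_3=an_1+(d+1)n_4=a'n_1+bn_2+n_3$, but with $a=a'=\alpha_1-1$ already known and $b$ to be determined this gives $(d+1)n_4=bn_2+n_3$, so $d+1\ge\alpha_4$; the reverse $d\le\alpha_4-1$ follows because $d\ge\alpha_4$ would make $F/2+n_4$ fail UF with only $n_1$ (and $n_2$) present, again via Corollary~\ref{2nj} forcing $\alpha_4n_4=\alpha_1n_1$ or $\alpha_4n_4=\alpha_2n_2$, both leading to contradictions as above. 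Finally $b=\alpha_{42}$ is just the name for whatever the $(4,2)$ entry is — but I must still rule out $b>0$ coexisting with the other entries in a way that breaks UF of $F/2+n_4$; since $F/2+n_4=(\alpha_1-1)n_1+bn_2$ and $\alpha_1-1<\alpha_1$, $b<\alpha_2$ (else rewrite $bn_2$), this is the unique factorization, so UF holds regardless, and we simply set $\alpha_{42}=b$.

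For the UF claims: $F/2+n_1=(\alpha_2-1)n_2$ with $\alpha_2-1<\alpha_2$ has UF by Lemma~\ref{NU_e=4}(ii) (only one $n_j$, coefficient below $\alpha_2$); similarly $F/2+n_2$ and, from the analysis above, $F/2+n_3=(\alpha_1-1)n_1+(\alpha_4-1)n_4$ and $F/2+n_4=(\alpha_1-1)n_1+\alpha_{42}n_2$ have all coefficients strictly below the corresponding $\alpha_i$ (here using $\alpha_{42}<\alpha_2$, which I would verify: if $\alpha_{42}\ge\alpha_2$ rewrite to lower it and lose UF, then Corollary~\ref{2nj} forces $\alpha_2n_2=\alpha_1n_1$, contradiction as before), so by Lemma~\ref{NU_e=4}(ii) each $F/2+n_k$ has UF, whence $\RF(F/2)$ is uniquely determined. \textbf{The main obstacle} I anticipate is the bookkeeping in the ``$a\ge\alpha_1$'' and ``$d\ge\alpha_4$'' cases: one must carefully extract, from $\alpha_1n_1=\alpha_4n_4$ (or the analogous equalities), a contradiction with the definition of the $\alpha_i$ by taking the right linear combination of rows of $\RF(F/2)$ — this is exactly the delicate step that already appears at the end of the proof of Proposition~\ref{RF(F/2)}, and getting all four corner entries simultaneously consistent requires running that argument twice, once on each side, while keeping track of which entries Corollary~\ref{0_in_row} has frozen to $0$.
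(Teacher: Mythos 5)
There is a genuine gap, and it sits exactly at the hard part of the theorem. Your determination of the $(3,1)$ entry rests on an invalid inference: from $(\al_2-1)n_2+n_3=(a+1)n_1+dn_4$ you conclude ``$a+1\ge\al_1$ since the left side omits $n_1$'', but the minimality of $\al_1$ only applies to equations of the form $c\,n_1=\sum_{j\ne 1}c_jn_j$ with nonnegative $c_j$; here the term $dn_4$ with $d>0$ sits on the same side as $(a+1)n_1$, so no such equation is produced and nothing follows. This inequality $a\ge \al_1-1$ is precisely where the paper has to work: it first proves $b>0$ and $a>0$ (this is where the hypothesis $\PF(H)=\{\Fr(H)/2,\Fr(H)\}$, i.e. $\PF'(H)=\{\Fr(H)/2\}$, enters via Lemma~\ref{Apery}), and only then shows that $a<\al_1-1$ would give $\al_{41}>0$, hence UF of $\Fr(H)+n_1$ by Lemma~\ref{formula_Ap}(i) --- which needs $\al_{21},\al_{31},\al_{41}$ all positive, hence needs $a>0$ --- and derives a contradiction with the first row through Lemma~\ref{Apery}. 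Your proposal never proves $a>0$ or $b>0$ and never uses the type-$2$ hypothesis in this step; if your two-line manipulation were valid, the delicate special-row analysis of Proposition~\ref{even_type_le2} (which establishes the same entries assuming only $\Fr(H)/2\in\PF(H)$) would be superfluous. Since your derivation of $d=\al_4-1$ uses the unproved equality $a=a'=\al_1-1$, it collapses as well (the paper obtains $d=\al_4-1$ from the difference of rows $3$ and $4$ using only $a\le\al_1-1$).

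The treatment of $b$ and $a'$ is also not repaired after your retraction. The opening claim that $b=0$ is false: under the theorem's hypothesis the paper proves $b=\al_{42}>0$, and Lemma~\ref{symm} applied with $f=f'=\Fr(H)/2$ gives no contradiction from a positive $(4,2)$ entry because the $(2,4)$ entry is $0$. Having withdrawn that claim, you never supply a correct identification of $a'$: the paper gets $a'\le\al_1-1$ (otherwise $\al_1 n_1=\al_2 n_2$, producing an expression with positive $(4,3)$ entry against the frozen zeros), and $a'+1\ge\al_1$ from the difference of rows $1$ and $4$, namely $(\al_2-1-b)n_2+n_4=(a'+1)n_1$, where the minimality of $\al_1$ genuinely applies; this same relation forces $b<\al_2-1$ (because $n_4$ is a minimal generator), a bound you never establish --- you only argue $b<\al_2$. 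Finally, your UF verification misuses Lemma~\ref{NU_e=4}(ii): exhibiting one factorization with all coefficients below the $\al_i$ is not enough, since UF fails exactly when \emph{some} factorization has a coefficient $\ge\al_i$; one must exclude $\Fr(H)/2+n_k\ge_H\al_i n_i$ for every $i$, which is what the zero-entry (Corollary~\ref{0_in_row}) and Corollary~\ref{2nj} arguments in the paper accomplish.
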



\medskip
\begin{proof}[Proof of Theorem \ref{type2}]
We start with the $\RF$-matrix $A$ given in  Proposition~\ref{RF(F/2)},  and first determine the unknown
values $a,a',b,d$ there.  Note that until the end of (3), we only assume that $\Fr(H)/2\in \PF(H)$.

\medskip

(1) Taking the difference of  the $1$st and $2$nd (resp.\ the $2$nd and $3$rd)
 row of $A$, leads to the equations
\[(*2) \quad \al_2n_2 = n_1 + (\al_3 -1)n_3,\]
\[(*3) \quad \al_3 n_3 = an_1 + n_2 + d n_4.\]

\medskip
(2) If $a'\ge \al_1$, since  (4, 3) component of $\RF(F/2)$ should be $0$
in any expression, we must have  $\al_1n_1 = \al_2n_2$. But then from
(1), we have another expression of $\RF(F/2)$ with positive (4,3) component.
A contradiction! Similarly, we have $b\le \al_2-1$.

Now, taking  the difference of the $1$st and  the $4$th row, we have
\[(\al_2-1- b)n_2+n_4= (a'+1)n_1.\]

Since we have seen $a'\le \al_1 -1$, we have $a'= \al_1-1$.
Also, since $n_4$ is a minimal generator of $h$, we  have $b < \al_2-1$. We have
\[(*1) \quad \al_1n_1 = (\al_2-1-b) n_2 + n_4\].
Then, we must have $a< \al_1$, since otherwise will have different expression of
$\RF(F/2)$ with positive $(3,2)$ entry.
\par

\medskip
(3) If $d \ge \al_4$, then we must have $\al_4n_4 = \al_1n_1$ since $(3,2)$ entry
of $\RF(F/2) =0$ in any expression. Then we have a contradiction from equation $(*1)$.

Taking  the difference of the $3$rd and  the $4$th row, we have
\[ (d+1)n_4 = (\al_1 - 1 -a)  n_1 + b n_2 + n_3.\]
Since $d\le \al_4-1$, we have $d= \al_4-1$ and we get
\[(*4) \quad \al_4n_4 = (\al_1 - 1 -a)  n_1 + b n_2 + n_3.\]
\medskip
Let us  sum up what we have got so far:

\[
(\dagger) \quad \RF(\Fr(H)/2) =  \left( \begin{array}{cccc} -1 & \alpha_2-1 & 0 & 0\\
0 & -1 & \al_3-1 & 0 \\
a & 0 & -1 & \al_4-1\\
\al_1-1 &
 b=\al_{42} & 0 & -1 \end{array}\right),
\]

\medskip
\noindent
Moreover, we have obtained an expression with
$\al_{12}=\al_2-1-b >0$, $\al_{13}=0$, $\al_{14}=1$,  $\al_{21}=1$, $\al_{23}=\al_3-1$, $\al_{24}=0,$
$\al_{31}=a$, $\al_{32}=1$, $\al_{34}=\al_4-1$, $\al_{41}=\al_1-1-a$, $\al_{42}=b$, $\al_{43}=1.$

\medskip
\noindent
(4) To finish the proof, it suffices to show that $b>0, a>0$ and then $a= \al_1-1$.
If $b=0$, then adding 2nd and 4th rows of our matrix, we have
\[(\al_1-1)n_1+ (\al_3-1)n_3 = \Fr(H) + n_2+ n_4.\]
Since $(\al_i -1)n_i \in \Ap(n_k, H)$ for every $k\ne i$, and from Lemma \ref{Apery},
we get
\[\Fr(H) +n_2 - (\al_1-1)n_1=  (\al_3-1)n_3 - n_4 \in \PF'(H).\]
Which leads to $ (\al_3-1)n_3 = n_4 + \Fr(H)/2 = (\al_1-1)n_1$, a contradiction!
Thus we have $b>0$. If $a=0$, then adding 2nd and 4th rows of our matrix and by the same
argument as above, we have $(\al_2-1)n_2= \Fr(H)/2 + n_3= (\al_4 -1)n_4$, a contradiction.

\medskip

Next we show that $a=\al_1-1$. We have seen that $a\le \al_1-1$.  If $a< \al_1-1$, then
 we have $\al_{41}>0$ and by Lemma \ref{formula_Ap},
$\Fr(H) + n_1$ has UF.
We  show that this leads to a contradiction.\par

Adding the 1st and 2nd rows of $\RF(\Fr(H)/2)$, we get
\[ \Fr(H) + n_1 = (\al_2-2) n_2 + (\al_3-1) n_3.\]
Since  $\Fr(H) + n_1$ has UF, $(\Fr(H) + n_1) - (\al_4-1)n_4\not\in H$ and by
Lemma \ref{Apery} we should have
 $(\al_4-1)n_4 = \Fr(H)/2 +n_1$.  Since we have seen $\Fr(H)/2 +n_1= (\al_2-1)n_2$,
 we get a contradiction.  Hence we have $a= \al_1-1$.
\end{proof}

\begin{Theorem}\label{Type_even}
  If $\Fr(H)/2 \in \PF(H)$ and if $\RF(\Fr(H)/2)$ is as in Theorem \ref{type2},
then
we have:
\begin{enumerate}
\item[{\em (i)}] $\Fr(H) +n_2$ has UF and we have $ n_2 = \al_1\al_4(\al_3-1) +1$.
\item[{\em (ii)}] Every generator of $I_H$ is an $\RF(\Fr(H)/2)$-relation.\\
Namely,
$I_H = (x_2^{\al_2}- x_1x_3^{\al_3-1},
x_1^{\al_1}- x_2^{\al_{2}-1-\al_{42}}x_4,
x_3^{\al_3}- x_1^{\al_1-1}x_2x_4^{\al_4-1},
x_3^{\al_3-1}x_4- x_1^{\al_1-1}x_2^{\al_{42}+1},
x_4^{\al_4} - x_2^{\al_{42}}x_3)$.
(The difference of 1st and 3rd rows does not give a
minimal generator of $I_H$.)
\item[{\em (iii)}] $H$ is almost symmetric and $\type(H)=2$.
\end{enumerate}
\end{Theorem}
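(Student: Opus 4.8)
The plan is to read the five generators off the $\RF$-matrix, verify they lie in $I_H$, and then prove they generate $I_H$ by comparing $K$-colengths modulo $x_1$. Write $A=\RF(\Fr(H)/2)$ in the normal form of Theorem~\ref{type2}, set $\alpha_{12}=\alpha_2-1-\alpha_{42}$, and recall from the proof of Theorem~\ref{type2} that $\alpha_{12}>0$, $\alpha_{42}>0$, and $\alpha_i\ge 2$ for all $i$. Taking the pairwise differences of the rows of $A$ and invoking Lemma~\ref{produce}, one obtains exactly the five binomials $g_1,\dots,g_5$ listed in (ii), each lying in $I_H$, together with the numerical relations $(*1)$--$(*4)$ recorded in the proof of Theorem~\ref{type2}; the sixth difference (rows $1$ and $3$) equals $x_4^{\alpha_4-1}g_2+x_2^{\alpha_{12}}g_5$ up to sign, so it lies in $\mm(g_1,\dots,g_5)$ and is not needed as a generator. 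Put $J=(g_1,\dots,g_5)\subseteq I_H$.

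Next I would settle the unique-factorization facts behind (i). Adding rows $2$ and $3$ of $A$ and cancelling $n_3$ gives $\Fr(H)+n_2=(\alpha_1-1)n_1+(\alpha_3-2)n_3+(\alpha_4-1)n_4$. Using $(*1)$, $(*3)$, $(*4)$, one checks that every factorization of $\Fr(H)+n_2$ has zero coefficient on $n_2$ and coefficient $<\alpha_i$ on $n_i$ for $i=1,3,4$ (otherwise subtracting the relevant multiple of $\alpha_in_i$ would force $\Fr(H)\in H$), so Lemma~\ref{NU_e=4}(ii) shows $\Fr(H)+n_2$ has UF. Hence every $h\le_H\Fr(H)+n_2$ has UF, so the $\alpha_1\alpha_4(\alpha_3-1)$ elements $in_1+jn_3+kn_4$ with $0\le i<\alpha_1$, $0\le j<\alpha_3-1$, $0\le k<\alpha_4$ are pairwise distinct and all lie in $\Ap(n_2,H)$; and $(\alpha_3-1)n_3=\Fr(H)/2+n_2$ lies in $\Ap(n_2,H)$ but is not $\le_H\Fr(H)+n_2$, so it is distinct from all of them. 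Therefore $n_2=|\Ap(n_2,H)|\ge\alpha_1\alpha_4(\alpha_3-1)+1=:X$.

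For (ii): since $x_1\mapsto t^{n_1}$ is a nonzerodivisor on $K[H]=S/I_H$, we have $\dim_K S/(J+x_1S)\ge\dim_K K[H]/(t^{n_1})=n_1$. Reducing $g_1,\dots,g_5$ modulo $x_1$ (most monomials vanish because $\alpha_i-1\ge 1$) and computing a Gröbner basis in $K[x_2,x_3,x_4]$ gives $\dim_K S/(J+x_1S)=Y$, where $Y:=\alpha_2+(\alpha_3-1)(\alpha_4\alpha_{12}+\alpha_{42})$, while solving the linear system $(*1),(*2),(*4)$ for the $n_i$ yields the identity $Yn_2=Xn_1$. Combining $Y\ge n_1$, $Yn_2=Xn_1$, and $n_2\ge X$ forces $Y=n_1$ and $n_2=X$; hence $S/(J+x_1S)\twoheadrightarrow K[H]/(t^{n_1})$ is an isomorphism, i.e.\ $I_H\subseteq J+x_1S$. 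A descending induction on the graded degree, using that $x_1$ is a nonzerodivisor modulo $I_H$, then upgrades this to $I_H=J$, which is (ii); and $n_2=X$ together with the UF of $\Fr(H)+n_2$ gives (i).

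Finally, (iii) falls out: we have now determined $\Ap(n_2,H)$ completely, and its $\le_H$-maximal elements are precisely $\Fr(H)+n_2$ (which dominates every $in_1+jn_3+kn_4$) and $(\alpha_3-1)n_3=\Fr(H)/2+n_2$ (incomparable to all of them by UF), so $\PF(H)=\{\Fr(H),\Fr(H)/2\}$ and $\type(H)=2$; since $\Fr(H)/2+\Fr(H)/2=\Fr(H)$, Lemma~\ref{Nari} (equivalently, the characterisation of pseudo-symmetric semigroups) shows $H$ is almost symmetric. The step I expect to be most delicate is arranging the logic so as to avoid circularity among (i), (ii), (iii): the colength count needs UF of $\Fr(H)+n_2$ and so does the inequality $n_2\ge X$, so it is essential to obtain this UF from Lemma~\ref{NU_e=4}(ii)---whose proof does not presuppose that $H$ is almost symmetric---rather than from Lemma~\ref{formula_Ap}.
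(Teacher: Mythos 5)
Your proof is correct, but it takes a genuinely different route from the paper's at the decisive step. The paper reduces modulo $x_2$: since $\al_{12},\al_{42}\ge 1$, all five binomials become monomials mod $x_2$, so $\dim_K S/(I'+x_2S)=\al_1\al_4(\al_3-1)+1$ is read off from a monomial ideal and compared with $n_2=\al_1\al_4(\al_3-1)+\type(H)-1$ (Lemma~\ref{formula_Ap}(ii)); this yields $\type(H)=2$ and $S/(I',x_2)=S/(I_H,x_2)$ at once, and $I'=I_H$ follows by a $\Tor$/Nakayama argument with $x_2$ a nonzerodivisor. You instead reduce modulo $x_1$, where $g_5=x_4^{\al_4}-x_2^{\al_{42}}x_3$ stays a binomial, so a genuine Gr\"obner computation is needed (the extra element $x_2^{\al_2-1}x_3$ appears in the basis); I checked that your colength $Y=\al_2+(\al_3-1)(\al_4\al_{12}+\al_{42})$ is correct, that $Yn_2=Xn_1$ with $X=\al_1\al_4(\al_3-1)+1$ does follow from $(*1),(*2),(*4)$, and that the squeeze $Y\ge n_1$, $n_2\ge X$, $Yn_2=Xn_1$ forces the equalities, so your argument closes. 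Two further genuine differences: you get UF of $\Fr(H)+n_2$ from Lemma~\ref{NU_e=4}(ii) by substituting $(*1),(*3),(*4)$, and you recover $\type(H)=2$ and almost symmetry at the end by determining $\Ap(n_2,H)$ completely and taking its $\le_H$-maximal elements, whereas the paper quotes Lemma~\ref{formula_Ap}(i),(ii), whose statement assumes $H$ almost symmetric; that appeal is innocuous in the paper's applications (where almost symmetry is already known, or because the directions actually used do not need it), but as a standalone proof of (iii) your arrangement is logically cleaner, exactly as you anticipated in your final remark. Your explicit syzygy $x_1^{\al_1}x_4^{\al_4-1}-x_2^{\al_2-1}x_3=x_4^{\al_4-1}g_2+x_2^{\al_{12}}g_5$ also substantiates the paper's parenthetical comment on the $1$st/$3$rd row difference. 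Two small writing points: the Gr\"obner count is the one step you assert rather than prove and should be written out, and the final upgrade from $I_H\subseteq J+x_1S$ to $I_H=J$ is an ascending induction on degree (equivalently graded Nakayama applied to $I_H=J+x_1I_H$), not a descending one. In sum, the paper's choice of $x_2$ buys a trivial monomial colength computation; your choice of $x_1$ costs a Gr\"obner basis and the auxiliary identity $Yn_2=Xn_1$, but buys a proof of (i)--(iii) that never leans on almost symmetry before it is established.
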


We will show in Proposition \ref{even_type_le2}  that if $e=4$ and $\Fr(H)/2\in \PF(H)$, then
$\RF(\Fr(H)/2)$ is as in \ref{normal}, showing that if $e=4$ and $H$ is almost
symmetric with even $\Fr(H)$, then $\type(H)=2$.

\begin{proof}
First, note that  by Lemma \ref{formula_Ap}(i),  $\Fr(H) + n_2$ has UF,  since $\al_{i2}\neq 0$ for all
$i\neq 2$, as we can read from our $\RF(\Fr(H)/2)$.
Adding the 2nd and 3rd row of $\RF(\Fr(H)/2)$, we get
\[ \Fr(H) +  n_2 = (\al_1-1)n_1+ (\al_3-2)n_3 + (\al_4-1) n_4.\]
Hence $n_2 = \al_1\al_4(\al_3-1) +\type(H)-1\ge \al_1\al_4(\al_3-1) +1$,
by Lemma \ref{formula_Ap}(ii). We will show that $\type(H)=2$ by showing
$n_2 =  \al_1\al_4(\al_3-1) +1$. \par
\medskip

We determine the minimal generators of $I_H$. Let $I'$ be the ideal generated by the binomials
\[x_2^{\al_2}- x_1x_3^{\al_3-1},
x_1^{\al_1}- x_2^{\al_{12}}x_4,
x_3^{\al_3}- x_1^{\al_1-1}x_2x_4^{\al_4-1},
x_3^{\al_3-1}x_4- x_1^{\al_1-1}x_2^{\al_2-\al_{12}},
x_4^{\al_4} - x_2^{\al_2-1-\al_{12}}x_3.
\]
Since these binomials correspond to difference vectors of rows of $\RF(\Fr(H)/2)$, it is clear that $I' \subset  I_H$. In order to  prove that $I' = I_H$, we first show that  $S/ (I' ,x_2) = S/(I_H,x_2)$. Note that $S/(I_H,x_2)\iso K[H]/(t^{n_2})$. Therefore, $\dim_KS/(I_H,x_2)=n_2$, and since $I'\subset I_H$ we see that $\dim_K S/(I',x_2)\geq n_2$. We have seen that $n_2 = \al_1\al_4(\al_3-1) +\type(H)-1$. On the other hand,
\[
S/(I',x_2)\iso K[x_1,x_3,x_4]/( x_1x_3^{\al_3-1},  x_1^{\al_1}, x_3^{\al_3}, x_3^{\al_3-1}x_4, x_4^{\al_4}),
\]
from which we deduce that  $\dim_k (S/ (I' ,x_2))= \al_1\al_4(\al_3-1) +1$. It follows that $\al_1\al_4(\al_3-1) +1\geq  \al_1\al_4(\al_3-1) +\type(H)-1$. This is only possible if $\type(H)=2$ and $S/ (I' ,x_2) = S/(I_H,x_2)$.

Now consider the exact sequence
\[
0\to I_H/I'\to S/I'\to S/I_H\to 0.
\]
Tensorizing this sequence with $S/(x_2)$ we obtain the long exact sequence
\[
\cdots \to \Tor_1(S/I_H, S/(x_2))\to (I_H/I')/x_2(I_H/I')\to S/(I',x_2)\to S/(I_H,x_2)\to 0.
\]
Since $S/I_H$ is a domain, $x_2$ is a non-zerodivisor on $S/I_H$. Thus   $\Tor_1(S/I_H, S/(x_2))=0$. Hence, since  $S/ (I' ,x_2) = S/(I_H,x_2)$,  we deduce from this exact sequence that $(I_H/I')/x_2(I_H/I')=0$. By Nakayama's lemma, $I_H/I'=0$, as desired.

This finishes the proof of the structure theorem of Komeda, using $\RF(\Fr(H)/2)$.
\end{proof}
\begin{Remark}{\em  The generators of $I_H$ in the papers \cite{Ko} and  \cite{BFS} are obtained by applying the cyclic  permutation
$(1,3,4,2)$. Namely, in their paper, $I_H = (x_1^{\al_1}- x_3x_4^{\al_4-1},
x_1^{\al_1}- x_2^{\al_{2}-1-\al_{42}}x_4,
x_4^{\al_4}- x_3^{\al_3-1}x_1x_2^{\al_2-1},
x_4^{\al_4-1}x_2- x_3^{\al_3-1}x_1^{\al_{21}+1},
x_2^{\al_2} - x_1^{\al_{21}}x_4)$.

These equations are derived from the matrix obtained by the same permutation.

\[\RF(\Fr(H)/2)=  \left( \begin{array}{cccc} -1 &  0 & 0 &\alpha_4-1 \\
\al_{21} & -1 & \alpha_3-1 & 0 \\
\al_1-1 & 0 & -1 & 0\\
0 & \al_2-1 & \al_3-1 & -1 \end{array}\right) \]
}
\end{Remark}

\section{Some structure Theorem for $\RF$-matrices of a $4$-generated almost almost symmetric
 numerical semigroup $H$  and a proof that  $\type(H) \le 3$.}
 \label{sec:5}

We investigate the \lq\lq special rows" of  $\RF$-matrices of a $4$-generated almost almost symmetric
 numerical semigroup $H$ and give  another proof of the fact that a $4$-generated
almost symmetric  numerical semigroup has type $\le 3$, proved by A.~Moscariello.

\begin{Theorem}\label{type_le3}\cite{Mo}  If $H=\langle n_1,\ldots, n_4\rangle$ is almost symmetric, then $\type(H)\le 3$.

\end{Theorem}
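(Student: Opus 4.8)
The plan is to argue by contradiction: suppose $H = \langle n_1,n_2,n_3,n_4 \rangle$ is almost symmetric with $\type(H) = t \geq 4$. Write $\PF(H) = \{f_1 < f_2 < \cdots < f_{t-1} < \Fr(H)\}$, so by Nari's Lemma (Lemma~\ref{Nari}) we have $f_i + f_{t-i} = \Fr(H)$ for all $i$. In particular $\Fr(H)/2$ is \emph{not} a pseudo-Frobenius number when $t$ is even, and when $t$ is odd it is $f_{(t-1)/2}$; in either case, every $f \in \PF'(H)$ is paired with a distinct partner $\Fr(H) - f \in \PF'(H)$, and for a pair $f \neq f'$ with $f + f' = \Fr(H) \notin H$ we may apply the symmetry Lemma~\ref{symm} and Remark~\ref{symm-Rem} to the matrices $\RF(f)$ and $\RF(f')$. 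The strategy is to exploit Corollary~\ref{0_in_row} heavily: for each $f \in \PF'(H)$, every row of $\RF(f)$ has at least one zero (off the diagonal), and moreover the \emph{position} of a forced zero in each row is independent of the choice of $\RF(f)$. Since each row of an $e=4$ $\RF$-matrix has $3$ off-diagonal entries, a forced zero means each $f + n_k$ has a factorization using at most two of the $n_j$ (this is essentially Corollary~\ref{2nj}).

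The core of the argument is a counting/pigeonhole analysis of how the zeros are distributed. For a fixed $f \in \PF'(H)$, encode $\RF(f)$ by the function assigning to each row $i$ the set $Z_i(f) \subseteq \{1,2,3,4\}\setminus\{i\}$ of columns forced to be zero; Corollary~\ref{0_in_row} gives $Z_i(f) \neq \emptyset$ for all $i$, and Lemma~\ref{a_ij>0} gives that every column $j$ has some row $i$ with $a_{ij} > 0$, i.e.\ the zero-pattern cannot contain a full column. Now run over all $t-1 \geq 3$ elements of $\PF'(H)$ simultaneously. For each pair $\{f, \Fr(H)-f\}$ the symmetry constraint (Lemma~\ref{symm}: $a_{ij}>0$ in $\RF(f)$ forces $b_{ji}=0$ in $\RF(\Fr(H)-f)$) links their patterns. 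I would also bring in Proposition~\ref{f+n_k= f' + n_l}(i), which says that for three distinct $f,f',f'' \in \PF'(H)$ and a permutation $\{i,j,k,l\}$ the coincidence $f + n_k = f' + n_l = f'' + n_j$ cannot occur — this is exactly a tool for ruling out too many pseudo-Frobenius numbers competing for the same ``slots''. The plan is: from $t-1 \geq 3$ pseudo-Frobenius numbers in $\PF'(H)$, each contributing a row-factorization of $f + n_k$ for each of the four $k$, the forced-zero structure together with Proposition~\ref{f+n_k= f' + n_l}(i) and Lemma~\ref{a_ij>0} will produce either a forced zero column (contradicting Lemma~\ref{a_ij>0}) or a forbidden triple coincidence, hence a contradiction.

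Concretely, I would fix an index $k$ and look at the $k$-th column across the matrices $\RF(f_1), \dots, \RF(f_{t-1})$; by Lemma~\ref{a_ij>0} none of these columns is all-zero, so each $\RF(f_s)$ has some row $i(s) \neq k$ with the $(i(s), k)$-entry positive. Dually, the symmetry lemma then pins down zeros in column $i(s)$ of $\RF(\Fr(H)-f_s)$. Iterating this with $t-1 \geq 3$ gives more positive entries than the $6$ available off-diagonal slots of a single matrix can absorb once one also accounts for the forced zeros from Corollary~\ref{0_in_row} (which removes at least $4$ of the $12$ slots, leaving at most $8$ possibly-positive entries per matrix, but the cross-matrix symmetry constraints cut this down further). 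The bookkeeping should show that $t-1 \leq 2$, i.e.\ $t \leq 3$.

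The main obstacle is organizing the combinatorics cleanly: there are several choices of $\RF$-matrices, and the forced-zero positions are only partially canonical (Corollary~\ref{0_in_row} gives one forced zero per row, not the full pattern). The delicate point is to show that the ``extra'' pseudo-Frobenius numbers beyond type $3$ genuinely overconstrain the system — one must be careful that a putative $f_4$ does not simply reuse a factorization pattern already occupied, which is precisely where Proposition~\ref{f+n_k= f' + n_l}(i) (no triple coincidence $f+n_k = f'+n_l = f''+n_j$) does the decisive work. I expect the proof to split into a small number of cases according to which rows of $\RF(\Fr(H)/2)$-type structure (or, when $t$ is even, of the pair-matrices) have a single positive entry versus two, mirroring the case analysis in Proposition~\ref{RF(F/2)}, and in each case to terminate by exhibiting a relation forcing some $(\al_i - 1)n_i$ or $\al_i n_i$ to coincide with a shorter combination, contradicting minimality of $\al_i$.
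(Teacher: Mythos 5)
Your proposal is a plan rather than a proof, and as written it contains a parity error and two genuine gaps. The parity error: by Nari's lemma the involution $f\mapsto \Fr(H)-f$ on the $t-1$ elements of $\PF'(H)$ has a fixed point exactly when $t$ is \emph{even}, so $\Fr(H)/2\in\PF(H)$ in the even-type case and not (in general) in the odd case --- the opposite of what you assert. Consequently your claim that every $f\in\PF'(H)$ has a \emph{distinct} partner fails precisely when $t$ is even, and that case cannot be absorbed into your pairing bookkeeping. In the paper it is treated separately: $\Fr(H)/2\in\PF(H)$ forces the rigid matrix of Theorem~\ref{type2} (via Proposition~\ref{RF(F/2)} and Proposition~\ref{even_type_le2}), hence $\type(H)=2$; some argument of this strength is needed to exclude type $4$, and your sketch contains none.

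The more serious gap is that the pigeonhole you describe does not close. Counting forced zeros (Corollary~\ref{0_in_row}), nonzero columns (Lemma~\ref{a_ij>0}) and the cross-matrix symmetry (Lemma~\ref{symm}) is indeed what the paper does, but through the sharper notion of \emph{special rows} $(\al_i-1){\bld e}_i-{\bld e}_k$: each complementary pair $\{f,\Fr(H)-f\}$ contributes at least $4$ special rows (Lemma~\ref{special row}(ii), whose proof needs Lemma~\ref{f + n_k = b_in_i} to upgrade rows with a single positive entry to special rows --- zero-counting alone does not give this), while Proposition~\ref{f+n_k= f' + n_l} caps at $2$ the number of special rows attached to a fixed generator $n_i$, hence at most $8$ in all. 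This yields only $\type(H)\le 5$, not $\le 3$. Excluding type $5$ is then not bookkeeping: the paper must set up the coincidences $f+n_p=(\al_i-1)n_i=f'+n_q$ for the four non-complementary pairs, derive the arithmetic identities $n_p-n_q=f_2-f_1$, $n_s-n_r=f_1'-f_2$, and run a three-case analysis that terminates in contradictions with the minimality of the $\al_i$, with Lemma~\ref{a_ij>0}, and with size comparisons such as $f_1+n_l<n_p,n_q$. Your proposal gestures at ``a small number of cases'' but supplies none of this, and Proposition~\ref{f+n_k= f' + n_l}(i) alone (no triple coincidence) is nowhere near sufficient; the expectation that the zero/positive-entry count by itself forces $t-1\le 2$ is unfounded, since a hypothetical type-$5$ configuration is not ruled out at that level of the argument.
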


In this section,  let $H =\langle n_1,n_2,n_3,n_4\rangle$  and
we assume always that $H$ is almost symmetric.  Our main tool is the \lq\lq special row of $\RF(f)$ for
 $f\in \PF(H)$.

 \begin{defn}\label{special_def}
 {\em If a row of $\RF(f)$ is of the form $(\al_i -1) {\bld e}_i - {\bld e}_k$ is called {\bf a special row},
 where ${\bld e}_i$ the $i$-th unit vector of $\Z^4$.}
 \end{defn}

\begin{lem}\label{special row}  We assume $e=4, \{n_1,n_2,n_3,n_4\} =
\{n_i, n_j, n_k, n_l\} $ and $H$ is almost symmetric.
 \begin{enumerate}
\item[{\em (i)}]  There  are $2$ rows in $\RF(\Fr(H)/2)$  of the form
$(\al_i -1) {\bld e}_i - {\bld e}_k$. 
\item[{\em (ii)}]  If $f\ne f' \in \PF(H)$ with $f+f' \not\in H$, then there are $4$ rows in
$\RF(f)$ and $\RF(f')$ of the form $(\al_i -1) {\bld e}_i - {\bld e}_k$.
\item[{\em (iii)}] For every pair $\{f,f'\}\subset \PF'(H), f\ne f', f+f'\not\in H$ and for
every $j\in \{1,2,3,4\}$,
 there  exists some $s$ such that either $(\al_j-1)n_j = f +n_s$ or
$(\al_j-1)n_j = f' +n_s$.
\end{enumerate}
\end{lem}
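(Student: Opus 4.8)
The three statements are really one statement applied in different ranges; the common engine is that $\RF$-matrices of $f$ and $f'$ with $f+f'\notin H$ "see" each other through Lemma~\ref{symm}, together with Corollary~\ref{0_in_row} (every row has a zero, and one can fix a zero position independent of the choice of matrix) and Lemma~\ref{a_ij>0} (every column contains a positive entry). The plan is to prove (iii) first in the general shape "for each $j$ there is a choice of $f''\in\{f,f'\}$ and an index $s$ with $(\al_j-1)n_j=f''+n_s$", and then harvest (i) and (ii) as the special cases $\{f,f'\}=\{\Fr(H)/2\}$ (so $f=f'$) and $\{f,f'\}\subset\PF'(H)$ respectively by counting the resulting special rows.

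\textbf{Step 1: the single-pair key lemma.} Fix $f\ne f'$ in $\PF(H)$ with $f+f'\notin H$ (allowing also $f=f'=\Fr(H)/2$, for which the hypothesis $f+f'\notin H$ holds). Fix $j$ and, using Corollary~\ref{0_in_row}, pick $l\ne j$ so that the $(j,l)$ entry of every $\RF(f)$ is $0$; thus $f+n_j$ is a sum of $n_i$ and $n_k$ only, where $\{i,j,k,l\}=\{1,2,3,4\}$. Adding $n_j$ to both sides, $\al_j n_j \le_H f+n_i+n_j$ would be too weak; instead I look at $f+n_j$ directly: by Corollary~\ref{2nj} it uses at most two of the $n$'s, so $f+n_j=pn_i+qn_k$. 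Now I want to force $p=0$ or $q=0$ and the nonzero one to equal $\al_j-1$. Adding $n_j$: $(f+n_j)+n_j$ is a factorization of... no — rather add $n_k$ (resp.\ $n_i$) to get $f+n_j+n_k$, and use Lemma~\ref{symm} against $\RF(f')$ exactly as in the proof of Proposition~\ref{f+n_k= f' + n_l}: if both $p,q>0$ then $n_j$ appears in $(f+n_j)-n_i$ and in $(f+n_j)-n_k$, producing positive $(j,i)$ and $(j,k)$ entries of $\RF(f)$ in those shifted expressions, so the corresponding $(i,j)$, $(k,j)$ entries of $\RF(f')$ vanish; combined with the fixed zero in position $(j,l)$ and Lemma~\ref{a_ij>0} applied to column $j$ of $\RF(f')$, one column of $\RF(f')$ must have all entries $\le 0$ unless $f'+n_j$ supplies the positive entry — here I mirror the contradiction-hunting of Proposition~\ref{f+n_k= f' + n_l}(ii)/(iii) to conclude $f+n_j$ is a monomial multiple $bn_i$, and then Proposition~\ref{f + n_k = bn_i} gives either $b=\al_i-1$ (wrong variable, so recurse) or $b\ge\al_i$ with $\al_in_i=\al_mn_m$; ruling the latter out via $(\al_i-1)n_i\in\Ap(n_j,H)$ and minimality of $\al_j$ forces $f+n_j=(\al_j-1)n_j$, i.e.\ $s=j$ works — and if the mirror argument instead cornered $\RF(f')$, one gets $(\al_j-1)n_j=f'+n_s$. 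This is the heart of the matter and the step I expect to be the main obstacle: making the case split clean, and in particular handling the "recurse to another variable" branch without circularity (it terminates because each recursion step moves to a strictly different generator index and there are only four).

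\textbf{Step 2: counting to get (i) and (ii).} For (i), take $f=f'=\Fr(H)/2$. Step~1 gives, for each $j$, an $s$ with $(\al_j-1)n_j=\Fr(H)/2+n_s$; such an equation says the $s$-th row of $\RF(\Fr(H)/2)$ equals $(\al_j-1)\eb_j-\eb_s$, a special row. The four equations (one per $j$) involve distinct target rows $s$ — because two special rows $(\al_j-1)\eb_j-\eb_s$ and $(\al_{j'}-1)\eb_{j'}-\eb_s$ in the same row $s$ force $(\al_j-1)n_j=(\al_{j'}-1)n_{j'}$, and then $(\al_j-1)n_j-n_{j'}\in\Ap(n_s,H)$ (Lemma~\ref{al_iAp}) combined with $j\ne j'$ and $s\ne j,j'$ contradicts minimality of one of the $\al$'s; a $4\times 4$ matrix cannot have four special rows because a special row in position $i$ forces $a_{ii}=-1$ to be the $i$-coordinate only if $i\ne j$, and one checks the diagonal constraint $a_{ii}=-1$ is incompatible with more than two indices $j$ being "hit" — so one actually gets \emph{exactly} two special rows, which is statement (i). For (ii), apply Step~1 with $f\ne f'$ in $\PF(H)$, $f+f'\notin H$: for each of the four values of $j$ we obtain a special row $(\al_j-1)\eb_j-\eb_s$ sitting in $\RF(f)$ or in $\RF(f')$; by the same "distinct targets" argument (now splitting by which matrix each special row lives in, using Lemma~\ref{symm} to see the two matrices cannot both carry the conflicting rows) these four special rows are distinct, giving the stated count of $4$ special rows among $\RF(f)$ and $\RF(f')$. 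Statement (iii) is then literally the conclusion of Step~1 restricted to $\{f,f'\}\subset\PF'(H)$.

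\textbf{What to be careful about.} The delicate points are: (a) in Step~1 the dichotomy "the pressure lands on $\RF(f)$ vs.\ on $\RF(f')$" must be exhaustive, which is where Lemma~\ref{a_ij>0} (no all-nonpositive column) is essential — it is the only thing preventing a degenerate configuration; (b) the recursion in the "$b=\al_i-1$, wrong variable" branch must be shown to terminate, which it does by a finite descent on $\{1,2,3,4\}$; (c) in the counting for (i) and (ii), one must rule out coincidences among the special rows, for which the tool is Lemma~\ref{al_iAp} together with the minimality defining the $\al$'s. I expect roughly the same length of argument as the proof of Proposition~\ref{f+n_k= f' + n_l}, which is the closest precedent and can be cited to shorten several of the case analyses.
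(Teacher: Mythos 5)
There is a genuine gap, and it sits exactly where you predicted: Step 1 is not a proof, and in the generality you state it, it is false. First the falsity: you allow $f=f'=\Fr(H)/2$ and claim that for \emph{every} $j$ there is an $s$ with $(\al_j-1)n_j=\Fr(H)/2+n_s$. But by Proposition~\ref{RF(F/2)} and Theorem~\ref{type2}, $\RF(\Fr(H)/2)$ has exactly two special rows in its normal form (rows $1$ and $2$, covering only the generator indices $2$ and $3$); the indices $1$ and $4$ occur in the row $(\al_1-1,0,-1,\al_4-1)$, which has two positive entries, so $(\al_1-1)n_1$ and $(\al_4-1)n_4$ are in general \emph{not} of the form $\Fr(H)/2+n_s$. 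Consequently (i) cannot be harvested from your Step 1, and your own counting paragraph for (i) is internally inconsistent (you first claim four special rows with distinct target rows, then conclude there are exactly two). Second, the mechanism of Step 1 is confused about what must be produced: statement (iii) requires exhibiting, for the generator index $j$, a row $s\neq j$ of $\RF(f)$ or $\RF(f')$ equal to $(\al_j-1)\eb_j-\eb_s$; analyzing the $j$-th row $f+n_j$ cannot yield this, and your stated conclusion \lq\lq forces $f+n_j=(\al_j-1)n_j$, i.e.\ $s=j$ works\rq\rq\ is impossible, since it would give $f=(\al_j-2)n_j\in H$. What your row analysis can give (via Proposition~\ref{f + n_k = bn_i}) is a special row for some \emph{other} index $i$, and your \lq\lq recursion over indices terminates\rq\rq\ remark does not show that every index is eventually covered --- it can perfectly well cycle among three indices. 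That residual case is precisely the hard point: the at least four special rows guaranteed by (ii) may share a generator index, i.e.\ $f+n_k=(\al_i-1)n_i=f'+n_l$, and the paper resolves it through Proposition~\ref{f+n_k= f' + n_l}(iii) (the two expressions $\al_in_i=pn_j+qn_k=p'n_j+rn_l$, with $q=\al_k$) followed by the explicit case analysis (a), (b), (b1), (b2) on the entries of $\RF(f)$ and $\RF(f')$, using Lemma~\ref{symm}, Corollary~\ref{0_in_row} and Lemma~\ref{a_ij>0}. Nothing in your proposal substitutes for that analysis.

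There is also a structural reversal that leaves (ii) unproved: the paper obtains (ii) by a direct count --- Lemma~\ref{symm} forces at least $12$ zeros among the off-diagonal entries of $\RF(f)$ and $\RF(f')$, Corollary~\ref{0_in_row} forbids rows with three positive entries, Lemma~\ref{f + n_k = b_in_i} controls rows with a single positive entry, and classifying the eight rows into the three types yields at least four rows of type (c) --- and then uses (ii) as input to prove (iii). You invert this, deriving (ii) from a strengthened (iii); that would be legitimate only if Step 1 were established independently, which it is not. So the proposal, as it stands, proves none of (i), (ii), (iii): (i) needs the normal form of Proposition~\ref{RF(F/2)}, (ii) needs the zero-counting argument, and (iii) needs the shared-index case analysis, all of which are missing.
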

\begin{proof} We have proved (i) in Proposition \ref{RF_F/2}. \par
(ii) By Lemma \ref{symm}, we have at least $12$ zeroes in $\RF(f)$ and
$\RF(f')$.  Also, by
 Corollary \ref{0_in_row}, a row of $\RF(f)$ should not contain $3$
 positive components. Also, we showed in \ref{f + n_k = b_in_i}
that if the $k$-th row of $\RF(f)$ is $-{\bld e}_k + b{\bld e}_i$,
then $b\ge \al_i -1$.

 Now, a row of $\RF(f), \RF(f')$ is either one
 of the following $3$ types.
 \begin{enumerate}
\item[(a)] contains $2$ positive components,
\item[(b)] $q {\bld e}_s - {\bld e}_t$  with $q\ge \al_s$,
\item[(c)] $(\al_s-1){\bld e}_s - {\bld e}_t$.
\end{enumerate}
Now, in case (b),  some different components of $t$-th row
are positive. Hence if $a,b,c$ be number of rows or type (a), (b), (c),
respectively, we must have $2(a+b)+c\le 12$.  Since $a+b+c=8$,
we have $c\ge 4$. \par
 (iii) We have shown in (ii) that there are at least $4$ rows
 in $\RF(f)$ and $\RF(f')$ of the form $(\al_i -1) {\bld e}_i - {\bld e}_k$.
So, we may assume that for some  $i,k,l$ we have the relations
\[n_k + f = (\al_i-1)n_i = n_l+ f'.\]
Then  we have shown in Proposition \ref{f+n_k= f' + n_l} that
$\al_in_i$ must have $2$ different expressions
\[(5.7.1) \qquad \al_i n_i = p n_j + qn_k = p' n_j + r n_l\]
and we will have
\[(5.7.2) \qquad f + n_i = p n_j + (q-1)n_k, f'+ n_i = p' n_j + (r-1) n_l.\]

We will write $\RF(f) = (m_{st})$ and $\RF(f') = (m'_{st})$
if $f+ n_s = \sum_t m_{st} n_t$ for some expression and
we say $m_{st}=0$ for some $(s,t)$ if $m_{st} =0$ in {\it any}
expression of $f+ n_s = \sum_t m_{st} n_t$ and likewise for
$\RF(f') = (m'_{st})$.
\par
Here, since our argument is  symmetric on $k,l$ until now, we may assume that 
$p'\ge p$ and then from (5.7.1), we have
\[ qn_k = (p'-p)n_j + rn_l\]
and we must have $q \ge \al_k$ and also if
$q\ge \al_k + 1$, then $\al_kn_k=\al_jn_j$  by Proposition \ref{f + n_k = bn_i}.
 But then from $p n_j + qn_k
=(p+\al_j)n_j +(q-\al_k)n_k= p' n_j + r n_l$, we have $r\ge \al_l$.
 and this will easily lead to a contradiction. Hence we have $q=\al_k$ and
\[(5.7.2') \qquad f + n_i = p n_j + (\al_k-1)n_k, f'+ n_i = p' n_j + (r-1) n_l.\]
\par
Now, since $m_{il}=m_{kl}=0$ (resp. $m'_{ik} = m'_{lk}=0$),
then $m_{jl}$ (resp. $m'_{jk}$) must be positive by Lemma
\ref{a_ij>0}. Let us put
\[ f + n_j = s n_i + t n_k + u n_l \quad (u>0).\]

Since $f' = f + n_k - n_l$, we have $f' = s n_i + (t+1) n_k + (u-1)n_l$.

We discuss according to $s>0$ or $s=0$.\par

\medskip
Case (a).  If $s>0$, by Lemma \ref{a_ij>0}, $m_{jl}=m'_{jk}=0$
since  by Lemma \ref{symm}, $m_{ij}=m'_{ij}=0$ and hence
we have $p=p'=0$ and after  a short calculation we have
\[(5.7.3) \qquad f + n_i =  (\al_k-1)n_k, f' +n_i = (\al_l-1)n_l\]
by Lemma \ref{f + n_k = b_in_i} and since $m_{il}=m'{ik}=0$.
Also, by Corollary \ref{0_in_row}, since $s>0$, $m_{li}=m'_{ki}=0$
and by Lemma \ref{symm}, either $m_{lk}$ or $m'_{lk}=0$.
Hence we have $f + n_l = (\al_j-1)n_j$ (resp. $f' +n_k =  (\al_j-1)n_j$)
if $m_{lk}$ (resp. $m'_{lk}=0$). And then we have proved our assertion.

Case (b). $s=0$.  We put
\[(5.7. 4)\qquad f + n_l = an_i + bn_j + cn_k, f' + n_k =  b'n_j + d' n_l\]
(since $m_{ik} =\al_k-1>0$, we have $m'_{ki}=0$ by Lemma \ref{symm})
so that
\[RF(f)= \left( \begin{array}{cccc} -1 &  p & \al_k-1 & 0 \\
0 & -1 & t & u \\
\al_i-1 & 0 & -1 & 0\\
a & b & c & -1 \end{array}\right),\quad
\RF(f') =
\left( \begin{array}{cccc} -1 &  p' & 0 & r-1 \\
0 & -1 & t+1 & u-1 \\
0 & b' & -1 & d'\\
\al_i-1 &0 & 0 & -1 \end{array}\right).\]

Note that by Lemma \ref{symm}, we have
\[a(r-1) = b(u-1)= b't = cd' =0.\]

We then discuss the cases $c>0$ and $c-0$.

Case (b1). $c>0$.  Then  $d'=0, b'= \al_j -1$ and $t=0$.  Then $u = \al_l-1$ and
we need to show there is a special row $(\al_k -1) {\bld e}_ k - {\bld e}_s$ for some $s$.
In this case, by Corollary~\ref{0_in_row}, either $b=0$ or $a=0$. If $b>0$, then $u=1$ and
the 2nd row of $\RF(f')$ is $(\al_k -1) {\bld e}_ k - {\bld e}_j$ with $\al_k=2$. If $a>0$, then
the ist row of $\RF(f')$ is $p' {\bld e}_ j - {\bld e}_i$ and this contradicts the fact the 3rd
row of $\RF(f')$ is $(\al_j-1) {\bld e}_ j - {\bld e}_k$ inducing $n_i=n_k$. If $a=b=0$, then
the 4th row of $\RF(f)$ is the desired special row.  Now we are reduced to the case $c=0$.

Case (b2). $c=0$ and $b>0$.  Then we have $t+1= \al_k-1$. If $a>0$, then we have $r=1$
and $p'=\al_j-1$ and either $b'=0$ or $t=0$.  In either case, we have enough special rows.
If $a=0$, then $b=\al_j-1$ and the 4th row of $\RF(f)$ is $(\al_j-1) {\bld e}_ j - {\bld e}_l$.
If, moreover,  $b'=0$, then $d' = \al_l-1$ and the 3rd row of $\RF(f')$ is the desired special row.
If, moreover,  $b'>0$, then we have $t=0$ and the 2nd rows of $\RF(f)$ and $\RF(f')$ give
enough numbers of special rows.

\end{proof}

We investigate the semigroups which has special type of $\RF(f)$.

\begin{lem}\label{4rows} Assume $H$ is almost symmetric with odd $\Fr(H)$ and assume
for some $f\in \PF'(H)$, $\RF(f)$ has only one positive entry in each row. Then we have:
\begin{enumerate}
\item[{\em (i)}] After suitable permutation of indices, we can assume
\[\RF(f) = \left( \begin{array}{cccc} -1 & \alpha_2-1 & 0 & 0\\
0 & -1 & \al_3-1 & 0 \\
0 & 0 & -1 & \al_4-1\\
\al_1-1 & 0 & 0 & -1 \end{array}\right).\]
\item[{\em (ii)}] In this case, if we put $f' =\Fr(H)-f$, then
 \[\RF(f') = \left( \begin{array}{cccc} -1 & \alpha_2-2 & \al_3-1 & 0\\
0 & -1 & \al_3-2 & \al_4-1 \\
\al_1-1 & 0 & -1 & \al_4-2\\
\al_1-2 & \al_2-1 & 0 & -1 \end{array}\right)\]
\item[{\em (iii)}] We have  $\type(H) = 3$ with $\PF(H)=
\{f,f', \Fr(H)\}$, $\mu(I_H)=6$ and the minimal generators of $I_H$ are obtained
from taking differences of $2$ rows of $\RF(f)$, namely $I_H=(x_1^{\al_1}-x_2^{\al_2-1}x_4, x_2^{\al_2}-x_3^{\al_3-1}x_1, x_3^{\al_1}-x^{\al_4-1}x_2, x_4^{\al_4}-x_1^{\al_1-1}x_3,
x_1x_4^{\al_4-1}-x_2^{\al_2-1}x_3, x_1^{\al_1-1}x_2-x_3^{\al_3-1}x_4).$
\item[{\em (iv)}]
We have
\[n_1= (\al_2-1)(\al_3-1)\al_4 +\al_2, n_2= (\al_3-1)(\al_4-1)\al_1,\]
\[n_3= (\al_4-1)(\al_1-1)\al_2+\al_4, n_4=(\al_1-1)(\al_2-1)\al_3+\al_1,\]
\end{enumerate}
\end{lem}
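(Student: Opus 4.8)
The plan is to start from the hypothesis that every row of $\RF(f)$ has exactly one positive entry and bootstrap the whole structure from there. For part (i), each row is of the form $(\al_{i}-1)\eb_i - \eb_k$ by Lemma~\ref{f + n_k = b_in_i}(ii) (a row $-\eb_s + b\eb_t$ forces $b\ge \al_s-1$, and since $f+n_s=bn_t$ with $f\in\PF'(H)$, Proposition~\ref{f + n_k = bn_i} together with the fact that we can also rule out $b\ge\al_s$ — otherwise $\al_sn_s$ would have a forced two-term expression and we'd be able to re-choose $\RF(f)$ with a non-special row — pins $b=\al_s-1$). So $\RF(f)$ consists of four special rows, and the map sending row $i$ to the index of its positive entry must be a permutation of $\{1,2,3,4\}$: if two rows had their positive entry in the same column $j$, the column $j$ would carry two positive entries but then Lemma~\ref{symm} applied to $f$ with itself (using $2f\notin H$, valid since $f\in\PF'(H)$ and $H$ almost symmetric so $2f\ne\Fr(H)$... actually one uses that $f+f\notin H$) would be violated, or more directly the indices not appearing as positive-entry columns would have a zero column contradicting Lemma~\ref{a_ij>0}. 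A permutation of $\{1,2,3,4\}$ with no fixed points (forced since $a_{ii}=-1$) is either a 4-cycle or a product of two transpositions; the two-transposition case would give a relation like $\al_1n_1=\al_2n_2$ and $\al_3n_3=\al_4n_4$ which one checks contradicts minimality (taking differences of the relevant rows produces $(\al_i-1)n_i$ as a multiple of $n_j$, contradicting the definition of $\al_i$). Hence after relabelling the permutation is the 4-cycle $1\to2\to3\to4\to1$, giving exactly the displayed matrix; the relations read off from differences of consecutive rows are $\al_1n_1=(\al_2-1)n_2+n_4$, etc.

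For part (ii), set $f'=\Fr(H)-f$; by Lemma~\ref{Nari}, $f'\in\PF'(H)$. To compute $\RF(f')$, add $\Fr(H)-f = f'$ to each relation $f+n_k=(\al_i-1)n_i$: this gives $\Fr(H)+n_k = f' + (\al_i-1)n_i$. Using the four relations from part (i) to express each $\al_in_i$ in terms of the other three generators, one gets a factorization of $f'+n_k$ for each $k$; the claim is that these factorizations have the displayed coefficient pattern and moreover are the unique ones (so that $\RF(f')$ is well-defined and equals the displayed matrix). Concretely, from $f'+n_1 = \Fr(H)+n_1-(\al_4-1)n_4 = \Fr(H)+n_1 - [(\al_4-1)n_4]$ and substituting $(\al_4-1)n_4$... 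I would instead directly verify that the displayed matrix $B=(b_{ij})$ satisfies $\sum_j b_{ij}n_j = f'$ for each $i$ by using the four relations of part (i): e.g. row~1 of $B$ asserts $f' = -n_1+(\al_2-2)n_2+(\al_3-1)n_3$, and one checks $f+f' = \Fr(H)$ by adding this to row~1 of $\RF(f)$, namely $\Fr(H) = f+f' = [-n_1+(\al_2-1)n_2] + [-n_1+(\al_2-2)n_2+(\al_3-1)n_3]$, reorganized via $\al_1n_1=(\al_2-1)n_2+n_4$. Each of the four rows is a short such verification. One then notes every column of $B$ has a positive entry (consistent with Lemma~\ref{a_ij>0}) and that the support pattern of $\RF(f)$ versus $\RF(f')$ is compatible with Lemma~\ref{symm} (for $(i,j)$ with $i\ne j$, not both $a_{ij}>0$ and $b_{ji}>0$) — indeed this compatibility, plus Corollary~\ref{0_in_row} forcing each row of $\RF(f')$ to contain a zero, is what forces $\RF(f')$ to have exactly the displayed shape once one knows one valid factorization of each $f'+n_k$.

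For part (iii), I would first establish $\type(H)=3$. From parts (i)–(ii) we have two distinct elements $f,f'\in\PF'(H)$ with $f+f'=\Fr(H)$, so $t(H)\ge 3$; by Theorem~\ref{type_le3} (quotable, proved earlier in this section), $t(H)\le 3$, hence $t(H)=3$ and $\PF(H)=\{f,f',\Fr(H)\}$. For the generators of $I_H$: the six differences of pairs of rows of $\RF(f)$ are $\RF(f)$-relations lying in $I_H$ by Lemma~\ref{produce}, and inspection shows all six are of the form $f+n_i+n_j$ with no cancellation (the four "consecutive" differences give the binomials $x_i^{\al_i}-x_{i-1}^{\al_{i-1}-1}x_{i+1}$ type and the two "opposite" differences give $x_1x_4^{\al_4-1}-x_2^{\al_2-1}x_3$ and $x_1^{\al_1-1}x_2-x_3^{\al_3-1}x_4$). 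To see these six generate $I_H$ and are minimal, I would run the same argument as in the proof of Theorem~\ref{Type_even}: let $I'$ be the ideal they generate, show $S/(I',x_1)\cong K[x_2,x_3,x_4]/(\text{monomial ideal})$ and compute $\dim_K S/(I',x_1) = n_1$ using the formula $n_1 = (\al_2-1)(\al_3-1)\al_4+\al_2$ from part (iv); since $\dim_K S/(I_H,x_1)=n_1$ and $I'\subseteq I_H$ with equality of these dimensions, the $\Tor_1(S/I_H,S/(x_1))=0$ argument (domain $\Rightarrow$ $x_1$ nonzerodivisor) and Nakayama give $I'=I_H$. Minimality of the six generators then follows because they all have distinct multidegrees $f+n_i+n_j$ (or one argues via the graded Betti numbers, since $\mu(I_H)=6$ will drop out of the resolution count). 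For part (iv), the four formulas for $n_1,n_2,n_3,n_4$ come from the Apery-set count: by Lemma~\ref{formula_Ap}(ii), if $\Fr(H)+n_k$ has UF with factorization $\sum_{j\ne k}\beta_jn_j$ then $n_k=\prod_{j\ne k}(\beta_j+1)+t(H)-1 = \prod_{j\ne k}(\beta_j+1)+2$; adding appropriate rows of $\RF(f)$ or $\RF(f')$ (whichever gives the UF factorization of $\Fr(H)+n_k$) produces the $\beta_j$, yielding the stated products — but one must check the UF hypothesis holds, which follows from Lemma~\ref{formula_Ap}(i) since from $\RF(f)$ one reads $\al_{ik}\ge1$ for the relevant $k$ and all $i\ne k$, or alternatively from the explicit monomial ideal computed above. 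The main obstacle will be the bookkeeping in part (ii): verifying that the displayed $\RF(f')$ is correct and, more delicately, that it is forced (uniqueness up to the listed shape), which requires carefully combining Lemma~\ref{symm}, Corollary~\ref{0_in_row}, and Lemma~\ref{a_ij>0} rather than a single clean computation; and in part (iv), confirming which of $\Fr(H)+n_k$ genuinely have unique factorizations so that Lemma~\ref{formula_Ap}(ii) applies.
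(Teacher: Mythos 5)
Your treatment of part (i) starts the same way as the paper (the positive entries form a fixed-point-free permutation, via Lemma~\ref{a_ij>0}), but the two points you then wave at are precisely the ones needing proof, and your reasons are not valid. To force an entry of a row $f+n_s=bn_t$ to be exactly $\al_t-1$ you argue that otherwise one could \emph{re-choose} $\RF(f)$ with a non-special row; since the hypothesis concerns one fixed choice of $\RF(f)$, producing another RF-matrix is no contradiction. Likewise, in the two-transposition case the row differences give $\al_1n_1=\al_2n_2$ and $\al_3n_3=\al_4n_4$ (or $(b_1+1)n_1=(b_2+1)n_2$, etc.), not ``$(\al_i-1)n_i$ as a multiple of $n_j$''; equalities $\al_in_i=\al_jn_j$ do occur in genuine almost symmetric semigroups (e.g.\ $\langle 33,56,61,84\rangle$), so this is not an automatic violation of minimality, and excluding that case still requires an argument. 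More seriously, your plan for (ii) is circular. Using only the relations of (i) one can indeed check that the four row sums of the displayed matrix agree; their common value is $2f$ (each row sum is a sum of two rows of $\RF(f)$). So what (ii) actually asserts is $\Fr(H)-f=2f$, i.e.\ $\Fr(H)=3f$, and this cannot be ``checked'' from (i), because $\Fr(H)$ is not determined by those relations. The paper instead \emph{derives} $\RF(f')$: Lemma~\ref{symm} forces the zero pattern (the $(2,1),(3,2),(4,3),(1,4)$ entries vanish in any choice), bounds such as $p_2<\al_2$ follow by substituting $\al_2n_2=n_1+(\al_3-1)n_3$ (which would put $f'\in H$), and the remaining entries come from comparing several expressions of $\Fr(H)+n_1$. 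You defer exactly this as ``bookkeeping'', so (ii) is not proved.

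For (iii) your route (type $3$ from $f\ne f'$ and Theorem~\ref{type_le3}, then $I'=I_H$ by the $\dim_K S/(I',x_1)$--Nakayama argument of Theorem~\ref{Type_even}) is reasonable, but it needs the value of $n_1$, which you import from (iv), and your proof of (iv) fails. Lemma~\ref{formula_Ap}(ii) requires $\Fr(H)+n_k$ to have UF, and your appeal to Lemma~\ref{formula_Ap}(i) is unavailable: the factorizations read off from $\RF(f)$ are $\al_1n_1=(\al_2-1)n_2+n_4$, $\al_2n_2=n_1+(\al_3-1)n_3$, $\al_3n_3=n_2+(\al_4-1)n_4$, $\al_4n_4=n_3+(\al_1-1)n_1$, so $\al_{13}=\al_{24}=\al_{31}=\al_{42}=0$ and no column satisfies its hypothesis. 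Indeed UF genuinely fails: for $(\al_1,\al_2,\al_3,\al_4)=(2,3,3,2)$ one gets $(n_1,n_2,n_3,n_4)=(11,7,5,8)$, $f=3$, $\Fr(H)=9$, and $\Fr(H)+n_1=20=4n_3=n_2+n_3+n_4$; applying the count to the second factorization would give $n_1=2\cdot2\cdot2+2=10\neq 11$. So the Ap\'ery-count route cannot deliver the stated formulas (which, incidentally, should read $n_2=(\al_3-1)(\al_4-1)\al_1+\al_3$; the printed $n_2$ is off), and your ``alternative'' via the explicit monomial ideal is circular, since identifying $I'$ with $I_H$ in (iii) is what needed $n_1$ in the first place. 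The workable derivation of (iv) is the one used in the analogous computation of Theorem~\ref{H+mAG3}: the four relations above determine $(n_1:n_2:n_3:n_4)$ up to a common factor, and solving them yields the formulas.
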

\begin{proof}  Assume $(i, a_i)$ component of $\RF(f)$ is positive.  Then
$(a_1.a_2.a_3,a_4)$ gives a permutation of $(1,2,3,4)$ with no fixed point.
So, we can assume either $(a_1.a_2.a_3,a_4)= (2,1,4,3)$ or $(2,3,4,1)$.

Since $f + n_1= (\al_2-1)n_2$, we have
\[f + n_1+n_2 = \al_2 n_2 = (\al_3-1)n_3 + n_1,\]
hence $\al_{21}=1,\al_{23}=\al_3-1,\al_{24}= 0$ and likewise, we have
$\al_{12}= \al_2-1, \al_{13}=0, \al_{14}=1, \al_{31}=0,\al_{32}=1, \al_{34}=\al_4-1,
\al_{41}= \al_1-1, \al_{42}=0,\al_{43}=1$.

Next,  by Lemma \ref{symm}, $\RF(f')$ is of the form
$\RF(f') = \left( \begin{array}{cccc} -1 & p_2 & p_3 & 0\\
0 & -1 & q_3& q_4 \\
r_1 & 0 & -1 & r_4\\
s_1 & s_2 & 0 & -1 \end{array}\right).$  Then, note that we should have
$p_2<\al_2$ because $\al_{21}=1>0$ and $p_3<\al_4$ because $\al_{34}>0$
and likewise.

Them we compute $\Fr(H)+n_1= (\al_2-1)n_2+ f' = (\al_2-2)n_2 + q_3n_3+q_4n_4
= p_2n_2+p_3n_3+ f = (p_2-1)n_2 + (\al_3-1+p_3)n_3 = p_2n_2+(p_3-1)n_3 +
(\al_4-1)n_4$. Hnce we have $p_2=\al_2-2$ and $q_4= \al_4-1$. Repeating this process,
we get $\RF(f')$ as in the statement.
\end{proof}

Now, let's begin our  proof of Theorem \ref{type_le3}.
First we treat the case with even $\Fr(H)$.

\begin{Proposition}\label{even_type_le2}
We assume $e=4$. If $\Fr(H)$ is even and if $\Fr(H)/2 \in \PF(H)$, then $\type(H)=2$.
That is, if $e=4$ and $H$ is almost symmetric of even type, then $\type(H)=2$.
\end{Proposition}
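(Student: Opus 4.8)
The plan is to deduce this from the structure results already obtained in Section~\ref{sec:komeda}, after observing that those results never actually use the hypothesis $\type(H)=2$, only that $H$ is almost symmetric and $\Fr(H)/2\in\PF(H)$. So suppose $e=4$ and $\Fr(H)/2\in\PF(H)$; since $\Fr(H)/2<\Fr(H)$ we have $\Fr(H)/2\in\PF'(H)$, and $H$ is almost symmetric by the standing hypothesis of this section. By Proposition~\ref{RF(F/2)} we may relabel the generators so that $\RF(\Fr(H)/2)$ is the matrix displayed in that proposition, with unknown entries $a,a',b,d$ subject to $a',d>0$ and $a,b\ge 0$.

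Now I would run through steps (1)--(4) of the proof of Theorem~\ref{type2} unchanged. As remarked there, steps (1)--(3) use only that $\Fr(H)/2\in\PF(H)$ together with Lemma~\ref{symm}, Lemma~\ref{f + n_k = b_in_i} and Corollary~\ref{2nj}, and step (4) --- which forces $b>0$, $a>0$ and $a=\al_1-1$ --- uses in addition only Lemma~\ref{al_iAp}, Lemma~\ref{Apery}(ii) and Lemma~\ref{formula_Ap}. Every one of these inputs is valid for an arbitrary almost symmetric four--generated semigroup; none of them needs the type to be $2$. Hence the argument goes through and pins down $\RF(\Fr(H)/2)$ to be exactly the matrix (\ref{normal}) of Theorem~\ref{type2}. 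Theorem~\ref{Type_even}(iii) then applies verbatim and gives $\type(H)=2$, so that $H$ is in fact pseudo-symmetric. For the reformulation at the end of the statement, suppose $H$ is almost symmetric with $e=4$ and $t=\type(H)$ even; writing $\PF(H)=\{f_1<\cdots<f_{t-1}<\Fr(H)\}$, Lemma~\ref{Nari} gives $f_{t/2}+f_{t/2}=\Fr(H)$, so $\Fr(H)$ is even and $\Fr(H)/2=f_{t/2}\in\PF(H)$, and the first part yields $t=2$.

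The step I expect to be the main obstacle is the bookkeeping implicit in the second paragraph: one has to go carefully through the proofs of Proposition~\ref{RF(F/2)}, Theorem~\ref{type2} and Theorem~\ref{Type_even}, and check that each auxiliary result they cite --- Lemma~\ref{symm}, Corollary~\ref{2nj}, Corollary~\ref{0_in_row}, and the Apery-set lemmas of Section~\ref{sec:2} --- is invoked purely on the strength of ``$H$ almost symmetric and $\Fr(H)/2\in\PF(H)$'', and is never being used as a downstream consequence of a prior assumption that $\type(H)=2$. Once this is verified the proposition is immediate, and in particular it settles the even-Frobenius case of Theorem~\ref{type_le3}.
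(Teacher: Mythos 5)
The reformulation at the end of your argument (even type $\Rightarrow$ $\Fr(H)/2\in\PF(H)$ via Lemma~\ref{Nari}) is fine, and you are right that Proposition~\ref{RF(F/2)} and steps (1)--(3) of the proof of Theorem~\ref{type2} only use ``$H$ almost symmetric and $\Fr(H)/2\in\PF(H)$'' --- the paper itself says ``until the end of (3), we only assume that $\Fr(H)/2\in\PF(H)$.'' But precisely that remark signals where your proof breaks: step (4) does use the hypothesis $\PF(H)=\{\Fr(H)/2,\Fr(H)\}$, and not merely through the lemmas it cites. In each of the three subarguments of (4), Lemma~\ref{Apery}(ii) only yields that a certain element lies in $\PF'(H)$; the proof of Theorem~\ref{type2} then replaces that element by $\Fr(H)/2$, which is legitimate only because under the type~$2$ hypothesis $\PF'(H)=\{\Fr(H)/2\}$. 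Concretely, in the $b=0$ case one gets $(\al_1-1)n_1=f+n_2$ and $(\al_3-1)n_3=f'+n_4$ for some $f,f'\in\PF'(H)$ with $f+f'=\Fr(H)$, not $f=f'=\Fr(H)/2$, so no contradiction with the fourth row $(\al_1-1)n_1=\Fr(H)/2+n_4$ follows; likewise in the $a<\al_1-1$ case one only gets $(\al_4-1)n_4-n_1\in\PF'(H)$, which does not clash with $\Fr(H)/2+n_1=(\al_2-1)n_2$ unless the type is already known to be $2$. So the claim that ``the argument goes through unchanged'' is exactly the point at issue, and it is circular: you are assuming the conclusion when you identify these pseudo-Frobenius numbers with $\Fr(H)/2$.

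This is why the paper gives Proposition~\ref{even_type_le2} a separate and much longer proof: it keeps the matrix $(\dagger)$ from steps (1)--(3), but replaces step (4) by an argument valid for an arbitrary almost symmetric $H$ of embedding dimension $4$. There, in the $b=0$ case the unknown $f,f'\in\PF'(H)$ with $f+f'=\Fr(H)$ are analysed using the special rows of $\RF(f)$ and $\RF(f')$ (Lemma~\ref{special row}), together with Lemma~\ref{symm}, Lemma~\ref{a_ij>0} and Corollary~\ref{0_in_row}, leading to $n_1+n_2=n_3+n_4$ and finally to the contradiction $f=(\al_2-2)n_2+f'$; and in the $a<\al_1-1$ case one sets $f=\Fr(H)+n_1-n_4$, $f'=n_4-n_1$, invokes Proposition~\ref{f + n_k = bn_i} to force $\al_4=2$, and derives $f\ge_H f'$, a contradiction. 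Some replacement of this kind is genuinely needed; without it your proposal does not prove the proposition, it only reduces it to the unproved assertion that type~$2$ is not used in step (4).
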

\begin{proof}  It suffices to show that $\RF(\Fr(H)/2)$ is as in \ref{normal} in Theorem \ref{type2}.
Then we have seen that $\type(H)=2$ by Theorem \ref{Type_even}.
We recall the proof of Theorem \ref{type2} and show that $b=\al_{42}>0$ and $a =\al_1-1$
in the following matrix $(\dagger)$.
\[
(\dagger) \quad \RF(\Fr(H)/2) =  \left( \begin{array}{cccc} -1 & \alpha_2-1 & 0 & 0\\
0 & -1 & \al_3-1 & 0 \\
a & 0 & -1 & \al_4-1\\
\al_1-1 &
 b=\al_{42} & 0 & -1 \end{array}\right),
\]

If we assume $b=0$, adding the 2nd and 4th rows of $(\dagger)$, we get
\[\Fr(H) + n_2+n_4 = (\al_1-1)n_1+(\al_3-1)n_3.\]
Since $(\al_1-1)n_1- n_2, (\al_3-1)n_3-n_4\not\in H$, by Lemma \ref{Apery},
 \begin{eqnarray*}
 (\al_1-1)n_1&=& f + n_2=\Fr(H)/2+n_4  \quad \text{ and}\\
 (\al_3-1)n_3&=& f' + n_4= \Fr(H)/2+n_2
 \end{eqnarray*}
for some $f,f'\in \PF'(H)$ with $f+f'=\Fr(H)$.
Also, we have $f + n_1= \al_1n_1 - n_2 = (\al_2-2)n_2 +n_4$ and likewise,
$f' + n_3= an_1 + n_2 + (\al_4-2)n_4$. Now by Lemma \ref{special row},
there should be  special rows $(\al_2 -1) {\bld e}_2 - {\bld e}_j, (\al_4 -1) {\bld e}_4 - {\bld e}_k$
on either $\RF(f)$ or $\RF(f')$.  We write $\RF(f) = (m_{ij})$ and $\RF(f')=(m'_{ij})$.  We have
seen $m_{13}=m_{23}=0$. Hence we must have $m_{43}>0$ by Lemma \ref{a_ij>0}.
Then we must have $m'_{34} = \al_4-2=0$, giving $\al_4=2$.
Also, since $m_{21}=\al_1-1>0$, $m'_{12}=0$.  Hence only possibility of  $(\al_2 -1) {\bld e}_2 - {\bld e}_j$
is the 3rd row of $\RF(f)$, giving $m_{31}=m_{34}=0, m_{32}=\al_2-1$ and
$(\al_2-1)n_2= f + n_3$. Since $(\al_2-1)n_2= \Fr(H)/2 + n_1$ and $(\al_1-1)n_1= f + n_2=\Fr(H)/2+n_4$,
we have $n_1+n_2= n_3+n_4$.
Then since $\Fr(H)/2 - f' = n_4-n_2= n_1-n_3$, we have $n_4= (\al_4-1)n_4 = f' + n_1$.
We have seen $f + n_1= (\al_2-2)n_2 +n_4$ above. Substituting $n_4= (\al_4-1)n_4 = f' + n_1$
we get $f + n_1= (\al_2-2)n_2 +n_4 = (\al_2-2)n_2 + f' + n_1$ and then $f = (\al_2-2)n_2 + f'$,
contradicting $f\ne f'$ and $f' \in \PF(H)$.  Tuus we have showed $b=\al_{42}>0$.
\par
We have seen at the end of the proof of Theorem \ref{type2} (3),
$\al_{12}=\al_2-1-b >0$, $\al_{13}=0$, $\al_{14}=1$,  $\al_{21}=1$, $\al_{23}=\al_3-1$, $\al_{24}=0,$
$\al_{31}=a$, $\al_{32}=1$, $\al_{34}=\al_4-1$, $\al_{41}=\al_1-1-a$, $\al_{42}=b$, $\al_{43}=1.$

Since $b>0$, we can assert that $\Fr(H)+n_2$ has UF by Lemma \ref{formula_Ap}.

\medskip
Next we will show $a=\al_1-1$.  We have seen that $a\le \al_1-1$ in the proof of  Theorem \ref{type2}.
If $a< \al_1-1$ then we have $\al_{41}>0$ and hence by by Lemma \ref{formula_Ap},
$\Fr(H)+n_1$ has UF.  We will show that this leads to a contradiction.\par

Adding 1st and 2nd  rows of $(\dagger)$, we have
\[\Fr(H) + n_1 =  (\al_2-2)n_2+ (\al_3-1)n_3.\]

Since this expression is unique, $\Fr(H) + n_1 - n_4\not\in H$ and thus
$\Fr(H) + n_1 - n_4 \in \PF'(H)$ by Lemma \ref{Apery}.  We put
\[f = \Fr(H) + n_1 - n_4, \quad f' = \Fr(H) - f = n_4 -n_1.\]
Since $H$ is almost symmetric, $f' \in \PF'(H)$.
Then by Proposition \ref{f + n_k = bn_i}, $\al_4=2$.
Then
\[f' + n_4 = 2n_4 - n_1 = \al_4 n_4 - n_1= (\al_1-2-a)n_1+bn_2+n_3.\]
Then by Corollary \ref{0_in_row}, since $b>0$, we must have $\al_1-2-a =0$; $a=\al_1-2$. \par

Since $(4,2), (4,3)$ entries of $\RF(f')$ are both positive, $(2,4), (3,4)$ entries of $\RF(f)$ must be $0$
by Lemma \ref{symm}.  Then by Lemma \ref{a_ij>0}, $(1,4)$ entry of $\RF(f)$ is positive,
which induces that $f + n_1 \ge_H n_4 = f' + n_1$. Then we have $f \ge_H f'$, contradicting our
assumption  $f,f' \in \PF'(H)$.  Thus we have shown that if $\Fr(H)/2 \in \PF(H)$, then
$\RF(\Fr(H)/2)$ is as in Theorem \ref{type2}.  We have shown that if $H$ is almost symmetric
with even $\Fr(H)$, then $\type(H)=2$.
\end{proof}

\begin{proof}[Proof of Theorem \ref{type_le3}]
If $H$ is almost symmetric and $\Fr(H)$ is even, then we have shown in Proposition
\ref{even_type_le2}
that $\type(H)=2$.   So, in the rest of this section,  we will assume that $\Fr(H)$ is  odd.

Let $H=\langle n_1,n_2,n_3,n_4\rangle$ be almost symmetric and let $\{i,j,k,l\}$ be a permutation of
$\{1,2,3,4\}$. \par

Our tool is Moscariello's $\RF$-matrices and especially the
special rows of those matrices.

By Lemma  \ref{special row} there are
 at least $4$ special rows in $\RF(f)$ and $\RF(f')$ together,
 if  $f+f'= \Fr(H)$.  On the other hand, we showed in Proposition
 \ref{f+n_k= f' + n_l} that for fixed $n_i$, there exists at most
 $2$ relations of type $f+ n_k=(\al_i-1)n_i$.  Since the possibility
 of $n_i$ is $4$, there exists at most $8$ special rows in
 $\RF(f)$ for all possibilities of $f\in \PF'(H)$. This implies
 the cardinality of $\PF'(H)$ is at most $4$ and we have
 $\type(H)\le 5$.  Also, this argument shows that for every $n_i$,
 there are exactly $2$ special rows of type
 $(\al_i -1){\bld e}_i - {\bld e}_k$ in some $\RF(f), f\in \PF'(H)$.

 Now, we assume $\type(H) = 5$ and
 $\PF(H) = \{f_1,f_2,f_2', f_1', \Fr(H)\}$ with
\[f_1<f_2<f_2'< f_1' \quad {\mbox{\rm and}} \quad
f_1+f_1' = f_2+f_2'=\Fr(H).\]

We will show a contradiction.

 We have seen in Lemma \ref{special row}
that for every pair of $f, f'\in \PF(H)$ with $f+f'=\Fr(H)$,
and for every $n_i$, there is a special row of type
$(\al_i-1){\bld e}_i - {\bld e}_k$ for some $k$ on either
$\RF(f)$ or $\RF(f')$.  Hence if we had a relation of type
 \[f+n_k = (\al_i-1)n_i = f' + n_l,\]
 then there will be  $2$ special rows
 $(\al_i-1){\bld e}_i - {\bld e}_k$ in $\RF(f)$ and
 $(\al_i-1){\bld e}_i - {\bld e}_l$ in $\RF(f')$, which
 leads to $5$ special rows on $\RF(f)$ and $\RF(f')$ together,
 which contradicts the fact there are at most $8$ special
 rows in total.

 Hence if there is a relation of type
 \[ f + n_k = (\al_i-1) n_i = f' + n_l,\]
 then $f+f' \ne\Fr(H)$. Since there exists  exactly
 $4$ such pairs of $\{f,f'\}$, namely,
 $\{f_1,f_2\}, \{f_1', f_2\}. \{f_1,f'_2\}, \{f_1', f_2'\}$,
 we must have the following relations for some
 $ n_p,\ldots , n_y\in \{n_1,n_2,n_3,n_4\}$.

Let $\{i,j,k,l\}$ be a permutation of $\{1,2,3,4\}$
and assume that $n_1<n_2<n_3<n_4$.

Now we must have the following relations

\begin{eqnarray}
f_1+ n_p = (\al_i-1)n_i = f_2+ n_q\\
f_1'+ n_r = (\al_j-1)n_j = f_2+ n_s\\
f_1+ n_t = (\al_k-1)n_k = f_2'+ n_u\\
f_1'+ n_x = (\al_l-1)n_l = f_2'+ n_y
\end{eqnarray}
\vskip 0.5cm

From these equations and since $f_2-f_1= f_1' - f_2'$,
we have

\begin{eqnarray}
n_p-n_q= f_2-f_1= f_1'- f_2' = n_y-n_x\\
n_s-n_r= f_1'-f_2= f_2'-f_1= n_t-n_u.
\end{eqnarray}

We divide the cases according to how many among
$\{n_p,n_q, n_x, n_y\}$ and $\{n_t.n_u, n_x,n_y\}$ are
different.
\vskip 0.2cm
 Case 1: First, we assume that $n_p = n_y$ and $n_q= n_x$.
Since we have assumed $\{i,j,k,l\}$ is a permutation of $\{1,2,3,4\}$
and since $n_i, n_l$ must be different from $n_p, n_q$, we must have
$\{n_l, n_k\} = \{n_p, n_q\}$.  Then from equations (4), (5), $n_r=n_u
< n_s= n_t$ and $\{n_i, n_l\} =\{n_r, n_s\}$.

Now for the moment, assume $n_i > n_l$ (hence $n_i= n_s= n_t$ and
$n_l = n_r = n_u$)  and we will deduce a contradiction. The assumption
 $n_i < n_l$ leads to a contradiction similarly.
\par
We check $\RF$ matrices of
$f_1,f_2, f_2', f_1'$ with respect to $\{n_i,n_p, n_q, n_l\}$.
Equations (3), (6) show us the $p$-th row of $\RF(f_1)$ is
$(\al_i-1, -1,0,0)$ and since  $\RF(f'_1)_{qi}=\al_i-1$,
$\RF(f_1)_{iq}=0$ by  Lemma \ref{symm}. Hence by Lemma
\ref{a_ij>0}, $\RF(f_1)_{lq}>0$. From equation (5), to have
$\RF(f_1)_{lq}>0$, we must have $n_k = n_q$ and then
$n_j = n_p$. Then from  (3), we have
\[\al_i n_i = (f_1+ n_i) + n_p = (f_2 + n_i) + n_q.\]
But from  (4), (5), we know $f_1+ n_i = (\al_q-1)n_q,
f_2+n_i = (\al_p-1)n_p$ and then we have
$(\al_q-1)n_q+ n_p = (\al_p-1)n_p + n_q$, getting $\al_p=\al_q=2$ and
from (4), (5),
$n_p = f_1' + n_l = f_2 + n_i, n_q = f_1 + n_i = f_2' + n_l$.
Then from (3), (6), we get $\al_in_i = (f_1+n_i) + n_p = n_p+n_q
= (f_1'+ n_p) +n_l= \al_l n_l$.

Now we compute $f_1 + n_l$. We know that $f_1+n_l < n_p, n_q$.
Then we must have $f_1 + n_l = c n_i$ for some positive integer $c$.
But by Lemma \ref{f + n_k = bn_i}, $c\ge \al_i -1$. Then we have
$f_1 + n_l + n_i \ge \al_in_i = n_p+n_q$, contradicting $n_p = f_1' +n_l$ and
$n_q = f_1+n_i$. Thus Case 1 does not occur.
\bigskip

Case 2:  If $\sharp \{n_p,n_q,n_x, n_y\}=3$,
then  either $n_p=n_x$ or $n_q=n_y$ and hence
either $2 n_p = n_q + n_y$ or $2 n_q= n_p+ n_x$,
having $\al_p=2$ or $\al_q=2$. For the moment we assume
$2 n_q= n_p+ n_x$ and $\al_q=2$.   Then from (3) - (6),
for some $f,f'\in \PF'(H)$,
\[(*) \quad (\al_q-1)n_q= n_q =f+n_v = f' + n_w.\]
Since $n_q$ is not the biggest among $n_1,\ldots , n_4$
and bigger than the other $2$, $n_q=n_3$.
We assume $n_1<n_2 < n_3=n_q < n_4$ and again compute
$n_1 + f_1.$ By (*), $n_1+ f_1< n_3$ and we muct have
$n_1+ f_1 = c n_2$ for some positive integer $c$.
We have seen $c \al_2-1$ and if $n_1+ f_1 = (\al_2-1)n_2$, then by
(3) - (6), $n_1+ f_1= (\al_2-1)n_2 = n_w+ f$ but that is impossible since
$n_1+ f_1< n_3$.  If $c\ge \al_2$ we get also a contradiction since
$n_1+f_1$ cannot contain $n_1,n_3,n_4$.
Thus Case 2 does not occur. It is easy to see that $\sharp\{n_r,n_s,n_t, n_u\} =3$
leads to a contradiction, either.
 \vskip 0.2cm

Case 3:   To prove the Theorem  \ref{type_le3}, it suffices to get a contradiction
assuming   $\{n_p,n_q,n_x, n_y\}$ and $\{n_r,n_s,n_t, n_u\}$ are different elements.
By (3) - (6), we may assume $n_1<n_2<n_3<n_4$ with $n_2-n_1= n_4-n_3 = f_2-f_1$
and $n_3-n_1=n_4- n_2 = f_2' - f_1$.  Hence $n_4-n_1 = (f_1'-f_2) + (f_2-f_1) = f_1'-f_1$
and we have $f_1 +n_1= f_1' + n_4$.  Also, since $f_1 +n_4$ is not of the type  $(\al_w-1)n_w$,
we have
\[f_1+n_4= f_1' + n_1 = b n_2 + c n_3\]
with $c,d >0$.  Then from Lemma \ref{symm} we must have $\RF(f_1)_{2,1} = \RF(f_1)_{3,1}=0$.
Since the 4th row of $\RF(f_1) = (0,b,c,0)$, every component of the 1st column of $\RF(f_1)$ is $0$,
contradicting Lemma  \ref{a_ij>0}.
This finishes our proof of Theorem \ref{type_le3}.
\end{proof}

\section{On the free resolution of $k[H]$.}
\label{sec:6}
Let as before $H=\langle n_1,\ldots,n_e\rangle$ be a numerical semigroup and  $K[H]=S/I_H$ its  semigroup ring over $K$.

We are interested in the minimal graded free $S$-resolution $(\FF, d)$ of $K[H]$. For each $i$,  we have $F_i = \bigoplus_j S(- \beta_{ij})$,  where the $\beta_{ij}$  are the graded Betti numbers of $K[H]$. Moreover,  $\beta_i = \sum_j \beta_{ij}= \rank(F_i)$ is the $i$th Betti number of $K[H]$.
Note that $\projdim_SK[H]=e-1$ and that $F_{e-1} \cong \bigoplus_{f\in \PF(H)} S( - f - N)$, where we put $N=\sum_{i=1}^e n_i$

Recall from Section~1 that $H$ is almost symmetric, or, equivalently, $R$ is almost Gorenstein 
  if the cokernel of a natural morphism
   $$R \to \omega_R( -\Fr(H))$$
  is annihilated by the graded maximal ideal of $K[H]$. In other words,
  there is an exact sequence of graded $S$-modules
$$0 \to R \to \omega_R( -\Fr(H))   \to \bigoplus_{f \in \PF(H), f\ne \Fr(H)} K(-f)\to 0. $$
Note that, we used the symmetry of $\PF(H)$ given in Lemma~\ref{Nari} when $H$ is almost symmetric.

Since $\omega_S\cong S(-N)$, the minimal free resolution of $\omega_R$ is given by
the  the $S$-dual $\FF^{\vee}$  of $\FF$ with respect to $S(-N)$.
Now, the injection $R \to \omega_R (-\Fr(H))$
 lifts to a morphism $\varphi : \FF \to \FF^{\vee}(- \Fr(H))$,  and the resolution of the  cokernel of $R\to
K_R(-\Fr(H))$ is given by the mapping cone $\MC(\varphi)$ of $\varphi$.

On the other hand,  the free resolution of the residue field $K$ is given by the Koszul complex
$\KK= \KK(x_1,\ldots , x_e;K)$. Hence we get

\begin{Lemma}\label{Koszul}
The mapping cone $\MC(\varphi)$ gives a (non-minimal) free $S$-resolution of
$\bigoplus_{f\in \PF(H), f\ne \Fr(H)} K(-f)$.  Hence, the minimal free resolution obtained from
$\MC(\varphi)$ is isomorphic to $\bigoplus_{f\in \PF(H), f\ne \Fr(H)} K(-f).$
\end{Lemma}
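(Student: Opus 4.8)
The plan is to make precise the statement that the mapping cone $\MC(\varphi)$ resolves $C:=\bigoplus_{f\in\PF(H),\,f\neq\Fr(H)}K(-f)$ and then to compute what a minimal resolution of such a module is. First I would recall the setup: $\FF$ is the minimal graded free resolution of $R=K[H]$ over $S$, and since $\omega_S\cong S(-N)$ with $N=\sum_i n_i$, the complex $\FF^\vee(-N)$ (the $S$-dual of $\FF$ shifted) is a minimal free resolution of $\omega_R$. The inclusion $R\hookrightarrow\omega_R(-\Fr(H))$ of the short exact sequence $0\to R\to\omega_R(-\Fr(H))\to C\to 0$ lifts, by the comparison theorem for projective resolutions, to a chain map $\varphi:\FF\to\FF^\vee(-\Fr(H))$. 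The standard mapping cone construction then produces a (generally non-minimal) free complex $\MC(\varphi)$ which resolves $\Coker(R\to\omega_R(-\Fr(H)))=C$; this is just the long exact homology sequence of the mapping cone together with the fact that $R\to\omega_R(-\Fr(H))$ is injective, so all the connecting maps vanish except in degree $0$. This gives the first assertion of the lemma.

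Next, for the second assertion, I would argue that $C$ is a finite-length $S$-module that is a direct sum of (shifted) copies of the residue field $K=S/\mm_S$. Indeed, by Nari's symmetry (Lemma~\ref{Nari}) the almost symmetric hypothesis guarantees the cokernel $C$ is exactly $\bigoplus_{f\in\PF'(H)}K(-f)$ as a graded $K$-vector space, and the $S$-module structure is the trivial one: $\mm_S C=0$ because $C$ is the cokernel in the almost Gorenstein exact sequence and $\mm_{K[H]}$ (hence $\mm_S$) annihilates it. A minimal free resolution of $K(-f)$ over $S$ is the Koszul complex $\KK(x_1,\dots,x_e;K)$ shifted by $-f$, and taking direct sums, the minimal free resolution of $C$ is $\bigoplus_{f\in\PF'(H)}\KK(x_1,\dots,x_e;K)(-f)$. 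Since any free resolution of a module becomes, after deleting a direct summand that is a trivial (split exact) complex, the minimal one, and since minimal free resolutions are unique up to isomorphism of complexes, the minimal complex obtained from $\MC(\varphi)$ by cancelling all units in its differential must be isomorphic to $\bigoplus_{f\in\PF'(H)}\KK(x_1,\dots,x_e;K)(-f)$, which in homological degree $0$ has $H_0=C=\bigoplus_{f\in\PF'(H)}K(-f)$.

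The main obstacle, such as it is, is keeping the book-keeping of graded shifts straight and being careful about exactly in what sense $\MC(\varphi)$ "is isomorphic to" $\bigoplus K(-f)$; the cleanest reading is that the \emph{minimal} complex split off from $\MC(\varphi)$ is the minimal free resolution of $C$, which is $\bigoplus_{f\in\PF'(H)}\KK(-f)$, and in particular the module it resolves is $\bigoplus_{f\in\PF'(H)}K(-f)$ — this is really a restatement combining the mapping-cone resolution with the identification of $C$. I do not expect any genuinely hard step; the one point that requires the almost symmetric hypothesis (and not just the almost Gorenstein language) is the identification of $C$ with a direct sum of copies of $K$ with the explicit shifts $-f$, $f\in\PF'(H)$, which is precisely where Lemma~\ref{Nari} and the discussion of the canonical module in Section~\ref{sec:1} are invoked.
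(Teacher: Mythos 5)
Your proposal is correct and takes essentially the same route as the paper: the almost symmetric hypothesis together with Nari's symmetry (Lemma~\ref{Nari}) identifies the cokernel of $R\to \omega_R(-\Fr(H))$ with $\bigoplus_{f\in \PF'(H)}K(-f)$, the mapping cone of the lifted comparison map $\varphi\colon \FF\to \FF^{\vee}(-\Fr(H))$ resolves this cokernel, and since $\mm$ annihilates it, its minimal free resolution is the corresponding direct sum of shifted Koszul complexes, which is what the minimalization of $\MC(\varphi)$ must be by uniqueness of minimal resolutions. This matches the paper's argument, which treats the lemma as a direct consequence of exactly this discussion.
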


Let us discuss the case $e=4$ in more details. For $K[H]$ with $t=\type(K[H])$  we have the graded minimal free resolution
\[
0\to \Dirsum_{f\in \PF(H)}S(-f-N)\to \Dirsum_{i=1}^{m+t-1}S(-b_i)\to \Dirsum_{i=1}^mS(-a_i)\to S\to K[H]\to0
\]
of $K[H]$. The dual with  respect to $\omega_S=S(-N)$ shifted by $-F(H)$ gives the exact sequence
\begin{eqnarray*}
0\to S(-\Fr(H)-N)&\to & \Dirsum_{i=1}^mS(a_i-F(H)-N)\to \Dirsum_{i=1}^{m+t-1}S(b_i-F(H)-N)\\
&\to & \Dirsum_{f\in \PF(H)}S(f-\Fr(H))\to \omega_{K[H]}(-\Fr(H))\to 0.
\end{eqnarray*}

Considering the fact that for the map $\varphi\: \FF\to \FF^\vee$ the component  $\varphi_0\: S\to   \Dirsum_{f\in \PF(H)}S(f-F(H))$ maps $S$ isomorphically to $S(\Fr(H)-\Fr(H))=S$, these two terms can be canceled against each others in the mapping cone. Similarly,via $\varphi_4: \Dirsum_{f\in \PF(H)}S(-f-N)\to S(-\Fr(H)-N)$ the summands  $S(-\Fr(H)-N)$ can be canceled. Observing then that $\PF'(H)=\{\Fr(H)-f\: f\in \PF'(H)\}$, we  obtain the reduced mapping cone
\begin{eqnarray*}
0&\to& \Dirsum_{f\in \PF'(H)}S(-f-N)\to \Dirsum_{i=1}^{m+t-1}S(-b_i)\to \Dirsum_{i=1}^mS(-a_i)\dirsum \Dirsum_{i=1}^mS(a_i-F(H)-N)\\
&\to&  \Dirsum_{i=1}^{m+t-1}S(b_i-F(H)-N)\to \Dirsum_{f\in \PF'(H)}S(-f)\to \Dirsum_{f\in \PF'(H)}K(-f)\to 0.
\end{eqnarray*}
which provides a graded free resolution of $\Dirsum_{f\in \PF'(H)}K(-f)$.
Comparing this resolution with  the minimal  graded free resolution of $\Dirsum_{f\in \PF'(H)}K(-f)$, which is
\begin{eqnarray*}
0&\to &\Dirsum_{f\in \PF'(H)}S(-f-N)\to  \Dirsum_{f\in \PF'(H)\atop 1\leq i \leq 4}S(-f-N+n_i)\to \Dirsum_{f\in \PF'(H)\atop 1\leq i<j\leq 4}S(-f-n_i-n_j)\\
&\to &  \Dirsum_{f\in \PF'(H)\atop 1\leq i\leq 4}S(-f-n_i)\to \ \Dirsum_{f\in \PF'(H)}S(-f)\to \Dirsum_{f\in \PF'(H)}K(-f)\to 0,
\end{eqnarray*}
we notice that  $m\geq 3(t-1)$. If $m=3(t-1)$,  then   reduced mapping cone provides a graded minimal free resolution of $\Dirsum_{f\in \PF'(H)}K(-f)$. Also, if $m = 3(t-1)+s$ with $s>0$, then there should occur
$s$ cancellations in the mapping $\varphi : \Dirsum_{i=1}^mS(-a_i)
\to  \Dirsum_{i=1}^{m+t-1}S(b_i-F(H)-N)$.

\medskip
A comparison of the mapping cone with the graded minimal free resolution of $\Dirsum_{f\in \PF'(H)}K(-f)$ yields the following numerical result.

\begin{Proposition}
\label{comparison}
Let $H$ be a $4$-generated  almost symmetric numerical semigroup of type $t$.  Then
putting $m_0=  3(t-1)$, we have $m=\mu_R(I_H) = m_0+s$ with $s\ge 0$.
Moreover,  with the notation introduced,  we can put $\{a_1, \ldots, a_{m_0}, \ldots , a_m=a_{m_0+s}\}$
and  $\{b_1,\ldots,b_{m_0+t-1},\ldots , b_{m+t-1}\}$ so that
one has  the following equalities of multisets:
\begin{eqnarray*}
&&\{a_1,\ldots, a_m\}\union \{\Fr(H)+N-a_1,\cdots, \Fr(H)+N-a_m\}\\
&&=\{f+n_i+n_j \:\; f\in \PF'(H), 1\leq i<j\leq 4\},
\end{eqnarray*}
and
\[
\{b_1,\ldots,b_{m_0+t-1}\}=\{f+N-n_i\:\; f\in  \PF'(H), 1\leq i\leq 4\}.
\]
and if $s>0$,
\[
a_{m_0+j} = \Fr(H) - b_{m_0+ t-1 +j} , (1\leq j \leq s).
\]
\end{Proposition}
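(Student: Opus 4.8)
The plan is to regard the reduced mapping cone displayed just above as one graded free resolution of $C:=\Dirsum_{f\in\PF'(H)}K(-f)$ (Lemma~\ref{Koszul}), and to compare it with the minimal one, which is the displayed direct sum over $f\in\PF'(H)$ of the Koszul complex on $x_1,\dots,x_4$ twisted by $f$; call the latter $\Gc_\bullet$. Every graded free resolution over $S$ is the direct sum of the minimal one and a complex built from ``trivial'' summands $0\to S(-c)\xrightarrow{\ \cong\ }S(-c)\to 0$ sitting in two consecutive homological degrees, so the reduced mapping cone is $\Gc_\bullet\oplus T_\bullet$ with $T_\bullet$ such a complex. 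In homological degrees $0$ and $4$ the reduced mapping cone is $\Dirsum_{f\in\PF'(H)}S(f-\Fr(H))$ and $\Dirsum_{f\in\PF'(H)}S(-f-N)$; by Nari's symmetry (Lemma~\ref{Nari}) the first equals $\Dirsum_{f\in\PF'(H)}S(-f)=\Gc_0$, and the second is literally $\Gc_4$. Hence $T_\bullet$ has no summand in homological degrees $(0,1)$ or $(3,4)$, so every trivial summand lies in degrees $(1,2)$ or $(2,3)$. Comparing ranks in degree $1$ then gives $m+t-1=4(t-1)+(\text{number of trivial summands in degrees }(1,2))$, so $m\ge m_0:=3(t-1)$ and there are exactly $s:=m-m_0$ of them; the degree‑$3$ count gives the same number $s$ in degrees $(2,3)$, and the degree‑$2$ count $2m=6(t-1)+2s$ is automatic. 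Matching the degree‑$3$ terms and relabelling the $b_i$ so that $b_1,\dots,b_{m_0+t-1}$ index the summands of $\Gc_3=\Dirsum_{f,i}S(-f-N+n_i)$, Nari once more converts this into $\{b_1,\dots,b_{m_0+t-1}\}=\{f+N-n_i:f\in\PF'(H),\,1\le i\le 4\}$, and $b_{m_0+t-1+1},\dots,b_{m+t-1}$ are the shift parameters of the degree‑$(2,3)$ trivial summands.

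The remaining assertions rest on the self‑duality of the construction, and this is the step I would do first. The map $\varphi\colon\FF\to\FF^\vee(-\Fr(H))$ lifts the natural inclusion $\iota\colon R\to\omega_R(-\Fr(H))$, and $\Hom_R(R,\omega_R(-\Fr(H)))$ is a single graded piece of $\omega_R$, hence at most one‑dimensional over $K$ (the graded pieces of $H^1_\mm(R)$ of a numerical semigroup ring are at most one‑dimensional), and nonzero since it contains $\iota$. Applying $\Ext^3_S(-,\omega_S)$ to $\iota$ and using $\Ext^3_S(\omega_R,\omega_S)\iso R$ produces another nonzero map $R\to\omega_R(-\Fr(H))$, which is therefore a unit multiple of $\iota$; so $\varphi$ agrees, up to homotopy and a unit, with $\Hom_S(\varphi,\omega_S)(-\Fr(H))$. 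Consequently $\MC(\varphi)$ is isomorphic as a complex to its $\omega_S$‑dual, twisted by $\Fr(H)$ and read backwards; removing the two paired trivial complexes used to reduce it respects this symmetry, so the reduced mapping cone $\Gc_\bullet\oplus T_\bullet$ is self‑dual in the same sense. Since $\Gc_\bullet$ clearly has this self‑duality, $T_\bullet$ inherits it: the degree‑$(1,2)$ trivial summands and the degree‑$(2,3)$ ones are carried into one another by the twisted $\omega_S$‑duality.

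Now the equalities involving the $a_i$ are shift bookkeeping. The cancellations in degrees $(1,2)$ occur inside the map $\Dirsum_i S(-a_i)\to\Dirsum_i S(b_i-\Fr(H)-N)$, so the degree‑$2$ part of each such trivial summand is some $S(-a_j)$; matching its shift against the paired summand of the degree‑$1$ term forces $a_j+b_i=\Fr(H)+N$, i.e.\ after a coherent relabelling of the extra generators $a_{m_0+j}=\Fr(H)+N-b_{m_0+t-1+j}$ for $1\le j\le s$. By the self‑duality the degree‑$(2,3)$ cancellations occur inside $\Dirsum_i S(-b_i)\to\Dirsum_i S(a_i-\Fr(H)-N)$ over the same index set, so deleting from the degree‑$2$ term $\Dirsum_{i=1}^m S(-a_i)\oplus\Dirsum_{i=1}^m S(a_i-\Fr(H)-N)$ the $2s$ summands $S(-a_{m_0+j})$ and $S(a_{m_0+j}-\Fr(H)-N)$, $j=1,\dots,s$, leaves exactly $\Gc_2=\Dirsum_{f,\,i<j}S(-f-n_i-n_j)$; reading off the shift parameters yields $\{a_1,\dots,a_{m_0}\}\cup\{\Fr(H)+N-a_1,\dots,\Fr(H)+N-a_{m_0}\}=\{f+n_i+n_j:f\in\PF'(H),\,1\le i<j\le 4\}$, which is the asserted multiset identity.

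The degree bookkeeping above is routine; the one genuinely substantive ingredient — the hard part — is the self‑duality of $\varphi$, equivalently that the natural map $\iota\colon R\to\omega_R(-\Fr(H))$ is its own $\omega_S$‑dual up to a unit and the twist by $\Fr(H)$. It is exactly this that makes the degree‑$(1,2)$ and degree‑$(2,3)$ corrections dual to each other and thereby permits a single coherent relabelling of the ``extra'' generators $a_{m_0+j}$ and $b_{m_0+t-1+j}$; without it one only obtains the cruder statement that the deleted summands of the degree‑$2$ term are $s$ of the $S(-a_i)$ together with $s$ of the $S(a_i-\Fr(H)-N)$, possibly over different index sets.
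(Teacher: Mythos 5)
Your proof is correct and follows the paper's route: the paper likewise compares the reduced mapping cone of $\varphi\colon \FF\to\FF^\vee(-\Fr(H))$ with the minimal resolution of $\Dirsum_{f\in\PF'(H)}K(-f)$ (a direct sum of twisted Koszul complexes), gets $m\ge 3(t-1)$ from the rank count in homological degrees $1$ and $3$, and locates the $s$ extra cancellations inside the component $\varphi_1\colon\Dirsum_i S(-a_i)\to\Dirsum_i S(b_i-\Fr(H)-N)$ because the other blocks of the mapping-cone differentials are minimal. Where you go beyond the paper is the homotopy self-duality of $\varphi$ (via the one-dimensionality of $(\omega_R)_{-\Fr(H)}$ and $\Ext^3_S(-,\omega_S)$-duality), which gives $\bar\varphi_2$ equal, up to a unit and the duality identification, to the transpose of $\bar\varphi_1$ modulo $\mm$, and hence lets the degree-$(1,2)$ and degree-$(2,3)$ cancellations be indexed coherently; the paper only records the cancellations occurring in $\varphi_1$ and leaves this matching implicit, so your argument is the more complete one on exactly the point you identify as the substantive step (without it one indeed only gets the ``different index sets'' version). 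Note also that your reading of the statement is the intended one: as Example~\ref{a&b} shows, the first multiset identity should involve only $a_1,\ldots,a_{m_0}$ (otherwise the two sides differ in cardinality by $2s$), and the last relation should read $a_{m_0+j}=\Fr(H)+N-b_{m_0+t-1+j}$; your proof establishes precisely these corrected forms.
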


\begin{ex}\label{a&b} (1) Let $H = \langle  5,6,7,9 \rangle$.
Then $H$ is pseudo-symmetric with $\PF(H) = \{ f=4, \Fr(H)= 8\}$ and  we see
\[\{a_1,\ldots, a_5\} = \{ 15,16,18; 12, 14\}, \{b_1,\ldots , b_6\} = \{22, 24, 25, 26; 23,21\},\]
where we see $15 = f + n_1+n_2, 16= f;n_1+n_3, 16= f + n_1+n_4; \Fr(H)+N = 35$ and
$35- a_4= b_5, 35 - a_5 = b_6$.\par
(2) Let $H = \langle  18,21,23,26 \rangle$. Then $\PF(H) = \{ 31, 66, 97\}$ showing that
$H$ is AS of type $3$. Then we see $\mu(I_H) = 7 = 3(t-1) +1$ and
 \[\{a_1,\ldots, a_7\} = \{ 72, 75, 78, 105, 110, 115; 44\}, \]
\[ \{b_1,\ldots , b_9\} = \{93, 96, 98, 101, 128, 131, 133, 136; 141 \},\]
where $141 = \Fr(H) + N - a_7$.
\end{ex}
\medskip
Now let us assume that $t=3$,  and that $m=6$.
\medskip
An example of an almost symmetric  $4$-generated numerical semigroup  of type $3$ with $6$  generators for $I_H$ is the semigroup $H=\langle  5, 6, 8, 9 \rangle$. In this example $I_H$ is generated by
\begin{eqnarray*}
\phi_1&=&x_1^3-x_2x_4,\; \phi_2=x_2^3-x_1^2x_3,\;  \phi_3=x_3^2-x_1^2x_2,\;  \phi_4=x_4^2-x_2^3,\\
\phi_5&=&x_1x_2^2-x_3x_4,\; \phi_6= x_1x_4-x_2 x_3.
\end{eqnarray*}
\medskip
We have $\PF(H)=\{3,4,7\}$, and the $\RF$-matrices of $H$ for $3$ and $4$ are
\[
\RF(3) =  \left( \begin{array}{cccc} -1 & 0 & 1 & 0\\
0 & -1 & 0 & 1 \\
1 & 1 & -1 & 0\\
0 & 2 & 0 & -1 \end{array}\right),
\quad
and
\quad
\RF(4) =  \left( \begin{array}{cccc} -1 & 0 & 0 & 1\\
2 & -1 & 0 & 0 \\
0 & 2 & -1 & 0\\
1 & 0 & 1 & -1 \end{array}\right).
\]
\medskip
The $\RF$-relations resulting from $\RF(3)$ (which are obtained by taking for each $i<j$ the difference  of the $i$th row and the $j$th row of the matrix ) are
\begin{eqnarray*}
\phi_{12}&=&x_2x_3-x_1x_4,\; \phi_{13}=x_3^2-x_1^2x_2,\; \phi_{14}=x_3x_4-x_1x_2^2,\; \phi_{23}= x_3x_4-x_1x_2^2,\\
\phi_{24}&=& x_4^2-x_2^3,\; \phi_{34}=x_1x_4-x_2x_3,
\end{eqnarray*}
while the $\RF$-relations resulting from $\RF(4)$ are
\begin{eqnarray*}
\psi_{12}&=&x_2x_4-x_1^3,\; \psi_{13}=x_3x_4-x_1x_2^2,\; \psi_{14}=x_4^2-x_1^2x_3,\; \psi_{23}=x_2^3 -x_1^2x_3,\\
\psi_{24}&=& x_1x_4-x_2x_3,\; \psi_{34}=x_2^2x_4-x_3^2x_1.
\end{eqnarray*}
We see that $\deg \phi_{ij}=3+n_i+n_j$ for all $i<j$, except for $\phi_{34}$ for which we have  $\deg \phi_{34}=14<20= 3+8+9$. Similarly,
$\deg \psi_{ij}=4+n_i+n_j$  for all $i<j$, except for $\psi_{24}$ for which we have  $\deg \psi_{24}=14< 19=4+6+9$.

Comparing the $\RF$-relations with the generators of $I_H$ we see that
\begin{eqnarray*}
&&\phi_1=-\psi_{12},\;  \phi_2= \psi_{23},\; \phi_3=  \phi_{13},\\
&&\phi_4= \phi_{24},\; \phi_5= -\phi_{14}=-\psi_{13} ,\; \phi_6 = -\phi_{12}=\phi_{34}=\psi_{24}.
\end{eqnarray*}
In this example we see  that the $\RF$-relations generate $I_H$.

\medskip
The next result shows that this is always the case for such kind of numerical semigroups

\begin{Theorem}
\label{rfrelations}
Let $H$ be a $4$-generated  almost symmetric numerical semigroup of type $t$ for which $I_H$ is generated by $m=3(t-1)$ elements. Then $I_H$ is generated by $\RF$-relations.
\end{Theorem}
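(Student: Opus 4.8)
The hypothesis $\mu_S(I_H)=3(t-1)$ is vacuous unless $t=3$: for $t=1$ the ideal $I_H$ has height $3$, so $\mu_S(I_H)\ge 3>0=3(t-1)$, and for $t=2$ Theorem~\ref{Type_even}(ii) together with Proposition~\ref{even_type_le2} gives $\mu_S(I_H)=5>3=3(t-1)$. So assume $t=3$; then Proposition~\ref{even_type_le2} forces $\Fr(H)$ odd, Lemma~\ref{Nari} gives $\PF'(H)=\{f,f'\}$ with $f+f'=\Fr(H)$, and $m:=\mu_S(I_H)=6$, i.e.\ $s=0$ in Proposition~\ref{comparison}. The plan is to exhibit a system of binomial generators of $I_H$ of the shape demanded by Lemma~\ref{conditionforquestion} and then quote that lemma.

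First I pin down the degrees. Proposition~\ref{comparison} with $s=0$ gives the multiset equality $\{\deg\phi_1,\dots,\deg\phi_6\}\uplus\{\Fr(H)+N-\deg\phi_k\}_{k}=\{g+n_i+n_j:g\in\PF'(H),\ 1\le i<j\le 4\}$, where $\phi_1,\dots,\phi_6$ are the minimal generators of $I_H$. Since a multiset is contained in itself $\uplus$ anything, each $\deg\phi_k$ lies in $\{g+n_i+n_j:g\in\PF'(H),\ i<j\}$; that is, every minimal generator $\phi$ of $I_H$ has $\deg\phi=g+n_a+n_b$ for some $g\in\PF'(H)$ and $a<b$.

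Now fix such a $\phi=u-v$ with $\gcd(u,v)=1$ and put $A=\supp u$, $B=\supp v$ (disjoint, nonempty). If $\{a,b\}\subseteq A$, then $g=\deg u-n_a-n_b$ is a nonnegative integer combination of $n_1,\dots,n_4$, hence $g\in H$, contradicting $g\in\PF'(H)\subseteq G(H)$; likewise $\{a,b\}\not\subseteq B$. Consequently, whenever $|A|=|B|=2$ or $\{|A|,|B|\}=\{1,3\}$, in any representation $\deg\phi=g+n_a+n_b$ one of $a,b$ lies in $A$ and the other in $B$, so $\phi$ already meets the requirement of Lemma~\ref{conditionforquestion}. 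The remaining configurations are those in which one side of $\phi$, say $u$, is a pure power $x_c^{\gamma}$; since $\gamma n_c=\deg v\in\langle n_j:j\ne c\rangle$, minimality of $\al_c$ gives $\gamma\ge\al_c$. If $\gamma>\al_c$, the relation $x_c^{\al_c}-\prod_{j\ne c}x_j^{\al_{cj}}\in I_H$ shows $\phi\equiv x_c^{\gamma-\al_c}\prod_{j\ne c}x_j^{\al_{cj}}-v\pmod{\mm I_H}$; as $\phi\notin\mm I_H$ the two monomials on the right are coprime, so this is again a primitive binomial of degree $\deg\phi$, now with at least two variables on the side carrying $x_c$, and replacing $\phi$ by it in the generating system is harmless by Nakayama's lemma. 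A short induction reduces us either to one of the configurations already handled ($2+2$ or $3+1$) or to a minimal generator of the form $\phi=x_c^{\al_c}-\prod_{d\in B}x_d^{\delta_d}$ with $|B|\le 2$. For the latter one must produce $d\in B$ with $(\al_c-1)n_c-n_d\in\PF'(H)$; by Lemma~\ref{al_iAp} we have $(\al_c-1)n_c\in\Ap(n_d,H)$, so Lemma~\ref{Apery}(ii) yields this unless $\Fr(H)+n_d-(\al_c-1)n_c\in H$ for every $d\in B$, and in that event the relation $\al_c n_c=\sum_{d\in B}\delta_d n_d$ together with the column and special-row structure of $\RF(f),\RF(f')$ (Lemma~\ref{symm}, Lemma~\ref{a_ij>0}, Lemma~\ref{special row}, Proposition~\ref{f+n_k= f' + n_l}) are used to derive a contradiction. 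Once this is done, $I_H$ admits a binomial generating system of the type in Lemma~\ref{conditionforquestion}, whence $I_H$ is generated by $\RF$-relations.

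I expect the genuine difficulty to be entirely in the last step — the minimal generators one of whose sides is a pure power, in particular the honest two-variable generators $x_c^{\al_c}-x_d^{\delta}$ arising when $\al_c n_c=\al_d n_d$ — since there the degree bookkeeping of Proposition~\ref{comparison} by itself leaves open the ``bad'' branch of Lemma~\ref{Apery}(ii), and closing it seems to require the detailed analysis of the $\RF$-matrices carried out in Section~\ref{sec:5}.
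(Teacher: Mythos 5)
Your proposal is incomplete, and the gap sits exactly where you flag it. Lemma~\ref{conditionforquestion} requires, for each minimal generator $\phi=u-v$, a representation $\deg\phi=f+n_i+n_j$ with $f\in\PF'(H)$ \emph{together with} the divisibility condition $x_i\mid u$, $x_j\mid v$. Proposition~\ref{comparison} supplies only the multiset of degrees, and your support argument upgrades a degree representation to the divisibility condition only when $\supp(u)\union\supp(v)=\{1,2,3,4\}$. For generators with a pure power on one side (total support at most three) --- and these do occur for almost symmetric $H$ of type $3$, e.g.\ $\phi_2=x_2^3-x_1^2x_3$ and $\phi_4=x_4^2-x_2^3$ in the example $H=\langle 5,6,8,9\rangle$ preceding the theorem --- your argument ends with the assertion that, in the bad branch of Lemma~\ref{Apery}(ii) (when $\Fr(H)+n_d-(\al_c-1)n_c\in H$ for every $d\in\supp(v)$), a contradiction ``is derived'' from Lemmas~\ref{symm}, \ref{a_ij>0}, \ref{special row} and Proposition~\ref{f+n_k= f' + n_l}; no such derivation is given, and it is not routine: knowing $\al_cn_c=f+n_a+n_b$ for \emph{some} pair $a<b$ does not place either index in $\supp(v)$, so the degree bookkeeping alone genuinely leaves this case open (the termination of your rewriting induction is also unargued, though that is minor). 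As it stands, the proposal reduces the theorem to its hardest case rather than proving it --- as your own closing paragraph concedes.

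The paper closes precisely this gap by using the hypothesis $m=3(t-1)$ structurally rather than only numerically: under that hypothesis the reduced mapping cone of $\varphi\colon\FF\to\FF^\vee(-\Fr(H))$ is a \emph{minimal} graded resolution of $\Dirsum_{f\in\PF'(H)}K(-f)$, hence isomorphic as a graded complex to a direct sum of shifted Koszul complexes. The degree-two component $\alpha_2$ of this isomorphism produces generators $\epsilon_{f,ij}$ of $F_1$ satisfying $\partial_1(\epsilon_{f,ij})\in(x_i,x_j)$, and it is this membership --- information invisible to the Betti-number and degree comparison of Proposition~\ref{comparison} --- which, after a rewriting-plus-Nakayama argument on the binomials occurring in $\partial_1(\epsilon_{f,ij})$, yields a binomial generating system satisfying the hypothesis of Lemma~\ref{conditionforquestion}, with no case distinction on supports and no appeal to the $\RF$-matrix analysis of Section~\ref{sec:5}. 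To complete your route you would have to either carry out the omitted $\RF$-matrix contradiction for the pure-power generators or import the Koszul-complex comparison as the paper does.
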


\begin{proof}
By  Lemma~\ref{conditionforquestion} it suffices to show that $I_H$ admits a system of generators $\phi_1,\ldots, \phi_r$  such that for each $k$ there exist $i<j$ and $f\in \PF'(H)$  such that $\deg \phi_k=f+n_i+n_j$ and $\phi_k=u-v$, where $u$ and $v$ are monomials such that $x_i|u$ and $x_j|v$, or $x_j|u$ and $x_i|v$.

Consider the chain map $\varphi : \FF \to \GG$ with $\GG=\FF^{\vee}(- \Fr(H))$ resulting from the inclusion $R \to \omega_R( -\Fr(H))$:
\[
\begin{CD}
0@>>> F_3@>\partial_3 >> F_2@>\partial_2 >> F_1@>\partial_1 >> F_0\\
 @.      @VV\varphi_3 V @VV\varphi_2 V @VV\varphi_1 V @VV\varphi_0 V\\
0@>>> G_3@>d_3 >> G_2@>d_2 >> G_1@>d_1 >> G_0. \\
\end{CD}
\]
The assumption of the theorem implies that the reduced mapping cone of $\varphi$ is isomorphic as a graded complex to a direct sum of Koszul comlexes with suitable shifts, as described above. Thus we obtain a commutative diagram
\[
\begin{CD}
K_2@>\kappa_2 >> K_1@>\kappa_1 >> K_0@.\\
@VV\alpha_2 V @VV\alpha_1 V @VV\alpha_0 V@. \\
F_1\dirsum G_2@>>(-\varphi_1,d_2) > G_1@>>\bar{d_1} > \overline{G_0}@= G_0/F_0\\
\end{CD}
\]
of graded free $S$-modules, where the top complex is the begin of the direct sum of Koszul complexes, and where each $\alpha_i$ is a graded isomorphism.

We choose suitable bases for the free modules involved in this diagram. The free module
$K_0$ (resp. $K_1$)  admits a basis $\{ e_f \: f\in \PF'(H)\}$ (resp.
$\{e_{f,i}\: f\in \PF'(H),\; i=1,\ldots,4\}$) with $\deg e_f = f$,
$\deg e_{f,i}=f+n_i$ and such that $\kappa_1(e_{f,i})=x_ie_f$.
Then a basis for $K_2$ is given by the wedge products $e_{f,i}\wedge e_{f,j}$ with $\deg(e_{f,i}\wedge e_{f,j})=f+n_i+n_j$.

On the other hand, $F_1$ admits a basis $\epsilon_1,\ldots, \epsilon_m$,  where  each $\epsilon_k$
  has a degree of the form $f+n_i+n_j$ for some $f\in \PF'(H)$ and some $i<j$ since $\alpha_2$ is an
  isomorphism of graded complexes.
  Moreover, $\partial_1(\epsilon_k)=\phi_k$,  where $\phi_k=u_k-v_k$ is a binomial with $\deg u_k=\deg v_k=\deg \phi_k=\deg \epsilon_k$.

Let $\alpha_2(e_{f,i}\wedge e_{f,j})=\epsilon_{f,ij}+\sigma_{f,ij}$ with $\epsilon_{f,ij}\in F_1$ and $\sigma_{f,ij}\in G_2$ for $f\in \PF'(H)$ and $i<j$. Then the elements  $\epsilon_{f,ij}$ generate $F_1$. Moreover, we have
\[
-\varphi_1(\epsilon_{f,ij})+d_2(\sigma_{f,ij})=x_i\alpha_1(e_{f,j}) - x_j\alpha_1(e_{f,i})\subset (x_i,x_j)G_1.
\]
Since $\bar{d_1}(-\varphi_1(\epsilon_{f,ij})+d_2(\sigma_{f,ij}))=0$. it follows that
\[
-\varphi_0\partial_1(\epsilon_{f,ij})=d_1(-\varphi_1(\epsilon_{f,ij})+d_2(\sigma_{f,ij}))
=d_1(-\varphi_1(\epsilon_{f,ij}))\in (x_i,x_j)\varphi_0(F_0).
\]
If follows that \begin{eqnarray}
\label{ij}
\partial_1(\epsilon_{f,ij})\subset (x_i,x_j)F_0=(x_i,x_j),
\end{eqnarray}
for all $f\in \PF'(H)$ and $i<j$.

Since the elements $\epsilon_{f,ij}$ generate $F_1$, it follows that  the elements

$\partial_1(\epsilon_{f,ij})$ generate $I_H$. 

To show that $I_H$ is generated by RF-relations, it suffices to show the  following.

\begin{enumerate}
\item  $I_H$ admits a system of binomial generators $\phi_1,\ldots,\phi_m$
 such that  for each $k$ there exist $i<j$ and $f\in \PF'(H)$  with $\deg \phi_k=f+n_i+n_j$

\item  $\phi_k=u-v$, where $u$ and $v$ are monomials such that $x_i|u$ and $x_j|v$.
\end{enumerate}

For given $f\in\PF'(H)$ and $i<j$, let $\epsilon_{f,ij}=\sum_{k=1}^m\lambda_k\epsilon_k$. Then $\lambda_k=0$ if  $\deg \epsilon_k \neq  f+n_i+n_j$, and

$\partial_1(\epsilon_{f,ij})=\sum_{k=1}^m\lambda_k \phi_k=\sum_{k=1}^m\lambda_k(u_k-v_k)$
with $\lambda_k\neq 0$ only if degree $\deg u_k=\deg v_k=f+n_i+n_j$.
This sum can be rewritten as $\sum_{k=1}^r\mu_kw_k$ with $\sum_{k=1}^r\mu_k=0$,
and pairwise distinct monomials $w_k$ with  $\{w_1,\ldots,w_r\}\subset \{u_1,v_1,\ldots, u_m,v_m\}$
 and $\deg w_k=f+n_i+n_j$ for $i=1,\ldots,r$. Provided $\partial_1(\epsilon_{f,ij})\neq 0$,
 we may assume that $\mu_k\neq 0$ for all $k$. Then $\sum_{k=1}^r\mu_kw_k=\sum_{k=2}^r\mu_k(w_k-w_1)$. Since $\deg w_k=\deg w_1$ for all $k$, it follows that  $w_k-w_1\in I_H$ for all $k$. Moreover, since the $\partial_1(\epsilon_{f,ij})$ generate $I_H$, we see that the binomials $w_k-w_1$
  in the various  $\partial_1(\epsilon_{f,ij})$  altogether generate $I_H$.
We also have that $\sum_{k=2}^r\mu_k(w_k-w_1)\subset (x_i,x_j)$. We may assume
 that $x_j|w_1$, and $x_j$ does not divide  $w_2,\ldots,w_s$ while $x_j|w_k$ for $k=s+1,\ldots,r$.
 Then $x_i|w_k$ for $k= 2,\ldots,s$ and
 $\partial_1(\epsilon_{f,ij})+\mm I_H=\sum_{i=2}^s\mu_k(w_k-w_1)+\mm I_H$.
 It follows that modulo $\mm I_H$, the ideal $I_H$ is generated by binomials $\phi=u-v$
 for which there exists $f\in\PF'(H)$  and
$i<j$ such that $\deg \phi=f+n_i+n_j$ and $x_i|u$ and $x_j|v$. By Nakayama,
the same is true for $I_H$, as desired.
\end{proof}

\subsection{$7$-th generator of $I_H$.}

In this subsection, we will show that if $H$ is almost symmetric
generated by $4$ elements, then $I_H$ is generated by $6$ or
$7$ elements,  and if $I_H$ is generated by $7$ elements,
we can determine such $H$.

We put $H=\left< n_1,n_2,n_3,n_4\right>$
with $n_1<n_2 <n_3 <n_4$ and put $N=\sum_{i=1}^4 n_i$.
Also, we always assume $H$ is almost symmetric and
$\PF(H) = \{ f, f', \Fr(H)\}$ with $f+f'= \Fr(H)$.

\begin{Theorem}\label{7th}
 If $I_H$ is  minimally generated by more
 than $6$ elements, then $I_H$ is generated by $7$ elements
  and we have the relation $n_1+n_4= n_2+n_3$.
\end{Theorem}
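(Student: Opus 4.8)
The plan is to combine the numerical bookkeeping of Proposition~\ref{comparison} with the resolution-theoretic analysis in the proof of Theorem~\ref{rfrelations}. By Theorem~\ref{type_le3} we have $\type(H)=t\le 3$, and since $\type(H)=2$ forces $I_H$ to be minimally generated by $5$ elements (Theorem~\ref{Type_even}, Proposition~\ref{even_type_le2}), the hypothesis $\mu(I_H)>6$ leaves only $t=3$, where $m_0=3(t-1)=6$. So write $m=\mu_R(I_H)=6+s$ with $s>0$ and $\PF(H)=\{f,f',\Fr(H)\}$ with $f+f'=\Fr(H)$. The first step is to show $s=1$, i.e.\ $m=7$; the second is to extract the relation $n_1+n_4=n_2+n_3$ from the degree data.

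First I would bound $s$. Running the reduced mapping-cone argument preceding Proposition~\ref{comparison} with $t=3$: the comparison with the minimal free resolution of $\bigoplus_{f\in\PF'(H)}K(-f)$ shows that when $m=6+s$ there must be exactly $s$ cancellations in the map $\bigoplus_{i=1}^m S(-a_i)\to\bigoplus_{i=1}^{m+t-1}S(b_i-\Fr(H)-N)$, and Proposition~\ref{comparison} records that these cancelled pairs satisfy $a_{6+j}=\Fr(H)-b_{8+j}$ for $1\le j\le s$, with $\{b_1,\dots,b_8\}=\{f+N-n_i,\ f'+N-n_i:1\le i\le 4\}$ accounting for the first $8$ of the $m+t-1=8+s$ second syzygies. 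The key multiset identity from Proposition~\ref{comparison} is
\[
\{a_1,\dots,a_m\}\cup\{\Fr(H)+N-a_1,\dots,\Fr(H)+N-a_m\}=\{f''+n_i+n_j:f''\in\PF'(H),\ i<j\},
\]
whose right-hand side has $2\cdot\binom{4}{2}=12$ elements. Thus $2m=12$ would force $m=6$; since $m=6+s>6$, the only way the identity can hold is if the left-hand multiset has repeated elements, i.e.\ some $a_k$ equals some $\Fr(H)+N-a_l$. I expect that a short counting/parity argument — using that the $b_i$ for $i\le 8$ are forced and the extra $b_{8+j}$ must themselves be among the degrees $f''+n_i+n_j$ shifted appropriately — pins down $s=1$. \emph{This is the main obstacle:} one must rule out $s\ge 2$ by showing that two or more coincidences among the twelve degrees $f+n_i+n_j$, $f'+n_i+n_j$ cannot occur simultaneously with an almost symmetric $4$-generated $H$, presumably by invoking the RF-matrix structure (Corollary~\ref{0_in_row}, Lemma~\ref{a_ij>0}, Proposition~\ref{f+n_k= f' + n_l}) to see that too many such coincidences would collapse distinct generators $n_i$.

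Granting $s=1$, so $m=7$, the second step is to identify the single coincidence. We have $a_7=\Fr(H)+N-a_7$ from the multiset identity combined with $a_{6+1}=\Fr(H)-b_{8+1}$, hence $2a_7=\Fr(H)+N$; equivalently the unique "extra" generator $\phi$ has degree $a_7=(\Fr(H)+N)/2$, and its degree occurs as $f+n_i+n_j$ from one side of the symmetry and as $\Fr(H)+N-a_7=f'+n_k+n_l$ from the other, where by Lemma~\ref{deg_phi} and the proof of Theorem~\ref{rfrelations} the binomial $\phi=u-v$ has $x_i\mid u$, $x_j\mid v$ on one factorization and the complementary pair appears in the mirrored factorization. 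Writing $f+n_i+n_j=f'+n_k+n_l=(\Fr(H)+N)/2$ and using $f+f'=\Fr(H)$, $N=n_1+n_2+n_3+n_4$, subtract to get $f-f'=(n_k+n_l)-(n_i+n_j)$, and add $f+f'=\Fr(H)$ to the equation $2(f+n_i+n_j)=\Fr(H)+N$ to get $f-f'=n_k+n_l-n_i-n_j$ as well as $n_i+n_j+n_k+n_l=N$, so $\{i,j,k,l\}=\{1,2,3,4\}$. Finally, to see the pairing is $\{1,4\}$ versus $\{2,3\}$: the symmetry $a_7=\Fr(H)+N-a_7$ means the two complementary factorizations have the \emph{same} degree $(\Fr(H)+N)/2$, so $n_i+n_j=n_k+n_l=N/2\pm(f-f')/2$ forces, after using $f\ne f'$ and $n_1<n_2<n_3<n_4$, that the two index pairs are $\{1,4\}$ and $\{2,3\}$ (the only partition of $\{1,2,3,4\}$ into pairs with a chance of giving two factorizations of a single semigroup element without collapsing generators — the partitions $\{1,2\},\{3,4\}$ and $\{1,3\},\{2,4\}$ are excluded because $\phi$ being a \emph{minimal} generator together with Lemma~\ref{deg_phi}(ii) and Corollary~\ref{0_in_row} would otherwise force one of the $n$'s to be redundant). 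Tracking through, $n_1+n_4=n_2+n_3$ falls out. I would present the degree computation as a short display and relegate the exclusion of the other two partitions to the RF-matrix constraints already proved.
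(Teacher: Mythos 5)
Your proposal does not follow the paper's route, and as it stands it has a fatal error in its second step and an acknowledged gap in its first. The paper argues directly on the mapping cone (Lemma~\ref{Koszul}): an extra generator $g$ beyond the six of degrees $f+n_i+n_j$ forces a cancellation $\deg e=\Fr(H)+N-\deg g$ against a basis element $e$ of $F_2$; the syzygy attached to $e$ is analyzed (Lemma~\ref{relation}: each $g_i$ occurring involves at most three variables, and at least three terms occur), one observes using Lemma~\ref{special row} that $\deg g$ is \emph{not} of the form $f+n_i+n_j$, so $g=x_i^ax_j^b-x_k^cx_\ell^d$ with $\{i,j,k,\ell\}=\{1,2,3,4\}$, and finally the RF-matrix constraints (Lemma~\ref{symm}, Lemma~\ref{a_ij>0}, Corollary~\ref{0_in_row}) force $a=b=c=d=1$. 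This simultaneously yields $n_1+n_4=n_2+n_3$ and the bound $\mu(I_H)\le 7$, since a minimal generating set can contain the binomial $x_1x_4-x_2x_3$ only once.

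Concretely, your step two rests on the claim $a_7=\Fr(H)+N-a_7$, i.e.\ that the extra generator has degree $(\Fr(H)+N)/2$ and is of the form $f+n_i+n_j=f'+n_k+n_l$. Proposition~\ref{comparison} does not say this: the extra first-syzygy degree is paired with an extra \emph{second}-syzygy degree, $a_{m_0+j}=\Fr(H)+N-b_{m_0+t-1+j}$, not with itself, and the multiset identity concerns only the $m_0$ ``standard'' generators (for $m>6$ the two sides you compare do not even have the same cardinality, so no conclusion about coincidences among the $a_k$ can be drawn from it). The paper's example $H=\langle 18,21,23,26\rangle$ refutes your identity outright: there $a_7=44=n_1+n_4=n_2+n_3$, which is not of the form $f+n_i+n_j$, while $\Fr(H)+N=185$ is odd. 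Moreover, even granting $f+n_i+n_j=f'+n_k+n_l$ with $f+f'=\Fr(H)$ and $f\neq f'$, subtraction gives $n_i+n_j-n_k-n_l=f'-f\neq 0$, so your own bookkeeping would produce $n_i+n_j\neq n_k+n_l$ rather than the desired relation; the line ``$n_i+n_j=n_k+n_l=N/2\pm(f-f')/2$'' is self-contradictory. Finally, your first step (excluding $s\ge 2$) is explicitly left as an expectation; in the paper it is not a separate counting argument but falls out of the same analysis, because every cancellation produces the same binomial $x_1x_4-x_2x_3$. The real content of the theorem --- reducing the extra generator to a binomial in all four variables and then using the RF-matrices to force all four exponents to be $1$ --- is missing from the proposal.
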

\begin{proof}
By Lemma \ref{Koszul}, if we need more than $6$ generators
for $I_H$, then there must be a cancellation in the mapping
\[\phi_2 : F_1 \to F_2^\vee.\]
Namely, there is a monomial generator $g$ of $I_H$ and a free base $e$ of $F_2$ with
\[\deg e =  \Fr(H) + N - \deg(g).\]

On the other hand, being a base of $F_2$, $e$ corresponds to
a relation
\[ (**) \quad \sum_i g_i y_i = 0,\]
where $g_i$ are generators of $I_H$ and $\deg y_i = h_i\in H_+$.
 It follows that for every $i$ with $y_i\ne 0$, we have
 \[\deg e = \deg g_i + h_i.\]

Note that by Lemma \ref{Koszul}, we have $6$ minimal
 generators of $I_H$, whose degree is of the form
 \[ f + n_i + n_j \quad (f\in \PF'(H)).\]

 Now, by Lemma \ref{special row}, for every $i$,
 there is $f\in \PF'(H)$ and some $n_k$ such that
 $(\al_i -1)n_i = f + n_k$, or, $\al_in_i = f + n_i + n_k$.
 Since our $g$ has degree not of the form $ f + n_i + n_k$,
 we may assume that
 \[g = x_i^ax_j^b - x_k^cx_{\ell}^d\]
 for some permutation $\{i,j,k,\ell\}$ of $\{1,2,3,4\}$
  with positive $a,b,c,d$. Our aim is to show
  $a=b=c=d=1$.
\end{proof}

 We need a lemma.

 \begin{lem}\label{relation} In the relation $(**)$ above,
 the following  holds:
 \begin{enumerate}
\item[{\em (i)}]  If $y_i\ne 0$ in $(**)$, then at most $3$ $x_i's$
appear in $g_i$.
 \item[{\em (ii)}]  At least $3$ non-zero  terms appear in $(**)$.
 \end{enumerate}
 \end{lem}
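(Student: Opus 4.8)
Recall the relation $(**)$: $\sum_i g_i y_i = 0$, where the $g_i$ are the minimal binomial generators of $I_H$, the $y_i$ are homogeneous polynomials with $\deg y_i = h_i \in H_+$, and this relation corresponds to a free basis element $e$ of $F_2$ of degree $\deg e = \Fr(H)+N-\deg g$ (so in particular $e$ has ``small'' degree). The plan is to prove the two assertions of Lemma~\ref{relation} more or less independently.

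\emph{Proof of (i).} Suppose $y_i \neq 0$. Then $\deg e = \deg g_i + h_i$, so $\deg g_i \leq_H \deg e$, hence $\deg g_i \leq_H \Fr(H)+N-\deg g$. Now $\deg g_i$ has the form $f + n_p + n_q$ for some $f \in \PF'(H)$ and $p<q$ (by Lemma~\ref{Koszul}, i.e.\ the structure discussion preceding Proposition~\ref{comparison}). I would argue that if $g_i$ involved all four variables, say $g_i = x_ax_b\cdots - x_c\cdots$ with every exponent positive, then (combining Lemma~\ref{deg_phi}(i) and the fact $\deg g_i = f+n_p+n_q$) one of $n_r$ with $r \notin\{p,q\}$ cannot appear, or more directly: a minimal generator of $I_H$ has at most $3$ distinct variables because its degree is $f+n_p+n_q$ and, by Lemma~\ref{deg_phi}(ii) together with Corollary~\ref{0_in_row}, the corresponding RF-row has a zero entry. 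Concretely, write $g_i = u-v$ with $x_p \mid u$, $x_q \mid v$; then $\deg u' = f+n_q$ for $u' = u/x_p$, and since $H$ is almost symmetric, Corollary~\ref{0_in_row} forces $f+n_q$ (as an Apery element, cf.\ Lemma~\ref{f + n_k = b_in_i} and Corollary~\ref{2nj}) to omit one of the generators; hence $g_i$ involves at most $3$ of the $x$'s. Thus at most $3$ $x_i$'s appear in $g_i$. (The point of phrasing it with $y_i \neq 0$ is that the degree constraint $\deg g_i \leq_H \deg e$ is what we will exploit later in Theorem~\ref{7th}, but the ``at most $3$ variables'' part already holds for every minimal generator under the almost symmetric hypothesis.)

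\emph{Proof of (ii).} Suppose, for contradiction, that $(**)$ has at most $2$ non-zero terms. It cannot have exactly one, since a single $g_i y_i = 0$ with $y_i \neq 0$ is impossible in the domain $S$ (note $g_i \neq 0$). So assume exactly two: $g_1 y_1 + g_2 y_2 = 0$ with $y_1, y_2 \neq 0$, i.e.\ $g_1 y_1 = -g_2 y_2$. Since $S$ is a UFD and the $g_i$ are distinct irreducible-up-to-units binomials... actually $g_1, g_2$ need not be irreducible as polynomials, but they are \emph{minimal} generators of the prime ideal $I_H$, hence primitive (their two monomials are coprime) and distinct. The relation $g_1 y_1 = g_2 y_2$ forces $g_2 \mid g_1 y_1$; I would rule this out by a degree/monomial-support argument: comparing the two monomials of $g_1$ against those of $g_2$, a nontrivial syzygy between just two binomial generators of this shape would make one of $g_1, g_2$ a multiple of the other, contradicting minimality (this is the standard fact that the first syzygies of a minimal binomial generating set have all $\geq 3$ nonzero entries unless there is a trivial Koszul-type relation, which here would again contradict that $g_1, g_2$ are distinct minimal generators). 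Hence at least $3$ non-zero terms appear in $(**)$.

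\emph{Main obstacle.} The delicate point is part (ii): ruling out a two-term syzygy $g_1 y_1 = g_2 y_2$ rigorously. One must use that $g_1$ and $g_2$ are \emph{minimal} generators of the \emph{prime} ideal $I_H = \ker\pi$, not merely elements of it — if $g_2$ divided $g_1$ as polynomials we would contradict primitivity of $g_1$ (coprimality of its monomials), and if neither divides the other then $g_1 y_1 = g_2 y_2$ with $S$ a UFD forces a common factor $w$ with $g_1 = w g_1'$, $g_2 = w g_2'$, again contradicting that each $g_i$ is a primitive binomial unless $w$ is a unit, whence $g_1 \mid y_2$ and $g_2 \mid y_1$, making the Koszul syzygy $g_1 g_2 - g_2 g_1$ the relation — but that relation is (up to scalar) $e$, and one checks its degree $\deg g_1 + \deg g_2$ is too large to equal $\deg e = \Fr(H)+N-\deg g$, since $\deg g$ is itself one of the $f+n_i+n_j$-type values bounded by $\Fr(H)+N$ minus a positive amount. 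I would spell out this last numerical contradiction using Proposition~\ref{comparison} and the explicit degree bookkeeping there.
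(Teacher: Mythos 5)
Your proof of (i) rests on a claim that is false. You assert that, because $H$ is almost symmetric, \emph{every} minimal generator of $I_H$ involves at most three variables, and you try to deduce this from Corollary~\ref{0_in_row}. The paper's own examples contradict this: for $H=\langle 5,6,8,9\rangle$ the minimal generator $x_1x_4-x_2x_3$ involves all four variables, for $H=\langle 7,12,13,22\rangle$ the minimal generator $yw-x^3z$ does as well, and the whole point of Theorem~\ref{7th} is that the seventh generator is $x_ix_j-x_kx_l$. Your RF-matrix argument cannot give the conclusion: Corollary~\ref{0_in_row} only forces a zero entry in one row of $\RF(f)$, i.e.\ it constrains the support of the single monomial $u/x_p$; the other monomial of $g_i$ is free to contain the omitted variable, so nothing bounds the total number of variables by three. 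Statement (i) is genuinely a statement about the generators occurring in the particular syzygy $(**)$: the paper's argument combines $\deg e=\deg g_i+h_i$ with $\deg e=\Fr(H)+N-\deg(g)$ and with the fact, fixed in the setup of Theorem~\ref{7th}, that $g=x_i^ax_j^b-x_k^cx_l^d$ involves all four variables; if $g_i$ also involved all four, then choosing suitable monomials of $g$ and of $g_i$ one writes $\Fr(H)+N$ as $N$ plus an element of $H$, i.e.\ $\Fr(H)\in H$, a contradiction. By discarding the constraint coming from $(**)$ you discard exactly the ingredient that makes (i) true.

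For (ii), your reduction of a two-term relation to the Koszul syzygy $y_1=cg_2$, $y_2=-cg_1$ (via irreducibility of the minimal binomial generators) is the same first step as in the paper, but the decisive contradiction is only promised, not proved, and the route you sketch would fail. You assert that $\deg g$ is of the form $f+n_i+n_j$, whereas the setup of Theorem~\ref{7th} stipulates the opposite (this is precisely why $g$ is the extra generator), and ``$\deg g_1+\deg g_2$ is too large'' is not an obstruction at all: nothing in the numerics of Proposition~\ref{comparison} rules out the equality $\deg g_1+\deg g_2=\Fr(H)+N-\deg(g)$ by size alone. The actual contradiction is again one of semigroup membership, and it needs part (i): by (i) each of $g_1,g_2$ involves at most three variables, hence has a pure power as one of its monomials, so $\deg g_1=f_1+n_p+n_q$ and $\deg g_2=f_2+n_r+n_s$ with at least three distinct indices among $p,q,r,s$; substituting this into $\Fr(H)+N-\deg(g)=\deg g_1+\deg g_2$ and using that the two monomials of $g$ together involve all four variables forces $\Fr(H)\in H$. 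Since your version of (i) is unproved (indeed false as stated), this final step is a genuine gap in your proposal, not a routine verification.
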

\begin{proof} (i) If $g$ contains $4$ $x_i$'s, then either
 $g = x_r^{\al_r} - x_s^{\beta}x_t^{\gamma}x_u^{\delta}$
 or $g= x_r^{a'} x_s^{b'}- x_t^{c'}x_u^{d'}$ for some permutation of
 $x_i$'s. In the first case, if $n_r \in \{n_i, n_j\}$, then
 we have
 \[ \Fr(H) + N = an_i + bn_j + {\beta}n_s + {\gamma}n_t +
 {\delta}n_u,\]
 which will give $\Fr(H)\in H$, a contradiction !
 In the second case, since we have
 \[\Fr(H) + N -\deg(g) = \deg(g_i) + h_i\]
and it is easy to see that $\deg(g) + \deg(g_i) + h_i\ge_H N$,
which again deduce $\Fr(H)\in H$.\par

(ii) Since the minimal generators of $I_H$ are
irreducible in $S$, if there appear only $2$ $g_i$'s in $(**)$
like $y_ig_i - y_jg_j = 0$, then $y_i = g_j$ and $y_j = g_i$
up to constant.  Also, by (i), $g_i$ are of the form
$g_i = x_p^{\al_p} - x_r^px_s^q$.  Then
 for $f_1,f_2\in \PF'(H)$
and $n_p,n_q,n_t,n_u$, we have
$\deg g_i = f_1+ n_p+n_q, \deg g_j = f_2+ n_r+n_s$.
Since $\deg g_i = \al_p n_p$ and $\deg g_j = \al_rn_r$
for $n_p\ne n_r$, there are at least $3$ different elements
among $\{ n_p, n_q,n_r, n_s\}$.  Then the relation

\[ \Fr(H) + N -\deg(g) = (f_1+ n_p+n_q) + (f_2+ n_t+n_u)\]
will deduce $\Fr(H)\in H$.
\end{proof}

\begin{proof}[Proof or Theorem \ref{7th}]
Now, assume that $\Fr(H) + N - \deg(g) = f + n_p + n_q + h$.
Then we deduce
\[f' + n_r + n_s = an_i+bn_j + h = cn_k + dn_{\ell} + h.\]

Since $\{n_i, n_j\}$ and $\{n_k, n_{\ell}\}$ are symmetric at
this stage, we may assume $n_r= n_i, n_s=n_k$ and
\[f' + n_k = (a-1) n_i +bn_j +h, \quad f' + n_i = (c-1)n_k +
dn_{\ell}+h.\]

Now, it is clear that $h$ should not contain $n_i$ or $n_k$.
For the moment, assume that $h = m n_j$.
Then we have
\[f' + n_i = m n_j + (c-1)n_k + d n_{\ell}.\]
By Corollary \ref{0_in_row}, $i$-th row of $\RF(f')$ should
contain $0$ and hence we should have $c=1$.
Likewise, if $h = m n_j + m' n_{\ell}$ with $m,m'>0$, then we will
have $a=c=1$.

Now, since at least $3$ non-zero  terms appear in $(**)$,  by
Lemma \ref{relation} we have at least $3$ relations of the type
\[\Fr(H) - \deg(g) = f_i + (n_{p.i} + n_{q,i})\]
for $i= 1,2,3$.  We can assume $f_1=f_2=f$,
 \[f' + n_i + n_k = an_i+bn_j + h = cn_k + dn_{\ell} + h\]
and also $h = m n_j$ with $m >0$. Note that by our discussion
above, we have $c=1$ and
\[f' + n_k = (a-1) n_i +(b+m) n_j ,
\quad f' + n_i =  m n_j + dn_{\ell}.\]

Now, we have the 2nd relation
\[\Fr(H) - \deg(g) = f+ (n_{p.2} + n_{q,2}) + h_2\]
and hence
\[f' + n_t + n_u = an_i+bn_j + h_2 = n_k + dn_{\ell} + h_2.\]
 We discuss the possibility of $n_t, n_u$ and $h_2$.
 Since $h_2\ne h=mn_j$, we must have $\{n_t, n_u\}
 = \{ n_j, n_{\ell}\}$ and we have either

\medskip
\noindent
Case A:  $h_2= m' n_i, d=1$,  or

\medskip
\noindent
Case B: $h_2= m'' n_k, b=1$.

\medskip
 Now, by the argument above, in  Case A,
matrix $\RF(f')$ with respect to $\{n_i,n_j, n_k, n_{\ell}\}$   is
\[ \RF(f') =   \left( \begin{array}{cccc} -1 & m & 0 & d\\
m' & -1 & 1 & 0 \\
a-1 & b+m & -1 & 0\\
a+m' & b-1 & 0 & -1 \end{array}\right), \]

By Lemma \ref{symm}, $(i,j), (i, \ell)$ entries of $\RF(f)$ is $0$
and by Lemma \ref{a_ij>0}, $(i,k)$ component should be
$\al_k-1>0$. This implies that $(k,i)$ entry $a-1$ of $\RF(f')=0$,
thus we obtain $a=1$. Likewise, since $(j,i), (j,k)$ entry of
$\RF(f)$ are $0$, hence $(j, \ell)$ entry is $\al_{\ell} -1>0$,
forcing $(\ell, j)$ entry $b-1$ of $\RF(f')$ to be $0$.
Thus we have $a=b=c=d=1$. We have the same conclusion
in Case B, too.
\end{proof}

\begin{rem}\label{RF-rel-7gen}  {\em Since we get $a=b=1$ from above proof, the relation
$x_ix_j- x_kx_l$ is obtained by taking the difference of 2nd and 4th row
of  $\RF(f')$ above. So, it is  an \lq\lq RF-relation".
Thus combining this with Theorem \ref{rfrelations} and Theorem \ref{7th} we
see that if  $H$ is almost symmetric of type $3$, then $I_H$ is generated by
RF-relations.}
\end{rem}

\section{When is $H+m$ almost symmetric for infinitely many $m$?}
\label{sec:7}

In this section we consider shifted families of numerical semigroups.

\begin{defn}\label{H+m_def}
{\em For $H =\langle n_1,\ldots ,n_e\rangle$, we put $H+m =\langle n_1+m,\ldots ,n_e+m\rangle$.
When we write $H+m$, we assume that $H+m$ is a numerical semigroup, that is,
$\GCD(n_1+m,\ldots , n_e+m)=1$.
In this section, we always assume that $n_1<n_2 < \ldots < n_e$.
We put}
\[s = n_e -n_1,  d = \GCD(n_2-n_1, \ldots , n_e - n_1) \quad {\rm\text and}\quad  s' = s/d.\]
\end{defn}

\medskip
First, we will give a lower bound of Frobenius number of $H+m$.

\begin{prop}
For $m\gg 1$,  $\Fr(H+m) \ge m^2/s $.
\end{prop}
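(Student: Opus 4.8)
The plan is to exhibit, for large $m$, an explicit gap of $H+m$ of size roughly $m^2/s$. The key observation is that in the shifted semigroup the number of generators appearing in a factorization is almost detected by the size of the element: if $a=\sum_{i=1}^{e}\alpha_i(n_i+m)\in H+m$ and we set $j=\sum_{i=1}^{e}\alpha_i$, then because $n_1\le n_i\le n_e$ for all $i$ and $n_1+m>0$ we obtain the two-sided bound
\[
j(n_1+m)\le a\le j(n_e+m)=j(n_1+m)+js .
\]
Hence every element of $H+m$ that uses at most $k$ generators lies in $[\,0,\,k(n_e+m)\,]$, while every element using at least $k+1$ generators lies in $[\,(k+1)(n_1+m),\,\infty)$. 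As long as $k(n_e+m)<(k+1)(n_1+m)-1$, i.e.\ as long as $ks\le n_1+m-2$, the open interval strictly between these two bands is nonempty and, by the above, contains no element of $H+m$.

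First I would choose $k$ to be the largest non-negative integer with $ks\le n_1+m-2$; for $m\gg 1$ this gives $k\ge 1$, and maximality yields $(k+1)s\ge n_1+m-1$. Next I would check, using $ks=k(n_e+m)-k(n_1+m)\le n_1+m-2$, that the integer $g:=(k+1)(n_1+m)-1$ satisfies $k(n_e+m)<g<(k+1)(n_1+m)$; therefore $g$ is a positive integer lying in neither band, hence $g\notin H+m$, and so $g$ is a gap and $\Fr(H+m)\ge g=(k+1)(n_1+m)-1$. Finally, the arithmetic: from $(k+1)s\ge n_1+m-1$ we get $k+1\ge (n_1+m-1)/s$, whence
\[
\Fr(H+m)\ \ge\ (k+1)(n_1+m)-1\ \ge\ \frac{(n_1+m-1)(n_1+m)}{s}-1\ =\ \frac{(n_1+m)^2-(n_1+m)}{s}-1 .
\]
Since $(n_1+m)^2\ge m^2+2n_1m$, the right-hand side is at least $\frac{m^2}{s}+\frac{(2n_1-1)m-n_1-s}{s}$, which is $\ge m^2/s$ as soon as $(2n_1-1)m\ge n_1+s$; as $n_1\ge 1$ gives $2n_1-1\ge 1$, this holds for all sufficiently large $m$.

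I do not expect a real obstacle; the argument is elementary and uses nothing beyond the definitions of $H+m$, of a gap, and of the Frobenius number. The only points requiring care are: that $k$ is a genuine non-negative integer (needs $n_1+m-2\ge 0$, which is part of the hypothesis $m\gg 1$); that $g$ exceeds $k(n_e+m)$ \emph{strictly} rather than weakly, which is exactly why the threshold is taken at $n_1+m-2$ instead of $n_1+m-1$; and that the lower-order terms in the last estimate are tracked carefully so that the final bound is $m^2/s$ exactly and not $m^2/s-O(1)$. Each of these is routine bookkeeping.
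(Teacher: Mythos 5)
Your argument is correct, and it is essentially the paper's: the paper bounds $\Fr(H+m)$ below by the Frobenius number of the interval semigroup $H'_m=\langle m+n_1, m+n_1+1,\ldots,m+n_1+s\rangle$ and calls the estimate $\Fr(H'_m)\ge m^2/s$ ``easy to see,'' and that easy step is exactly your band argument (elements with $j$ generators lie in $[j(m+n_1),\,j(m+n_1)+js]$). You simply carry out the band/gap count directly in $H+m$ and track the constants explicitly, which fills in the detail the paper omits but adds no new idea.
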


\begin{proof} Note that $\Fr(H+m) \ge \Fr(H'_m)$, where we put
$H'_m= \langle m+n_1,m+n_1+1,\ldots , m+n_e= m+n_1+s\rangle$,
 and it is easy to see that $\Fr(H'_m)\ge m^2/s$.
\end{proof}

The following fact is trivial but very important in our argument.

\begin{lem}\label{al_i(m)}  If $\phi = \prod_{i=1}^e x_i^{a_i} -
\prod_{i=1}^e x_i^{b_i}\in I_H$ is
homogeneous, namely, if $\sum_{i=1}^e a_i =\sum_{i=1}^e b_i$, then
$\phi \in I_{H+m}$ for every $m$.
\end{lem}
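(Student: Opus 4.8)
The plan is to simply unwind the two defining conditions and add them with appropriate weights. Recall that a binomial $\phi=\prod_{i=1}^e x_i^{a_i}-\prod_{i=1}^e x_i^{b_i}$ lies in $I_H$ precisely when $\sum_{i=1}^e a_in_i=\sum_{i=1}^e b_in_i$, since $I_H$ is the kernel of the $K$-algebra map sending $x_i\mapsto t^{n_i}$. Likewise $\phi\in I_{H+m}$ is equivalent to $\sum_{i=1}^e a_i(n_i+m)=\sum_{i=1}^e b_i(n_i+m)$. So the whole statement reduces to an elementary identity among integers.

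First I would write $\sum_{i=1}^e a_i(n_i+m)=\sum_{i=1}^e a_in_i+m\sum_{i=1}^e a_i$ and similarly for the $b_i$'s. By hypothesis $\phi\in I_H$ gives $\sum_{i=1}^e a_in_i=\sum_{i=1}^e b_in_i$, and the homogeneity hypothesis gives $\sum_{i=1}^e a_i=\sum_{i=1}^e b_i$. Adding $m$ times the second equation to the first yields $\sum_{i=1}^e a_i(n_i+m)=\sum_{i=1}^e b_i(n_i+m)$, which is exactly the condition for $\phi\in I_{H+m}$. This holds for every $m$, with no restriction beyond those already imposed in Definition~\ref{H+m_def} (where $H+m$ is assumed to be a numerical semigroup).

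There is essentially no obstacle here; the only point worth a sentence of care is noting that the argument is insensitive to whether cancellation occurs in $\phi$ or whether $\phi$ is a minimal generator — it uses only membership in the binomial ideal, characterized by the linear relation on exponents. This is precisely why the lemma will be "trivial but very important": it says the homogeneous part of $I_H$ (with respect to the standard $\mathbb{Z}$-grading $\deg x_i=1$) is automatically contained in $I_{H+m}$ for all shifts $m$, which is the mechanism that will let later arguments transport relations across the shifted family.
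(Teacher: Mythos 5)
Your proof is correct and is exactly the elementary argument intended: the paper itself states this lemma without proof, calling it trivial, and your verification that $\sum_i a_i(n_i+m)=\sum_i a_in_i+m\sum_i a_i$ combined with the two hypotheses is the whole content. Nothing is missing.
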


We define $\alpha_i(m)$ to be the minimal positive integer such that
$$\al_i(m)  (n_i +m) = \sum_{j=1, j\ne i}^e \al_{ij}(m) (n_j+m).$$

\begin{lem}\label{homogeneous} Let $H+m$ be as in Definition \ref{H+m_def}. Then,
if $m$ is sufficiently big with respect to $n_1,\ldots , n_e$, then
$\al_2(m),\ldots , \al_{e-1}(m)$ is constant,
$\al_1(m)\ge   (m+n_1)/s'$ and $\al_4(m)\ge (m+n_1)/s'-1$.
Moreover, there is a constant $C$ depending only on $H$ such that
$\al_1(m)-(m+n_1)/s' \le C$ and $\al_4(m)-(m+n_1)/s' \le C$.
\end{lem}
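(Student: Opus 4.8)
The plan is to put each relation $\al_i(m)(n_i+m)=\sum_{j\neq i}\al_{ij}(m)(n_j+m)$ into ``difference form'': subtracting $\beta(n_i+m)$ with $\beta=\sum_{j\neq i}\al_{ij}(m)$ gives
\[
(\al_i-\beta)(n_i+m)=\sum_{j\neq i}\al_{ij}(n_j-n_i),
\]
which isolates the dependence on $m$. The one structural fact used is that $H+m$ being a numerical semigroup is equivalent to $\gcd(n_1+m,\ldots,n_e+m)=\gcd(n_1+m,d)=1$; thus $d$ is coprime to $n_1+m$, hence to every $n_j+m$. Recall $s=ds'$ and $n_1<n_2<n_3<n_4$ (the same arguments run for general $e$ with $\al_4$ replaced by $\al_e$).

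\emph{The indices $i=1$ and $i=4$.} For $i=1$ every $n_j-n_1$ is positive, so the difference identity forces $\al_1-\beta\ge1$ (it is not $0$, else all $\al_{1j}=0$ and then $\al_1=0$); since its right side is a non-negative combination of the $n_j-n_1$, all divisible by $d$, coprimality of $d$ and $n_1+m$ gives $d\mid\al_1-\beta$, i.e.\ $\al_1-\beta\ge d$. Using $n_j-n_1\le s$ one gets $\beta\ge(\al_1-\beta)(n_1+m)/s$, hence
\[
\al_1=\beta+(\al_1-\beta)\ge(\al_1-\beta)\Bigl(\tfrac{n_1+m}{s}+1\Bigr)\ge d\,\tfrac{n_1+m}{s}+d=\tfrac{m+n_1}{s'}+d .
\]
For the upper bound I would set $c_j=(n_j-n_1)/d$, so $\langle c_2,c_3,c_4\rangle$ is a numerical semigroup with largest generator $c_4=s'$, and realize a relation with $\al_1-\beta=d$ by writing $n_1+m=\sum_{j\ge2}\al_{1j}c_j$ with as few summands as possible: once $n_1+m$ exceeds the Frobenius number of $\langle c_2,c_3,c_4\rangle$ one may take $n_1+m=w+q s'$ with $w$ below a constant $W=W(H)$ and $q\ge0$, so the number of summands is at most $(n_1+m)/s'+W$ and $\al_1(m)\le(m+n_1)/s'+(W+d)$. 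The index $i=4$ is symmetric: all $n_4-n_j$ are positive, $\gcd(n_4-n_1,n_4-n_2,n_4-n_3)=\gcd(s,d)=d$, the same divisibility argument gives $\beta-\al_4\ge d$, and (using $n_4=n_1+s$)
\[
\al_4=\beta-(\beta-\al_4)\ge(\beta-\al_4)\Bigl(\tfrac{n_4+m}{s}-1\Bigr)=(\beta-\al_4)\,\tfrac{m+n_1}{s}\ge\tfrac{m+n_1}{s'},
\]
with the matching upper bound obtained from a representation of $n_4+m$ in $\langle(n_4-n_1)/d,(n_4-n_2)/d,(n_4-n_3)/d\rangle$.

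\emph{The indices $i=2$ and $i=3$.} The point is that $H$ itself already admits a homogeneous relation at each such $n_i$: since $n_1<n_i<n_4$, with $g_i=\gcd(n_4-n_1,n_i-n_1)$ we have $r_in_i=p_in_1+q_in_4$ where $r_i=(n_4-n_1)/g_i$, $p_i=(n_4-n_i)/g_i$, $q_i=(n_i-n_1)/g_i$ and $r_i=p_i+q_i\le s$; by Lemma~\ref{al_i(m)} the binomial $x_i^{r_i}-x_1^{p_i}x_4^{q_i}$ lies in $I_{H+m}$ for every $m$, so $\al_i(m)\le s$ for all $m$. Conversely the difference identity shows any \emph{non-homogeneous} relation (one with $\al_i\neq\beta$) has $\al_i$ of order $(n_i+m)/s$: if $\al_i>\beta$ then $\beta s\ge(\al_i-\beta)(n_i+m)\ge n_i+m$ and $\al_i\ge\beta$; if $\al_i<\beta$ a short estimate gives $\al_i\ge\beta\,(n_i+m-s)/(n_i+m)\ge(n_i+m)/(2s)$ once $m\ge2s$. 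Hence for $m$ beyond a bound of order $s^2$ the relation computing $\al_i(m)$ must be homogeneous; since homogeneous relations of $H+m$ are exactly those of $H$, $\al_i(m)$ equals the least positive $\al_i$ occurring in a homogeneous relation of $H$ at $n_i$, a constant independent of $m$. Taking $m_0$ to be the largest threshold encountered and $C$ the larger of the two additive constants above then proves every assertion.

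I expect the main difficulty to be uniformity in $m$ rather than any single inequality: one must check that the divisibility step $d\mid(\al_i-\beta)$ rests on nothing beyond $\gcd(d,n_1+m)=1$, and, more delicately, that the ``few-summands'' expansions of $n_1+m$ and of $n_4+m$ can be produced with one additive constant $C(H)$ valid for all large $m$ --- which is exactly where finiteness of the complements of the auxiliary numerical semigroups $\langle c_2,c_3,c_4\rangle$ and its analogue for the differences $n_4-n_j$ must be invoked.
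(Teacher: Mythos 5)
Your argument is correct and follows essentially the same route as the paper: the interior $\al_i(m)$ stabilize because pure-power homogeneous relations are shift-invariant while any non-homogeneous relation forces $\al_i(m)$ to grow linearly, and for $i=1,e$ the congruence $d\mid \al_i(m)-\beta$ coming from $\GCD(m+n_1,d)=1$ gives the lower bound $\ge (m+n_1)/s'$, with the upper bound supplied by an explicit relation built mainly from the extreme generator. Your packaging of that upper bound via the Ap\'ery/Frobenius data of the auxiliary semigroup $\langle (n_j-n_1)/d\rangle$ (and its analogue for $n_e-n_j$) is a cosmetic variant of the paper's computation with $m+n_1=s'm'-r$ and a bounded correction $c$; you merely spell out the $\al_e(m)$ case and the quantitative stabilization argument that the paper states tersely.
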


\begin{proof}  It is obvious that there are some {\it homogenous} relation $\phi\in I_H$
of type $\phi=  x_i^{a_i} - \prod_{j=1, j\ne i}^e x_j^{b_j}$ if $i \ne 1,e$.
Thus for $m\gg 1$, $\al_i(m)$ is the minimal $a_i$ such that there exists a homogeneous equation
of type  $\phi=  x_i^{a_i} - \prod_{j=1, j\ne i}^e x_j^{b_j} \in I_H$.\par

If $\al_1(m) (m+n_1)= \sum_{i=2}^e a_i(m+n_i)$, then obviously, $\al_1(m) \ge \sum_{i=2}^e a_i+1$.
Moreover, if $d>1$, then since we should have
$\GCD(m+n_1,d)=1$
 to make a numerical semigroup,
we have $\al_1(m) \equiv \sum_{i=2}^e a_i$ (mod $d$).
 Hence $\al_1(m)\ge \sum_{i=2}^e a_i+d$.

We can compute $\al_1(m)$ in the following manner.
We assume that $m$ is sufficiently large and define $m'$ by the equation
 \[(m+n_1)=s' m' - r  \quad {\text with} \quad 0\le r<s',\]
 then  $m' (m+ n_e) - (m' +d) (m+n_1) = dr \ge 0 $ and
 also, for an integer $c>0$, we have $ (m'+c) (m+ n_e) - (m' +d+c) (m+n_1)
 = dr +cs.$  Take $c$ minimal so that
$(m'+c) (m+ n_e)- (m' +d+c) (m+n_1) = \sum_{j=2}^{e-1} b_j (n_e-n_j)$.
Since $\GCD\{n_e-n_1=s, \ldots , n_e-n_{e-1}\}=d$, such $c$ is a constant
depending only on $\{n_1,\ldots , n_e\}$ and $r$, which can take only
$s'$ different values.
Then we have $\al_1(m) = m'+ c$, since $ (m' +d+c) (m+n_1) =
(m'+c- \sum_{j=2}^{e-1} b_j ) (m+ n_e) + \sum_{j=2}^{e-1} b_j(m+n_j)$ and the minimality of $c$.
\end{proof}

Due to Lemma~\ref{homogeneous},  we write simply $\al_i= \al_i(m) $ for $m\gg 1$ and  $i \ne 1,e$.

\begin{Question} {\em If we assume $H=\langle n_1,n_2,n_3,n_4\rangle$ is almost symmetric
of type $3$, we have some examples of $d>1$ and odd, like
$H=\langle 20, 23, 44, 47\rangle$ with $d=3$ or $H= \langle 19, 24, 49, 54\rangle$
with $d=5$.  But in all examples we know, at least one of the minimal generators is even.
Is this true in general?  Note that we have examples of $4$ generated {\it symmetric}
semigroup all of whose minimal generators are odd.}
\end{Question}

\subsection{$H+m$ is almost symmetric of type $2$ for only finite $m$.}


\begin{Theorem}\label{H+m_type2}  Assume $H+m=\langle n_1+m,\ldots ,n_4+m\rangle$. Then for large enough   $m$,
$H+m$ is not almost symmetric of type $2$.
\end{Theorem}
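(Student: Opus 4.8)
The plan is to argue by contradiction: suppose $H+m$ is almost symmetric of type $2$ — equivalently $\PF(H+m)=\{\Fr(H+m)/2,\Fr(H+m)\}$ — for infinitely many $m$, and derive a contradiction for $m$ large. First I would note that for $m$ large the elements $n_1+m,\ldots,n_4+m$ form a minimal generating system (a sum of two of them already exceeds $n_4+m$), so $H+m$ is genuinely $4$-generated and the structure theorem applies; and from Lemma~\ref{homogeneous} I extract the two asymptotic facts I need: for $m\gg 1$ the invariants $\alpha_2(m),\alpha_3(m)$ equal fixed constants (say both $\le M_0$), while $\alpha_1(m)\ge (m+n_1)/s'$ and $\alpha_4(m)\ge (m+n_1)/s'-1$, both tending to infinity. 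Call the indices $1,4$ ``big'' and $2,3$ ``small''.

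Next, for each such large $m$, Theorem~\ref{type2} provides a permutation $\rho=\rho_m$ of $\{1,2,3,4\}$ so that, after relabelling $n_i+m$ through $\rho$, the matrix $\RF(\Fr(H+m)/2)$ is in the normal form \eqref{normal}. Since there are only finitely many permutations, one $\rho$ occurs for infinitely many of these $m$, and I restrict to those. Writing $N_i=n_i+m$ and $a_i=\alpha_i(m)$, Theorem~\ref{Type_even} then tells us that $I_{H+m}$ contains the two binomial relations
\[
a_{\rho(2)}N_{\rho(2)}=N_{\rho(1)}+(a_{\rho(3)}-1)N_{\rho(3)},\qquad
a_{\rho(3)}N_{\rho(3)}=(a_{\rho(1)}-1)N_{\rho(1)}+N_{\rho(2)}+(a_{\rho(4)}-1)N_{\rho(4)},
\]
and moreover $N_{\rho(2)}=a_{\rho(1)}a_{\rho(4)}(a_{\rho(3)}-1)+1$; here each $N_i$ equals $m+O(1)$ and each $a_i\ge 2$. (The one point requiring care is that the exponents in the generators listed in Theorem~\ref{Type_even}(ii) really are the intrinsic invariants $\alpha_{\rho(j)}(m)$ — which holds because those binomials minimally generate $I_{H+m}$.)

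The contradiction is then a three-case analysis on the position of the big indices relative to the pivot index $\rho(2)$. If $\rho(2)\in\{2,3\}$, then $\{1,4\}\subseteq\{\rho(1),\rho(3),\rho(4)\}$, so at least two of $a_{\rho(1)},a_{\rho(3)},a_{\rho(4)}$ are of size $\ge(m+n_1)/s'-1$; since also $a_{\rho(3)}-1\ge1$, the identity $N_{\rho(2)}=a_{\rho(1)}a_{\rho(4)}(a_{\rho(3)}-1)+1$ forces $N_{\rho(2)}$ to grow at least quadratically in $m$, contradicting $N_{\rho(2)}=n_{\rho(2)}+m\le n_4+m$ for $m$ large. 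If $\rho(2)\in\{1,4\}$ and $\rho(3)\in\{1,4\}$ as well, then $\{\rho(1),\rho(4)\}=\{2,3\}$, so $a_{\rho(1)},a_{\rho(4)}\le M_0$ while $a_{\rho(3)}\ge(m+n_1)/s'-1$; the second displayed relation then has left-hand side $\ge\bigl((m+n_1)/s'-1\bigr)m$ and right-hand side $\le 2M_0(n_4+m)$, impossible for $m$ large. Finally, if $\rho(2)\in\{1,4\}$ and $\rho(3)\in\{2,3\}$, then $a_{\rho(2)}\ge(m+n_1)/s'-1$ while $a_{\rho(3)}\le M_0$, so the first displayed relation has left-hand side $\ge\bigl((m+n_1)/s'-1\bigr)m$ and right-hand side $\le(M_0+1)(n_4+m)$, again impossible for $m$ large. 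Every case being excluded, $H+m$ is almost symmetric of type $2$ for only finitely many $m$. The main obstacle here is organisational rather than computational: one must set up the relabelling $\rho$ correctly and choose it uniformly over the infinitely many $m$, so that the structure theorem feeds clean numerical relations into the comparison; once that is done the heart of the matter is simply that a quantity of size $\Theta(m^2)$ cannot equal one of size $\Theta(m)$.
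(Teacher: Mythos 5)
Your proof is correct, and it rests on the same two pillars as the paper's: the normal form of $\RF(\Fr(H+m)/2)$ for a $4$-generated type-$2$ semigroup (Theorem~\ref{type2}) and the asymptotics of Lemma~\ref{homogeneous} ($\al_2(m),\al_3(m)$ eventually constant, $\al_1(m),\al_4(m)$ growing linearly in $m$). Where you diverge is in which consequences of the structure theorem feed the size comparison. The paper reads off the first three rows of the normal form, i.e.\ relations of the shape $\Fr(H+m)/2+(n_i+m)=(\al_j(m)-1)(n_j+m)$, and plays them against the quadratic lower bound $\Fr(H+m)\ge m^2/s$ from the unnumbered Proposition at the start of Section~\ref{sec:7}: rows $1$ and $2$ force the two pivot generators to be the ones with linearly growing $\al$'s, and then row $3$ makes $\Fr(H+m)/2$ grow only linearly, a contradiction. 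You never touch the Frobenius number: you use the homogeneous relations $\al_2n_2=n_1+(\al_3-1)n_3$ and $\al_3n_3=(\al_1-1)n_1+n_2+(\al_4-1)n_4$ together with the counting formula $n_2=\al_1\al_4(\al_3-1)+1$ of Theorem~\ref{Type_even}, and run a three-case analysis over the relabeling permutation; the contradiction is always a $\Theta(m^2)$ quantity equated with a $\Theta(m)$ one. This buys independence from the lower bound on $\Fr(H+m)$, at the cost of an explicit case split that the paper avoids because rows $1$--$2$ pin down the location of the big $\al$'s at once. Two small remarks: the pigeonhole step fixing one permutation $\rho$ along an infinite subsequence is unnecessary, since your case analysis works for each large $m$ with its own $\rho_m$; and your parenthetical worry about the exponents being the intrinsic invariants $\al_{\rho(j)}(m)$ needs no separate argument, because Theorem~\ref{type2} and Theorem~\ref{Type_even} already state the normal form and the generators of $I_{H+m}$ directly in terms of the $\al_i$ of the relabeled generators.
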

\begin{proof}  We assume that $m$ is suitably big and  $H+m$ is almost symmetric  of type $2$.

Recall the RF-matrix is of the following form
(since we assumed the order on $\{n_1,\ldots, n_4\}$, we changed the indices
$\{1,2,3,4\}$ to $\{i,j,k,l\}$) by Theorem \ref{type2}.

\[ \RF(\Fr(H+m)/2) = \left( \begin{array}{cccc} -1 & \alpha_j(m)-1 & 0 & 0\\
0 & -1 & \alpha_k(m)-1 & 0 \\
\al_i(m)-1 & 0 & -1 & \al_l(m)-1\\
\al_i(m)-1 & \al_j(m)-1-\al_{ij}(m) & 0 & -1 \end{array}\right)\]

Also, we know by Lemma \ref{al_i(m)} that $\al_1(m), \al_4(m)$ grows linearly on $m$ and
$\al_2(m), \al_3(m)$ stays constant for big $m$.

Now, 1st and 2nd rows of   $\RF(\Fr(H+m)/2)$ shows $\Fr(H+m)/2 + (n_i+m)  = (\al_j(m)-1)(n_j +m)$
and $\Fr(H+m)/2 + (n_j+m) = (\al_k(m) - 1)(n_k+m)$.
Since $\Fr(H+m)$ grows by the order of $m^2$, these equations show that $\{j,k\} = \{1,4\}$.
But then looking at the 3rd row, since $\{i,l\} = \{2,3\}$, $\Fr(H+m)/2$ grows as a linear function
on $m$. A contradiction!
\end{proof}

\subsection{The classification of $H$ such that $H+m$ is almost symmetric of type 3 for infinitely many $m$}
Unlike the case of type $2$, there are infinite series of $H+m$,  which are almost symmetric of type $3$
for infinitely many $m$.
The following example was given by T.~Numata.

\begin{ex} If $H= \langle 10,11,13,14\rangle$, then $H+4m$ is AS of type $3$ for all integer $m\ge 0$.
\end{ex}

\begin{ex} {\em  For the following $H$, $H+m$ is almost symmetric with type $3$ if
\begin{enumerate}
\item[(1)] $H= \langle10,11,13,14\rangle$, $m$ is a multiple of $4$.
\item[(2)] $H = \langle 10, 13, 15, 18\rangle$, $m$ is a multiple of $8$.
\item[(3)] $H = \langle 14, 19, 21, 26\rangle$, $m$ is a multiple of $12$.
\item[(4)] $H= \langle18, 25, 27, 34\rangle$, $m$ is a multiple of $16$.
\end{enumerate}}
\end{ex}

 In the following, we determine the type of numerical semigroup $H$
generated by $4$ elements and $H+m$ is almost symmetric of type $3$ for infinitely many $m$.

\begin{defn} Let $H=\langle n_1,n_2,n_3 ,n_4\rangle$ with $n_1<n_2<n_3<n_4$ and
we assume that $H+m$ is almost symmetric of type $3$
with $\PF(H+m) = \{ f(m), f'(m), \Fr(H+m)\}$ with $f(m) < f'(m), f(m) + f'(m)= \Fr(H+m)$.
  We say some invariant $\sigma(m)$ (e.g. $\Fr(H+m), f(m), f'(m)$)
of $H+m$ is $O(m^2)$ (resp. $o(m)$) if there is some positive constant $c$ such that
$\sigma(m) \ge cm^2$ (resp. $\sigma(m) \le cm$) for all $m$.
\end{defn}

\begin{lem}  The invariants $\Fr(H+m)$ and $f'(m)$ are $O(m^2)$ and $f(m)$ is $o(m)$.
\end{lem}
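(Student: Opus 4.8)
The plan is to read the three growth statements off results already available: the lower bound $\Fr(H+m)\ge m^2/s$ proved just above, the eventual stabilization of the ``middle'' multipliers $\al_2(m),\al_3(m)$ from Lemma~\ref{homogeneous}, and the special-row dichotomy of Lemma~\ref{special row}(iii).

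First I would handle $\Fr(H+m)$ and $f'(m)$. The inequality $\Fr(H+m)\ge m^2/s$ says at once that $\Fr(H+m)$ is $O(m^2)$. Since $H+m$ is almost symmetric of type $3$, Lemma~\ref{Nari} gives $f(m)+f'(m)=\Fr(H+m)$, and as $f(m)<f'(m)$ this forces $f'(m)>\Fr(H+m)/2\ge m^2/(2s)$, so $f'(m)$ is $O(m^2)$ too.

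The real point is the upper bound on $f(m)$. Here I would invoke Lemma~\ref{special row}(iii): $f(m)$ and $f'(m)$ are distinct elements of $\PF'(H+m)$ with $f(m)+f'(m)=\Fr(H+m)\notin H+m$, so, applied to the generator $n_2+m$ of $H+m$, it produces an index $q\in\{1,2,3,4\}$ with
\[
(\al_2(m)-1)(n_2+m)=f(m)+(n_q+m)\quad\text{or}\quad(\al_2(m)-1)(n_2+m)=f'(m)+(n_q+m).
\]
(In particular $\al_2(m)\ge2$, since both right-hand sides are positive.) By Lemma~\ref{homogeneous}, for $m\gg1$ the integer $\al_2(m)$ equals a fixed constant $\al_2$, so the left-hand side is affine in $m$; since $f'(m)+(n_q+m)\ge f'(m)\ge m^2/(2s)$ is quadratic in $m$, the second alternative is impossible for large $m$. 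Hence $f(m)=(\al_2-1)(n_2+m)-(n_q+m)\le(\al_2-2)m+(\al_2-1)n_2-n_1$, a quantity bounded by a constant multiple of $m$; that is, $f(m)$ is $o(m)$. The finitely many small $m$ in the shifted family not covered by ``$m\gg1$'' are absorbed into the constants.

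The only step needing care — more a bookkeeping check than a genuine obstacle — is the elimination of the second alternative in Lemma~\ref{special row}(iii); this is exactly the clash between the \emph{linear} growth of $\al_i(m)(n_i+m)$ for the middle generator $n_2$ and the \emph{quadratic} growth of $f'(m)$ and $\Fr(H+m)$. I note that the same argument with $n_3+m$ in place of $n_2+m$ works verbatim.
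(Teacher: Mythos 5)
Your proof is correct and follows essentially the paper's own argument: the quadratic lower bound $\Fr(H+m)\ge m^2/s$ yields both $O(m^2)$ statements (via $f(m)<f'(m)$ and $f(m)+f'(m)=\Fr(H+m)$), and the special-row relation for the middle generator $n_2+m$, whose multiplier $\al_2(m)$ is eventually constant by Lemma~\ref{homogeneous}, forces the pseudo-Frobenius number occurring there to be $f(m)$ and bounds it linearly. The only difference is that you invoke Lemma~\ref{special row}(iii) directly instead of Lemma~\ref{(al_i-1)n_i} as the paper does, which is actually preferable since that later lemma is itself proved using the present one; you also spell out the step the paper leaves implicit, namely that the alternative with $f'(m)$ is excluded because its growth is quadratic while the left-hand side is linear.
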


\begin{proof}
By Lemma \ref{(al_i-1)n_i}, we have

\[ \phi(m)+ (n_k +m) = (\al_i (m)-1 )(n_i +m) \]

for every $i$ and for some $k$ and $\phi(m) \in \PF'(H+m)$. If $i=2,3$ (resp. $1,4$), then
$\al_i(m)=\al_i$ is constant (resp. grows linearly on $m$) and $\phi(m)$ is $O(m)$
 (resp. $O(m^2)$).
\end{proof}

\medskip
The following lemma is very important in our discussion.

 \begin{lem}\label{(al_i-1)n_i} Assume  $H+m=\langle n_1+m,n_2+m,n_3+m ,n_4+m\rangle$ is almost symmetric of type $3$ for sufficiently large $m$.
 Then for every $i$, there exists $k\ne i$ and $\phi(m)\in \PF'(H+m)$
 such that $(\al_i(m)-1)(n_i+m) = \phi(m)+ (m+n_k)$.
 \end{lem}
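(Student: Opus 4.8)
The plan is to observe that this lemma is essentially Lemma~\ref{special row}(iii) applied to the numerical semigroup $H+m$ in place of $H$, so the whole proof is a matter of transferring hypotheses. First I would record that the standing assumptions of Section~\ref{sec:5} hold for $H+m$ once $m$ is large: by Definition~\ref{H+m_def} we have $\GCD(n_1+m,\ldots,n_4+m)=1$, and for $m\gg 0$ the four elements $n_i+m$ form a \emph{minimal} generating set, since if some $n_i+m$ lay in the submonoid generated by the others it would satisfy $n_i+m\ge 2(n_1+m)$, i.e.\ $m\le n_i-2n_1$, which is false for large $m$. Thus $H+m$ is a $4$-generated numerical semigroup which, by hypothesis, is almost symmetric, so Lemma~\ref{special row} applies to it. Note also that the integer $\al_i(m)$ defined just before the lemma is precisely the number ``$\al_i$'' of equation~(\ref{minimal}) computed inside $H+m$.

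Next I would unravel the pseudo-Frobenius data. Because $H+m$ is almost symmetric of type $3$, Lemma~\ref{Nari} gives $\PF(H+m)=\{f(m),f'(m),\Fr(H+m)\}$ with $f(m)+f'(m)=\Fr(H+m)$; hence $\PF'(H+m)=\{f(m),f'(m)\}$, the two elements $f(m),f'(m)$ are distinct (equality would make $H+m$ pseudo-symmetric, i.e.\ of type $2$), and $f(m)+f'(m)=\Fr(H+m)\notin H+m$. So $\{f(m),f'(m)\}$ is exactly a pair of the sort appearing in the hypothesis of Lemma~\ref{special row}(iii).

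I would then simply invoke Lemma~\ref{special row}(iii) for $H+m$, taking the pair $\{f,f'\}=\{f(m),f'(m)\}$ and $j=i$: it produces an index $s$ with either $(\al_i(m)-1)(n_i+m)=f(m)+(n_s+m)$ or $(\al_i(m)-1)(n_i+m)=f'(m)+(n_s+m)$. Putting $k=s$ and letting $\phi(m)$ be whichever of $f(m),f'(m)$ occurs, we get $(\al_i(m)-1)(n_i+m)=\phi(m)+(m+n_k)$ with $\phi(m)\in\PF'(H+m)$. Finally $k\ne i$, for if $k=i$ then $\phi(m)=(\al_i(m)-2)(n_i+m)\in H+m$ (here $\al_i(m)\ge 2$ since $n_i+m$ is a minimal generator), contradicting $\phi(m)\notin H+m$.

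I do not anticipate a real obstacle: all the substance lies in Lemma~\ref{special row}, already established, and the only genuine (but routine) verification is that $n_1+m,\ldots,n_4+m$ remain a minimal generating set for large $m$, handled in the first step above.
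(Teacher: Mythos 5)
Your proof is correct, but it takes a genuinely different route from the paper's. The paper proves Lemma~\ref{(al_i-1)n_i} from Lemma~\ref{special row}(ii) only: there are at least four special rows in $\RF(f(m))$ and $\RF(f'(m))$ together, so if some generator $n_i+m$ were missed, two special rows would concentrate on a single $n_j+m$, giving $(\al_j(m)-1)(n_j+m)=f(m)+(m+n_k)=f'(m)+(m+n_l)$; this forces $f'(m)-f(m)=n_k-n_l$ to be bounded, which contradicts the growth estimates $f(m)=o(m)$, $f'(m)=O(m^2)$ for the shifted family. You instead apply Lemma~\ref{special row}(iii) directly to $H+m$, after the (correct and necessary) checks that $n_1+m,\ldots,n_4+m$ remain a minimal generating set for $m\gg0$, that $\PF'(H+m)=\{f(m),f'(m)\}$ with $f(m)\ne f'(m)$ and $f(m)+f'(m)=\Fr(H+m)\notin H+m$ by Lemma~\ref{Nari}, and that the index $s$ produced cannot equal $i$ (since $\al_i(m)\ge 2$ by minimality, $s=i$ would put $\phi(m)$ in $H+m$). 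Your route is purely semigroup-theoretic and needs no asymptotics at all; in particular it does not lean on the growth lemma, whose proof in the paper actually cites the present lemma, so your argument has the cleaner logical structure. What the paper's approach buys is that it only invokes the lighter counting statement (ii) rather than the full case analysis behind (iii), at the price of importing the $o(m)$ versus $O(m^2)$ estimates special to the shifted situation. Both arguments are valid; yours is arguably the more direct deduction from the results of Section~5.
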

\begin{proof} We have seen in Lemma \ref{special row} (ii) that there are at least $4$ relations of
type $(\al_i(m)-1)(n_i+m) = \phi(m)+ (m+n_k)$.  If for some $i$, the relation
$(\al_i(m)-1)(n_i+m) = \phi(m)+ (m+n_k)$ does not exist, then for some $j\ne i$, there must exist
relations
\[(\al_i(m)-1)(m+n_j) = f(m)+ (m+n_k)= f'(m)+(m+n_l),\]
which is absurd since $f(m)= o(m)$ and $f'(m)=O(m^2)$.
\end{proof}

Now, we will start the classification of $H$, almost symmetric of type $3$ and
$H+m$ is   almost symmetric of type $3$ for infinitely many $m$.

\begin{prop}\label{AG3-fm}  Assume $H$ and $H+m$ are almost symmetric of type $3$
 for infinitely many $m$.
We use notation as above and we put $d=\GCD (n_2-n_1, n_3-n_2,n_4-n_3)$.
If $H+m$ is almost symmetric of type $3$ for sufficiently big $m$, then
 the following statements hold.
\begin{enumerate}
\item[{\em (i)}] We have $\al_2=\al_3$ and $\al_1(m)= \al_4(m)+1$. If  we put $a=\al_2=\al_3$ and $b= \al_1(m)=\al_4(m)+1$,
we have the following $\RF(f(m)), \RF(f'(m))$.
Note that $\RF(f(m))$ does not depend on $m$
if $H+m$ is almost symmetric.

\[\RF ( f(m) ) = \left( \begin{array}{cccc} -1 & a-1 & 0 & 0\\
1 & -1 & a-2 & 0 \\
0 & a-2 & -1 & 1\\
0  & 0 & a-1 &  -1 \end{array}\right),
\RF(f'(m))= \left( \begin{array}{cccc} -1 &  0 & 1 & b-d-2\\
0 & -1 &  0 & b-d-1\\
b-1 & 0 & -1 & 0\\
b-2  & 1 &  0 &  -1 \end{array}\right),\]
where we put  $b= \al_1(m)$  and then $\al_4(m)= b-d$.
\item[{\em (ii)}]  The integer  $a=\al_2=\al_3$ is odd and we have
$n_2= n_1+(a-2)d, n_3= n_1+ ad, n_4= n_1+ (2a-2)d$.
\end{enumerate}
\end{prop}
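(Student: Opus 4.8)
The plan is to build up the two $\RF$-matrices from the $4$ special rows guaranteed by Lemma~\ref{special row}(ii), then read off the generators $n_i$ by comparing rows. First I would invoke Lemma~\ref{(al_i-1)n_i}: for each $i\in\{1,2,3,4\}$ there is $k\ne i$ and $\phi(m)\in\PF'(H+m)$ with $(\al_i(m)-1)(n_i+m)=\phi(m)+(n_k+m)$, i.e.\ $\RF(\phi(m))$ has a special row $(\al_i(m)-1)\mathbf e_i-\mathbf e_k$. Since $\al_2,\al_3$ are constant while $\al_1(m),\al_4(m)$ grow linearly (Lemma~\ref{homogeneous}), the two special rows for $i=2,3$ must sit in $\RF(f(m))$ (because $f(m)=o(m)$ forces $\phi(m)=f(m)$ there) and the two for $i=1,4$ in $\RF(f'(m))$, for large $m$; this already pins which $\phi(m)$ accompanies which special row. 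The constancy claim $\al_2=\al_3=:a$ and $\al_1(m)=\al_4(m)+1=:b$ I would get by exploiting the symmetry $f(m)+f'(m)=\Fr(H+m)$ together with Lemma~\ref{symm}: taking differences of the special rows with the remaining rows of each $\RF$-matrix produces homogeneous relations in $I_H$ (Lemma~\ref{al_i(m)}), and matching these across $m$ forces the two ``small'' $\alpha$'s equal and the two ``large'' ones to differ by exactly the residue governed by $d$; here one uses that $\GCD(n_1+m,d)=1$ forces $\al_1(m)\equiv\al_4(m)+$const mod $d$, and a careful count of the linear growth rates (both $\al_1(m)(n_1+m)$ and $\al_4(m)(n_4+m)$ must equal $f'(m)+$const$\cdot m$) yields $\al_1(m)-\al_4(m)=d$ — wait, the statement says $\al_1(m)=\al_4(m)+1$, so in fact $d=1$ is forced at this point; I would extract $d=1$ here from the incompatibility of a larger gap with the linear-growth bookkeeping, unless it is already known, in which case I cite it.

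Next I would determine $\RF(f(m))$ completely. Its $2$nd and $3$rd rows are the special rows $(\al_2-1)\mathbf e_2-\mathbf e_k$ and $(\al_3-1)\mathbf e_3-\mathbf e_{k'}$; using Lemma~\ref{a_ij>0} (every column has a positive entry) together with Lemma~\ref{symm}, the possible choices of $k,k'$ and the entries of rows $1$ and $4$ are highly constrained. A case analysis — entirely parallel to the one in Lemma~\ref{4rows} — pins down, after relabeling so that $n_1<n_2<n_3<n_4$, that
\[
\RF(f(m))=\left(\begin{array}{cccc}-1&a-1&0&0\\1&-1&a-2&0\\0&a-2&-1&1\\0&0&a-1&-1\end{array}\right),
\]
and then the analogous bookkeeping for $\RF(f'(m))$, now with the two large special rows $(\al_1(m)-1)\mathbf e_1-\mathbf e_k$, $(\al_4(m)-1)\mathbf e_4-\mathbf e_{k'}$ and $\al_1(m)=b$, $\al_4(m)=b-1$, gives
\[
\RF(f'(m))=\left(\begin{array}{cccc}-1&0&1&b-d-2\\0&-1&0&b-d-1\\b-1&0&-1&0\\b-2&1&0&-1\end{array}\right).
\]
The independence of $\RF(f(m))$ from $m$ is then immediate since all its entries involve only the constant $a$.

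For part (ii), I would take row differences in these two matrices to obtain the generating relations of $H$ (Lemma~\ref{produce}): e.g.\ rows $1,2$ of $\RF(f(m))$ give $a n_2=n_1+(a-1)n_3$, rows $3,4$ give $a n_3=(a-1)n_4+n_2$ (homogeneous, hence valid for all $m$), and the rows of $\RF(f'(m))$ give relations tying $n_4$ to $n_1,n_3$. Solving this small linear system — using that $\gcd(n_1,n_2,n_3,n_4)=1$ — yields $n_2=n_1+(a-2)d$, $n_3=n_1+ad$, $n_4=n_1+(2a-2)d$; parity of $a$ drops out because $n_3-n_2=2d$ and $n_4-n_3=(a-2)d$ together with the requirement that $\GCD(n_2-n_1,\dots)=d$ and that $H+m$ remains a semigroup for infinitely many $m$ (so $\gcd(n_1+m,d)=1$ has solutions) force $a$ odd; otherwise the reduced matrix $\RF(f(m))$ would have a row contradicting the minimality defining some $\al_i$, exactly as in the final contradiction of Lemma~\ref{4rows}. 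The main obstacle I anticipate is the combinatorial case analysis fixing the positions of the special rows in $\RF(f'(m))$ and showing no other configuration survives for large $m$: one must rule out, using Lemma~\ref{symm} and Lemma~\ref{a_ij>0} simultaneously on \emph{both} matrices (remembering that a positive entry in \emph{some} choice of $\RF(f)$ forces a zero in \emph{every} choice of the transposed entry of $\RF(f')$, Remark~\ref{symm-Rem}), every assignment that would make $f(m)$ grow faster than $o(m)$ or $f'(m)$ slower than $O(m^2)$, and this is where the growth estimates of Lemma~\ref{homogeneous} must be threaded through carefully.
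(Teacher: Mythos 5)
Your overall strategy (use Lemma \ref{(al_i-1)n_i} to place the two ``small'' special rows in $\RF(f(m))$ and the two ``large'' ones in $\RF(f'(m))$, then pin the matrices down with Lemma \ref{symm}, Lemma \ref{a_ij>0} and the growth estimates of Lemma \ref{homogeneous}) is the same as the paper's, but your plan contains a concrete error and defers exactly the steps where the paper does its real work. The error: you first arrive (correctly) at $\al_1(m)-\al_4(m)=d$ and then override it, declaring that ``$d=1$ is forced'' in order to match the phrase $\al_1(m)=\al_4(m)+1$ in the statement. That phrase is a typo: the displayed matrix itself encodes $\al_4(m)=b-d$ (entries $b-d-1$, $b-d-2$), the paper's proof establishes $\al_1(m)-\al_4(m)=d$ by a divisibility-mod-$d$ argument combined with the bound $|\al_1(m)-dm/s'|\le C$ from Lemma \ref{homogeneous}, and $d=1$ is simply false in general --- the classification in Theorem \ref{H+mAG3} produces families $H(a,b;d)$ with arbitrary odd $d>1$ coprime to $a$ that satisfy the hypotheses. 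Your plan is even internally inconsistent on this point: you set $\al_4(m)=b-1$ while writing down a matrix for $\RF(f'(m))$ whose rows force $\al_4(m)=b-d$. Attempting to ``extract $d=1$ from the linear-growth bookkeeping'' means attempting to prove a false statement, so the proof cannot be completed along that line.

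Beyond this, the decisive quantitative arguments are missing rather than merely compressed. Deciding which generator is subtracted in each special row is not a finite case analysis ``parallel to Lemma \ref{4rows}'': for $\RF(f'(m))$ the paper must exclude the pairing $f'(m)+(m+n_4)=(\al_1(m)-1)(m+n_1)$ by combining the divisibility of $\al_1(m)-1-p_1(m)-p_2(m)$ by $d$ with the asymptotics of $\al_1(m)$, and it must separately prove (its step (5)) that the two parameters $t=e-n_2$ and $u=n_3-e$ in the parametrization $n_1=e-(a-1)t$, $n_2=e-t$, $n_3=e+u$, $n_4=e+(a-1)u$ coincide before the displayed $\RF(f(m))$ is available; none of this appears in your outline. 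Two smaller slips: the special rows of $\RF(f(m))$ are its first and fourth rows (the row index is the subtracted generator), not the second and third; and the row differences you quote are wrong --- rows $1,2$ give $an_2=2n_1+(a-2)n_3$ and rows $3,4$ give $an_3=(a-2)n_2+2n_4$, and it is the first of these together with the minimality of $a=\al_2$ that yields that $a$ is odd, not the $\GCD$ bookkeeping you propose, which presupposes the very formulas for the $n_i$ that part (ii) asks you to prove.
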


\begin{proof}  We divide our proof into several steps.

\medskip
(1) By Lemma \ref{(al_i-1)n_i}, we have the relations
\[f(m) + (m+n_k) = (\al_2-1)(m+n_2), \quad f(m) +  (m+n_l) = (\al_3-1)(m+n_3)\]
in $H+m$.  Taking the difference, we have
\[ n_l-n_k =  (\al_3-\al_2)m + (\al_3-1)n_3- (\al_2-1)n_2.\]
Hence we must have $\al_2=\al_3$, since $m$ is sufficiently larger than
$n_1,\ldots, n_4$.  Now, we will put $\al_2=\al_3=a$.

Then we will determine $n_k,n_l$.  Since we have $n_k< n_l$, there are $3$ possibilities;

\medskip
 $n_k=n_1$ and $n_l=n_4$, \quad  $n_k=n_1$ and $n_l=n_2$, \quad
 or \quad $n_k=n_3$ and $n_l=n_4$.
If we have $n_k =n_3$ and $f(m)+ n_3 = (a-1)(m+n_2)$, $f(m) + n_1<  (a-1)(m+n_2)$ and
there is no way to express $f(m)+n_1$ as an element of $H+m$.  Hence we have $n_k=n_1$
and in the same manner, we can show $n_l=n_4$. 
  Thus we have obtained
 \[f(m) + (m+n_1 )= (a-1)(m+n_2), \quad f(m) +  (m+n_4 )= (a-1)(m+n_3),\]
which are 1st and 4th rows of $\RF(f(m))$.

\medskip

Now, put $f(m) = (a-2)e_m$ ($e_m\in \QQ$). Then we have
\[(*) \quad (a-2)(e_m - (n_2+m))= n_2-n_1, \quad (a-2)((n_3+m)-e_m) = n_4-n_3.\]

Now, put $e_m = e+m, t= e - n_2$ and $u= n_3 - e$.  Then from (*) we have
\[(**)\quad  n_1= e- (a-1)t, n_2= e -t, n_3= e+u, n_4 =e + (a-1)u\]
with $n_3-n_2= t +u \in \Z$.

\medskip
(2) Noting $a=\al_2(m)$, we put
\begin{eqnarray}
a(m+n_2) = c_1(m+n_1)+c_3(m+n_3)+c_4(m+n_4),\\
a(m+n_3)= c'_1(m+n_1)+ c'_2(m+n_2)+c'_4(m+n_4)
\end{eqnarray}

Since $x_2^a - x_1^{c_1}x_3^{c_3}x_4^{c_4}$ is a homogeneous equation,
we have $a= c_1+c_3+c_4$.  Also, since $(a-1)(n_3-n_2)> n_2-n_1$, we have $c_1>1$ and
in the same manner, we get $c'_4>1$.

\medskip

(3) By Lemma \ref{(al_i-1)n_i}, there should be relations

\[f'(m) + (m+n_k) = (\al_1(m)-1)(m+n_1), \quad f'(m) + (m+n_l)= (\al_4(m)-1)(m+n_4)  \]
for some $k\ne 1$ and $l\ne 4$.
By Lemma \ref{symm}, we must have $k=3$ or $4$, $l= 1$ or $2$. We will show that
$k=3$ and $l=2$.

Assume, on the contrary,   $f'(m) + (m+n_4) = (\al_1(m)-1)(m+n_1)$ and
 put $f'(m) + (m+n_3)= p_1(m)(m +n_1)+ p_2(m)(m+n_2)$,
noting that $(3.4)$ entry of $\RF(f'(m))$ is $0$.  Then since
$f'(m) + (m+n_4) = (\al_1(m)-1)(m+n_1)> f'(m) + (m+n_3)= p_1(m)(m +n_1)+ p_2(m)(m+n_2)
\ge (p_1(m)+p_2(m))(m +n_1)$,

we have $\al_1(m)-1\ge p_1(m)+p_2(m)+1$.

Taking the difference of
\begin{eqnarray*}f'(m) + (m+n_4) = (\al_1(m)-1)(m+n_1),\\
f'(m) + (m+n_3)= p_1(m)(m +n_1)+ p_2(m)(m+n_2),
\end{eqnarray*}
we have
\[(***) \quad n_4-n_3= (\al_1(m) - 1-p_1(m) -p_2(m))(m+n_1) - p_2(m)(n_2-n_1).\]

Using the definition in Lemma \ref{homogeneous}, since $n_4-n_3, n_2-n_1$ are divisible by $d$
and $m +n_1$ is relatively prime with $d$, $(\al_1(m) - 1-p_1(m) -p_2(m))$ should be a multiple of
$d$ and thus we have $\al_1(m) - 1-p_1(m) -p_2(m)\ge d$.

Then we have $p_2(m)(n_2-n_1) \ge d (m+n_1) +C_1$, where $C_1$ is a constant
not depending on $m$. On the other hand, we have seen that $p_2(m) < \al_1(m)$
and $n_2-n_1 < s$ and $|\al_1(m) - dm/ s|\le C$, by Lemma \ref{homogeneous}.
 Then we have $d (m+n_1)+C_1\le p_2(m)(n_2-n_1) < p_2(m)s <\al_1(m) s
 \le dm$, which is a contradiction!

Thus we get
$(\al_1(m)-1)(m+n_1) = f'(m) + (m+n_3)$ and in the same manner,
$(\al_4(m)-1)(m+n_4) = f'(m) + (m+n_2)$.

\medskip

(4) Since we have seen $c_1,c'_4>1$ in (2), by Lemma \ref{symm}, we have
 $f'(m)+ (m+ n_1) = p_3(m)(m+n_3)+p_4(m)(m+n_4)$ and
$f'(m)+ (m+ n_4) =  q_1(m)(m+n_1)+q_2(m)(m+n_2)$.
 If, moreover, we assume $p_3(m)=0$, then we will have
$f'(m)+ (m+ n_1)= p_4(m)(m+n_4)$  and $f'(m) +(m+n_2) = (\al_ 4(m) -1)(m+n_4)$,
which will lead to $(n_2-n_1) = (\al_4(m)-1-p_4(m))(m+n_4)$, a contradiction!
Hence we have $p_3(m) >0$ and in the same manner, $q_2(m)>0$.
Again by Lemma \ref{symm}, we have $c_4=0= c'_1$.

\medskip
(5) Let us fix $\RF(f(m))$. Using (**), we compute
\begin{eqnarray}
m+n_1 +  (a-2) (m+n_3) - (f(m) + (m+n_2)) = (a-2) (u-t)\\
m+n_4 +  (a-2) (m+n_2) - (f(m) + (m+n_3))= (a-2) (u-t) .
\end{eqnarray}

This shows that if $u=t$, we get desired $\RF(f(m))$.

Now, if $u>t$, since  we have $f(m) + (m+n_3)= c'_2 (m+n_2) + (c_4'-1) (m+n_4)$
 with $c'_2 + c'_4=a, c'_4 \ge 2$, then
$c'_2 (m+n_2) + (c_4'-1) (m+n_4) \ge m+n_4 +  (a-2) (m+n_2) = (f(m) + (m+n_3)) + (a-2) (u-t) >0$,
a contradiction. Similarly, $t>u$ leads to a contradiction, too.  So we have $u=t$ and
we have proved the form of $\RF(f(m))$   in Proposition  \ref{AG3-fm}.

\medskip
(6) From $f'(m) + (m+n_2) = (\al_4(m)-1)(m+n_4)$, add $n_1-n_2=n_3-n_4$ to
both sides, we get  $f'(m) + (m+n_1) = (m+n_3) + (\al_4(m)-2)(m+n_4)$, likewise
 $f'(m) + (m+n_4) = (m+n_2) + (\al_1(m)-2)(m+n_1)$

\medskip
(7)  We will show that $\al_4(m)= \al_1(m)-d$.  By equations
\[f'(m)+(m+n_3) =(\al_1(m) -1)(m+n_1),\]   \[f'(m)+(m+n_2) =(\al_4(m) -1)(m+n_4),\]
taking the difference, we have
\[n_3-n_2 + (\al_1(m) - \al_4(m))(m+n_1) = (\al_4(m) - 1)s.\]
Since $s, n_3-n_2$ are divisible by $d$ and $\GCD(m+n_1,d)=1$,
$\al_1(m)-\al_4(m)$ is also divisible by $d$ and since $(\al_4(m)-1)s$
is bounded by $(d+C')m$ for some constant $C'$ by Lemma \ref{homogeneous},
we have $\al_1(m)-\al_4(m)=d$.

\medskip
(8) Finally, we will show that $a$ is odd and $t=u=d$.  Since $(a-2)t = n_2-n_1$ and $2t= n_3-n_2$
are integers, if  we show that $a$ is odd, then $t,u\in \Z$. Recall that
\[an_2 = 2 n_1+ (a-2) n_3\]
and $a$ is minimal so that $an_2$ is representable by $n_2,n_3,n_4$.  Then if
$a$ is even, then it will contradict the fact that $a$ is minimal.\par

By the definition of $d$ we see that $t=u=d$.
\end{proof}

\begin{Theorem}\label{H+mAG3}  Assume that
$H=\langle n_1, n_2, n_3, n_4\rangle$ with $n_1<n_2<n_3<n_4$
and we assume that
$H$ and $H+m$ are almost symmetric of type $3$ for infinitely many
$m$.  Then putting $d=\GCD (n_2-n_1,n_3-n_2,n_4-n_3)$, $a=\al_2,
b= \al_1$ and $\PF(H)= \{ f,f', \Fr(H)\}$, $H$  has the following
characterization.
\begin{enumerate}
\item[{\em (i)}] $a$ and $d$ are odd, $\GCD(a,d)=1$ and $b\ge d+2$.
\item[{\em (ii)}] $\RF(f)$ and $\RF(f')$ have the following form.
\[\RF(f) =  \left( \begin{array}{cccc} -1 & a-1 & 0 & 0\\
1 & -1 & a-2 & 0 \\
0 & a-2 & -1 & 1\\
0  & 0 & a-1 &  -1 \end{array}\right),
\RF(f')= \left( \begin{array}{cccc} -1 &  0 & 1 & b-d-2\\
0 & -1 &  0 & b-d-1\\
b-1 & 0 & -1 & 0\\
b-2  & 1 &  0 &  -1 \end{array}\right).\]
\item[{\em (iii)}]
$n_1= 2a+ (b-d-2)(2a-2), n_2= n_1+(a-2)d, n_3= n_1+ad,
n_4= 2a+ (b-2)(2a-2)$.
\item[{\em (iv)}] If we put $H(a,b; d) = \langle n_1,n_2,n_3,n_4\rangle$,
it is easy to see that $H(a,b+1; d) = H(a,b;d) + (2a-2)$.
Since $H(a,b;d)$ is almost symmetric of type $3$ for every
$a,d$ odd, $\GCD(a,d)=1$ and $b\ge d+2$,
$H(a,b;d)+m$ is  almost symmetric of type $3$ for infinitely many $m$.

\item[{\em (v)}]    $I_H = (xw-yz, y^a - x^2z^{a-2}, z^{a}-y^{a-2}w^2, xz^{a-1}- y^{a-1} w,
x^b -z^2w^{b-d-2}, w^{b-d}-x^{b-2}y^2, x^{b-1}y - zw^{b-d-1}))$.
\end{enumerate}
\end{Theorem}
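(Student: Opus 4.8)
The plan is to deduce everything from Proposition~\ref{AG3-fm}, which already carries out the delicate part of the argument, and then to supply the routine verifications needed for the reverse implication.

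\emph{Necessity of (i)--(iii).} Applying Proposition~\ref{AG3-fm} to the infinitely many shifts $H+m$ that are almost symmetric of type $3$, I get $\alpha_2=\alpha_3=:a$ with $a$ odd, the two $\RF$-matrices displayed there with $b=\alpha_1(m)$ and $\alpha_4(m)=b-d$, and the relations $n_2-n_1=(a-2)d$, $n_3-n_1=ad$, $n_4-n_1=(2a-2)d$; since the differences $n_j-n_i$ and the constants $\alpha_2,\alpha_3$ are unchanged by shifting, these hold for $H$ as well. Reading the value $f'(m)$ off the third and the second row of $\RF(f'(m))$ and equating the two expressions yields $m+n_1=2a+(\alpha_1(m)-d-2)(2a-2)$ and $m+n_4=2a+(\alpha_1(m)-2)(2a-2)$. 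Running the same computation directly on $H$ (every step of the proof of Proposition~\ref{AG3-fm} that compares $H+m$ with $H+m'$ may instead compare $H$ with one large shift) gives (ii) and (iii) for $H$ with $b=\alpha_1$, once $n_2,n_3$ are filled in from the difference relations. For (i): $a$ is odd by Proposition~\ref{AG3-fm}; $b\ge d+2$ because the off-diagonal entries $b-d-2$ and $b-d-1$ of $\RF(f')$ are non-negative; from (iii) one has $n_1=2\bigl(a+(b-d-2)(a-1)\bigr)$, hence $n_1$ is even, while the minimality of the generators forces $\gcd(n_1,n_2,n_3,n_4)=1$, which (using that $a$ is odd, so the gcd of the three differences equals $d$) equals $\gcd(n_1,d)$, so $d$ is prime to the even number $n_1$ and is therefore odd; finally $\gcd(a,d)=1$ follows from the minimality of $a=\alpha_2$ against the relation $an_2=2n_1+(a-2)n_3$.

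\emph{The shift relation and sufficiency (iv).} The identity $H(a,b+1;d)=H(a,b;d)+(2a-2)$ is immediate from (iii): replacing $b$ by $b+1$ increases each of $n_1,n_2,n_3,n_4$ by exactly $2a-2$. To show that every $H=H(a,b;d)$ with $a,d$ odd, $\gcd(a,d)=1$ and $b\ge d+2$ is almost symmetric of type $3$, I would set $f=(a-2)(n_2+d)$ and $f'=(b-2)n_1-ad$. Substituting the formulas of (iii) one checks that the two matrices in (ii) really are $\RF$-matrices of $f$ and of $f'$, so $f,f'\in\PF(H)$; that $f+f'\notin H$ while $(f+f')+n_i\in H$ for every $i$, so $\Fr(H)=f+f'$; and, by a socle computation of $K[H]/(t^{n_2})$ along the lines of the proof of Theorem~\ref{Type_even} (or by inspecting $\Ap(n_2,H)$), that there is no further pseudo-Frobenius number, whence $\type(H)=3$ and $\PF(H)=\{f,f',\Fr(H)\}$. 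Since $f+f'=\Fr(H)$, Lemma~\ref{Nari} shows $H$ is almost symmetric. Consequently $H(a,b;d)+j(2a-2)=H(a,b+j;d)$ is almost symmetric of type $3$ for all $j\ge 0$, so infinitely many shifts are.

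\emph{The defining ideal (v).} Since $H$ is $4$-generated and almost symmetric of type $3$, Remark~\ref{RF-rel-7gen} shows that $I_H$ is generated by $\RF$-relations, that is, by the pairwise differences of the rows of $\RF(f)$ and of $\RF(f')$. Carrying out the six row-differences of each matrix in (ii) and discarding repetitions --- e.g.\ the difference of rows $1$ and $3$ of $\RF(f)$ reproduces $xw-yz$, and the difference of rows $1$ and $4$ agrees up to sign with that of rows $2$ and $3$ --- one is left exactly with the seven binomials $xw-yz$, $y^a-x^2z^{a-2}$, $z^a-y^{a-2}w^2$, $xz^{a-1}-y^{a-1}w$, $x^b-z^2w^{b-d-2}$, $w^{b-d}-x^{b-2}y^2$, $x^{b-1}y-zw^{b-d-1}$, where $(x,y,z,w)=(x_1,x_2,x_3,x_4)$. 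Hence $I_H$ is the ideal they generate; and since $n_1+n_4=n_2+n_3$ by (iii), Theorem~\ref{7th} forces $\mu(I_H)=7$, so this is a minimal generating set.

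\emph{Main obstacle.} The subtle point is the reverse implication: shifting does not preserve almost symmetry in general, so one genuinely has to verify, for each admissible triple $(a,b,d)$, that $\PF(H(a,b;d))$ contains no pseudo-Frobenius number beyond $f,f',\Fr(H)$ --- this is the step where the parity of $a$ and $d$ and the coprimality $\gcd(a,d)=1$ are really used. A minor additional bookkeeping point is transporting the conclusions of Proposition~\ref{AG3-fm} from $H+m$ with $m\gg0$ down to $H$ itself; this is harmless because the admissible $m$ all lie in a single residue class modulo $2a-2$ and the shift relation of (iv) links the members of the family.
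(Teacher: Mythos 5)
Your reduction of (i)--(iii) to Proposition~\ref{AG3-fm}, the shift identity in (iv), and your row-difference computation producing exactly the seven binomials are all consistent with the paper. But the proposal has a genuine gap precisely where the paper does its real work: the verification that every admissible $H(a,b;d)$ is almost symmetric of type $3$, equivalently that $\PF(H)$ is exactly $\{f,f',f+f'\}$, together with the identification of $I_H$. The paper handles both in one stroke: the seven binomials lie in $I_H$ by the degree identities coming from (iii), so $I'\subseteq I_H$; then $\dim_K S/(I',x_1)$ is computed explicitly via a monomial basis and shown to equal $n_1=\dim_K S/(I_H,x_1)$, forcing $I'=I_H$; finally the socle of $S/(I_H,x_1)$ is read off --- three generators, hence $\type(H)=3$, and the socle relation $(y^{a-1})(zw^{b-d-2})=x_1\,(y^{a-2}w^{b-d-1})$ in $K[H]$ gives almost symmetry. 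Crucially, that argument does not presuppose almost symmetry, so it proves (v) and the sufficiency claim of (iv) simultaneously. Your route to (v) through Remark~\ref{RF-rel-7gen} cannot play this role: the remark assumes $H$ is almost symmetric of type $3$, which is exactly what is to be proved in (iv), so using the resulting presentation (or the Apery set derived from it) for the ``socle computation along the lines of Theorem~\ref{Type_even}'' is circular unless the presentation is established independently --- which is what the paper's colength count does and what you omit. Even in the necessity direction your argument for (v) is incomplete: the remark only guarantees \emph{some} generating set of $\RF$-relations, where $\RF$-relations are taken with respect to arbitrary choices of $\RF$-matrices; to conclude that the seven binomials coming from the two displayed matrices generate $I_H$ you would also need uniqueness of those matrices (or that every choice yields only these differences), which you do not address. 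You honestly flag the sufficiency verification as the ``main obstacle'', but that verification is the substance of the proof, so leaving it out leaves the theorem unproved.

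Two smaller points. First, your one-line derivation of $\GCD(a,d)=1$ from ``the minimality of $a=\al_2$ against $an_2=2n_1+(a-2)n_3$'' is not an argument: minimality of $\al_2$ is perfectly compatible with $\GCD(a,d)>1$ --- for instance $H=\langle 10,13,19,22\rangle$ fits the shape (iii) with $a=\al_2=3$, $d=3$, $b=\al_1=6$, the relation $3n_2=2n_1+n_3$ holds with $3$ minimal, and this $H$ is almost symmetric of type $3$; so this item of (i) needs a genuine proof (the paper's own text only argues that $d$ is odd, via $n_1$ even, which you reproduce correctly). Second, transporting the conclusions of Proposition~\ref{AG3-fm}, proved only for $m\gg 0$, down to $H$ itself (in particular identifying $a,b$ with $\al_2,\al_1$ of $H$ and asserting the displayed $\RF(f),\RF(f')$ for $H$) needs more than the observation that differences are shift-invariant, since the asymptotic estimates in that proposition genuinely use $m$ large; the paper is admittedly also brief here, but ``compare $H$ with one large shift'' is not yet a proof.
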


\begin{proof}  We have determined $\RF(f), \RF(f')$  in Proposition \ref{AG3-fm}
and saw that $n_2= n_1 +(a-2)d,  n_3= n_1+ ad, n_4= n_1+(2a-2)d$.
Here we write $n_i$ (resp. $f.f'$) instead of $n_i+m$ (resp.
$f(m), f'(m)$).
Then from 1st and 3rd rows of
$\RF(f')$, we get
\[bn_1 = 2 (n_1+ad) + (b-d-2)(n_1+(2a-2)d)\]
hence $n_1= 2a+ (b-d-2)(2a-2)$.  It is easy to see that
putting
 $n_2= n_1+(a-2)d, n_3= n_1+ad,
n_4= n_1+(2a-2)d$,  we have $f = (a-1)n_2-n_1
= (a-2)(2a+(a-1)(2b-d-4))$ and $f' = (b-1)n_1-n_3=(b-2)n_1-ad$.

Also, since $n_1$ is even, $d$ should be odd to make a numerical semigroup.

We put $I' = (xw-yz, y^a - x^2z^{a-2}, z^{a}-y^{a-2}w^2, xz^{a-1}- y^{a-1} w,
x^b -z^2w^{b-d-2}, w^{b-d}-x^{b-2}y^2, x^{b-1}y - zw^{b-d-1})$.
We can easily see those equations come from
$\RF(f(m))$ or $\RF(f'(m))$.   Hence $I'\subset I_{H+m}$.
Now we will show that $I' = I_H$ by showing $\dim_k S/ (I', x)= \dim_k S/(I_H, x)
= n_1$.

Now, we see that
\[S/(I', x)\cong k[y,z,w]/( yz, y^{a}, z^{a}-y^{a-2}w^2,y^{a-1}w,
 z^2w^{b-d-2}, w^{b-d},  zw^{b-d-1}),\]
whose base over $k$ we can take

 \[A \cup B \cup C \cup \{y^{a-1}\}\cup \{zw^{b-d-2}\},\]
\noindent
where $A = \{y^iw^j \;|\; 0\le i\le a-3, 0\le j \le b-d-1\}$,

$B = \{z^iw^j\;|\; 1\le i \le a-1, 0\le j\le b-d-3\},
C=  \{ y^{a-2}w^j\;|0\le j\le b-d-1\}$.  Note that $y^{a-2}w^{b-d-1}
= z^aw^{b-d-3}$.

Hence $\dim_k S/(I', x) = \sharp A +\sharp B + \sharp C + 2
= (a-2)(b-d) + (a-1)(b-d-2) + (b-d) +2   = n_1 = S/(I_H, x )$.
Thus  we have show that $I'=I_H$.

Also, it is easy to see that $\Soc( S/(I_H,x))$ is generated by $y^{a-1}, zw^{b-d-2},
y^{a-2}w^{b-d-1}= z^{a} w^{b-d-3}$ and since $xw = yz$ in $k[H]$,
we have $ (y^{a-1})\cdot (zw^{b-d-2}) = x  (y^{a-2}w^{b-d-1})$ in $k[H]$,
which shows that $H(a,b)$ is almost symmetric.
\end{proof}

\end{document}